\documentclass[11pt]{amsart}
\usepackage[utf8]{inputenc}
\usepackage{txfonts}
\usepackage[T1]{fontenc}

\usepackage{amsmath,amsthm,amssymb,hyperref, mdframed, eucal}
\usepackage{mwe}
\usepackage{mathrsfs} 
\usepackage{tikz}
\usepackage[all]{xy}
\usetikzlibrary{decorations.markings}
\usetikzlibrary{backgrounds,shapes}
\usepackage{subfig}
\usepackage{tikz}
\usetikzlibrary{shapes,arrows}
\usetikzlibrary{shapes.geometric, arrows}
\usetikzlibrary{decorations.markings}
\usepackage{subfig}
\usepackage{tabularx}
\usepackage{enumitem}
\usepackage{amsmath,amsthm,amscd}
\usepackage{amssymb,amsfonts}
\usepackage{fancyhdr} 
\usepackage{pgf,tikz}
\usepackage{caption}
\usepackage{tikz-cd}
\usepackage{tikzscale}
\usetikzlibrary{patterns}
\usepackage{rotating}
\usepackage{xcolor}
\usepackage{lscape}
\usepackage{array} 
\usepackage{tabularx}
\usepackage{multirow}
\usepackage{multicol}
\usepackage{booktabs} 
\usepackage{caption}
\usepackage{xspace}
\usepackage{xfrac}
\usepackage{blindtext}
\usepackage[ruled,vlined]{algorithm2e}
\usepackage{imakeidx}
\makeindex

\usetikzlibrary{decorations.markings}
\usetikzlibrary{patterns}
\usetikzlibrary{calc}
\usetikzlibrary{shapes.geometric}

\usepackage{color}
\usepackage{hyperref}
\hypersetup{
    colorlinks=true, 
    linktoc=all,     
    linkcolor=blue,  
    citecolor=blue,
    filecolor=blue,
    urlcolor=blue
}

\definecolor{aliceblue}{RGB}{0.9, 0.95, 1.0}
\definecolor{smoked}{RGB}{216, 212, 204}
\definecolor{mauve}{RGB}{200, 55, 171}
\definecolor{apricot}{RGB}{250, 144, 4}
\definecolor{sky}{RGB}{66, 169, 244}
\definecolor{plum}{RGB}{76, 0, 102}
\definecolor{forest}{RGB}{90,145,120}
\definecolor{sand}{RGB}{180,160,120}

\usepackage{geometry} 
\numberwithin{equation}{section}

\newcommand\Z{{\mathbb Z}}
\newcommand\ziz{{\Z\oplus i\,\Z}}

\newcommand\R{{\mathbb R}}

\newcommand{\C}{{\mathbb C}}

\newcommand{\spz}{\mathrm{Sp}(2g,\Z)}

\newcommand{\pol}{{\textnormal{P}}}

\newcommand{\cp}{\mathbb{C}\mathrm{\mathbf{P}}^1}

\newcommand{\strata}{\mathcal H_g(\,\mu\,)}

\newcommand{\shomolz}{\textnormal{H}_1(\,\Sigma,\mathbb{Z}\,)}

\theoremstyle{plain}                    
\newtheorem{thm}{Theorem}[section]
\newtheorem*{hthm}{Haupt's Theorem}

\newtheorem{lem}[thm]{Lemma}
\newtheorem{prop}[thm]{Proposition}
\newtheorem{cor}[thm]{Corollary}
\newtheorem*{thmnn}{Theorem}

\newtheorem{ru}[thm]{Rule}

\theoremstyle{definition}
\newtheorem{defn}[thm]{Definition}

\theoremstyle{remark}
\newtheorem{ex}[]{Example}

\newtheorem{rmk}[thm]{Remark}

\newcommand{\cm}[1]{\textcolor{blue}{#1}}

\tikzstyle{rb} = [rectangle, rounded corners, minimum width=3cm, minimum height=1cm, text width=3cm, text centered, draw=black, fill=blue!30]
\tikzstyle{sb} = [rectangle, minimum width=2cm, minimum height=1cm, text width=3cm, text centered, draw=black, fill=violet!30]

\title[The dark side of translation surfaces: It is not all about dynamics]{The dark side of translation surfaces:\\ It is not all about dynamics}

\author{Gianluca Faraco}
\address[Gianluca Faraco]{School of Mathematics, Monash University, Clayton, VIC 3800, Australia}
\email{gianluca.faraco@monash.edu}
\date{\today}

\begin{document}

\keywords{Translation surfaces, Haupt's Theorem, absolute period realisation}
\subjclass[2020]{57M50; 32G15; 14H15}%
\dedicatory{}

\begin{abstract}
This survey offers a geometric and topological perspective on translation surfaces, with a focus on aspects that are often overshadowed by the dynamical viewpoint. After presenting several equivalent definitions of translation surfaces, particularly those based on polygonal models and fundamental domains, we explore the structure of their moduli spaces in the spirit of Thurston, including natural stratifications. The final part is devoted to the study of the period map, the realisation of representations as period characters and the fibres of the holonomy map. This approach highlights a foundational, yet sometimes underrated, facet of the theory.
\end{abstract}

\maketitle
\tableofcontents

\section{Introduction}

\noindent Translation surfaces appear naturally at the intersection of geometry, topology, and dynamics, and they have attracted increasing attention in recent decades. A large portion of the research on translation surfaces has focused on their dynamical aspects. This includes the study of interval exchange transformations, billiards in rational polygons, and Teichm\"uller dynamics. These directions have led to deep and beautiful results, and the field has flourished around them.

\smallskip

\noindent This survey, however, explores a different and perhaps less illuminated side of the theory, a side whose existence is well known but often left in the background. Borrowing from the title of the famous Pink Floyd album, we might call it \textit{the dark side of the translation surface}. The word "dark" is not meant as a negative term, but rather as an invitation to shed light on aspects of translation surfaces that, while fundamental, often remain in the shadow of the more visible dynamics-orientated research.

\smallskip

\noindent In the spirit of Thurston, this survey begins by presenting various equivalent ways of defining translation surfaces, particularly through polygonal models and fundamental domains. It then turns to the structure of moduli spaces of translation surfaces and their natural stratifications. This chapter focuses on the period map and the problem of realising abelian differentials with prescribed absolute and relative periods. In fact, the main purpose of this survey, however, is to present Haupt's theorem and its recent developments. We finally conclude with the geometry and topology of isoperiodic foliations, a subject that has been significantly developed through the pioneering work of McMullen. While this perspective may not be the most visible part of the theory, it offers a rich and geometrically intuitive approach that complements the more widely known dynamical picture.

\bigskip

\noindent \textit{Scope of the present note:} In \cite{Mass}, Massart writes, referring to his own work: 
\begin{quote}
    \textit{"There are already many wonderful introductions to translation surfaces (for instance, \cite{FoMa}, \cite{HuSc}, \cite{Moe}, \cite{AW} or \cite{Zo}), and this one is in no way meant as a substitute to any of them [...]"} 
\end{quote}
\noindent I fully agree with this statement. The cited works are difficult to match and encompass a remarkable wealth of material. 
To these, I would add the recent book by Athreya–Masur \cite{AtMa}, a recent survey by Filip \cite{FS}, as well as Massart’s own paper \cite{Mass}, which I found particularly helpful and thoughtfully written, and which also serves as a gentle introduction to Teichm\"uller curves. For the reasons mentioned above, these notes will not address the dynamical aspects in any way. To semi-quote Massart, this work does not intend in any way to replace any of the previous ones, and I hope it may be helpful to those who wish to explore the "dark side" of translation surfaces.

\subsection*{Acknowledgments} The author wishes to thank Athanase Papadopoulos and Ken'ichi Ohshika for inviting him to write the present survey. The author also wishes to thank Gabriel Calsamiglia, Dawei Chen, Stefano Francaviglia, Subhojoy Gupta, Ursula Hamenst\"adt, Thomas Le Fils, Martin M\"oller, Guillaume Tahar and Yongquan Zhang for all discussions he had with them about translation surfaces and moduli spaces of differentials. I would also like to thank Gye-Seon Lee for inviting me to Seoul National University to give a mini-course on translation structures. Part of these notes were written during that visit, and the course itself was inspired by the material presented here. Finally, I thank an anonymous referee for comments which led to improvements in the present note.

\section{Translation surfaces}\label{sec:trans}

\noindent The present chapter is devoted to introducing the reader to \textit{translation surfaces} from a topological point of view in the spirit of Thurston. These can be defined in several ways depending on whether we adopt the language of geometric structures, or a complex-analytic language or a topological one.

\medskip

\textit{Convention.} In what follows we shall agree with the usual identification \(\mathbb C\cong\mathbb R^2\) determined by the well-known association \(z=x+iy\longleftrightarrow(x,y)\) and hence \(dz^2=dx^2\,+\,dy^2\). In particular \(\mathbb C\) is endowed with its standard complex structure which determines a well defined orientation.  

\medskip

\subsection{Translation surfaces as tiled surfaces}\label{ssec:tiled} We begin with a topological definition because this is certainly the most direct and concrete one, useful when we need to visualise the objects. We first recall that a polygon is said to be \textit{finite and simple} if its boundary is a simple curve, \textit{i.e.} without self-intersections and made of finitely many segments. Let \(\mathcal {P}=\{\,\pol_1,\dots,\pol_n\,\}\) be a finite collection of simple polygons in \(\C\) with disjoint interiors. We shall agree that all polygons in the present chapter are understood in the Euclidean sense.

\subsubsection{Topological definition}\label{sssec:firstdef} We begin by introducing the concept of \textit{pairing via translations}, bearing in mind that the following definition naturally extends to other geometries.

\begin{defn}\label{defn:pairing}
    A finite collection of polygons \(\mathcal P\) in \(\mathbb R^2\) admits a \textit{pairing via translations} if there exists a finite subset \(\tau=\{\,g_s\,\,|\,\, s\in\mathcal S\,\}\) of translations of the plane, indexed by the collection \(\mathcal S\) of all the sides of the polygons in \(\mathcal P\), such that for each side \(s\in \mathcal S\) there is a side \(t\in\mathcal S\) such that
    \begin{itemize}
        \item[1.] \(g_s(\,t\,)=s\),
        \item[2.] the translations \(g_s,\,g_t\) satisfy \(g_s\,g_t(\,z\,)\,=\,g_t\,g_s(\,z\,)=z\), that is \(g_s=g_t^{-1}\),
        \item[3.] if \(s\) is a side of \(\pol_i\) and \(t\) is a side of \(\pol_j\), for some \(1\le i,j\le n\), then \(\pol_i\,\cap\,g_s(\,\pol_j\,)=s\).
    \end{itemize}
\end{defn}

\noindent We observe that it is not necessary to explicitly require that each side in the collection \(\mathcal{S}\) corresponds to a unique side, as this uniqueness is already ensured by condition~(2) in Definition~\ref{defn:pairing}. It is worth noticing that a translation in \(\tau\) is allowed to be trivial, \textit{i.e.} the identity, see Figures in \S\ref{sec:per} for some examples.

\smallskip

\begin{figure}[!ht]
    \centering
    \begin{tikzpicture}[scale=1.25, every node/.style={scale=0.875}]
    \definecolor{pallido}{RGB}{221,227,227}
    \definecolor{sabbia}{RGB}{207,185,151}


    \pattern [pattern=north west lines, pattern color=sabbia, opacity=1] (7.5,-8)--(11.5,-8)--(9.5,-7)--(7.5,-8);
    \pattern [pattern=north west lines, pattern color=sabbia, opacity=1] (11.5,-8)--(10.5,-10)--(8.5,-11)--(11.5,-8);
    \pattern [pattern=north west lines, pattern color=sabbia, opacity=1] (7.5,-8)--(8.5,-11)--(6.5,-10)--(7.5,-8);
    \pattern [pattern=north west lines, pattern color=sabbia, opacity=1] (11.5,-10)--(13.5,-10)--(14.5,-8 )--(12.5,-8 );
    \pattern [pattern=north west lines, pattern color=sabbia, opacity=1] (7.5,-8)--(11.5,-8)--(8.5,-11)--(7.5,-8);
    \pattern [pattern=north west lines, pattern color=sabbia, opacity=1] (8.5,-12)--(10.5,-11)--(10.5,-13)--( 9.5,-14)--( 8.0,-13)--( 8.5,-12);

    \draw[thin, black] (12.5,-8 )--(11.5,-10);
    \draw[thin, black] (11.5,-10)--(13.5,-10);
    \draw[thin, black] (14.5,-8 )--(13.5,-10);
    \draw[thin, black] (12.5,-8 )--(14.5,-8 );

    \draw[thin, black] (7.5,-8)--(9.5,-7);
    \draw[thin, black] (11.5,-8)--(9.5,-7);
    \draw[thin, black] (11.5,-8)--(10.5,-10);
    \draw[thin, black] (8.5,-11)--(10.5,-10);
    \draw[thin, black] (7.5,-8)--(6.5,-10);
    \draw[thin, black] (8.5,-11)--(6.5,-10);

    \fill[black]       (11.5, -8) circle (1.25pt);
    \fill[black]       ( 7.5, -8) circle (1.25pt);
    \fill[black]       ( 8.5,-11) circle (1.25pt);
    \fill[black]       ( 9.5, -7) circle (1.25pt);
    \fill[black]       (10.5,-10) circle (1.25pt);
    \fill[black]       ( 6.5,-10) circle (1.25pt);
    \fill[black]       (12.5, -8) circle (1.25pt);
    \fill[black]       (14.5, -8) circle (1.25pt);
    \fill[black]       (11.5,-10) circle (1.25pt);
    \fill[black]       (13.5,-10) circle (1.25pt);

    \draw[thin, black] ( 8.5, -12) -- (10.5, -11);
    \draw[thin, black] (10.5, -11) -- (10.5, -13);
    \draw[thin, black] (10.5, -13) -- ( 9.5, -14);
    \draw[thin, black] ( 9.5, -14) -- ( 8.0, -13);
    \draw[thin, black] ( 8.0, -13) -- ( 8.5, -12);

    \fill[black]     ( 8.5, -12)  circle (1.25pt);
    \fill[black]     (10.5, -11)  circle (1.25pt);
    \fill[black]     (10.5, -13)  circle (1.25pt);
    \fill[black]     ( 9.5, -14)  circle (1.25pt);
    \fill[black]     ( 8.0, -13)  circle (1.25pt);

    \pattern [pattern=north west lines, pattern color=sabbia, opacity=1] ( 9.5,-6 )--( 7.5,-7 )--( 7.5,-5)--( 8.5,-4)--(10.0,-5 )--( 9.5,-6 );
    
    \draw[thin, black] (9.5, -6.0) -- (7.5, -7.0);
    \draw[thin, black] (7.5, -7.0) -- (7.5, -5.0);
    \draw[thin, black] (7.5, -5.0) -- (8.5, -4.0);
    \draw[thin, black] (8.5, -4.0) -- (10.0, -5.0);
    \draw[thin, black] (10.0, -5.0) -- (9.5, -6.0);

    \fill[black] (9.5, -6.0) circle (1.25pt);
    \fill[black] (7.5, -7.0) circle (1.25pt);
    \fill[black] (7.5, -5.0) circle (1.25pt);
    \fill[black] (8.5, -4.0) circle (1.25pt);
    \fill[black] (10.0, -5.0) circle (1.25pt);
    
    \end{tikzpicture}
    \caption{A collection of disjoint polygons \textbf{P} that admits a pairing via translations.}
    \label{fig:transurfone}
\end{figure}

\smallskip

\begin{rmk}
    It is worth noting that, according to the definition provided, the polygons in a given collection are not necessarily disjoint, as shown in the Figure~\ref{fig:transurfone}. In the following, many translation surfaces are constructed using a single polygon in the plane subdivided into triangles and/or squares that are not necessarily disjoint. If they are not disjoint, two polygons intersect along a common edge that they share.
\end{rmk}

\noindent We therefore assume that our collection of polygons admits a pairing via translations \(\tau\) and let us denote by \(\mathcal P\) the disjoint union of all polygons of our collection, \textit{i.e.}
\begin{equation}
    \textbf{P}\,=\, \bigcup_{\mathcal P} \pol_i.
\end{equation}
Two points, say \(p,\,q\), are said to be paired if there are two sides, say \(s,\,t\in\mathcal S\), such that \(p\in s\) and \(q\in t\) and a translation \(g_s\) of the plane such that \(g_s(\,q\,)=p\). This pairing of edge points generates an equivalence relation on the set \( \mathcal S \). Let \( \Sigma \) be the topological quotient space of \( \mathcal S \). Then \( \Sigma \) is the space obtained by gluing the polygons in \( \mathcal P \) via the pairing \( \tau \). Its topology depends only on the combinatorial properties of the identification. We have the following:

\begin{prop}\label{prop:mfldstruc}
    \(\Sigma\) is a compact orientable manifold of dimension two, \textit{i.e.} \(\Sigma\) is a compact orientable surface.
\end{prop}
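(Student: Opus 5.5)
We prove the statement by checking, in turn, that \(\Sigma\) is compact and Hausdorff, that it is locally Euclidean of dimension two, and that it is orientable. Throughout we use the quotient map \(\pi\colon \textbf{P}\to\Sigma\), where \(\textbf{P}\) is taken as the disjoint union of the finitely many polygons (the quotient is really of \(\textbf{P}\), not of \(\mathcal S\)).

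\emph{Compactness and the Hausdorff property.} Since \(\textbf{P}\) is a finite disjoint union of compact polygons, \(\Sigma=\pi(\textbf{P})\) is compact. For the Hausdorff property, note that every equivalence class is finite. Indeed, an interior point is identified with nothing, a point interior to a side \(s\) is identified only with \(g_s^{-1}\) of it on the paired side \(t\), and a vertex class consists of vertices only, of which there are finitely many. Given two distinct classes, we choose small Euclidean balls of radius \(\varepsilon\) around every representative of each. Because the gluing maps are isometries, a small enough \(\varepsilon\) makes these neighbourhoods saturated and disjoint, so their images are disjoint open sets in \(\Sigma\).

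\emph{Local charts.} We split into three cases according to where a point of \(\Sigma\) comes from.
\begin{itemize}
\item A point interior to some \(\pol_i\) has a disc neighbourhood on which \(\pi\) is injective and open, and this disc is the chart.
\item A point interior to a side \(s\) is paired with a point of \(t\). We take half-discs of the same radius in \(\pol_i\) and \(\pol_j\). Condition 3 of Definition~\ref{defn:pairing} says \(\pol_i\cap g_s(\pol_j)=s\), so \(g_s\) carries the half-disc in \(\pol_j\) to the half-disc on the opposite side of \(s\), not onto the half-disc in \(\pol_i\). Their union is therefore a genuine Euclidean disc, which descends homeomorphically to a neighbourhood in \(\Sigma\).
\item Vertices are the main obstacle, and we handle them as follows. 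Take a vertex class \(\{v_1,\dots,v_k\}\); near each \(v_m\) the polygon looks like a sector with two bounding sides. Each side is glued to exactly one other side, so we form a graph whose nodes are these sectors and whose edges join two sectors when one of their sides is paired with the other's. Every node has degree two, since each sector has two sides and each side has a unique partner by condition 2. Hence each component of this graph is a cycle. Moreover, the vertex class is exactly one such cycle, because the identifications of vertices are generated by following these side pairings. Gluing the sectors cyclically, by translations along the shared sides, produces a cone neighbourhood. This cone is homeomorphic to a disc whose total angle is the sum of the sector angles, possibly \(2\pi k'\). For the chart we then map the cone onto a round disc by radially rescaling the angles, which is a homeomorphism even though it is not isometric. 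The care needed here is to check that the cyclic gluing closes up consistently and that small radii avoid all other vertices.
\end{itemize}

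\emph{Orientability.} Each polygon inherits the orientation of \(\C\). All gluing maps are translations, which preserve orientation, and condition 3 places paired polygons on opposite sides of the glued edge. The transition maps between the charts above are therefore orientation preserving, with the vertex charts requiring a check that the angular rescaling preserves the cyclic order. Hence the orientations of the polygons patch together to a global orientation of \(\Sigma\), so \(\Sigma\) is a compact orientable surface.
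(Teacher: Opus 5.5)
Your proof is correct and follows the same route as the paper's argument in \S\ref{sssec:mfldstruc}. Both get compactness from the finitely many compact polygons, build charts in the same three cases (interior point, interior of a side, vertex class) with the vertex chart given by rescaling angles as in \eqref{eq:chartaroundvertex}, and deduce orientability from orientation-preserving transition maps. Your version is more careful than the paper's in several places, which the paper glosses over:
\begin{itemize}
\item you check the Hausdorff property;
\item you use condition~3 of Definition~\ref{defn:pairing} to see that the two half-discs fit together into a genuine disc;
\item you justify, via the $2$-regular graph, that the sectors of a vertex class close up in a single cycle;
\item you note that the angle-rescaling chart works without knowing that the cone angle is a multiple of $2\pi$.
\end{itemize}
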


\noindent We postpone the proof to \S\ref{sssec:mfldstruc}. Notice that, in principle, \(\Sigma\) may not be a connected surface and thus be the union of finitely many connected surfaces because the collection \(\mathcal P\) is finite. In what follows we shall require \(\Sigma\) to be connected, just because if it is not, we study its connected components separately. 

\smallskip

\noindent More important is the fact that two distinct collections of polygons that admit a pairing via translations may define the same surface in the following sense: two collections of polygons, say \(\mathcal P\) and \(\mathcal Q\), are considered to define the same surface if one can be cut into pieces along straight lines, and these pieces can be translated and reassembled to form the other collection. Notice that when a polygon is cut into two pieces, the resulting boundary components must be paired, and two polygons can be glued together along a pair of edges only if those edges are paired. We may thus say that \(\mathcal P\) and \(\mathcal Q\) are equivalent collections of polygons. More generally, this criterion yields an equivalence relation on the set of all possible finite collections of polygons in~\(\C\) that admit a pairing via translations.

\begin{defn}\label{defn:firstdef}
    A \textit{translation surface}\index{surface!translation} is an equivalence class of polygons of \(\C\) that admits a pairing via translations. 
\end{defn}

\noindent Before we explore some examples of translation surfaces, let us understand why they are called surfaces in the first place, \textit{i.e.} it is now the right time to show Proposition \ref{prop:mfldstruc}.

\subsubsection{Manifold structure}\label{sssec:mfldstruc} In the present paragraph, we prove that the topological space \(\Sigma\) defined as in \S\ref{sssec:firstdef} is a differentiable surface by providing a differentiable atlas. Let \(\pi\colon\textbf{P} \longrightarrow \Sigma\) be the quotient map, and let \(p \in \Sigma\) be any point. There are three mutually disjoint possibilities as follows.

\smallskip

\noindent In the first place let us assume that \(\pi^{-1}(\,p\,)\) is a singleton. In this case, the preimage of \(p\) is an interior point of some polygon in the collection \(\mathcal{P}\), and as such, it admits an open neighbourhood \(U\) such that the restriction \(\pi_U\colon U \longrightarrow \pi(\,U\,) \subset \Sigma\) is a homeomorphism. Its inverse provides the desired local chart.

\smallskip

\noindent In the case where the preimage of \(p\) is not a singleton, then \(\pi^{-1}(\,p\,)\) must be a finite set because \(\textbf{P}\) is made of finite and simple polygons. See Remark \ref{rmk:chamanara} for a notable counterexample when the polygon has infinitely many sides. Assume that \(\pi^{-1}(\,p\,)\) consists of two points lying in the interior of edges of polygons \(\textnormal{P}_i\) and \(\textnormal{P}_j\). 

\begin{rmk}
    Notice that \(i = j\) is very well admissible. In fact, this is always the case if the collection \(\mathcal P\) is made of a single polygon \textnormal{P}.
\end{rmk}

\noindent In this case, \(p\) has a neighbourhood in \(\Sigma\) whose preimage in \(\textbf{P}\) is the union of two half-disks of equal radius and parallel diameters, each centred at one of the two preimages of \(p\). If the corresponding edges are identified by a translation, we define a chart by mapping one half-disk via the identity and the other via the translation, thus obtaining the desired local homeomorphism onto an open subset of \(\C\).

\smallskip

\noindent The third and the last case is the most subtle to deal with. Assume that \(\pi^{-1}(\,p\,)\) consists of finitely many points, all of which are vertices of polygons in the collection \(\mathcal{P}\), say \(v_1, \dots, v_k\). This happens because the gluing edge identifies only edges of equal length, so vertices are glued only to other vertices. A neighbourhood of \(p\) in \(\Sigma\) lifts to the union of \(k\) circular sectors \(S_1, \dots, S_k\), each of radius \(r\), centred at \(v_i\), and bounded by two edges \(e_i\) and \(f_i\), such that \(f_i\) is parallel to \(e_{i+1}\) for every \(i = 1, \dots, k\) where the indices are taken modulo \(k\).

\smallskip

\noindent Let \(\theta_i\) be the interior angle of the sector \(S_i\) at the vertex \(v_i\). It is a non-trivial fact that the total angle is always an integer multiple of \(2\pi\), that is \(\theta_1+\cdots+\theta_k=2(m+1)\pi\). In fact, this follows directly from the assumption that the polygons are glued together by translations. Without this assumption, such a property is not generally guaranteed. 

\smallskip

\noindent In what follows, we shall call \textit{conical singularity} or \textit{cone point} every point such that \(m\ge1\). Notice that these points must arise from the identification of vertices. Following Massart in his very nice survey \cite{Mass}, we define a chart around \(p\) by mapping each sector \(S_i\) onto a standard circular sector centred at the origin. Setting \(\alpha_i\) as the partial sum \( \alpha_i = \theta_1+\cdots+\theta_{i-1} \), we define the chart by
\begin{equation}\label{eq:chartaroundvertex}
     v_i + \rho\, \exp\Big(\,i\big(\,\theta\, +\, \alpha_i\,\big)\,\Big) \longmapsto \rho\, \exp\left(\,\frac{i}{m+1}\big(\,\theta\, +\, \alpha_i\,\big)\,\right),
\quad \text{for } 0 \leq \rho \leq r,\, 0 \leq \theta \leq \theta_i.
\end{equation}

\noindent This defines a homeomorphism onto an open subset of \(\C\). We thus have a collection of charts, and it is not difficult to see that the transition functions are restrictions of global translations or the identity. As a consequence, our topological space is indeed a manifold, and it is orientable because transition maps, being smooth functions, have Jacobian matrices with positive determinant. Notice that, by design, each local chart is an orientation-preserving map. It remains to show that the surface \(\Sigma\) is compact. This follows from the fact that our collection \(\mathcal{P}\) consists of finitely many compact polygons. Since \(\mathcal{P}\) is compact and the quotient projection is continuous, the claim follows.

\begin{rmk}\label{rmk:chamanara}
    In \cite{Cha}, Chamanara introduced a translation surface of infinite type obtained from a unit square on which each side is subdivided into infinitely many segments. Every side of the square is recursively halved, and edges on opposite segments are identified by translations. This process yields a polygon with infinitely many sides, and the resulting surface has infinitely many handles. Notably, the identification map fails to be finite-to-one: the quotient space includes a single conical singularity arising from the identification of all cutting points and corners in the metric completion.
\end{rmk}

\subsubsection{Tilings}\label{sssec:tilings} Strictly speaking, a tiling is a subdivision of a geometric surface \( \Sigma \) into non-overlapping polygons, \textit{i.e.} polygons with disjoint interiors, called \emph{tiles}, such that the total angle around each vertex is \( 2\pi \).  Without going into the general theory of tilings, which is rich and fascinating enough to have inspired numerous surveys on the topic, for instance see~\cite{GS}, we shall limit ourselves to a few remarks concerning tilings by Euclidean polygons. It is a well-known topological fact that the only closed surface that admits a true tiling (in the strict sense above) is the torus. However, as soon as we relax the rather restrictive condition that vertices must have total angle \( 2\pi \), we arrive at a broader notion of tiling. In this more general setting, the class of surfaces that can be constructed by gluing together Euclidean polygons along their edges by translations becomes much wider. Example~\ref{ex1} below shows, for instance, that any surface of positive genus can be realised by identifying sides of a single \( 4g \)-gon. For our purposes, as we will see, special importance will be given to the class of surfaces known in the literature as \emph{square-tiled surfaces}, see Example~\ref{ex2}.

\smallskip

\subsection{Some examples} We provide here a few explicit examples of translation surfaces. The collection of polygons in Figure \ref{fig:transurfone} determines a translation surface, and the reader is invited to find an explicit pairing via translation to determine it.

\smallskip

\begin{ex}[Regular polygons]\label{ex1}
    Naturally, the simplest example is that of a square or, more generally, a parallelogram embedded in the complex plane. By identifying the opposite edges of the parallelogram by using translations, the resulting surface is a torus. This example can be easily generalised as follows: every regular \(4g\)-gon embedded in \(\mathbb{C}\) has \(2g\) pairs of parallel sides. The edges of each pair can be identified via a translation, and the resulting surface is a closed surface of genus \(g\). 
\end{ex} 


\begin{ex}[Square-tiled surfaces\index{surface!square-tiled}]\label{ex2}
    One of the most celebrated and remarkable families of examples of translation surfaces is that of the square-tiled surfaces. These structures are obtained by gluing together a finite collection of unit squares. In the minimal case, consisting of a single square, one obtains a flat torus \(\mathbb{T}\). For square-tiled surfaces, Definition \ref{defn:firstdef} is equivalent to the following construction. Let \(\mathcal {P}\) be a collection of \(N>0\) squares labelled from from \(1\) to \(N\). Then a square-tiled surface is determined by a pair of permutations \((\sigma_h, \sigma_v)\) in the symmetric group \(\mathfrak S_N\), considered up to equivalence by simultaneous conjugation, where the upper horizontal edge of the \(i\)-th square is glued to the lower horizontal edge of the \(j\)-th square if \(j\) is the successor of \(i\) in \(\sigma_v\). Similarly, the right vertical edge of the \(i\)-th square is glued to the left vertical edge of the \(j\)-th square if \(j\) is the successor of \(i\) in \(\sigma_h\). By construction, on the resulting surface \(\Sigma\) there is a well defined map \(f\colon \Sigma\longrightarrow \mathbb T\) which is a covering of degree \(N\) branched at a single point. As we shall see in \S\ref{sec:per}, in what follows this realisation plays a crucial role.
\end{ex}

\medskip

\subsection{Gauss-Bonnet condition}\label{ssec:gbcond} In the following paragraph, we aim to address the following question: 

\begin{quote}
    \textit{how many conical singularities can a translation surface admit?}
\end{quote}

\noindent It is worth noting that our definition above allows for the presence of singular points, but does not guarantee their existence, nor does it impose any bound on their magnitude. In this section, we therefore set out to count them.

\smallskip

\begin{prop}[Gauss--Bonnet's formula]\label{prop:gbcond}
    Let \(\Sigma\) be a translation surface of genus \(g\) with cone points of total angle \(2(\,m_1+1\,)\pi,\dots,2(\,m_k+1\,)\pi\). Then
    \begin{equation}\label{eq:gbcond}
        2g-2\, -\, \sum_{i=1}^k\,m_i\,=0
    \end{equation}
\end{prop}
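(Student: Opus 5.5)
The plan is to compute the Euler characteristic of \(\Sigma\) from the polygonal model of \S\ref{sssec:firstdef} and compare it with the total angle. First triangulate: cut every polygon in \(\mathcal P\) into Euclidean triangles by diagonals, adding the new edges as paired sides with trivial translation. This does not change the translation surface, by the equivalence in Definition~\ref{defn:firstdef}. We may also add every cone point and every vertex as a vertex of the triangulation; regular points with total angle \(2\pi\) are allowed as vertices, with \(m=0\). This gives a triangulation of \(\Sigma\) with \(V\) vertices, \(E\) edges and \(F\) triangles. Every triangle has three sides and every edge of \(\Sigma\) comes from exactly two sides, by the pairing of Definition~\ref{defn:pairing}. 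Hence \(3F=2E\), and so \(\chi(\Sigma)=V-E+F=V-\tfrac12 F\).

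Next, count angles in two ways. The sum of interior angles over all triangles is \(\pi F\). By the description of vertex neighbourhoods in \S\ref{sssec:mfldstruc}, the sectors at the preimages of a vertex \(p\) of \(\Sigma\) fit together cyclically. So this same sum equals \(\sum_{p}\theta(p)\), where \(\theta(p)=2(m_p+1)\pi\) is the total cone angle at \(p\). Interior points of edges and faces contribute nothing, because they are not vertices. Therefore
\[
\pi F=\sum_{p\in V}2(m_p+1)\pi = 2\pi V+2\pi\sum_{i=1}^k m_i ,
\]
since only the cone points have \(m_p\neq 0\). This gives \(V-\tfrac12F=-\sum_i m_i\). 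Combining this with \(\chi(\Sigma)=2-2g\), which uses that \(\Sigma\) is a closed connected orientable surface of genus \(g\) by Proposition~\ref{prop:mfldstruc}, we obtain \(2-2g=-\sum m_i\), which is \eqref{eq:gbcond}.

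The main obstacle is the claim, stated but not proved in \S\ref{sssec:mfldstruc}, that each total angle is an integer multiple of \(2\pi\). The count above does not strictly need integrality: it only needs the total angle at \(p\) written as \(2(m_p+1)\pi\) for some real \(m_p\). Still, it must be justified that the triangulation is a genuine cell decomposition. Two points need care. First, the gluing of sectors around a vertex closes up after finitely many steps; this holds by finiteness of \(\mathcal P\). Second, no triangle is glued to itself in a degenerate way. If necessary, I would subdivide further, for instance barycentrically, to make it a simplicial complex; this changes neither \(\chi\) nor the angle count. For the integrality itself, walk around \(p\) through the sectors \(S_1,\dots,S_k\). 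The direction of \(f_i\) equals that of \(e_{i+1}\) because the gluing maps are translations. Hence after a full turn the direction vector returns to itself, and the accumulated angle \(\sum\theta_i\) lies in \(2\pi\mathbb Z\), and it is positive.
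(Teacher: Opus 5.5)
Your proof is correct and is essentially the paper's argument: triangulate so that every cone point is a vertex, then combine $3F=2E$ with the angle count $\pi F=\sum_v\theta(v)$ to get $2\pi\chi(\Sigma)=-2\pi\sum_i m_i$. Your additional justifications (triangulating each polygon by diagonals, and deriving integrality of the cone angles from the translation gluings) fill in details the paper only asserts. The worry about degenerate self-gluings is unnecessary, since $V-E+F$ computes the Euler characteristic for any finite cell decomposition, not just simplicial ones.
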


\begin{proof}
    There are several ways to prove the Gauss--Bonnet's formula; we provide the most topological one which relies on the well-known formula
    \begin{equation}
        \chi(\,\Sigma\,)\,=\,V\,-\,E\,+\,F,
    \end{equation}
    where \(V,\,E,\,F\) are the numbers of vertices, edges and faces of any triangulation of \(\Sigma\). Since a translation surface is realised by gluing Euclidean polygons by means of translations, it naturally admits a triangulation, say \(\tau\), into Euclidean triangles such that all cone points are vertices of \(\tau\). Recall that, for a Euclidean triangle \(T\) with interior angles \(\alpha, \beta, \gamma\), we have
    \begin{equation}
    \int_{T} k\, dA = \alpha + \beta + \gamma - \pi,
    \end{equation}
    where \(k\) is the Gaussian curvature. In the flat case, \(k= 0\), and the formula shows that the total angle sum equals \(\pi\), unless there is a singularity. Let \(\Delta=\{\,p_1,\dots,p_k\,\}\subseteq\,V\) be the set of singular points--notice that not all vertices are cone points. For every vertex \(v\in\, V\), set \(\theta(\,v\,)\) as
    \begin{equation}\label{eq:weight} 
        \theta(\,v\,)\,=\,
        \begin{cases}
            \,\,2\,(\,m_i\,+\,1\,)\pi \quad \text{ if } v=p_i\\
            \,\,2\pi \qquad \qquad    \quad \text{ if } v \text{ is a regular point}
        \end{cases}.
    \end{equation}
    Since \(\tau\) is a triangulation then \(2E=3F\) because each face is adjacent to three edges, equivalently, each edge is adjacent to two faces. The triangle being Euclidean, then we have the equality 
    \begin{equation}\label{eq:ordersum}
        \sum\theta(\,v\,)\,=\,\pi F.
    \end{equation}
    As a consequence, we get the following
    \begin{equation}
        2\pi\,\chi(\,\Sigma\,)\,=\,2\pi\,V-2\pi\,E+2\pi\,F\,\overset{\eqref{eq:ordersum}}{=}\,2\pi\,V-\sum\theta(\,v\,)\,\overset{\eqref{eq:weight}}{=}\,-2\pi\,\sum_{i=1}^k m_i,
    \end{equation} 
    that is
    \begin{equation}
        2g-2\,-\,\sum_{i=1}^k \,m_i=\,0
    \end{equation}
    as desired.
\end{proof}

\noindent As a direct consequence, we derive a characterisation for low-genus surfaces, \textit{i.e.} \(g\le1\).

\begin{cor}\label{cor:speccases}
    There are no translation surfaces of genus zero. Every translation surface of genus one has no conical singularities. 
\end{cor}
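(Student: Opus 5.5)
The plan is to read both claims directly off the Gauss--Bonnet condition of Proposition~\ref{prop:gbcond}. The one fact we need beyond it is that the integers \(m_i\) attached to the cone points are positive. By the discussion in \S\ref{sssec:mfldstruc}, every point of \(\Sigma\) arising from glued vertices has total angle \(2(m+1)\pi\) with \(m\ge 0\) an integer. A point is called a cone point exactly when \(m\ge1\), and all other points (interior points, interior edge points, and vertices with total angle \(2\pi\)) have \(m=0\). So in \eqref{eq:gbcond} every term \(m_i\) is a positive integer, and in particular \(\sum_i m_i\ge 0\), with equality if and only if \(k=0\).

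For genus zero, substituting \(g=0\) into \eqref{eq:gbcond} gives \(\sum_{i=1}^k m_i = 2g-2 = -2\). The left-hand side is a sum of non-negative integers, so this is impossible. Hence no collection of polygons admitting a pairing via translations can produce a surface of genus zero. By Proposition~\ref{prop:mfldstruc}, such a collection does give a compact orientable surface, so the genus is well defined and the contradiction is genuine.

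For genus one, substituting \(g=1\) gives \(\sum_{i=1}^k m_i=0\). Since each \(m_i\ge1\), this forces \(k=0\), so the surface has no conical singularities. As a consistency check, this agrees with Example~\ref{ex1}: the flat torus obtained from a parallelogram has a single vertex class with total angle \(2\pi\).

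The only delicate point is making sure that Proposition~\ref{prop:gbcond} applies to every translation surface. This requires that the triangulation used in its proof can always be chosen with all cone points as vertices, which the proof already asserts. It also requires the positivity \(m_i\ge1\) for cone points, which holds by definition. I expect no real obstacle beyond stating this clearly. The argument should also note that the statement concerns connected \(\Sigma\), as agreed after Proposition~\ref{prop:mfldstruc}, so that the genus \(g\) in \eqref{eq:gbcond} is a single integer.
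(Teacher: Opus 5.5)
Your proposal is correct and takes the same route as the paper, which presents the corollary as a direct consequence of the Gauss--Bonnet formula of Proposition~\ref{prop:gbcond}. Since every cone point has order $m_i\ge 1$, the identity $\sum_{i=1}^k m_i = 2g-2$ has no solution when $g=0$ and forces $k=0$ when $g=1$.
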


\smallskip

\noindent The result of this paragraph can be summarised as follows: for closed surfaces, there cannot be too many singularities, but neither can there be too few. Moreover, the singularities that do occur cannot have arbitrary order. The Gauss–-Bonnet condition thus provides a balance between the number of conical singularities and their angular magnitudes. What is truly remarkable is that this balance depends solely on the topology of the surface—in particular, on its genus. This is truly extraordinary. 

\smallskip

\subsection{Geometric structures and singular Euclidean metrics}\label{ssec:seconddef} In \S\ref{ssec:tiled}, we introduced translation surfaces as topological surfaces obtained by gluing together finitely many simple polygons in the Euclidean plane via translations. To ensure that the resulting structure is indeed a surface, it was necessary to define a suitable atlas of charts--see \S\ref{sssec:mfldstruc}. This atlas satisfies a key property: the transition maps are either trivial or translations of the plane. In particular, every translation surface admits a unique maximal atlas with this property. In this section, we show that this condition completely characterises translation surfaces in this sense. That is, if a topological surface~\(\Sigma\) is endowed with such an atlas, then it admits a triangulation made of Euclidean triangles. 

\smallskip

\noindent Recall that a generic translation surface may have special points, defined as conical singularities. Although this was not made explicit at the time, around every cone point the definition of charts is essentially given by taking the \((m+1)\)-th root. To better formalise this concept, and thus define translation surfaces in terms of atlases, it is useful to first introduce the following models.

\subsubsection{A small cone}\label{sssec:smallcone} We introduce a cone that serves as a local model. A \textit{cone of angle} \( \pi \) is defined as the quotient space obtained as follows:
\begin{equation}
    \mathcal{C}(\,\pi\,)\,=\,\left\{\,(\,\rho,\,\theta\,) \,\,\big|\,\,\rho\ge0 \text{ and } 0\le \theta\le\pi\,\right\}\,/\, (\,\rho,\,0\,)\sim(\,\rho,\,\pi\,)
\end{equation}
with the metric \(ds^2=d\rho^2+\rho^2\,d\theta^2\), where \((\,\rho,\,\theta\,)\) are the polar coordinates on the closure of the upper half-plane. The \textit{tip of the cone} is the unique point with coordinate \(\rho=0\). Let \(q(\,z\,)\,=\,z^{-1}\,dz^2\) be the quadratic differential on \(\mathbb C\) with a single simple pole at the origin. The following holds, see \cite{Tro86}:

\begin{lem}
    \( \mathcal{C}(\,\pi\,)\) is isometric to \(\mathbb C\) endowed with the flat metric induced by \(q\).
\end{lem}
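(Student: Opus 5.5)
The plan is to write down an explicit natural coordinate for \(q\) and check that it is an isometry onto \(\mathcal C(\,\pi\,)\). The flat metric induced by \(q(\,z\,)=z^{-1}\,dz^2\) is \(|q|=|z|^{-1}|dz|^2\) on \(\C^*\), completed at the origin. Away from the zero set, its natural coordinate \(\zeta\) satisfies \(d\zeta^2=q\), that is \(d\zeta = z^{-1/2}\,dz\). This gives \(\zeta = 2z^{1/2}\), which is only defined up to sign. I will therefore define the map \(\Phi\colon \C\to\mathcal C(\,\pi\,)\) in polar coordinates: for \(z=r e^{i\phi}\) with \(0\le\phi<2\pi\), set
\begin{equation*}
\Phi(\,r e^{i\phi}\,)\,=\,\left(\,2\sqrt r,\ \tfrac{\phi}{2}\,\right).
\end{equation*}
So \(\rho=2\sqrt r\) and \(\theta=\phi/2\in[0,\pi)\).

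The first step is to check that \(\Phi\) is a well-defined bijection and a homeomorphism. The ray \(\phi=0\) goes to \(\theta=0\). As \(\phi\to2\pi\) we get \(\theta\to\pi\). The identification \((\rho,0)\sim(\rho,\pi)\) is exactly what makes \(\Phi\) continuous across the positive real axis, and the inverse \((\rho,\theta)\mapsto (\rho^2/4)\,e^{2i\theta}\) respects that identification. The origin goes to the tip.

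The second step is the metric computation. On the sector \(0<\phi<2\pi\) we have \(|dz|^2=dr^2+r^2d\phi^2\), so
\begin{equation*}
|q| \,=\, \frac{dr^2+r^2\,d\phi^2}{r} \,=\, \frac{dr^2}{r}+r\,d\phi^2 .
\end{equation*}
Substituting \(r=\rho^2/4\) gives \(dr=\rho\,d\rho/2\) and \(d\phi=2\,d\theta\). Hence \(dr^2/r=d\rho^2\) and \(r\,d\phi^2=\rho^2\,d\theta^2\). So \(\Phi\) pulls back \(d\rho^2+\rho^2d\theta^2\) to \(|q|\) on the slit plane.

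The remaining work, and the main obstacle, is to pass from a local isometry on the slit plane to an isometry of the whole spaces. Across the slit I will use the other branch of \(z^{1/2}\), with \(\phi\in(-\pi,\pi)\). On the cone side, this corresponds to the chart gluing \(\theta\) near \(\pi\) to \(\theta\) near \(0\) by a half-turn, so the metric identity holds on all of \(\C^*\). At the origin, the metric \(|q|\) is not smooth, so I will argue with induced path metrics. Both spaces are the metric completions of \(\C^*\) with \(|q|\) and of \(\mathcal C(\,\pi\,)\) minus the tip, respectively. The singular point has finite distance, since \(\int_0^{r_0} r^{-1/2}\,dr<\infty\). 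Lengths of curves are preserved, curves through a single point can be approximated by curves avoiding it without changing length much, and so \(\Phi\) preserves distances. As a consistency check, the total angle at the tip is \(2\pi\cdot\tfrac12=\pi\), matching the simple pole of \(q\).
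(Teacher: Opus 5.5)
Your proposal is correct and takes the same approach as the paper. The paper uses the map \((\rho,\theta)\mapsto \tfrac14\rho^2e^{2i\theta}\) from \(\mathcal C(\,\pi\,)\) to \(\C\), which is exactly the inverse of your \(\Phi\), and leaves the verification as a ``routine exercise''; your pull-back computation \(\Phi^*(d\rho^2+\rho^2d\theta^2)=|z|^{-1}|dz|^2\), the check across the seam, and the path-metric argument at the tip supply precisely that omitted verification.
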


\begin{proof}
    We first recall that the flat metric induced by \(q\) is \( ds^2\,=\,|\,z^{-1}\,||\,dz\,|^2\). Then consider the function:
    \begin{equation}
        \big(\,\rho,\,\theta\,\big) \longmapsto \left(\,\displaystyle\frac14\,\rho^2\cos(\,2\theta\,)\, ,\displaystyle\frac14\,\rho^2\sin(\,2\theta\,)\,\right)\,=\,\big(\,x,\,y\,\big).
    \end{equation}
    It is a routine exercise to show that the given function realises the desired isometry.
\end{proof}

\noindent We conclude by noting that this construction is a special case of a more general framework, capable of modelling arbitrary cone angles, that is, any angle not greater than \(2\pi\), which we have intentionally omitted so as to avoid unnecessary technical digressions.


\subsubsection{A wider cone}\label{sssec:bigcone} We now proceed to employ the model just defined to introduced bigger cones. Take \( m+1 \) copies of the upper half-plane with boundary, each equipped with the standard flat metric, and similarly \( m+1 \) copies of the lower half-plane with boundary. Label both the upper and lower half-planes from \( 1 \) to \( m+1 \), and glue them as follows: the half-infinite ray \([\,0, \infty\,)\) of the \(i\)-th half-plane is glued to \((\,-\infty, 0\,]\) of the \((i+1)\)-th one, where the indices are taken modulo \( m+1 \). The gluing can be carried out in such a way that the Euclidean metrics on the individual half-planes extend to a singular Euclidean metric on the entire disc, with a unique singularity at the origin having total angle \( 2(m+1)\pi \). Notice that the resulting space is topologically a disc; we define this space as a \textit{Euclidean cone of order}\index{euclidean cone!order} \(m+1\). In this case, the origin plays the role of the \textit{tip of the cone}.


\smallskip

\noindent The terminology is motivated by the fact that there is a well defined branched covering map over the cone \(\mathcal C(\,\pi\,)\) defined as follows:

\begin{equation}\label{eq:projcone}
    \mathbb C\, \overset{g}{\longrightarrow}\,\mathbb C\,\overset{f}{\longrightarrow}\,\mathcal C(\,\pi\,)
\end{equation}
where \(f(\,z\,)=z^2=w\) and \(g(\,u\,)=u^{m+1}=z\). Their composition \(\pi(\,u\,)=u^{2m+2}\) is branched at the tip of the cone \(\mathcal C(\,\pi\,)\). 

\begin{lem}\label{lem:singgeo}
    The quadratic differential \(\pi^*q(\,u\,)\) defines a singular Euclidean metric on \(\mathbb C\) with a singularity of order \(m\) at the origin.
\end{lem}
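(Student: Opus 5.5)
We compute the pullback \(\pi^*q\) explicitly and compare it with the standard local model of a cone point. Since \(q(\,w\,)=w^{-1}\,dw^2\) and \(\pi(\,u\,)=u^{2m+2}\), we have \(dw=(2m+2)\,u^{2m+1}\,du\). Hence
\begin{equation*}
    \pi^*q(\,u\,)\,=\,u^{-(2m+2)}\,(2m+2)^2\,u^{4m+2}\,du^2\,=\,(2m+2)^2\,u^{2m}\,du^2 .
\end{equation*}
This is a holomorphic quadratic differential on all of \(\C\), with a single zero of order \(2m\) at the origin. Moreover, it is the square of the holomorphic one-form \(\omega=(2m+2)\,u^m\,du\).

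Away from the origin, \(\pi^*q\) is nonvanishing. The induced metric \(|\pi^*q|=(2m+2)^2|u|^{2m}|du|^2\) is therefore locally Euclidean. Natural coordinates are given by \(\zeta=\int\omega=2u^{m+1}\), in which the metric becomes \(|d\zeta|^2\). Different determinations of the primitive differ by constants, so the transition maps are translations. This is the atlas of \S\ref{ssec:seconddef}.

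It remains to identify the singularity at \(u=0\) and check that its order is \(m\), meaning the total angle is \(2(m+1)\pi\). Write \(u=re^{i\phi}\). The map \(u\mapsto\zeta=2u^{m+1}\) sends the ray of argument \(\phi\) isometrically onto the ray of argument \((m+1)\phi\), since the flat distance from \(0\) to \(u\) is \(\int_0^r(2m+2)t^m\,dt=2r^{m+1}\). As \(\phi\) runs over \([0,2\pi)\), the image argument runs over \([0,2(m+1)\pi)\).

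Concretely, cut the \(u\)-plane into the \(2m+2\) sectors \(k\pi/(m+1)\le\phi\le(k+1)\pi/(m+1)\). Each sector is mapped isometrically onto a closed upper or lower half-plane, alternately. Adjacent sectors are glued along the rays exactly as in the construction of the Euclidean cone of order \(m+1\) in \S\ref{sssec:bigcone}. So \((\C,|\pi^*q|)\) is isometric to that cone, whose tip has total angle \(2(m+1)\pi\), i.e.\ it is a singularity of order \(m\).

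Equivalently, one can use the factorisation \eqref{eq:projcone} together with the preceding lemma on \(\mathcal C(\,\pi\,)\). The map \(f\) pulls \(q\) back to \(4\,dz^2\), a flat metric that unwraps the cone of angle \(\pi\) to the full plane of angle \(2\pi\). The map \(g\), of degree \(m+1\) branched only at the origin, then multiplies the angle at the origin by \(m+1\).

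The only delicate step is the angle computation at the origin, which must be done intrinsically because the metric degenerates there. I would handle it either by the sector decomposition above or by measuring the length of the circle at flat distance \(R\) from the origin. That circle is \(|u|=(R/2)^{1/(m+1)}\), and its length is \(\int_0^{2\pi}(2m+2)|u|^m\,|u|\,d\phi=2\pi(m+1)R\), giving total angle \(2(m+1)\pi\).
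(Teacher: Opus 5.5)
Your proposal is correct and takes the same route as the paper: the direct computation \(\pi^*q(\,u\,)=(2m+2)^2\,u^{2m}\,du^2\). You go further than the paper by actually checking that the cone angle at the origin is \(2(m+1)\pi\), using the natural coordinate \(\zeta=2u^{m+1}\) or the length of flat circles; the paper simply asserts this after the computation.
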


\begin{proof}
    This is a straightforward and rather simple calculation. In the local coordinate \(u\) we have
    \begin{equation}
        \pi^*q(\,u\,)=\left(\,\frac{d\pi}{du}\,\right)^2\,\frac{du^2}{\pi(\,u\,)}=\,(\,2m+2\,)^2\,u^{2m}\,du^2.
    \end{equation}
    Thus \(\pi^*q\) defines a singular Euclidean metric with a singularity of order \(m\) at the origin.
\end{proof}


\noindent In reference to the concluding remark of \S\ref{sssec:bigcone}, the construction just presented allows for the realisation of cones with arbitrary angle greater than \(2\pi\). These are useful in describing flat structures with more general singularities, which lie beyond the scope of this survey. However, the reader can easily retrace the construction in order to define the singular Euclidean structure on a cone with an arbitrary angle.

\medskip

\subsubsection{Definition via atlases}\label{sssec:sencdef} We are now in a position to provide a second, more intrinsic definition of translation surfaces.

\begin{defn}[Second definition]\label{def:second_translation_surface} 
Let \(\Sigma\) be a closed, topological orientable surface. A \textit{translation structure}\index{structure!translation} on \(\Sigma\) consists of a finite set \(\Delta \subset \Sigma\), possibly empty, and an atlas of \( \mathbb{C}\)-valued charts on \( \Sigma \setminus \Delta \), extended by requiring that for each point \( p \in \Delta \) there exists an integer \( m > 0 \) and a homeomorphism from a neighbourhood of \( p \) to a neighbourhood of the origin in the Euclidean cone of order \( m+1 \), such that transition maps are translations on their overlaps. A \textit{translation surface}\index{surface!translation} is a closed topological surface endowed with a translation structure.
\end{defn}

\noindent A point \( p \in \Delta \) is called a \textit{conical singularity} whereas all other points are called \textit{regular}. It has a cone angle of magnitude \( 2(m+1)\pi \), since it can be obtained by gluing \( 2m + 2 \) half-planes, each of which with angle \( \pi \) at the origin. The term \textit{cone point} is a synonym for \textit{singularity}. In what follows, it will be useful to adopt the following standard terminology: the integer \(m\) is defined as the \textit{order of a cone point}, and it essentially measures how many times the cone angle exceeds \(2\pi\). Some remarks are in order.

\begin{rmk}\label{rmk:orientation}
    A translation structure on a closed topological surface \(\Sigma\) determines a natural orientation that makes the local charts orientation-preserving.
\end{rmk}

\begin{rmk}
    We observe that a Euclidean cone of order one is nothing but the complex plane itself. Therefore, one could define a translation surface as a geometric structure in which every point is modelled on a suitable cone, without explicitly stating in advance that the set of singular points is finite (recall that we are assuming the surfaces to be compact).
\end{rmk}

\begin{rmk}\label{rmk:branchedchart}
    In principle, every chart taking values in a cone of order \(m \geq 1\) can be replaced by its post-composition with the map \(g\), as defined in \S\ref{sssec:bigcone}. Such a post-composition is usually called a \textit{branched chart}, since it fails to be a homeomorphism onto its image. 
    Consequently, one could define translation structures as maximal atlases of charts, possibly branched, whose transition maps are translations on their overlaps. It is in fact not uncommon to find in the literature definitions that favour this second approach over the former, as it allows one to define translation surfaces, and more generally, structures with conical singularities, without first introducing a conical model.
\end{rmk}

\noindent In its simplicity, it is a non-trivial fact that the two definitions are equivalent. We will provide a proof of this fact in \S\ref{sssec:equivalentone}. We first provide a description of the local geometry.

\subsubsection{Local geometry}\label{sssec:locgeo} By using the definitions just given, 
we aim to describe the geometry around a point. We first consider a regular point. A local chart, say \( \varphi_\alpha\colon U_\alpha \longrightarrow\mathbb{C} \), is a homeomorphism onto its image \( \varphi_\alpha(U_\alpha) \subset \mathbb{C} \). As a consequence, the standard Euclidean metric, restricted to the image of the chart, pulls back to a Euclidean metric, say \( ds^2_\alpha \), defined on the open set \( U_\alpha \subset \Sigma \). Each chart centred at a regular point defines a local Euclidean metric, and these local metrics extend to a Riemannian metric on \( \Sigma \setminus \Delta \). Notice that it is locally Euclidean on \( \Sigma \setminus \Delta \), because the transition maps are translations. The Euclidean metric just defined on \(\Sigma\setminus\Delta\) extends over \(\Delta\) to a singular Riemannian metric with conical singularities and Lemma \ref{lem:singgeo} provides an explicit closed form. In what follows we refer to these metrics as singular Euclidean metrics.

\smallskip

\noindent One might be tempted to erroneously define a translation structure as the data of a singular Euclidean metric with conical singularities of angle \(2(m+1)\pi\) on a closed surface which is locally Euclidean away from the singularities. However, for such a metric to define a genuine translation structure, it is essential that the parallel transport it induces on \(\Sigma \setminus \Delta\) be trivial. Example \ref{ex3} in \S\ref{sssec:north} may clarify better this point.

\subsubsection{Showing the equivalence of definitions --- part one}\label{sssec:equivalentone} We now provide a proof of the equivalence between Definitions \ref{defn:firstdef} and \ref{def:second_translation_surface}. One implication is subsumed in \S\ref{sssec:mfldstruc}. In fact, we have already shown the existence of an atlas of charts such that transition functions are translations on their overlaps. We just clarify that the chart defined by Equation~\eqref{eq:chartaroundvertex} around a generic vertex is not a homeomorphism onto a cone of angle \(2(m+1)\pi\), but rather the associated branched covering, as explained in Remark~\ref{rmk:branchedchart}. By lifting this map to the Euclidean cone of order \(m+1\) one gets the desired chart as in Definition \ref{def:second_translation_surface}.

\smallskip

\noindent We only need to show the opposite implication. Let us then consider a surface endowed with a maximal atlas as defined in Definition~\ref{def:second_translation_surface}. We take an arbitrary open cover and then a finite sub-cover. The key fact now is that a surface can be triangulated. We therefore choose a suitable refinement such that the following properties hold:
\begin{itemize}
    \item[1.] each triangle is entirely contained in a chart,
    \smallskip
    \item[2.] and each singular point is a vertex of the triangulation.
\end{itemize}
\noindent We then deform the given triangulation by a homotopy that fixes all the vertices pointwise, so as to obtain triangles that are isometric to Euclidean triangles. The resulting triangulation yields the desired family of polygons as in Definition \ref{defn:firstdef}.

\subsection{Developing-holonomy pair}\label{ssec:devhol} What follows is a specific instance of a much more general construction that applies to an arbitrary geometric structure. As above, we do not dwell on details that are unnecessary for the purposes of this chapter. We begin by defining the developing map. Historically, this concept dates back to Ehresmann \cite{Eh36} and was subsequently revisited by Thurston; see \cite{Thu97}.

\smallskip 

\noindent Let \(\Sigma\) be a closed surface of genus \(g\) and let \(\widetilde\Sigma\) be its universal cover. Since there are no translation structures on the sphere, there is no loss of generality in assuming \(g\ge1\). A translation structure on a surface \(\Sigma\) naturally lifts to a translation structure on \(\widetilde\Sigma\) by making the covering projection a local isometry. A \textit{developing map}\index{developing map} \(\textnormal{dev}\colon \widetilde\Sigma\longrightarrow\mathbb C\) is a local embedding away from the singularities of \(\widetilde\Sigma\) such that 
\begin{itemize}
    \item[1.] its restriction to any sufficiently small open neighbourhood of a regular point is a local chart for the translation structure on \(\widetilde\Sigma\), and
    \smallskip
    \item[2.] its restriction to any sufficiently small open neighbourhood of a conical singularity is a branched chart as in Remark \ref{rmk:branchedchart}. In other words there is a local coordinate \(u\) such that \(\textnormal{dev}(\,u\,)\,=\,u^{m+1}\) for some integer \(m\ge1\). The developing map clearly fails to be a local embedding around a conical singularity because its Jacobian vanishes.
\end{itemize}

\noindent Notice that as a consequence of Remark \ref{rmk:orientation}, a developing map is an orientation-preserving local isometry away from the singularities.

\begin{rmk}\label{rmk:devgenusone}
    Recall that genus one surfaces admit translation structures without conical singularities, see Corollary \ref{cor:speccases}. In this case, a developing map is not only a local embedding, but indeed a global one. This is a matter of Riemannian geometry. A torus endowed with a translation structure is a compact Riemannian manifold, as the local charts define a complete Riemannian metric without conical singularities, and the pull-back metric on its universal cover is also complete. Since the developing map is an orientation-preserving local isometry, a standard result from Riemannian geometry asserts that it must be an orientation-preserving global isometry, in particular, a global embedding.
\end{rmk}

\noindent A developing map for a translation structure can be constructed geometrically by unfolding the surface into the Euclidean plane. As already hinted above, a translation structure naturally determines a triangulation of the surface, which lifts to a triangulation of \( \widetilde{\Sigma} \). 
Moreover, up to passing to a finer triangulation, we may assume that each triangle is entirely contained within a single coordinate chart. This seemingly simple observation leads to an alternative definition of developing maps: they can be described as continuous maps that restrict to isometries on each individual triangle as follows.

\smallskip

\noindent Suppose two adjacent triangles \( T_1 \) and \( T_2 \) share an edge \( e \), and let \( f_1\colon T_1 \to \mathbb{R}^2 \) be an isometric embedding. Then there exists a unique isometric embedding \( f_2\colon T_2 \to \mathbb{R}^2 \) such that \( f_1(\,e\,) = f_2(\,e\,) \) and the images of \( T_1 \) and \( T_2 \) under \( f_1 \) and \( f_2 \), respectively, have disjoint interiors. Gluing \( f_1 \) and \( f_2 \) along the shared edge defines a map \( f_e = f_1 \cup f_2\colon T_1 \cup T_2 \to \mathbb{R}^2 \). By iteratively unfolding adjacent triangles across shared edges, one defines an isometric map from an increasing union of triangles in \( \widetilde{\Sigma} \) to the Euclidean plane. Since the universal cover is simply connected, this process yields a globally defined developing map \( \widetilde{\Sigma} \longrightarrow \mathbb{R}^2 \simeq \mathbb{C} \), uniquely determined up to a translation.

\smallskip

 \noindent The developing map \( f\colon \widetilde{\Sigma} \longrightarrow \mathbb{C} \) of a translation structure on \( \Sigma \) satisfies an equivariance condition with respect to the fundamental group action on the universal cover. Specifically, for each element \( \gamma \in \pi_1(\,\Sigma\,) \), the composition \( f \circ \gamma \) is also a developing map for the same translation structure. Since a developing map is defined up to translation, there exists a translation \( c\in\mathbb C \) such that  
\begin{equation}
f \circ \gamma = f\,+\,c.
\end{equation}
The correspondence  
\begin{equation}
\rho \colon\pi_1(\,\Sigma\,) \longrightarrow\mathbb{C}, \quad \gamma \mapsto c
\end{equation} 
defines a group homomorphism. Notice that post-composition of the developing map with any translation preserves \(\rho\). By adopting the language of geometric structures, such a representation is called the \textit{holonomy}\index{holonomy} representation\index{representation!holonomy} of the translation structure. We may already observe that, since the target is abelian, the representation reduces to a representation on homology, namely \(\chi\colon\shomolz\longrightarrow\mathbb C\), well-known in literature as \textit{period character}\index{period!character}. We will return to this in \S\ref{ssec:perchar}.

\subsection{Complex-analytic perspective}\label{ssec:cadef} The definitions of translation surfaces given above both have a geometric flavour: the first is certainly the most intuitive and also the most convenient for certain applications. There is a third definition, which is arguably the simplest to state but at the same time the most abstract. The next definition we will introduce is of a complex-analytic nature and establishes a non-trivial and very interesting connection between low-dimensional topology, algebraic geometry, and complex analysis.

\smallskip

\subsubsection{Definition via abelian differentials}\label{sssec:thirdef} Let \(X\) be a compact Riemann surface, that is, an irreducible smooth projective algebraic curve over \(\mathbb C\). An \textit{abelian differential}\index{differential!abelian} on a Riemann surface is a holomorphic \(1\)-form (or differential)\index{differential!holomorphic}, that is, a global section of the cotangent bundle of \(X\).  

\begin{defn}[Third definition]\label{defn:thirddef}
    A translation\index{surface!translation} surface is the datum of a non-zero abelian differential, say \(\omega\), on a Riemann surface \(X\). In this setting a translation surface is denoted as a pair \((X,\omega)\).
\end{defn}

\noindent For the time being, we skip details about the moduli space of such pairs and will return to them later on in \S\ref{ssec:hodge}. We want to focus on how this definition is equivalent to the other. The leading question of the present paragraph is 
\begin{quote}
    \textit{why should we call a pair \((X,\omega)\) translation surface when translations do not even appear in the definition?}
\end{quote}

\noindent Translations do appear, though not as explicitly as in the previous cases. Here is the point: a Riemann surface is defined as a maximal atlas of \(\mathbb C\)-charts, with transition maps that are biholomorphisms on their domains of definition. A holomorphic 1-form singles out a sub-atlas that is maximal with respect to the condition that transition maps are translations. In particular, two different holomorphic 1-forms on the same Riemann surface may define different sub-atlases! While a direct equivalence with the first definition is less easy to establish, the equivalence between this third definition and the second one is relatively straightforward.

\subsubsection{Showing the equivalence of definitions --- part two}\label{sssec:equivalenttwo} We now provide a proof of the equivalence between Definitions \ref{def:second_translation_surface} and \ref{defn:thirddef}. 

\smallskip

\noindent One implication is a consequence of our discussion in \S\ref{sssec:locgeo}. Suppose we are given a translation structure on a closed surface \(\Sigma\). Thus we have an atlas of charts to \(\mathbb C\) whose transition functions are translations on their overlaps. Since these are biholomorphisms, we get a well defined complex structure on \(\Sigma\setminus \Delta\). We use these charts to define a holomorphic \(1\)-form on the open surface \(\Sigma \setminus \Delta\) and we set \(\omega = dz\) for any such local coordinate \(z\). 

\smallskip 

\noindent The interesting and non-trivial part is to define the complex structure and the differential at the conical points. The key observation now is the following

\begin{lem}\label{lem:complexconem}
    For every integer \( m \geq 0 \), the Euclidean cone of order \( m+1 \) carries a unique complex structure that makes it biholomorphic to the complex plane. It is, in fact, the Riemann surface that makes the \((m+1)\)-root a single-valued function.
\end{lem}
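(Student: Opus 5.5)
We will exhibit an explicit homeomorphism from the Euclidean cone of order \(m+1\), call it \(\mathcal C_{m+1}\), onto \(\mathbb C\). We then show that pulling back the standard complex structure of \(\mathbb C\) gives a complex structure on \(\mathcal C_{m+1}\) compatible with the flat charts away from the tip, and that it is the only one.

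\emph{Construction of the map.} Recall from \S\ref{sssec:bigcone} that \(\mathcal C_{m+1}\) is built from \(2m+2\) closed half-planes glued cyclically along rays. We parametrise a point of \(\mathcal C_{m+1}\) by polar coordinates \((\rho,\theta)\) with \(\rho\ge 0\) and \(\theta\in\mathbb R/2(m+1)\pi\mathbb Z\), where the cone angle is read off along the cyclic chain of half-planes. Define
\begin{equation}
\Phi(\rho,\theta)\,=\,\rho^{\frac{1}{m+1}}\,\exp\!\Big(\,\tfrac{i\theta}{m+1}\,\Big).
\end{equation}
This is a bijection \(\mathcal C_{m+1}\to\mathbb C\), continuous with continuous inverse \(u\mapsto(|u|^{m+1},(m+1)\arg u)\), so it is a homeomorphism. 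Away from the tip, every half-plane sheet carries the flat chart \(z=\rho e^{i\theta}\), defined up to a translation (namely \(0\), since the sheets meet at the origin). In that chart \(\Phi\) is a branch of \(z\mapsto z^{1/(m+1)}\), which is holomorphic on each slit region and has nonvanishing derivative there. Hence \(\Phi\) is a biholomorphism from \(\mathcal C_{m+1}\setminus\{0\}\), with its flat complex structure, onto \(\mathbb C^*\). Conversely, \(z=u^{m+1}\), which matches the map \(g\) of \eqref{eq:projcone} and Lemma~\ref{lem:singgeo}. In the coordinate \(u\) the \((m+1)\)-th root of \(z\) is single-valued: it is just \(u\). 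This explains the final sentence of the statement.

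\emph{Extension over the tip and uniqueness.} We declare \(\Phi\) itself to be a chart around the tip. Its transition with each flat chart on the punctured cone is \(u\mapsto u^{m+1}\) or its local inverse, and both are holomorphic on the overlap (which avoids \(0\)). So we obtain a complex structure on \(\mathcal C_{m+1}\) extending the flat one, and \(\Phi\) is a biholomorphism onto \(\mathbb C\).

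For uniqueness, suppose \(J'\) is another complex structure on \(\mathcal C_{m+1}\) agreeing with the flat one off the tip. Then the identity map \((\mathcal C_{m+1},J)\to(\mathcal C_{m+1},J')\) is a homeomorphism that is biholomorphic away from a single point. Read in charts at the tip, it is continuous there and therefore bounded near it, so Riemann's removable singularity theorem shows it is holomorphic everywhere; the same argument applies to its inverse. Hence \(J=J'\).

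The main obstacle is making precise the phrase ``carries a unique complex structure''. It must be read as uniqueness among complex structures compatible with the translation charts on the punctured cone. Without that constraint, uniqueness only holds up to pullback by homeomorphisms. The removable-singularity step settles uniqueness once this interpretation is fixed. The rest, well-definedness of \(\Phi\) across the gluing rays, is routine bookkeeping of the angle \(\theta\) modulo \(2(m+1)\pi\).
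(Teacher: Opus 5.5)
Your proof is correct. The paper states this lemma without proof and relies only implicitly on the branched map \(g(u)=u^{m+1}\) of \eqref{eq:projcone}; your explicit homeomorphism \(\Phi\), which is the inverse of \(g\) read on the cone, makes exactly that idea rigorous. You also read ``unique'' as uniqueness among complex structures extending the flat one on the punctured cone, and you settle it with the removable singularity theorem. That is the right interpretation, and it covers a point the paper leaves unaddressed.
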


\noindent By Definition \ref{def:second_translation_surface}, for each singular point, say \(p\), there exists some \( m > 0 \) and a homeomorphism, say \( \varphi \), from a neighbourhood of \( p \) to a neighbourhood of the tip of the Euclidean cone of order \( m+1 \). The complex structure is pulled back by extending the complex structure already defined on \( \Sigma \setminus \Delta \). Even more explicitly, a neighbourhood of the tip of the Euclidean cone is biholomorphic to a neighbourhood of the origin in \( \mathbb{C} \); precomposing this biholomorphism with \( \varphi \) yields a chart for the complex structure. We thus obtain a complex structure \( X \) on the topological surface \( \Sigma \). \\
\noindent By using again the map \( g \) as defined in \eqref{eq:projcone}, the differential \( dz \) on \(\mathbb C\) can be pulled-back to a differential on the Euclidean cone which, in the local coordinate \( u \), takes the form \( \omega = u^m du \); pulling it back via the homeomorphism \( \varphi \) defines the desired holomorphic 1-form in a neighbourhood of the singular point. We have the desired pair \((X,\omega)\) as in Definition \ref{defn:thirddef}.

\medskip

\noindent It remains to show that every pair \((X,\omega)\) defines a translation structure as in Definition \ref{def:second_translation_surface}, thus establishing the equivalence with the complex-analytic definition. The starting point in this case is the following result of complex analysis.

\begin{lem}\label{lem:locformabdiff}
    Let \(X\) be a Riemann surface and let \(\omega\) be a holomorphic \(1\)-form. At any point where \(\omega\) is non-zero, there exists a local coordinate \(z\) for \(X\) in which \(\omega = dz\). At any point where \(\omega\) has a zero\index{differential!zero} of order\index{zero!order} \(m\), there exists a local coordinate \(z\) in which \(\omega = z^m dz\).
\end{lem}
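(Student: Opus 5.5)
The plan is to construct the coordinate by integrating \(\omega\) and then, at a zero, taking a suitable root. Fix a point \(p\in X\) and a local holomorphic coordinate \(w\) centred at \(p\), defined on a small disc \(D\). On \(D\) we can write \(\omega = f(w)\,dw\) with \(f\) holomorphic on \(D\). The natural candidate is the primitive
\begin{equation*}
F(w)\,=\,\int_0^w f(\xi)\,d\xi ,
\end{equation*}
which is well defined and single-valued because \(D\) is simply connected, hence Cauchy's theorem applies. It satisfies \(F(0)=0\) and \(dF=\omega\) on \(D\).

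\emph{Case \(\omega(p)\neq0\).} Here \(f(0)\neq0\), so \(F'(0)\neq0\). By the holomorphic inverse function theorem, \(F\) is a biholomorphism from a smaller disc onto a neighbourhood of \(0\). The function \(z=F(w)\) is therefore a local coordinate, and in it \(\omega=dF=dz\), which is the first claim.

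\emph{Case \(\omega\) has a zero of order \(m\) at \(p\).} Write \(f(w)=w^m h(w)\) with \(h\) holomorphic and \(h(0)\neq0\). Then
\begin{equation*}
F(w)=\frac{w^{m+1}}{m+1}\,k(w), \qquad k \text{ holomorphic},\quad k(0)=h(0)\neq0 .
\end{equation*}
This follows by integrating the power series term by term: the coefficient of \(w^{m+1+j}\) in \(F\) is \(h_j/(m+1+j)\), so \(k(0)=h(0)\).

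The key step, which is the only real subtlety, is extracting an \((m+1)\)-th root of \((m+1)F\). Since \(k(0)\neq0\), after shrinking the disc so that \(k\) stays in a simply connected region avoiding \(0\), there is a holomorphic branch \(\ell\) with \(\ell^{m+1}=k\). Set
\begin{equation*}
z \,=\, w\,\ell(w).
\end{equation*}
Then \(z'(0)=\ell(0)\neq0\), so \(z\) is again a local coordinate by the inverse function theorem. Moreover \(z^{m+1}=w^{m+1}k(w)=(m+1)F\), and differentiating gives
\begin{equation*}
(m+1)\,z^m\,dz=(m+1)\,dF=(m+1)\,\omega ,
\end{equation*}
that is, \(\omega=z^m\,dz\), as required.

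The main points to be careful about are the single-valuedness of the primitive, handled by simple connectivity, and the existence of the holomorphic root \(\ell\), handled by the non-vanishing of \(k\) near \(0\). Everything else is the inverse function theorem. This local normal form is exactly what connects \((X,\omega)\) with the branched charts of Remark \ref{rmk:branchedchart}.
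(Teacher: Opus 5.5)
Your argument is correct and follows the paper's route. The paper treats the lemma as a standard fact of complex analysis: at a non-zero point it takes the primitive \(z(q)=\int_p^q\omega\) as the coordinate, exactly as you do, and at a zero it only asserts the normal form, which your extraction of a holomorphic \((m+1)\)-th root of \(k\) supplies. One small note: the paper then works with the normalisation \(\omega=(m+1)z^m\,dz=d(z^{m+1})\). You obtain this directly by taking the root of \(k/(m+1)\) instead of \(k\), so that \(z^{m+1}=F\) is precisely the branched chart.
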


\noindent In a neighbourhood of a point \(p\) which is not a zero of \(\omega\), a local coordinate is defined as
\begin{equation} \label{eq:local_coordinate}
z(\,q\,) = \int_p^q \omega
\end{equation}
in which \(\omega = dz\), and the coordinates of two overlapping neighbourhoods differ by a translation \(z \mapsto z + c\) for some \(c \in \mathbb{C}\). Thus we have an atlas of charts on \(\Sigma\setminus\Delta\) to \(\mathbb C\) whose transition maps are translations. Around a zero, say \(p\), of order \(m \geq 1\), there exists an open neighbourhood \(U\) and a local coordinate \(z\) on \(U\) such that \(\omega = (m+1)\,z^m\,dz\). We may observe that this is the pull-back of the form \(dz\) via the branched covering map \(\varphi\colon U\longrightarrow \mathbb C\) defined as \(\varphi(\,z\,) = z^{m+1}\), since
\begin{equation}\label{eq:branchedchart}
d(\,z^{m+1}\,) = (m+1)\,z^m\,dz.
\end{equation}
By lifting this map to the Euclidean cone of order \(m+1\), we have a homeomorphism from a neighbourhood of \(p\) to a neighbourhood of the tip of the Euclidean cone and hence the desired chart. We have thus defined a translation structure as in Definition \ref{def:second_translation_surface}.

\begin{rmk}\label{rmk:gbanal}
    We observe that the proof also tells us that the set of conical singularities coincides with the set of zeros of \(\omega\). For this reason, in what follows, both will be denoted by \(\Delta\). This should not be too surprising. Indeed, the complex-analytic version of the Gauss--Bonnet theorem asserts that a holomorphic differential on a compact Riemann surface has exactly \(2g - 2\) zeros, counted with multiplicity. 
\end{rmk}

\subsubsection{Where is the north?}\label{sssec:north} We introduce one final concept that will be useful in what follows. In the first place, we observe that in the complex plane there are two main directions given by the real and imaginary axes, which we can visualise as the E-W and N-S directions on a map. In particular, there is a well defined direction, \textit{the north}, given by the positive imaginary axis. These directions determine two transverse foliations of the complex plane, called respectively the \textit{horizontal foliation}\index{foliation!horizontal} and the \textit{vertical foliation}\index{foliation!vertical}.

\smallskip

\noindent Let now \((X,\omega)\) be a translation surface and let \(\Delta\) denote the set of zeros of \(\omega\). By using the associated translation structure, the horizontal foliation pulls back to a well defined horizontal foliation on \(X\setminus\Delta\), since the coordinate changes are translations. In other words, at every regular point of the surface, a north direction is well defined. This foliation extends to \(\Delta\) as a singular foliation: if a singular point has order \(m\), then there are exactly \(m+1\) possible north directions. 

\smallskip

\noindent An interesting consequence is that the parallel transport associated with the locally Euclidean Riemannian metric on \(X\setminus \Delta\) turns out to be trivial. More precisely, the parallel transport induced by the flat connection yields a homomorphism
\begin{equation}
\pi_1(\,X \setminus \Delta, x_o\,) \longrightarrow \mathrm{SO}(2,\mathbb{R}) \cong \mathbb{S}^1
\end{equation}
which acts on the tangent space of \(x_o\). Since \(\mathbb{S}^1\) is abelian, such a homomorphism factors in homology, and hence we have a well defined homomorphism
\begin{equation}
\mathrm{PT}\colon \textnormal{H}_1(\,X \setminus \Delta,\, \mathbb{Z}\,) \longrightarrow \mathrm{SO}(2,\mathbb{R}) \cong \mathbb{S}^1.
\end{equation}
Since the coordinate changes are translations, the parallel transport of a vector pointing north along any closed path in \(X\) avoiding the zeros of \(\omega\) returns the vector to itself. Conversely, if the parallel transport is trivial, then a well defined north direction exists away from the singularities of \(\omega\). In general, a flat metric on a surface does not have trivial parallel transport, which means that the surface is obtained by gluing polygons via isometries of the plane, such as translations and also rotations. For this reason, in \S\ref{sssec:locgeo}, we claimed that a singular Euclidean metric does not necessarily determine a translation structure, see Examples \ref{expoly} and \ref{ex3}. The following criterion holds.

\begin{prop}\label{prop:tripartrans}
    A translation structure is the datum of a singular Euclidean metric on a surface \(\Sigma\) with trivial parallel transport.
\end{prop}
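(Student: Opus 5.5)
We must prove both directions, interpreting the statement as follows. A translation structure on \(\Sigma\) is the same as a singular Euclidean metric, with conical singularities of angle \(2(m+1)\pi\) at a finite set \(\Delta\), whose parallel transport \(\mathrm{PT}\colon \textnormal{H}_1(\,\Sigma\setminus\Delta,\Z\,)\to\mathrm{SO}(2,\R)\) is trivial.

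\emph{From translation structures to metrics.} This direction is essentially already done. By \S\ref{sssec:locgeo}, the pull-back of \(|dz|^2\) through the charts of Definition~\ref{def:second_translation_surface} is a flat Riemannian metric on \(\Sigma\setminus\Delta\). By Lemma~\ref{lem:singgeo}, it extends over \(\Delta\) with cone angles \(2(m+1)\pi\). The discussion in \S\ref{sssec:north} then shows that the north direction \(\partial_y\) is preserved by the transition maps, which are translations. Hence the vector field pulled back from \(\partial_y\) is a globally defined parallel unit field on \(\Sigma\setminus\Delta\), and so \(\mathrm{PT}\) is trivial.

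\emph{From metrics to translation structures.} Start from a flat metric on \(\Sigma\setminus\Delta\) with trivial holonomy. Fix a unit vector \(v_0\) at a base point and parallel transport it along paths. Trivial holonomy means the result is independent of the path, so we obtain a global parallel unit vector field \(N\) on \(\Sigma\setminus\Delta\). Rotating \(N\) by \(-\pi/2\), using the orientation, gives a parallel orthonormal frame \((E,N)\).

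Next build charts. Around a regular point \(p\), take a simply connected convex normal neighbourhood \(U\), which is isometric to a planar disc. Define \(z(q)=\int_p^q (\eta_E + i\,\eta_N)\), where \(\eta_E,\eta_N\) are the \(1\)-forms dual to the frame. These forms are closed, since a parallel frame on a flat surface has closed dual coframe (equivalently, the Levi-Civita connection form vanishes in this frame). The map \(z\) is therefore an orientation-preserving isometry onto a planar disc whose differential sends \((E,N)\) to \((\partial_x,\partial_y)\). Two such charts are isometries of planar domains with the same differential, so on connected overlaps they differ by a translation.

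At a cone point \(p\in\Delta\) of angle \(2(m+1)\pi\), the metric is locally isometric to a neighbourhood of the tip of a Euclidean cone of order \(m+1\), i.e.\ to a gluing of \(2m+2\) half-discs. We choose this isometry \(\varphi\) so that on each half-disc it sends \(N\) to the vertical direction. This is possible because \(N\) is parallel, so it is constant on each sector, and going once around \(p\) returns it to itself. The chart \(\varphi\) to the Euclidean cone then has translation transitions with the regular charts, as required by Definition~\ref{def:second_translation_surface}.

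The main obstacle is exactly this compatibility at cone points. One must check that the cone model can be aligned with the global field \(N\) simultaneously on all the half-planes. Equivalently, the branched chart \(u\mapsto u^{m+1}\) of Remark~\ref{rmk:branchedchart} composed with \(\varphi\) must differ from regular charts only by translations. I would handle this by developing a small punctured neighbourhood of \(p\) using the regular charts. Because the transitions are translations and the cone angle is an integer multiple of \(2\pi\), this development closes up to a map of degree \(m+1\) onto a punctured disc. This produces the branched chart, and lifting it to the cone of order \(m+1\) gives \(\varphi\).
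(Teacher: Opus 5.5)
Your proposal is correct and follows the same route as the paper, whose only justification is the short discussion in \S\ref{sssec:north}: translation transitions preserve the north direction, and conversely trivial parallel transport yields a global north field away from \(\Delta\). You fill in what the paper leaves implicit, namely integrating the parallel coframe \(\eta_E+i\,\eta_N\) to get charts with translation transitions, and aligning the cone charts with \(N\). As a minor aside, the hypothesis that cone angles are \(2(m+1)\pi\) is redundant in the converse: parallel transport around a small loop at a cone point of angle \(\theta\) is rotation by \(\theta\) modulo \(2\pi\), so triviality already forces \(\theta\in 2\pi\mathbb{Z}\).
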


\begin{ex}\label{expoly}
    Every polyhedron can be realised by gluing together a finite collection of Euclidean polygons to form a topological sphere. Famous examples include the Platonic solids. For instance, the cube is topologically a sphere obtained by gluing squares in the familiar way. If a polyhedron has vertices with rational angles, \textit{i.e.} of the form \(2\pi\frac{p_i}{q_i}\), we define \(d\) as the least common multiple of the \(\{\,q_i\,\}\). 
    Let \(\Delta\) be the subset of cone angles of some convex polyhedron \(\mathfrak P\). Notice that \(\mathfrak P\setminus\Delta\) is a punctured sphere. The homology group \(\textnormal{H}_1(\mathfrak P\setminus\Delta,\,\mathbb Z)\) is generated by loops around the punctures and, since there are finitely many punctures, the image of the parallel transport is a finite group of \(\mathbb S^1\) of order \(d\). Let \(\Sigma\) be the smallest cover of \(\mathfrak P\) branched at \(\Delta\) for which the parallel transport homomorphism has trivial image. That is, there is a branched covering map \(f\colon\Sigma\longrightarrow \mathfrak P\) that arises as the completion of the covering of \(\mathfrak P\setminus\Delta\) associated with the kernel of the parallel transport. Hence \(\Sigma\) supports a translation structure as in Proposition \ref{prop:tripartrans}.
    Let \(\mathcal{P}\) be the collection of Euclidean polygons used to construct the original polyhedron, and let \(\mathcal{Q}\) be the collection obtained by taking \(d\) copies of the polygons in \(\mathcal{P}\). The monodromy of \(f\) provides an explicit recipe for how to glue together the polygons in \(\mathcal{Q}\). The resulting surface is homeomorphic to \(\Sigma\) by design.

\end{ex}

\begin{ex}\label{ex3}
    Given a rectangle in the plane with side lengths \(3a\) and \(b\), we subdivide the upper segment into three edges of equal length by adding two vertices. The resulting polygon is thus a hexagon with four right angles and two angles of magnitude \(\pi\). We observe that the sum of the internal angles is \(4\pi\), as expected. We now take four copies of this rectangle: two oriented as the original one, and two rotated by \(\pi\). We glue the sides of the hexagon appropriately, as shown in Figure \ref{fig:halftrans1}, in order to obtain a larger rectangle with twice the original size and with two slits in the middle. 
    
\begin{figure}[!ht]
    \centering
    \begin{tikzpicture}[scale=1.75, every node/.style={scale=0.875}]
    \definecolor{pallido}{RGB}{221,227,227}
    \definecolor{sabbia}{RGB}{207,185,151}

    \pattern [color=sabbia, opacity=0.25] (0,0) rectangle (6,4);
    \draw[thin, sabbia] (3,0) -- (3,4)       ;
    \draw[thin, sabbia] (0,2) -- (6,2)       ;
    \draw[thin, sabbia] (0,0) rectangle (6,4);

    \fill[white] (1,2) to [out= 015, in=165] (2,2) to [out=195, in=-15] (1,2);
    \fill[white] (4,2) to [out= 015, in=165] (5,2) to [out=195, in=-15] (4,2);

    \draw[thin, black ] (1,2) to [out= 015, in=165] (2,2);
    \draw[thin, black ] (1,2) to [out=-015, in=195] (2,2);
    \draw[thin, black ] (4,2) to [out= 015, in=165] (5,2);
    \draw[thin, black ] (4,2) to [out=-015, in=195] (5,2);

    \fill[black] (1,2) circle (1pt);
    \fill[black] (2,2) circle (1pt);
    \fill[black] (4,2) circle (1pt);
    \fill[black] (5,2) circle (1pt);
    
    \foreach \x in {0,3,6} {
        \foreach \y in {0,2,4} {
            \fill[black] (\x,\y) circle (1pt);
        }
    };
    
    \node at (0.5,0.5) {\(\textnormal{R}_1\)};
    \node at (0.5,3.5) {\(\textnormal{R}_4\)};
    \node at (5.5,0.5) {\(\textnormal{R}_2\)};
    \node at (5.5,3.5) {\(\textnormal{R}_3\)};

    \node at (1.5,2.25) {\(s_o^+\)};
    \node at (4.5,2.25) {\(s_1^+\)};
    \node at (1.5,1.75) {\(s_o^-\)};
    \node at (4.5,1.75) {\(s_1^-\)};

    \end{tikzpicture}
    \caption{Realising a singular Euclidean metric which is not a translation surface.}
    \label{fig:halftrans1}
\end{figure}
\noindent The opposite sides of the rectangle are identified by translations. The surface obtained by gluing the two slits will be of genus one or two and will carry a singular Riemannian metric with two singularities of cone angle \(4\pi\). The resulting surface thus depends on how the edges \(s_i^{\pm}\) are glued together. There are three cases as follows.
\begin{itemize}
    \item[1.] If the edge \(s_i^+\) is identified with \(s_i^-\), for \(i=0,1\), then the resulting surface is a torus, in particular a translation surface in the sense of Definition \ref{defn:firstdef}. As a consequence, it is endowed with a translation structure in the sense of Definition \ref{def:second_translation_surface}.
    \smallskip
    \item[2.] If the edge \(s_i^+\) is identified with \(s_{i+1}^-\), for \(i=0,1\) with indices taken modulo \(2\), then the resulting space is a surface of genus two in the sense of Definition \ref{defn:firstdef}. In both cases, the identification is by design given by the translation \(z \mapsto z + 3a\). As a consequence, it is endowed with a translation structure in the sense of Definition \ref{def:second_translation_surface}. 
    \smallskip
    \item[3.] Finally, if the edge \(s_i^+\) is identified with \(s_{i+1}^+\), again for \(i=0,1\) modulo 2, then the resulting space is still a surface of genus two. In this case, however, the rectangles \(\textnormal{R}_1\) and \(\textnormal{R}_4\) are glued to \(\textnormal{R}_2\) and \(\textnormal{R}_3\), respectively, via a rotation of order two around the centre of the rectangle. As a consequence, the resulting surface is no longer endowed with a translation structure in the sense of Definition \ref{def:second_translation_surface}. See also Figure \ref{fig:halftrans2}.
\end{itemize}
\end{ex}

\begin{figure}[!ht]
    \centering
    \begin{tikzpicture}[scale=1.75, every node/.style={scale=1}]
    \definecolor{pallido}{RGB}{221,227,227}
    \definecolor{sabbia}{RGB}{207,185,151}

    \pattern [color=sabbia, opacity=0.25] (0,0) rectangle (6,4);
    \draw[thin, sabbia] (3,0) -- (3,4)       ;
    \draw[thin, sabbia] (0,2) -- (6,2)       ;
    \draw[thin, sabbia] (0,0) rectangle (6,4);

    \draw[red, thin] (1.5 ,2.05)--(1.5 ,2.75);
    \draw[red, thin] (1.75,3) .. controls (1.6,3) and (1.5,2.9) .. (1.5,2.75);
    \draw[red, thin] (1.75,3   )--(4.25,3   );
    \draw[red, thin] (4.25,3) .. controls (4.4,3) and (4.5,2.9) .. (4.5,2.75);
    \draw[red, thin] (4.5 ,2.05)--(4.5 ,2.75);

    \draw[blue, ->] (1.5  , 2.05) -- (1.5  , 2.30);
    \draw[blue, ->] (1.5  , 2.50) -- (1.5  , 2.75);
    \draw[blue, ->] (1.55 , 2.91) -- (1.55 , 3.16);
    \foreach \x in {1.75,2.0,...,4.25} {
        \draw[red , ->] (\x, 3) -- (\x, 3.25);
        \draw[blue, ->] (\x, 3) -- (\x, 3.25);
    }
    \draw[blue, ->] (4.45 , 2.91) -- (4.45 , 3.16);
    \draw[blue, ->] (4.5  , 2.05) -- (4.5  , 2.30);
    \draw[blue, ->] (4.5  , 2.50) -- (4.5  , 2.75);

    \fill[white] (1,2) to [out= 015, in=165] (2,2) to [out=195, in=-15] (1,2);
    \fill[white] (4,2) to [out= 015, in=165] (5,2) to [out=195, in=-15] (4,2);

    \draw[thin, black ] (1,2) to [out= 015, in=165] (2,2);
    \draw[thin, black ] (1,2) to [out=-015, in=195] (2,2);
    \draw[thin, black ] (4,2) to [out= 015, in=165] (5,2);
    \draw[thin, black ] (4,2) to [out=-015, in=195] (5,2);

    \fill[black] (1,2) circle (1pt);
    \fill[black] (2,2) circle (1pt);
    \fill[black] (4,2) circle (1pt);
    \fill[black] (5,2) circle (1pt);
    
    \foreach \x in {0,3,6} {
        \foreach \y in {0,2,4} {
            \fill[black] (\x,\y) circle (1pt);
        }
    };
    
    \node at (0.5,0.5) {\(\textnormal{R}_1\)};
    \node at (0.5,3.5) {\(\textnormal{R}_4\)};
    \node at (5.5,0.5) {\(\textnormal{R}_2\)};
    \node at (5.5,3.5) {\(\textnormal{R}_3\)};

    \node at (1.25,2.25) {\(s_o^+\)};
    \node at (4.75,2.25) {\(s_1^+\)};
    \node at (1.25,1.75) {\(s_o^-\)};
    \node at (4.75,1.75) {\(s_1^-\)};

    \end{tikzpicture}
    \caption{Surface endowed with a singular Euclidean metric without trivial parallel transport. Referring to Figure~\ref{fig:halftrans1} of Example~\ref{ex3}, this figure illustrates that if \(s_i^+\) is identified with \(s_{i+1}^+\), then the resulting surface carries a singular Euclidean metric with non-trivial parallel transport. A unit vector field along a non-contractible curve is highlighted in red. After completing a loop, the initial vector is rotated by an angle of \(\pi\).}
    \label{fig:halftrans2}
\end{figure}

\medskip

\subsection{First-order linear \textnormal{ODEs}}\label{ssec:ode} An even more analytic perspective is given by first-order linear ODEs on Riemann surfaces. Let \(X\) be a closed Riemann surface and let \(\omega\) be a holomorphic \(1-\)form on \(X\). In terms of analytic maps, a translation surface can be seen as a solution of the first-order linear ODE defined as
\begin{equation}\label{eq:ode}
    dF\,=\,\omega
\end{equation}
on a Riemann surface \(X\). On a simply connected open subset, say \(U \subset X\), we can always find a local solution \(F\colon U \longrightarrow \mathbb{C}\) which is a biholomorphism onto its image whenever \(U\) does not contain any cone point. This can be taken as a local chart, and we may easily observe that two solutions, say \(F_1\) and \(F_2\), must differ by some constant \(c \in \mathbb{C}\), \textit{i.e.}, \(F_2 = F_1 + c\). Notice that if \(U\) is a neighbourhood of some cone point of order \(m\), then there is a local coordinate, say \(z\), such that \(\omega(\,z\,) = z^m \, dz\), that is, \(dF = (m+1)\, z^m \, dz\), so that a local solution looks like \(F(\,z\,) = z^{m+1}\). It should not come as a surprise to the reader to encounter the same charts defined in \S\ref{sssec:equivalenttwo}. We thus have defined a translation structure.

\smallskip 

\noindent A local solution generally extends to a multivalued function \(F\colon X \longrightarrow \mathbb{C}\), obtained via the analytic continuation principle, with monodromy\index{representation!monodromy} given by a representation \(\pi_1(\,X\,) \longrightarrow \mathbb{C}\). This function lifts to a genuine single-valued function on the universal cover. The restriction of this lifted function to any sufficiently small open neighbourhood defines a local chart for the underlying translation structure, and therefore must be a developing map. Vice versa, given any translation surface with developing map, say \(\textnormal{dev}\colon \widetilde X\longrightarrow \mathbb C\), then \(\widetilde\omega=\textnormal{dev}^*dz\) is a well defined holomorphic \(1\)-form that projects to a holomorphic \(1\)-form \(\omega\) on \(X\). The key point is that the monodromy of the differential equation \eqref{eq:ode} coincides with the holonomy of the translation structure.

\smallskip

\noindent From a different perspective, equation~\eqref{eq:ode} is equivalent to the first-order differential equation
\begin{equation}\label{eq:ode2}
    du\,-\,\omega\,u=0,
\end{equation}
that means
\begin{equation}
    \frac{du}{u}\,=\,\omega.
\end{equation}
\noindent The equation \eqref{eq:ode2} being linear of the first order admits a unique solution up to some scalar factor. In other words, the ratio of two local solutions is constant. 

\subsection{Isomorphic structures}\label{ssec:equiv} In the previous sections, we have examined translation surfaces from various perspectives. We now wish to provide a notion of \textit{isomorphism} for translation surface. In other words, we wonder when two translation surfaces can be considered equivalent. Depending on the adopted perspective, the term "isomorphism" acquires different meanings: from the viewpoint of translation structures, we shall mean that the structures are isometric and differ by a translation. In terms of Riemann surfaces, we shall mean that the structures are biholomorphic. For our purposes, it is convenient to adopt the complex-analytic perspective as in \S\ref{ssec:cadef}. The leading question here is the following: 
\smallskip
\begin{quote}
    \textit{when are two translation surfaces equivalent?}
\end{quote}
\smallskip
\noindent Two Riemann surfaces, say \(X\) and \(Y\), are declared to be equivalent if and only if there exists a biholomorphism \(f\colon X\longrightarrow Y\). In the same spirit we introduce the following

\begin{defn}
    We define two translation surfaces, say \((X,\omega)\) and \((Y,\xi)\), to be equivalent if and only if there exists a biholomorphism \(f\colon X\longrightarrow Y\) such that \(f^*\xi=\omega\).
\end{defn}

\noindent The following characterisation result holds.

\begin{lem}\label{lem:isolem}
Let \((\,X,\,\omega\,)\) and \((\,Y,\,\xi\,)\) be two equivalent translation surfaces. Then, for every point \(p \in X\), there exist local charts \((U, \varphi)\) around \(p\) and \((V, \psi)\) around \(f(\,p\,)\), where \(f\colon X \longrightarrow Y\) is the biholomorphism satisfying \(f^*\xi = \omega\), such that on \(U\) the equality \(\psi \circ f \,=\,\varphi\,+\,c\) holds for some \(c\in\mathbb C\).
\end{lem}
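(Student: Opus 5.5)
The natural approach is to use the local coordinates of Lemma~\ref{lem:locformabdiff}, realised as period integrals (equation~\eqref{eq:local_coordinate}) away from zeros and as branched charts near zeros. We then check that \(f\) intertwines these coordinates up to an additive constant.

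We first take \(p\in X\) that is not a zero of \(\omega\). Since \(f\) is a biholomorphism with \(f^*\xi=\omega\), the point \(f(p)\) is not a zero of \(\xi\); indeed \(f\) maps \(\Delta_X\) bijectively onto \(\Delta_Y\), with the same orders. Choose a simply connected neighbourhood \(V\) of \(f(p)\) free of zeros of \(\xi\), and the translation chart \(\psi(q')=\int_{f(p)}^{q'}\xi\). Set \(U=f^{-1}(V)\), which is simply connected, and \(\varphi(q)=\int_p^q\omega\). For \(q\in U\), choose a path \(\gamma\) in \(U\) from \(p\) to \(q\). Then \(f\circ\gamma\) is a path from \(f(p)\) to \(f(q)\) in \(V\), and the change of variables gives
\[
\psi(f(q))=\int_{f\circ\gamma}\xi=\int_\gamma f^*\xi=\int_\gamma\omega=\varphi(q).
\]
So \(c=0\) for these normalised charts. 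If \(\varphi\) and \(\psi\) are arbitrary charts of the respective maximal translation atlases, they differ from the normalised ones by translations, which produces a general constant \(c\). Equivalently, \(d(\psi\circ f)=f^*d\psi=f^*\xi=\omega=d\varphi\) on the connected set \(U\), so \(\psi\circ f-\varphi\) is constant.

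At a zero \(p\) of order \(m\), the charts are branched charts in the sense of Remark~\ref{rmk:branchedchart}. Concretely, there is a local coordinate \(z\) with \(\omega=d(z^{m+1})\), and we take \(\varphi=z^{m+1}\) (or its lift to the Euclidean cone of order \(m+1\)). We do the same for \(\xi\) at \(f(p)\), which is also a zero of order \(m\), taking \(\psi=w^{m+1}\) on a disc \(V\), and we set \(U=f^{-1}(V)\) connected. The same differential argument now applies: \(d(\psi\circ f)=f^*\xi=\omega=d\varphi\) on \(U\), so \(\psi\circ f=\varphi+c\). In fact \(c=0\), because both sides vanish at \(p\).

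The main delicacy is this conical case, if one insists on honest (non-branched) charts into the Euclidean cone rather than branched charts. There, a "translation" must be read on the cone. The difference \(\psi\circ f-\varphi\) is constant after composing with \(g\), but the lifts to the cone may differ by a deck rotation of order \(m+1\). I would handle this by stating the lemma for branched charts, where the identity above holds verbatim, and remarking that the lifted charts then agree up to the choice of sheet. This choice can be fixed by choosing the lift of \(\psi\) compatibly with \(f\).
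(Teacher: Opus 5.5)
Your proof is correct and follows essentially the same route as the paper. At a regular point the paper shows that $g=\psi\circ f\circ\varphi^{-1}$ satisfies $g^*(dz)=dz$, hence $g(z)=z+c$; this is your observation that $d(\psi\circ f-\varphi)=0$ on a connected set. At a zero, both arguments use branched charts and integrate $f^*\xi=\omega$ (the paper via the separable equation $w^m\,dw=z^m\,dz$), and both kill the constant by centring the charts.

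Your direct version at the cone point is slightly cleaner. Your remark about choosing the sheet of the lift to the cone also correctly handles the $(m+1)$-th root of unity that the paper's final claim $f(z)=z$ passes over.
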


\begin{proof}
Let \( p \in X \) be any point. Then there exists a neighbourhood \( U \subset X \) of \(p\) and a chart \( \varphi\colon U \to \mathbb{C} \) such that \( \varphi^*(\,dz\,) = \omega \). Similarly, there exists a neighbourhood \( V \subset Y \) of \(f(\,p\,)\) and a chart \( \psi\colon V \to \mathbb{C} \) such that \( \psi^*(\,dz\,) = \xi \). Assume \(p\) is a regular point. In this case \(\varphi\) is a local homeomorphism onto the image and hence \(\psi \circ f \,=\,\varphi\,+\,c\) is equivalent to the condition that \(\psi \circ f \circ \varphi^{-1}\colon \varphi(\,U\,) \to \psi(\,V\,)\) is a translation. Define \(g\,=\, \psi \circ f \circ \varphi^{-1} \colon \varphi(\,U\,) \longrightarrow \psi(\,V\,) \subset \mathbb{C} \). Since \(f\) is a biholomorphism and \(f^*\xi = \omega\), we have
\begin{equation}
g^* (\,dz\,) = dz.
\end{equation}
This implies that \( g \) is a holomorphic function whose derivative is constantly one. Hence, it is of the form
\begin{equation}
g(\,z\,) = z + c
\end{equation}
for some constant \(c \in \mathbb{C}\). Therefore, in local coordinates, \(f\) is a translation. We next assume \(p\) is a cone point of order \(m\). In this case, \(\varphi\), and hence \(\psi\), are branched charts and the local inverse of \(\varphi\) is not well defined. Once again, since \(f^*\xi=\omega\), we have
\begin{equation}
    (\,\psi\circ f\,)^*(\,dz\,)\,=\,\varphi^*dz,
\end{equation}
that means
\begin{equation}
    f^*\Big(\,(\,m+1\,)w^mdw \,\Big)\,=\,(\,m+1\,)z^mdz \quad \Longleftrightarrow \quad (\,m+1\,)\,f(\,z\,)^m\,f'(\,z\,)dz\,=\,(\,m+1\,)z^mdz.
\end{equation}
This latter holds if and only if \(f(\,z\,)^m\,df\,=\,z^m\,dz\). We thus need to solve a non-linear ODE on the open set \( U \subset X \). Since \( f \) is a biholomorphism, we can perform a change of variable and set \( w = f(\,z\,) \). The equation then becomes
\begin{equation}
w^m \, dw = z^m \, dz,
\end{equation}
which is separable. By integrating both sides, substituting \( f(\,z\,) \) for \( w \) on the left side, and performing some manipulations, we obtain the following form:
\begin{equation}
f(\,z\,) = \displaystyle \sqrt[m+1]{\,\,z^{m+1}\,+\,C\,\,}.
\end{equation}
Notice that we do not need to specify any branch of the \(m+1\)-root because \(U\) is a neighbourhood of the tip of the Euclidean cone of order \(m+1\) which is by Lemma \ref{lem:complexconem} the Riemann surface that makes the \((m+1)\)-root a single-valued function. Since the charts are centred, \( p \) has coordinate \( z = 0 \) and \( f(\,p\,) \) has coordinate \( w = 0 \), it follows that \( C = 0 \). Hence \(f(\,z\,)=z\).
\end{proof}

\subsection*{Appendix at \S\ref{sec:trans}: Summarising table}\label{ssec:table} We can summarise the different perspectives discussed so far in the following double-entry table. A translation surface can be understood either in terms of surfaces or in terms of functions, viewed from a geometric or an analytic angle. See Table \ref{tab:sumtab}.

\smallskip

\begin{table}[!ht]
    \centering
    \begin{tikzpicture}[scale=1.125, every node/.style={scale=1}]

    \draw[thick] (-6,0) -- (6,0); 
    \draw[thick] (0,-4) -- (0,4); 

    \node[anchor=center] at (-3, 2 ) {\(\begin{gathered}   \textcolor{red}{\text{translation structure}} \\   \big\{\,\varphi_\alpha\colon U_\alpha\to\mathbb C\,\big\} \\  \text{whose transition functions are} \\ \text{translations on their overlaps} \\ \end{gathered}\)};

    \node[anchor=center] at ( 3, 2 ) {\(\begin{gathered}   \textcolor{red}{\text{Riemann surface}} \\  + \,\,\textcolor{red}{\text{holomorphic \(1\)-form}} \\ (\,X,\,\omega\,) \end{gathered}\)};

    \node[anchor=center] at ( 3,-2) {\(\begin{gathered}   \textcolor{red}{\text{multivalued function}} \\  F\colon X\longrightarrow \mathbb C  \\ \text{with monodromy } \rho\colon\pi_1(\,X\,)\to\mathbb C \end{gathered}\)};
    \node[anchor=center] at (-3,-2) {\(\begin{gathered}   \textcolor{red}{\text{developing map}} \\  \text{smooth map}\,\,\textnormal{dev}\colon \widetilde\Sigma\longrightarrow \mathbb C  \\ \text{equivariant wrt } \rho\colon\pi_1(\,\Sigma\,)\to\mathbb C \end{gathered}\)};

    \node[rotate=90] at (-5.75, 2) {\cm{surfaces}};
    \node[rotate=90] at (-5.75,-2) {\cm{maps on surfaces}};
    \node at (-3,3.75) {\cm{geometric perspective}};
    \node at ( 3,3.75) {\cm{complex-analytic perspective}};
\end{tikzpicture}
    \caption{Summarising table}
    \label{tab:sumtab}
\end{table}

\bigskip

\section{Metric geometry of translation surfaces}\label{sec:metric}

\noindent In the present section we wish to provide some features of translation surfaces seen as metric spaces. We may define a metric on a translation surface \((X, \omega)\) in two equivalent ways. First, we may consider the singular Euclidean metric induced by \(\omega\), which is locally isometric to the Euclidean plane away from the zeros of \(\omega\). 
If we can also describe a neighbourhood of the singular locus via a singular Euclidean metric, then we may define a global metric \(d(x, y)\) as the infimum of the Euclidean lengths of all piecewise differentiable paths joining \(x\) and \(y\) in \((X,\omega)\). As we shall see the expression for the metric is somewhat bizarre.

\medskip

\noindent Alternatively, since translation surfaces can be decomposed into polygons glued along parallel edges \textit{e.g.}, via a triangulation into Euclidean triangles as done in \S\ref{ssec:gbcond} and \S\ref{ssec:devhol}, each piece inherits the standard Euclidean metric. These local metrics can be combined to produce a global metric on \(X\). There are several ways to carry out this construction; for example, by using strings and chains as described in~\cite{BH}. As one may expect, both approaches yield the same resulting metric. 

\smallskip

\subsection{A few notions of metric geometry}\label{ssec:metricgeo} Before proceeding, let us recall some general definitions valid in any metric space. Given a metric space, say \((\Sigma, d)\), the \emph{length} of a continuous curve \(\gamma\colon [a,b] \to \Sigma\) is defined as
\begin{equation}
\sup \sum_{i=0}^{n-1} d\big(\,\gamma(\,t_i\,),\, \gamma(\,t_{i+1}\,)\,\big),
\end{equation}
where the supremum is taken over all partitions \(a = t_0 \leq t_1 \leq \dots \leq t_n = b\) of the interval \([a,b]\). A curve with finite length is called \emph{rectifiable}.

\smallskip

\noindent A curve \(\gamma\colon[a,b] \to \Sigma\) is a \emph{unit-speed geodesic} if, for every \(t \in [a,b]\) and \(s\) close to \(t\), we have
\begin{equation}
d\big(\,\gamma(\,t\,),\, \gamma(\,s\,)\,\big) = |\,t - s\,|.
\end{equation}
Any curve that is a reparametrisation of a unit-speed geodesic is called simply a \emph{geodesic}.

\smallskip

\noindent A metric space \((\Sigma,d)\) is called a \emph{geodesic space} if any two points in \(\Sigma\) can be joined by a geodesic. A metric \(d\) is called a \emph{length metric} if for every pair of points \(x,y \in \Sigma\), the distance \(d(x,y)\) equals the infimum of the lengths of rectifiable curves joining \(x\) and \(y\). A space endowed with a length metric is called a \emph{length space}. It is worth noting that not every metric is a length metric, but every metric space admits an \emph{induced length metric}, which can be constructed by taking the infimum over curve lengths between points. The reader is invited to consult the nice book \cite{BH} for more details.

\smallskip

\subsection{Distance on Euclidean cones}\label{ssec:distcones} In \S\ref{sssec:bigcone} we introduced the Euclidean cone of order \(m\), with \(m \geq 1\) and used it as a model to define the local geometry around conical points. We now describe the metric geometry induced by the singular Euclidean metric. Once again, it is convenient to use polar coordinates to introduce a notion of distance on a Euclidean cone\index{euclidean cone!distance on}. As usual we have a radial coordinate \(\rho \in \mathbb{R}^+\) and an angular coordinate \(\theta \in \big[0,\, 2(m+1)\pi\,\big)\). 
We denote this cone as \(\mathcal C(\,2(m+1)\pi\,)\). 

\smallskip

\noindent The resulting metric has an explicit form. By considering \(S^1 \subset \mathbb{E}^2=\mathcal C(\,2\pi\,)\), the points of the unit circle are exactly those ordered pairs \(x=(x_1,\,x_2)\) such that \(||\,x\,||=1\). The function \(d_{\mathbb S^1}\colon S^1\times S^1\longrightarrow \mathbb R\) that assigns to each pair \((x, y) \in S^1 \times S^1\) the unique real number \(d_{S^1}(\,x,\, y\,) \in [0, \pi]\) such that
\[
\cos d_{S^1}(\,x,\, y\,) = \langle\, x,\, y\,\rangle
\] is a metric and the distance between each pair of points is the length of the shortest arc joining them, see \cite[Proposition 2.1, Chapter I]{BH}. The unit circle, say \(S^1\), centred at the tip of the cone \(\mathcal C(\,2(m+1)\pi\,)\) has length equal to \(2(m+1)\pi\) and it can be endowed with a metric \(d_{S^1}\) that assigns to each pair of points the unique real number in \([\,0, (m+1)\pi\,]\) that represents the length of the shortest arc joining them. In both cases we refer to \(d_{S^1}\) as the circle metric.

\smallskip

\noindent Let \(d_{S^1}\) be the circle metric on \(S^1\). We define the so-called \textit{truncated metric}, and denote it by \(d^\pi\), the function defined as 
\begin{equation}
    d^\pi(\,\theta_1,\,\theta_2\,) = \min\big\{\,\pi,\, d_{S^1}(\,\theta_1, \theta_2\,)\,\big\}.
\end{equation}

\noindent We may observe that \(d^{\pi}=d_{S^1}\) on the unit circle of \(\mathcal C(\,2\pi\,)\). For every \(m\ge1\), for two points \(p = (\,t,\,\theta_1\,)\) and \(q = (\,s,\, \theta_2\,)\) in \(\mathcal C(\,2(m+1)\pi\,)\), we define the distance \(d(\,p,\,q\,)\) via the formula
\begin{equation}
d(\,p,\,q\,)^2 = t^2 + s^2 - 2st \cos\big(\,d^\pi(\,\theta_1,\, \theta_2\,)\,\big).
\end{equation}
One can show that this function \(d\) defines a metric on \(\mathcal C(\,2(m+1)\pi\,)\). While the formula for the distance may seem unusual, it simplifies in the case of small angular separation: for points \(p, q\) with small angular distance in \(S^1\), the expression reduces to the law of cosines in Euclidean geometry.

\smallskip

\noindent A geodesic \(\gamma\) passing through the tip of the cone \(\mathcal C(\,2(m+1)\pi\,)\), after reparametrisation, has the form
\begin{equation}
\gamma\colon (\,-\varepsilon, \varepsilon\,) \to \mathcal C(\,2(m+1)\pi\,), \qquad \gamma(\,t\,) = 
\begin{cases}
\,\,(\,t,\,\theta_1\,), & t < 0, \\
\,\,(\,t,\,\theta_2\,), & t > 0,
\end{cases}
\end{equation}
and for small \(\delta > 0\), we have
\begin{equation}
d(\,\gamma(\,-\delta\,),\, \gamma(\,\delta\,)\,)^2 = 2\delta^2 \,\left(\,1 - \cos\big(\,d^\pi(\,\theta_1,\, \theta_2\,)\,\big)\,\right).
\end{equation}
\noindent For \(\gamma\) to be a geodesic, we must then have \(d(\,\gamma(\,-\delta\,),\, \gamma(\,\delta\,)\,) = 2\delta\), and hence:
\begin{equation}
\cos\Big(\,d^\pi(\,\theta_1,\, \theta_2\,)\,\Big) = -1.
\end{equation}
\noindent This means \(d_{\mathbb S^1}(\,\theta_1,\, \theta_2\,)\ge\pi\). Therefore, \(\gamma\) is a geodesic if and only if it leaves an angle of magnitude at least \(\pi\) on each side of the tip of the cone. Notice that at a regular point, this requirement is classical!  However, when a cone point, say \(p\), has magnitude that exceeds \(2\pi\), say \(2(m+1)\pi\), many geodesics do pass through \(p\): any pair of geodesic segments meeting at \(p\) with an angle of at least \(\pi\) can be joined to form a single geodesic. Thus, unlike in the smooth case, geodesic continuation at a cone point of angle greater than \(2\pi\) is no longer unique.

\smallskip

\subsection{Non-Positive Curvature of Translation Surfaces}\label{ssec:cato} We now analyse the non-positive curvature properties induced by a translation structure on a surface. Let now \(\pi \colon \widetilde{\Sigma} \rightarrow \Sigma\) be the universal covering map. The singular flat metric on \(\Sigma\) lifts to a singular metric on \(\widetilde{\Sigma}\), turning the universal cover into a metric space as well. We show that \(\widetilde{\Sigma}\) is a \(\mathrm{CAT}(\,0\,)\)-space. It follows that \(\Sigma\) is a locally \(\mathrm{CAT}(\,0\,)\)-space. Precisely, we have the following:

\begin{prop}
Let \(\Sigma\) be a closed surface equipped with a translation structure. Then \(\widetilde{\Sigma}\) is a \(\mathrm{CAT}(\,0\,)\)-space. In particular, the geometric boundary of \(\widetilde{\Sigma}\) is an oriented circle.
\end{prop}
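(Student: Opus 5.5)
The plan is to use the Cartan--Hadamard theorem for metric spaces in the form given in \cite{BH}. A complete, simply connected geodesic space that is locally $\mathrm{CAT}(0)$ is globally $\mathrm{CAT}(0)$. The proof therefore splits into three steps. First, show that $\Sigma$, with the length metric induced by the singular Euclidean metric, is a compact geodesic space that is locally $\mathrm{CAT}(0)$. Second, lift this to $\widetilde\Sigma$, using completeness and simple connectivity. Third, identify the visual boundary.

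\textbf{Step one: local structure.} Choose a triangulation of $\Sigma$ by Euclidean triangles whose vertices include $\Delta$, as in \S\ref{ssec:gbcond}. The metric is then a piecewise Euclidean $M_0$-polyhedral metric with finitely many shapes. By Bridson's theorem this makes $\Sigma$ a complete geodesic space, and compactness gives completeness directly. Local $\mathrm{CAT}(0)$-ness is checked pointwise. At a regular point, a small ball is isometric to a Euclidean disc, which is $\mathrm{CAT}(0)$. At a cone point $p$ of order $m$, a small ball is isometric to a ball about the tip of $\mathcal C(2(m+1)\pi)$ with the metric of \S\ref{ssec:distcones}. I will invoke Berestovskii's theorem, or equivalently the link condition for $2$-dimensional $M_0$-polyhedral complexes: the Euclidean cone over a metric space $Y$ is $\mathrm{CAT}(0)$ if and only if $Y$ is $\mathrm{CAT}(1)$. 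Here the link is a circle of length $2(m+1)\pi\ge 2\pi$. A metric circle is $\mathrm{CAT}(1)$ exactly when its length is at least $2\pi$, because every closed geodesic then has length at least $2\pi$. So the condition holds. This is the key place where the angles being at least $2\pi$ matters, and it is consistent with the geodesic analysis of \S\ref{ssec:distcones}, which shows that geodesics through the tip make angle at least $\pi$ on each side.

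\textbf{Step two: passing to the universal cover.} Lift the length metric to $\widetilde\Sigma$ so that the covering map is a local isometry. Then $\widetilde\Sigma$ is complete, since it covers a compact space with the lifted length metric. It is locally $\mathrm{CAT}(0)$, since that property is local and the covering is a local isometry. It is also simply connected and a length space. Cartan--Hadamard \cite[II.4.1]{BH} then gives that $\widetilde\Sigma$ is $\mathrm{CAT}(0)$.

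\textbf{Step three: the boundary.} Since $g\ge 1$ by Corollary \ref{cor:speccases}, $\widetilde\Sigma$ is homeomorphic to $\mathbb R^2$. Moreover, $\pi_1(\Sigma)$ acts geometrically on $\widetilde\Sigma$. In genus one there are no singularities, so $\widetilde\Sigma\cong\mathbb E^2$ and the boundary is clearly a circle. For $g\ge 2$, the Švarc--Milnor lemma makes $\widetilde\Sigma$ quasi-isometric to the hyperbolic plane. This implies $\widetilde\Sigma$ is Gromov hyperbolic, because it is $\mathrm{CAT}(0)$ and quasi-isometric to a hyperbolic space. The visual boundary of a proper $\mathrm{CAT}(0)$ Gromov hyperbolic space coincides with its Gromov boundary, which is then $\partial\mathbb H^2\cong S^1$. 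The orientation of $\Sigma$ (Remark \ref{rmk:orientation}) lifts to $\widetilde\Sigma$ and induces a cyclic order on the geodesic rays from a basepoint, which orients the circle.

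I expect the main obstacle to be the local $\mathrm{CAT}(0)$ check at the cone points, that is, justifying the link condition for the cone metric given explicitly in \S\ref{ssec:distcones}. If needed, I would verify it by hand. Any geodesic triangle near the tip either avoids the tip, and then unfolds into the plane via the developing map, or it can be subdivided along geodesics through the tip into pieces that develop onto planar triangles. The comparison inequality then follows from Alexandrov's gluing lemma, using that the angles at the tip on each side are at least $\pi$.
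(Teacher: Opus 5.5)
Your proof is correct, but it takes a different route from the paper at both stages.

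\textbf{The $\mathrm{CAT}(0)$ property.} The paper notes that the regular part $\widetilde\Sigma\setminus\widetilde\Delta$ is locally Euclidean. It then invokes the fact that the metric completion of a $\mathrm{CAT}(0)$ space is $\mathrm{CAT}(0)$ \cite[Prop.~II.3.10]{BH}. You instead verify curvature $\le 0$ at each cone point via Berestovskii's criterion, since the link is a circle of length $2(m+1)\pi\ge 2\pi$. You then pass to the universal cover by Cartan--Hadamard.

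Your version is the one in which the hypothesis that cone angles are at least $2\pi$ actually enters. The regular part is only locally $\mathrm{CAT}(0)$. It is not even a geodesic space, because many geodesics pass through cone points. So the completion result does not apply to it as stated. Moreover, an argument that never inspects the cone points would equally ``prove'' that a flat surface with cone angles below $2\pi$, such as the boundary of a tetrahedron, is $\mathrm{CAT}(0)$.

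\textbf{The boundary.} The paper replaces the metric near the cone points by a smooth one and appeals to quasi-isometry invariance of the ideal boundary. You use \v{S}varc--Milnor to get a quasi-isometry to $\mathbb H^2$ when $g\ge 2$, deduce Gromov hyperbolicity, and identify the visual boundary with the Gromov boundary. You treat $g=1$ separately as $\mathbb E^2$.

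This detour is what justifies transporting the boundary along a quasi-isometry. Visual boundaries of general $\mathrm{CAT}(0)$ spaces are not quasi-isometry invariants (Croke--Kleiner). Note also that the paper's auxiliary smooth metric cannot be flat when $g\ge 2$. Gauss--Bonnet forces total curvature $-2m\pi$ near a cone point of order $m$, so at best it is a non-positively curved smoothing.

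\textbf{Trade-off.} The paper's argument is shorter and avoids coarse geometry. Yours is longer, but every step rests on a standard theorem. You also sketch why the circle is oriented, which the paper leaves implicit.
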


\begin{proof}
As already alluded to above, a translation structure determines a singular Euclidean metric with trivial parallel transport; see~\S\ref{sssec:north}. Let \(ds^2\) be its lift to \(\widetilde{\Sigma}\). Equivalently, this metric can be defined by pulling back the standard Euclidean metric via the developing map. The metric is a flat  metric, in fact locally isometric to the Euclidean metric on \(\mathbb C\cong\mathbb E^2\), away from the cone points. 
\noindent Let \(\widetilde{\Sigma}\) denote the universal cover of a translation surface \(\Sigma\), and let \(\widetilde{\Delta} \subset \widetilde{\Sigma}\) be the set of cone points. Consider the regular part \(\widetilde{\Sigma}^\circ := \widetilde{\Sigma} \setminus \widetilde{\Delta}\). At each point of \(\widetilde{\Sigma}^\circ\), the metric is locally isometric to the Euclidean plane \(\mathbb{E}^2\). Since Euclidean space is \(\mathrm{CAT}(\,0\,)\) and the \(\mathrm{CAT}(\,0\,)\) property is local, it follows that \(\widetilde{\Sigma}^\circ\) is locally \(\mathrm{CAT}(\,0\,)\). 
By a standard result in the theory of \(\mathrm{CAT}(\,0\,)\) spaces, the metric completion of a \(\mathrm{CAT}(\,0\,)\) space is \(\mathrm{CAT}(\,0\,)\), see \cite[Prop.~II.3.10]{BH}. Consequently, 
the completed space \(\widetilde{\Sigma}\) is \(\mathrm{CAT}(\,0\,)\). Since \(ds^2\) is smooth outside a sufficiently small closed neighbourhood of the cone points, there exists a smooth flat metric \(ds^2_o\) on \(\widetilde{\Sigma}\) agreeing with \(ds^2\) outside this neighbourhood. Let \(d_o\) denote the distance induced by \(ds^2_o\). The identity map
\begin{equation}
\mathrm{id}:(\widetilde{\Sigma},d)\longrightarrow (\widetilde{\Sigma},d_o)
\end{equation}
is a quasi-isometry, hence \((\widetilde{\Sigma},d)\) and \((\widetilde{\Sigma},d_o)\) have the same ideal boundary, which is homeomorphic to a circle \(S^1\).
\end{proof}


\medskip

\subsection{Developing maps}\label{sssec:devmap} In the present paragraph we want to use the metric properties of translation surfaces for a better understanding of developing maps\index{developing map}. The main result is the following:

\begin{prop}
    Let \(\Sigma\) a translation surface. Then the developing map \(\textnormal{dev}\colon \widetilde\Sigma\longrightarrow\mathbb C\) is uniformly open,\textit{ i.e.} for any \(\varepsilon>0\) there exists \(\delta>0\) such that \(B_\varepsilon(\,\textnormal{dev}(\,\widetilde{q}\,)\,) \subseteq \textnormal{dev}(\,B_\varepsilon(\,\widetilde{q}\,)\,)\) for any \(q\in\Sigma\). Moreover the developing map \(\textnormal{dev}\) is surjective.
\end{prop}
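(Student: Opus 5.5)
The plan is to use the local structure of the developing map together with completeness of the lifted singular Euclidean metric on \(\widetilde\Sigma\). The statement as written is slightly garbled in its quantifiers. I will prove the natural version: for every \(\varepsilon>0\) and every \(\widetilde q\in\widetilde\Sigma\) one has \(B_\varepsilon(\textnormal{dev}(\widetilde q))\subseteq\textnormal{dev}(B_\varepsilon(\widetilde q))\), with the same radius on both sides. This gives uniform openness with \(\delta=\varepsilon\), and surjectivity follows at once: letting \(\varepsilon\to\infty\) gives \(\mathbb C=\bigcup_\varepsilon B_\varepsilon(\textnormal{dev}(\widetilde q))\subseteq\textnormal{dev}(\widetilde\Sigma)\).

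The key step is a path-lifting property. Fix \(\widetilde q\) and a point \(w\in B_\varepsilon(\textnormal{dev}(\widetilde q))\). Let \(\sigma(t)=\textnormal{dev}(\widetilde q)+t\,(w-\textnormal{dev}(\widetilde q))\), for \(t\in[0,1]\), be the straight segment, whose length is \(\ell=|w-\textnormal{dev}(\widetilde q)|<\varepsilon\). I want a continuous path \(\widetilde\sigma\) in \(\widetilde\Sigma\) with \(\widetilde\sigma(0)=\widetilde q\) and \(\textnormal{dev}\circ\widetilde\sigma=\sigma\).

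Let \(T\subseteq[0,1]\) be the set of times \(t\) up to which such a lift exists. I will show \(T=[0,1]\) by three observations.

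\emph{Openness of \(T\).} Near a regular point, \(\textnormal{dev}\) is a local isometric homeomorphism, so the lift extends. Near a cone point of order \(m\), \(\textnormal{dev}\) is a branched chart \(u\mapsto u^{m+1}\) (Remark \ref{rmk:branchedchart}). This map is surjective onto a small disc and has path lifts, possibly non-unique ones: choose any one of the \(m+1\) outgoing directions at angle \(\ge\pi\). So the lift extends in this case too.

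\emph{Length control.} The lift is a locally isometric preimage of a segment, so its length in the lifted singular Euclidean metric equals the length of the corresponding piece of \(\sigma\). In particular every partial lift has length at most \(\ell<\varepsilon\).

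\emph{Closedness of \(T\).} Suppose the lift exists on \([0,t_0)\). By the length bound, \(\widetilde\sigma(t)\) is Cauchy as \(t\to t_0\). The metric on \(\widetilde\Sigma\) is complete: it is the lift of a metric on the compact surface \(\Sigma\), and the singular metric is a length metric, so Hopf–Rinow, or directly the fact that closed balls are compact, applies. Hence the limit exists and \(t_0\in T\).

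Connectedness of \([0,1]\) then gives \(T=[0,1]\). The endpoint \(\widetilde\sigma(1)\) lies at distance at most \(\ell<\varepsilon\) from \(\widetilde q\) and satisfies \(\textnormal{dev}(\widetilde\sigma(1))=w\). Therefore \(w\in\textnormal{dev}(B_\varepsilon(\widetilde q))\), as required.

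I expect the main obstacle to be completeness of \(\widetilde\Sigma\) with its singular metric, and behaviour at the cone points. For completeness, I would argue that the deck group acts cocompactly by isometries, so there is a uniform \(r>0\) such that every closed \(r\)-ball in \(\widetilde\Sigma\) is compact. This holds because, for \(r\) smaller than half the injectivity radius of \(\Sigma\), each such ball projects isometrically to a compact ball of \(\Sigma\). A Cauchy sequence then eventually lies in one compact ball and converges. The cone-point case of the openness step uses only the local model from Lemma \ref{lem:singgeo}. I will also remark that the \(\mathrm{CAT}(0)\) property of the previous proposition is not needed here; only completeness matters.
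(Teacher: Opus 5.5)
Your argument is correct, and it takes a genuinely different route from the paper.

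\textbf{The paper's route.} The paper works locally at a fixed radius. It sets \(2\varepsilon\) equal to the minimal distance between distinct cone points and separates two cases: \(B_\varepsilon(\widetilde q)\) contains no cone point, or exactly one cone point \(\widetilde p\). In the second case it takes the geodesic segment \(r\) from \(\widetilde p\) to \(\widetilde q\), together with two segments \(\tau_1,\tau_2\) leaving \(\widetilde p\) at angle \(\pi\) on either side of \(r\). These cut out a region containing \(r\) on which \(\textnormal{dev}\) is a local isometry whose image contains \(B_\varepsilon(\textnormal{dev}(\widetilde q))\). Surjectivity then follows by contradiction, choosing \(\widetilde q\) whose developed image lies at distance \(\delta<\varepsilon/2\) from \(\partial\,\textnormal{dev}(\widetilde\Sigma)\).

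\textbf{Your route and what it buys.} You prove a global path-lifting property for straight segments, with completeness of \(\widetilde\Sigma\) as the key input. This gives the inclusion for every radius at once, so surjectivity is immediate and no boundary argument is needed. It also removes any dependence on how cone points are separated. The paper's choice of \(\varepsilon\) handles that only loosely: in \(\widetilde\Sigma\), distinct lifts of the \emph{same} cone point can be closer than \(2\varepsilon\), and the minimum is over an empty set when there is a single cone point. The price is that you must establish completeness, which your compact-small-balls argument does. You must also cope with non-uniqueness of lifts at cone points, which the paper never meets.

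\textbf{One step to tighten.} Your definition of \(T\) and the sentence ``suppose the lift exists on \([0,t_0)\)'' do not quite mesh. Knowing that a lift exists on each \([0,s]\) with \(s<t_0\) does not give a \emph{single} lift on \([0,t_0)\), because different partial lifts may branch differently at cone points. There are two easy fixes.
\begin{itemize}
\item Fix a deterministic branching rule. For example, at a cone point always leave along the direction at angle exactly \(\pi\) counter-clockwise from the incoming one, and fix an initial direction if \(\widetilde q\) is itself a cone point. Lifts are then unique, so partial lifts agree on overlaps and patch together.
\item Alternatively, take a maximal lift by Zorn's lemma and run your openness and closedness arguments on its domain.
\end{itemize}
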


\begin{proof}
    Let \(\Sigma\) be a translation surface and let \(\Delta=\{\, p_1, \dots, p_n\,\} \) be the cone points of the translation surface. Let \(d(\,\cdot,\,\cdot\,)\) be the distance induced by the associated singular Euclidean metric and define
    \begin{equation}
    2\varepsilon = \min\{\, d(\,p_i,\, p_j\,) \mid i \neq j \,\}.
    \end{equation}
    Let \( \widetilde{q} \in \widetilde{\Sigma} \) be a regular point on the universal cover, and suppose that the Euclidean ball \( B_\varepsilon(\,\widetilde{q}\,) \subset \widetilde{\Sigma} \) does not contain any cone point. Then the result follows immediately, since the developing map
    \begin{equation}
    \mathrm{dev} \colon \widetilde{\Sigma} \longrightarrow \mathbb{C}
    \end{equation}
    is a local isometry away from singularities and hence maps \( B_\varepsilon(\,\widetilde{q}\,) \) isometrically to a Euclidean ball in \( \mathbb{C} \). 

    \smallskip
    
    \noindent Now suppose instead that \( B_\varepsilon(\,\widetilde{q}\,) \) contains exactly one cone point, say \( \widetilde{p} \), of order \( m\geq 1 \). Let \( r \) be the geodesic segment joining \( \widetilde{p} \) and \( \widetilde{q} \). Since the angle around \( \widetilde{p} \) exceeds \( 2\pi \), we can consider two additional geodesic segments, say \( \tau_1 \) and \( \tau_2 \), starting at \( \widetilde{p} \), forming angles of \( \pi \) on either side of \( r \). These lie entirely within the ball \( B_\varepsilon(\,\widetilde{q}\,) \), and their union divides the ball into two open regions. The developing map, restricted to the region containing \( r \), is again a local isometry and its image contains the Euclidean ball \( B_\varepsilon(\,\mathrm{dev}(\,\widetilde{q}\,)\,) \). Therefore,
    \begin{equation}
    B_\varepsilon(\,\textnormal{dev}(\,\widetilde{q}\,)\,) \subseteq \textnormal{dev}(\,B_\varepsilon(\,\widetilde{q}\,)\,)
    \end{equation}
    for any \( \widetilde{q} \in \widetilde{\Sigma} \). Suppose now that the developing map is not surjective, \textit{i.e.} \( \mathrm{dev}(\,\widetilde{\Sigma}\,) \subsetneq \mathbb{C} \). Then there exists a point at positive Euclidean distance from the boundary \( \partial \mathrm{dev}(\,\widetilde{\Sigma}\,) \). Fix \( \delta > 0 \) such that \( 0 < \delta < \varepsilon/2 \), and choose \( \widetilde{q} \in \widetilde{\Sigma} \) such that \( \mathrm{dev}(\,\tilde{q}\,) \) has distance \( \delta \) from \( \partial \mathrm{dev}(\,\widetilde{\Sigma}\,) \). Then
    \begin{equation}
    B_\varepsilon(\,\mathrm{dev}(\,\widetilde{q}\,)\,) \subseteq \mathrm{dev}(\,\widetilde{\Sigma}\,),
    \end{equation}
    which contradicts the assumption that \( \mathrm{dev}(\,\widetilde{\Sigma}\,) \) is not the entire plane. Hence, the developing map is surjective.
\end{proof}

\begin{rmk}
    In \cite{Tro07}, the author provides an alternative proof which, however, relies on the same underlying ideas. Unlike our approach, he considers a triangulation of the surface into Euclidean triangles, and uses the key property that a developing is an isometry on each triangle, see \S\ref{ssec:devhol}. It is then shown that a quadrilateral formed by two adjacent triangles is mapped to a quadrilateral. If the developing map were not surjective, one can show that a boundary point of the image must lie on the side of some triangle; however, such a triangle is always contained in some quadrilateral, yielding a contradiction. Our proof does not rely on a triangulation of the surface but instead on properties of the induced distance. It should not be surprising that the two proofs are equivalent.
\end{rmk}
  
\medskip

\subsection{Completeness}\label{ssec:complete} We begin with a general remark: a geometric structure induced by a possibly singular Riemannian metric can be complete in two senses: metric and geometric. We now examine these aspects in the case of translation surfaces.

\subsubsection{Metric sense} In our setting completeness is ensured by the fact that the translation structures are defined on compact surfaces and every Cauchy sequence converges with respect to the distance induced by the singular Euclidean metric. Therefore a translation structure defines a complete metric space. Moreover, a translation structure is also a length space because the length metric coincides with the one induced by the translation structure. By the Hopf--Rinow Theorem, every translation surface is also a geodesic space. As a consequence, any two distinct points can be joined by a geodesic. In the theory of translation surfaces, geodesics joining two conical points (possibly the same) play a particularly important role. These are known as \textit{saddle connections}. It can be shown that every translation surface admits a triangulation made entirely of saddle connections.

\smallskip

\subsubsection{Thurston sense} We have seen in the previous paragraph that translation structures are metrically complete. In the present one we show they are not complete in Thurston's sense. Indeed, this phenomenon occurs precisely because the developing map of a translation surface is not a global diffeomorphism. This failure arises due to the presence of critical points corresponding to the cone singularities on \(\Sigma\).

\smallskip

\noindent Let \( \mathrm{dev} \colon \widetilde{\Sigma} \longrightarrow \mathbb{C} \) be the developing map of a translation structure on \(\Sigma\), and let \( \rho \colon \pi_1(\,\Sigma\,) \longrightarrow \mathbb{C} \) be its holonomy representation. For the readers' convenience we recall the following notion: A subset \( \Omega \) of \(\mathbb C \) is called a \textit{fundamental domain}\index{fundamental!domain} for the action of \( \Gamma=\textnormal{Im}(\,\rho\,)\) if:
\begin{enumerate}
    \item[1.] The union of the translates covers the surface:
    \[
    \mathbb C= \bigcup_{\gamma \,\in\, \Gamma} \,\gamma(\,\Omega\,),
    \]
    \item[2.] The interiors of distinct translates are disjoint:
    \[
    \gamma(\,\Omega\,)^\circ \cap \gamma'(\,\Omega\,)^\circ = \emptyset \quad \text{for all } \gamma \ne \gamma' \in \Gamma.
    \]
\end{enumerate}

\noindent Clearly, this definition is not specific to translation structures, but rather an adaptation of a much more general definition valid for groups acting on topological spaces. In terms of tilings, a fundamental domain is a prototile, and the complex plane can be tessellated using only copies of this tile labelled with the elements of \(\Gamma\). The following equivalence is well known in the literature and some implications dates back to Fricke--Klein.

\begin{equation}
    \begin{split}
        \text{completeness} & \Longleftrightarrow \mathrm{dev} \colon \widetilde{\Sigma} \longrightarrow \mathbb C \text{ is a diffeomorphism} \\
        & \Longleftrightarrow \rho(\,\pi_1(\,\Sigma\,)\,) \,< \,\mathbb C \text{ acts freely and properly discontinuously} \\
        & \Longleftrightarrow \Sigma \cong \rho(\,\pi_1(\,\Sigma\,)\,) \backslash \mathbb C.
    \end{split}
\end{equation}

\medskip

\noindent In general, the developing map fails to be a global diffeomorphism otherwise it would be a global isometry. As a consequence, a fundamental domain for the action does not exist, and the developing map produces an overlapping tiling of \(\mathbb{C}\) in the sense that the interiors of two tiles overlap.  In particular, these structures are not uniformisable in the following sense: there is no open domain \(\Omega \subset \mathbb{C}\) such that the structure is isometric to the quotient \(\rho(\,\pi_1(\,\Sigma\,)\,) \backslash \Omega\) by a proper discontinuous action without fixed points. The following holds:

\begin{lem}\label{lem:torusarecomplete}
    A translation structure on a closed surface \(\Sigma\) of genus \(g>0\) is complete in Thurston's sense if and only if \(g=1\).
\end{lem}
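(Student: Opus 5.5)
I will prove the two implications separately, using the characterisation of completeness recalled just before the statement: completeness in Thurston's sense is equivalent to the developing map \(\textnormal{dev}\colon\widetilde\Sigma\longrightarrow\mathbb C\) being a diffeomorphism.

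For the implication \(g=1\Rightarrow\) complete, I will invoke Corollary~\ref{cor:speccases}: a translation structure on a genus one surface has no conical singularities, so the singular Euclidean metric is a genuine flat Riemannian metric. Since \(\Sigma\) is compact, this metric is complete, and its lift to \(\widetilde\Sigma\) is complete as well. By Remark~\ref{rmk:devgenusone}, the developing map is an orientation-preserving local isometry between complete, simply connected flat surfaces. The standard covering argument (a local isometry from a complete Riemannian manifold is a covering map, and \(\mathbb C\) is simply connected) then shows that \(\textnormal{dev}\) is a global isometry, in particular a diffeomorphism. Hence the structure is complete, and \(\Sigma\cong\rho(\pi_1(\Sigma))\backslash\mathbb C\) is a flat torus.

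For the converse, suppose \(g\ge 2\). By the Gauss--Bonnet formula, Proposition~\ref{prop:gbcond}, we have \(\sum m_i=2g-2>0\), so \(\Delta\neq\emptyset\). Pick a cone point \(p\) of order \(m\ge1\) and a lift \(\widetilde p\in\widetilde\Sigma\). By the local description of developing maps in \S\ref{ssec:devhol}, there is a local coordinate \(u\) centred at \(\widetilde p\) in which \(\textnormal{dev}(u)=u^{m+1}\), up to a translation. This map is not locally injective near \(\widetilde p\): every point near the origin has \(m+1\ge2\) preimages, and its differential vanishes at \(u=0\). Therefore \(\textnormal{dev}\) is neither injective nor a local diffeomorphism, so it is not a diffeomorphism. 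By the equivalence above, the structure is not complete.

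The main obstacle is justifying the equivalence between completeness and \(\textnormal{dev}\) being a diffeomorphism, which the paper quotes as well known; I will take it as given. Within the argument itself, the only delicate point is the genus one direction, where one must make sure the covering-space argument applies. This is exactly the content of Remark~\ref{rmk:devgenusone}, so I will cite it rather than reprove it. I expect no real difficulty in the \(g\ge2\) direction: once Gauss--Bonnet produces a cone point, the local branched form of \(\textnormal{dev}\) finishes the proof.
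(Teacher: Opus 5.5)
Your proof is correct. The genus one direction is the paper's argument: Corollary~\ref{cor:speccases} removes the cone points, and the Riemannian covering argument of Remark~\ref{rmk:devgenusone} makes \(\textnormal{dev}\) a global isometry. The paper's first proof simply cites that remark for both directions, although the remark only treats \(g=1\). Your use of Gauss--Bonnet together with the local branched form \(u\mapsto u^{m+1}\) makes explicit what the paper only alludes to just before the lemma, namely that the failure comes from the critical points at the cone singularities.

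For \(g\ge2\), the paper's alternative argument goes a genuinely different way. Since \(\pi_1(\Sigma)\) is non-abelian and \(\rho\) takes values in the abelian group \(\mathbb C\), the kernel of \(\rho\) contains a non-trivial element \(\gamma\). The equivariance \(\textnormal{dev}\circ\gamma=\textnormal{dev}+\rho(\gamma)=\textnormal{dev}\), together with the freeness of the deck action, then shows that \(\textnormal{dev}\) identifies the distinct points \(x\) and \(\gamma x\).

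The two routes buy different things. The paper's argument is global and purely group-theoretic: it needs neither Gauss--Bonnet nor the local model at a zero, and it applies to any structure with abelian holonomy on a surface with non-abelian fundamental group. Yours is local and gives a stronger conclusion, namely that \(\textnormal{dev}\) fails to be locally injective near every lift of a cone point. The price is that you must first produce a cone point via Gauss--Bonnet.
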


\begin{proof}
    We have already observed in Remark \ref{rmk:devgenusone} that the developing map is a global diffeomorphism, in fact an isometry, if and only if \(g=1\).
\end{proof}

\begin{proof}[Alternative argument for Lemma \ref{lem:torusarecomplete}] 

By adopting a more algebraic perspective, let \(\rho\) be the holonomy representation of the given translation structure. Since \(\rho\) takes values in an abelian group, then the subgroup generated by the commutators is properly contained in the kernel of the representation. If \(g \ge 2\), then \(\rho\) cannot be faithful. Recall that a developing map satisfies an equivariance property with respect to the representation \(\rho\). As a consequence, if \(g \ge 2\), a developing map cannot be injective and thus cannot be a global diffeomorphism. Therefore, the assumption \(g = 1\) is strictly necessary.

\smallskip

\noindent On the other hand, if \(g = 1\), then the image of \(\rho\) is a lattice \(\Lambda < \mathbb{C}\) and the action on \(\mathbb C\) is free and properly discontinuous. Since \(\mathbb{C} / \Lambda \cong \Sigma\), the resulting translation structure is complete in the sense of Thurston, as desired.
\end{proof}

\medskip

\section{Moduli spaces of translation surfaces in genus one}\label{sec:moduligenusone}

\noindent In the present section we shall discuss the moduli space of translation surfaces in genus one. We keep this section separate from the case of surfaces of genus \(g \ge 2\), which will be addressed in \S\ref{sec:modulihighgen}.

\smallskip

\noindent Let us assume \(g=1\). The moduli space \(\mathcal M_1\) of compact Riemann surfaces of genus one is identified with the quotient space \(\mathbb H\,/\,\textnormal{SL}(2,\mathbb Z)\), where \(\mathbb H\) denotes the upper half-plane. This identification is well known and represents a cornerstone result in the theory of Riemann surfaces and elliptic curves. Let us briefly review it from our perspective, with an emphasis on the geometric aspects of translation structures. 

\smallskip

\noindent The main purpose here is to define a moduli space of translation surfaces of genus one. Essentially, there are two approaches: a geometric one, in which we determine fundamental domains, and an analytic one, in which we describe the holomorphic \(1\)-forms supported on a Riemann surface. 

\smallskip

\subsection{Finding fundamental domains}\label{sssec:fundomain} Let \(\mu_1,\,\mu_2\) be non-real collinear elements of \(\mathbb{C}\) such that \(\{\,\mu_1,\,\mu_2\,\}\) is a positive oriented basis for \(\mathbb C\) as a real vector space. The additive subgroup
\begin{equation}\label{eq:lattice}
    \Lambda=\Lambda(\,\mu_1, \mu_2\,)\,=\,\Big\{ \,m\,\mu_1\,+\,n\,\mu_2\,\,|\,\, m,n\in\mathbb Z\,\Big\}
\end{equation}
which they generate is called \textit{lattice}\index{lattice} and we shall call \(\mu_1\) and \(\mu_2\) a basis for \(\Lambda\). We may notice that \(\Lambda\) acts on the complex plane by translations. Since it is discrete, this action is free and properly discontinuous. 

\smallskip

\noindent A \textit{fundamental domain}\index{fundamental!domain} for such an action is the parallelogram\index{parallelogram} \( \textnormal{P} = \left\{\, x\,\mu_1 + y\,\mu_2 \,\middle|\, x,y \in [0, 1] \,\right\} \) in \(\mathbb C\). Every point \(z \in \mathbb{C}\) is equivalent under the action of \(\Lambda\) to some point in \(\textnormal{P}\), and two points in \(\textnormal{P}\) are equivalent if they both lie on its boundary \(\partial \textnormal{P}\) and their difference is \(\mu_1\) or \(\mu_2\). Let \(\sim\) be this equivalence relation. Since each coset in \(\mathbb{C}/\Lambda\) admits a unique representative in \(\textnormal{P}/\sim\,\), the quotient \(\mathbb{C}/\Lambda\) is constructed by identifying the opposite edges of \(\textnormal{P}\) according to the action of \(\Lambda\). The holomorphic structure on \( \mathbb{C} \) yields a holomorphic structure on \( \mathbb{C}/\Lambda \), making it a Riemann surface. This surface is compact, since it is a continuous image of \( \textnormal{P} \), which, by the Heine–Borel theorem, is compact. Vice versa, by the uniformization theorem, all compact Riemann surfaces of genus one arise in this way. It is an easy matter to find a triangulation of \( \mathbb{C}/\Lambda \) (this can be directly drawn in \( \textnormal{P} \)) which shows that this surface has genus one and naturally carries a translation structure.

\smallskip

\begin{rmk}
    Under a topological perspective, we are gluing the opposite sides of \(\textnormal{P}\) by using translations and the resulting surface is a torus. Based on the definition of a translation surface we provided in \S\ref{sssec:firstdef}, we can deduce that such a torus inherits a well defined translation structure.
\end{rmk}  

\begin{rmk}
    In the present section we aim to define the moduli space of translation surfaces in genus one in terms of fundamental domains, namely parallelograms on the complex plane. In the case where two parallelograms \(\mathbb C\) differ by a translation, then the resulting translation surfaces are the same. This is a consequence of Lemma \ref{lem:isolem}. Therefore, in what follows, we shall assume our parallelograms to have one vertex at the origin. This leads us to avoid formulations like "up to translations" in the following.
\end{rmk}

\noindent A translation structure on a torus clearly depends on the parallelogram \(\textnormal P\) which in turn depends on the lattice \(\Lambda(\,\mu_1,\,\mu_2\,)\). We wish to provide a criterion that tells us when two lattices define the same translation surface. The desired criterion is in fact part of a more general result that classifies genus one Riemann surfaces defined via lattices in the complex plane. Based on Lemma \ref{lem:isolem} in \S\ref{ssec:equiv}, given two lattices, say \(\Lambda_1\) and \(\Lambda_2\), the resulting translation surfaces \(\mathbb C/\Lambda_1\) and \(\mathbb C/\Lambda_2\) are equivalent if the underlying Riemann surfaces are biholomorphic.


\begin{prop}\label{prop:samecomplexgone}
Suppose that \(\Lambda_1\) and \(\Lambda_2\) are lattices in \(\mathbb{C}\). Then the Riemann surfaces \(\mathbb{C}/\Lambda_1\) and \(\mathbb{C}/\Lambda_2\) are biholomorphic if and only if \(\Lambda_1\) and \(\Lambda_2\) are similar lattices, that is \(\Lambda_2 = \lambda\, \Lambda_1\) for some \(\lambda \in \mathbb{C} \setminus \{\,0\,\}\).
\end{prop}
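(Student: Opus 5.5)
The plan is to prove the two implications separately. The easy direction is "similar implies biholomorphic", which I would settle first; the substance is the converse, which I would get by lifting a biholomorphism to the universal cover \(\mathbb C\) and showing that the lift is affine.

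\emph{Similar lattices give biholomorphic tori.} Suppose \(\Lambda_2=\lambda\Lambda_1\). The linear map \(z\mapsto\lambda z\) is a biholomorphism of \(\mathbb C\). It sends \(\Lambda_1\)-cosets to \(\Lambda_2\)-cosets, because \(z-w\in\Lambda_1\) holds if and only if \(\lambda z-\lambda w\in\Lambda_2\). It therefore descends to a bijection \(\mathbb C/\Lambda_1\to\mathbb C/\Lambda_2\). This bijection is holomorphic with holomorphic inverse (induced by \(z\mapsto\lambda^{-1}z\)), since the projections \(\mathbb C\to\mathbb C/\Lambda_i\) are local biholomorphisms that define the complex structures.

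\emph{Biholomorphic tori give similar lattices.} Let \(f\colon\mathbb C/\Lambda_1\to\mathbb C/\Lambda_2\) be a biholomorphism. Since \(\mathbb C\) is simply connected and the projections are covering maps, \(f\) lifts to a continuous map \(F\colon\mathbb C\to\mathbb C\) with \(\pi_2\circ F=f\circ\pi_1\). This lift is holomorphic, because locally it is \(f\) written in the charts given by the projections. Composing with a translation (using Lemma~\ref{lem:isolem} and the earlier remark that translations are harmless), I may assume \(F(0)=0\).

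For each \(\mu\in\Lambda_1\), the function \(z\mapsto F(z+\mu)-F(z)\) is continuous and takes values in the discrete set \(\Lambda_2\). Since \(\mathbb C\) is connected, it is constant; call this constant \(c(\mu)\). Differentiating gives \(F'(z+\mu)=F'(z)\), so \(F'\) is a \(\Lambda_1\)-periodic entire function. Such a function is bounded, since it is bounded on the compact parallelogram \(\textnormal P\). By Liouville's theorem \(F'\) is a constant \(\lambda\), and hence \(F(z)=\lambda z\). Moreover \(\lambda\neq0\), because \(f\) is bijective and so \(F\) is not constant. Then \(\lambda\Lambda_1=\{c(\mu)\}\subseteq\Lambda_2\).

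The step I expect to be the main obstacle is the reverse inclusion \(\Lambda_2\subseteq\lambda\Lambda_1\). I will apply the same construction to \(f^{-1}\), choosing its lift \(G\) with \(G(0)=0\); this gives \(G(z)=\lambda' z\) with \(\lambda'\Lambda_2\subseteq\Lambda_1\). The composite \(G\circ F\) lifts the identity of \(\mathbb C/\Lambda_1\) and fixes \(0\). A lift of the identity is a deck transformation, i.e. a translation by an element of \(\Lambda_1\); since \(G\circ F\) fixes \(0\), that translation is trivial and \(G\circ F=\mathrm{id}\). Hence \(\lambda'=\lambda^{-1}\), and therefore \(\Lambda_2\subseteq\lambda\Lambda_1\). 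Combining the two inclusions yields \(\Lambda_2=\lambda\Lambda_1\).
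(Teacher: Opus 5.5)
Your proof is correct. It has the same skeleton as the paper's: lift \(f\) to \(\mathbb C\), show the lift is \(z\mapsto\lambda z\), and read off the lattices. The difference lies in how you justify the key step.

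The paper simply asserts that the lift is a biholomorphism of \(\mathbb C\). It then implicitly invokes the classification \(\mathrm{Aut}(\mathbb C)=\{z\mapsto\lambda z+c\}\), which tacitly requires already knowing the lift is bijective. Finally, it obtains \(\lambda\Lambda_1=\Lambda_2\) from ``well-definedness'' of \(f\), but strictly this gives only \(\lambda\Lambda_1\subseteq\Lambda_2\); the reverse inclusion needs injectivity of \(f\).

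Your argument, via Liouville applied to the \(\Lambda_1\)-periodic derivative \(F'\), uses only that \(F\) is a holomorphic equivariant lift. So it in fact shows that every holomorphic map \(\mathbb C/\Lambda_1\to\mathbb C/\Lambda_2\) lifts to \(z\mapsto\lambda z+c\) with \(\lambda\Lambda_1\subseteq\Lambda_2\). Bijectivity enters only at the end, through your lift of \(f^{-1}\), to upgrade the inclusion to equality. This makes your version more self-contained and more general, and it fills the inclusion gap the paper glosses over, at the cost of a few extra lines.

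Two minor remarks. The appeal to Lemma~\ref{lem:isolem} is misplaced, since that lemma concerns charts of translation surfaces. The normalisation \(F(0)=0\) comes simply from replacing \(F\) by \(F-F(0)\), which is a lift of \(f\) post-composed with a translation of \(\mathbb C/\Lambda_2\).

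The normalisation is not actually needed either. For arbitrary lifts, \(G\circ F(z)=\lambda'\lambda z+\mathrm{const}\) lifts the identity, so it is a deck translation \(z\mapsto z+\nu\). Hence \(\lambda\lambda'=1\) directly.
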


\begin{proof}
    If there exists a biholomorphism \( F \colon \mathbb{C} \to \mathbb{C} \), defined as \( z \mapsto \lambda z\,+\,c \), such that \( \lambda \Lambda_1\,+\,c = \Lambda_2 \), then it clearly induces a biholomorphism between the corresponding quotient surfaces. Conversely, any biholomorphism
    \begin{equation}\label{eq:locmapgone}
     f\colon \mathbb{C}/\Lambda_1 \to \mathbb{C}/\Lambda_2
    \end{equation}
    lifts to a biholomorphism \( F \colon \mathbb{C} \to \mathbb{C} \) of the form \( F(\,z\,) = \lambda z + c \) for some \( \lambda \in \mathbb{C}^* \) and \( c \in \mathbb{C} \). We may assume \(c=0\) because translations induce biholomorphisms of all tori. For \( f \) to be well defined, \( F \) must satisfy \( F(\,\Lambda_1\,) = \Lambda_2 \), and thus \( \lambda \Lambda_1 = \Lambda_2 \).
\end{proof}

\noindent The proof is classical and can be found, for instance, in \cite{JW} and we have included it here because 
it contains the key idea for proving the following corollary that provides the aforementioned criterion. 

\begin{cor}\label{cor:sametransone}
    Suppose that \(\Lambda_1\) and \(\Lambda_2\) are lattices in \(\mathbb{C}\). Then they determine the same translation surface if and only if \(\Lambda_1 = \, \Lambda_2\).
\end{cor}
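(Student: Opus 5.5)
The plan is to run the argument of Proposition~\ref{prop:samecomplexgone} again, now also tracking the holomorphic \(1\)-form. For a lattice \(\Lambda\), the translation surface \(\mathbb C/\Lambda\) is the pair \((\mathbb C/\Lambda,\omega_\Lambda)\). Here \(\omega_\Lambda\) is the abelian differential obtained by pushing \(dz\) down from \(\mathbb C\). This works because \(dz\) is invariant under the translations in \(\Lambda\). By the definition of equivalence in \S\ref{ssec:equiv}, the statement says the following: \((\mathbb C/\Lambda_1,\omega_{\Lambda_1})\) and \((\mathbb C/\Lambda_2,\omega_{\Lambda_2})\) are equivalent if and only if \(\Lambda_1=\Lambda_2\).

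The \emph{if} direction is immediate. When \(\Lambda_1=\Lambda_2\) the identity map is a biholomorphism, and it pulls \(\omega_{\Lambda_2}\) back to \(\omega_{\Lambda_1}\).

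For the converse, let \(f\colon\mathbb C/\Lambda_1\to\mathbb C/\Lambda_2\) be a biholomorphism with \(f^*\omega_{\Lambda_2}=\omega_{\Lambda_1}\).
\begin{itemize}
    \item[1.] As in the proof of Proposition~\ref{prop:samecomplexgone}, lift \(f\) to a biholomorphism \(F\colon\mathbb C\to\mathbb C\). It has the form \(F(z)=\lambda z+c\) with \(\lambda\neq0\), and it satisfies \(\lambda\Lambda_1=\Lambda_2\).
    \item[2.] The covering projections \(\pi_i\colon\mathbb C\to\mathbb C/\Lambda_i\) satisfy \(\pi_i^*\omega_{\Lambda_i}=dz\) and \(\pi_2\circ F=f\circ\pi_1\). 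Pulling back \(\omega_{\Lambda_2}\) along both sides of this identity gives \(F^*dz=\pi_1^*f^*\omega_{\Lambda_2}=\pi_1^*\omega_{\Lambda_1}=dz\).
    \item[3.] Since \(F^*dz=\lambda\,dz\), step~2 forces \(\lambda=1\). Then step~1 gives \(\Lambda_2=\Lambda_1\).
\end{itemize}
Alternatively, one can reach the same conclusion from Lemma~\ref{lem:isolem}. That lemma says \(f\) is locally a translation in the natural charts, and those charts are exactly local inverses of the \(\pi_i\). So the lift \(F\) has derivative identically \(1\).

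The only step that needs care is the lifting in step~1. Lifting \(f\) to \(F\) uses the fact that \(\mathbb C\) is the universal cover of both tori. The affine form of \(F\) follows because \(F'\) is a bounded entire function, hence constant by Liouville's theorem, which is the classical argument. Once the lift is available, the rest is the one-line pullback computation in steps~2 and~3.

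One also has to use the convention stated just before the corollary: parallelograms are normalised to have a vertex at the origin. With this convention the translation constant \(c\) is irrelevant. A translation by \(c\) descends to a translation automorphism of each torus, and such an automorphism preserves the form \(\omega_{\Lambda}\).
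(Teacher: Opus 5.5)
Your proposal is correct and is essentially the paper's argument. The paper only observes, from the proof of Proposition~\ref{prop:samecomplexgone}, that the lift \(F(z)=\lambda z+c\) is a translation in local charts exactly when \(\lambda=1\); your pullback computation \(F^*dz=\lambda\,dz=dz\) makes that observation explicit, and your closing appeal to the normalisation convention is unnecessary (though harmless), since \(\lambda\Lambda_1=\Lambda_2\) holds regardless of \(c\).
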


\begin{proof}
    From the proof of Proposition \ref{prop:samecomplexgone} we can easily deduce that the mapping \(f\colon\mathbb{C}/\Lambda_1 \to \mathbb{C}/\Lambda_2 \) is a translation in local charts if and only if \(\lambda=1\). Thence the desired conclusion.
\end{proof}

\noindent The above results show that two lattices define the same translation structure if and only if they coincide as additive subgroups of \( \mathbb{C} \). However, the fundamental domains they determine, \textit{i.e.} the corresponding parallelograms as defined above, need not be isometric. In fact, they generally are not, as their shape depends heavily on the choice of generators. At this point, one may naturally ask whether, given two such parallelograms, it is possible to subdivide one of them into triangles in such a way that, by applying suitable translations, one recovers the other. And if so, how can such a subdivision be determined? 

\begin{figure}[!ht]
    \centering
    \begin{tikzpicture}[scale=1.75, every node/.style={scale=0.875}]
    \definecolor{pallido}{RGB}{221,227,227}
    \definecolor{sabbia}{RGB}{207,185,151}



    \draw[pallido, very thin, dashed] ( 2   , 0   )--( 6.25, 0   );
    \draw[pallido, very thin, dashed] ( 4   , 2   )--( 6.25, 2   );
    \draw[pallido, very thin, dashed] (-0.25, 4   )--( 6.25, 4   );
    \draw[pallido, very thin, dashed] ( 0   , 0   )--(-0.25, 0   );
    \draw[pallido, very thin, dashed] ( 0   , 2   )--(-0.25, 2   );
    
    \draw[pallido, very thin, dashed] ( 0   , 2   )--( 0   , 4.25);
    \draw[pallido, very thin, dashed] ( 0   ,-0.25)--( 0   , 0   );
    \draw[pallido, very thin, dashed] ( 2   , 2   )--( 2   , 4.25);
    \draw[pallido, very thin, dashed] ( 2   ,-0.25)--( 2   , 0   );
    \draw[pallido, very thin, dashed] ( 4   ,-0.25)--( 4   , 2   );
    \draw[pallido, very thin, dashed] ( 4   , 4.25)--( 4   , 3   );
    \draw[pallido, very thin, dashed] ( 6   ,-0.25)--( 6   , 4.25);

    \draw[sabbia, thin] (0,0)--(2,2);
    \draw[sabbia, thin] (6,4)--(2,2);
    \draw[sabbia, thin] (6,4)--(4,2);
    \draw[sabbia, thin] (0,0)--(4,2);
    \draw[sabbia, thin] (0,1)--(2,2);
    
    \draw[sabbia, thin] (0,0)--(2,0)--(2,2)--(0,2)--(0,0);
    \draw[sabbia, thin] (2,2)--(4,2)--(4,3);

    \pattern [color=sabbia, opacity=0.500] (0,0)--(2,1)--(2,2)--(0,0);
    \pattern [color=sabbia, opacity=0.375] (2,2)--(4,2)--(2,1)--(2,2);
    \pattern [color=sabbia, opacity=0.250] (2,2)--(4,2)--(4,3)--(2,2);
    \pattern [color=sabbia, opacity=0.125] (4,2)--(4,3)--(6,4)--(4,2);

    \pattern [color=sabbia, opacity=0.125] (0,0)--(0,1)--(2,2)--(0,0);
    \pattern [color=sabbia, opacity=0.375] (0,1)--(0,2)--(2,2)--(0,1);
    \pattern [color=sabbia, opacity=0.250] (0,0)--(2,0)--(2,1)--(0,0);


    \foreach \x in {0,2,4,6} {
        \foreach \y in {0,2,4} {
            \fill[black] (\x,\y) circle (0.5pt);
        }
    };
    \end{tikzpicture}
    \caption{Different fundamental domains, \textit{i.e.} parallelograms, may define the same translation surface. Here is depicted the case \(\Lambda=\mathbb Z\,+\,i\,\mathbb Z\) with respect to the base \(\{\,1,\,i\,\}\) and the base \(\{\,2+i,\,1+i\,\}\). Triangles with the same shade are isometric and differ by a translation.}
    \label{fig:parallelograms}
\end{figure}

\smallskip

\noindent To address this question, let us begin with some general observations. A lattice \(\Lambda\), as defined in \eqref{eq:lattice}, not only determines a parallelogram \(\textnormal{P}\), as already mentioned, but in fact induces a tiling of the plane, where each tile has vertices in the lattice and is isometric to \(\textnormal{P}\). Moreover, there is a fairly explicit group isomorphism between \(\Lambda\) and \(\mathbb{Z}^2\). We denote by \(\textnormal{SL}(2, \Lambda)\) the group of automorphisms of \(\Lambda\); it can be shown, without too much effort, that this group is isomorphic to \(\textnormal{SL}(2, \mathbb{Z})\), hence the notation. 

\smallskip

\noindent The action of \(\textnormal{SL}(2, \Lambda)\) preserves the lattice, but not the tiling. In fact, the tiling is preserved only by the action of the lattice itself via translations. To determine the effect on the tiling, it is enough to understand the action on the tile \(\textnormal{P}\). Suppose \(\textnormal{P}\) is a fundamental domain of some lattice, say \(\Lambda(\,\mu_1,\mu_2\,)\). Given any element \(A\in\textnormal{SL}(2, \Lambda)\),
the image \(A \cdot \textnormal{P}\) is a tile whose vertices are lattice points determined by the vectors 
\(A(\,\mu_1\,)\) and \(A(\,\mu_2\,)\). If we imagine this tile overlaid on the original tiling, in which \(\textnormal{P}\) served as a fundamental domain, then \(A \cdot \textnormal{P}\) admits a decomposition into polygons determined by the intersection of \(A \cdot \textnormal{P}\) itself with translated tiles of \(\textnormal{P}\). These polygons, once appropriately translated and reassembled, yield the original parallelogram \(\textnormal{P}\). Therefore, two parallelograms in the plane define the same translation surface if and only if, after suitable decomposition and rearrangement, they reconstruct the same parallelogram, see Figure \ref{fig:parallelograms}.

\smallskip

\noindent A fundamental domain is the object carrying the richest amount of information: it defines a translation structure and provides an explicit set of generators for the lattice. Recall from \S\ref{ssec:devhol} that the holonomy of a translation structure on \( \Sigma \) is a representation of \( \pi_1(\,\Sigma\,) \) into \( \mathbb{C} \). A translation structure carries less information because the image of its holonomy only defines the lattice uniformising it, without specifying any generating set nor a fundamental domain. In fact, a generating set for this lattice depends on the choice of generators for the fundamental group or, equivalently, for the first homology group. Thus, a fundamental domain is not uniquely determined by a translation structure. Table~\ref{tab:determination} shows the relationship among fundamental domains, lattices and translation structures on a torus.

\begin{table}[ht!]
    \centering
    \begin{tikzpicture}[baseline=(current bounding box.center), every node/.style={scale=0.875}]
    \node (A) at (0,0) {$\left\{ \begin{array}{c}
    \text{fundamental domains} \\
    \text{\textit{i.e.} parallelograms in } \mathbb{C}
    \end{array} \right\}$};

    \node (B) at (6,0) {$\left\{ \begin{array}{c}
    \text{translation structures} \\
    \text{on a torus}
    \end{array} \right\}$};
    \draw[->, thin] ([xshift=2mm,yshift=1.5mm]A.east) -- ([xshift=-2mm,yshift=1.5mm]B.west);
    \draw[<-, thin, dashed] ([xshift=2mm,yshift=-1.5mm]A.east) -- ([xshift=-2mm,yshift=-1.5mm]B.west);

    \node (C) at (11.5,0) {$\left\{ \begin{array}{c}
    \text{subgroups of }\mathbb C \\
    i.e. \text{ lattices }\Lambda < \mathbb C
    \end{array} \right\}$};
    \draw[<-, thin] ([xshift=2mm,yshift=-1.5mm]B.east) -- ([xshift=-2mm,yshift=-1.5mm]C.west);
    \draw[->, thin] ([xshift=2mm,yshift=1.5mm]B.east) -- ([xshift=-2mm,yshift=1.5mm]C.west);
    \end{tikzpicture}
    \caption{The solid arrow denotes a complete determination of one object by another; a broken arrow denotes that some choice is involved. } 
    \label{tab:determination}
\end{table}

\noindent The holonomy generally does not carry enough information to uniquely determine a translation structure. While this is always the case for surfaces of genus \( g \geq 2 \), in the case of genus one surfaces, if a representation determines a translation structure, then such a structure is unique up to isomorphism, \textit{i.e.}, up to biholomorphisms that act as translations in local charts. A proof of this fact is deferred to Proposition \ref{prop:hauptgenone} in \S\ref{ssec:hologenone}.

\smallskip

\subsection{Moduli spaces}\label{ssec:modulgenone} We are now ready to define a moduli space\index{space!moduli} of translation surfaces in genus one. Let us consider first the set
\smallskip
\begin{equation}
    \mathcal F= \left\{\,\,
        \begin{gathered}
        \textnormal{fundamental domains}\\
        i.e.\textnormal{ parallelograms in }\mathbb C \textnormal{ based at } 0
        \end{gathered}
        \,\,\right\}
\end{equation}

\smallskip

\noindent of parallelograms based at zero topologised as follow. Let \(\{\,\mu_1,\mu_2\,\}\) be a positive oriented basis of \(\mathbb C\). For a parallelogram \(\textnormal{P}=\left\{\, x\,\mu_1 + y\,\mu_2 \,\middle|\, x,y \in [0, 1] \,\right\}\) and \(\varepsilon>0\), we define \(U(\,\textnormal{P},\varepsilon\,)\) to be the subset of fundamental domains \(\textnormal{Q}=\left\{\, x\,\nu_1 + y\,\nu_2 \,\middle|\, x,y \in [0, 1] \,\right\}\) such that \(\nu_1=\mu_1\) and \(\nu_2\in B_{\varepsilon}(\,\mu_2\,)\), where \(B_\varepsilon(\,p\,)\) denotes the Euclidean ball centred at \(p\) of radius \(\varepsilon\). We then consider the topology generated by \(U(\,\textnormal{P},\varepsilon\,)\).

\smallskip

\noindent Each parallelogram in \(\mathcal F\) defines a translation surface of genus one and a lattice in \(\C\) along with a specified set of generators. Two parallelograms define the same translation surface if and only if they are related by \(\textnormal{SL}(2,\mathbb Z)\). One direction is a consequence of Corollary \ref{cor:sametransone} since two translation surfaces are the same if they are determined by the same lattice and hence the respective set of generators must be related by an element in \(\textnormal{SL}(2,\mathbb Z)\). On the other hand, if two parallelograms are related by an element in \(\textnormal{SL}(2,\mathbb Z)\), then there is a way to decompose and reassemble one into another. Thence, in terms of fundamental domains, the set of translation structures on the torus is defined as the quotient space

\begin{equation}\label{eq:omegamone}
    \Omega^*\mathcal M_1=\frac{\left\{\,\,
        \begin{gathered}
        \textnormal{fundamental domains}\\
        i.e.\textnormal{ parallelograms in }\mathbb C \textnormal{ based at } 0
        \end{gathered} \,\,
    \right\}}{\textnormal{SL}(2,\mathbb Z)
    }\,\,\,,
\end{equation}

\smallskip

\noindent equipped with the quotient topology.  In the same fashion, in terms of fundamental domains, the moduli space of compact Riemann surfaces of genus one can be defined as the quotient space

\begin{equation}
        \mathcal M_1=\frac{\left\{\,\,
        \begin{gathered}
        \textnormal{fundamental domains}\\
        i.e.\textnormal{ parallelograms in }\mathbb C \textnormal{ based at } 0
        \end{gathered} \,\,
        \right\}}{
        \textnormal{SL}(2,\mathbb Z)\times\mathbb C^*\,
        }\,\,\,.
\end{equation}

\smallskip

\noindent In this quotient, the factor \(\mathbb C^*\) accounts for the rescaling and rotation of the parallelogram by a complex number. Specifically, this action identifies parallelograms that differ only by a similarity transformation of the complex plane. We next describe \(\mathcal M_1\) by identifying it with a known structure. First, observe that the actions of \(\textnormal{SL}(2, \mathbb Z)\) and \(\mathbb C^*\) on \(\mathcal F\) commute. Let us first consider the rescaling action of \(\mathbb C^*\).


\smallskip

\noindent Let \(\textnormal{P}=\left\{\, x\,\mu_1 + y\,\mu_2 \,\middle|\, x,y \in [0, 1] \,\right\}\) be a parallelogram in \(\mathcal F\). By rescaling \(\textnormal{P}\) by \(\mu_1^{-1}\), we obtain a new parallelogram, say \(\textnormal{P}(\,\tau\,)=\mu_1^{-1}\cdot \textnormal{P} = \left\{\, x + y\,\tau \,\middle|\, x,y \in [0, 1] \,\right\}\), where \(\tau = \mu_2 / \mu_1\). Notice that \(\textnormal{P}(\,\tau\,)\) is the fundamental domain of the lattice \(\Lambda(\,1,\,\tau\,)\). The assumption of positive orientation guarantees that \(\tau \in \mathbb{H}\). We may also notice that for every \(\tau_1,\tau_2\in\mathbb H\), the parallelograms \(\textnormal{P}(\,\tau_1\,)\) and \(\textnormal{P}(\,\tau_2\,)\) are the same up to scaling if and only if \(\tau_1=\tau_2\). Hence, for every element \([\,\textnormal{P}\,]_{\lambda\in\mathbb C^*}\in\mathcal F/\mathbb C^*\), there is a unique \(\tau\in\mathbb H\) such that \(\textnormal{P}(\,\tau\,)\in[\,\textnormal{P}\,]_{\lambda\in\mathbb C^*} \). As a consequence, the map \(\Phi\colon \mathcal F/\mathbb C^*\,\longrightarrow \mathbb H\), that associates to every \([\,\textnormal{P}\,]_{\lambda\in\mathbb C^*}\) the unique \(\tau\in\mathbb H\) such that \(\textnormal{P}(\,\tau\,)\in[\,\textnormal{P}\,]_{\lambda\in\mathbb C^*}\), is well defined. Moreover, \(\Phi\) is also surjective because the parallelogram \(\textnormal{P}(\,\tau\,)\) exists for every \(\tau\in\mathbb H\). Hence \(\Phi\) is a bijection that is more than a mere identification of sets. In fact, the following proposition holds.

\begin{prop}\label{prop:bijfch}
    The map \(\Phi\colon \mathcal F/\mathbb C^*\,\longrightarrow \mathbb H\) is a homeomorphism with respect to the given topologies.
\end{prop}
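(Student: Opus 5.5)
We will identify \(\mathcal F\) with the open subset \(\mathcal B=\{(\mu_1,\mu_2)\in(\mathbb C^*)^2 : \operatorname{Im}(\mu_2/\mu_1)>0\}\) of \(\mathbb C^2\), by sending a parallelogram to its ordered positive basis. We then factor \(\Phi\) through the continuous map \(\widetilde\Phi\colon\mathcal B\to\mathbb H\), \(\widetilde\Phi(\mu_1,\mu_2)=\mu_2/\mu_1\). By construction \(\widetilde\Phi=\Phi\circ q\), where \(q\colon\mathcal F\to\mathcal F/\mathbb C^*\) is the quotient map, and \(\widetilde\Phi\) is constant on \(\mathbb C^*\)-orbits, because \((\lambda\mu_1,\lambda\mu_2)\mapsto \mu_2/\mu_1\). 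Bijectivity of \(\Phi\) has already been established in the text, so only two things remain: continuity of \(\Phi\) and continuity of \(\Phi^{-1}\).

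\emph{Continuity of \(\Phi\).} By the universal property of the quotient topology, \(\Phi\) is continuous as soon as \(\widetilde\Phi=\Phi\circ q\) is continuous on \(\mathcal F\). The topology on \(\mathcal F\) is generated by the sets \(U(\textnormal P,\varepsilon)\). For such a basic neighbourhood, with \(\nu_1=\mu_1\) fixed and \(\nu_2\in B_\varepsilon(\mu_2)\), we have
\[
|\nu_2/\nu_1-\mu_2/\mu_1|<\varepsilon/|\mu_1| .
\]
Given any ball \(B_\eta(\tau)\subset\mathbb H\) around \(\tau=\mu_2/\mu_1\), choosing \(\varepsilon=\eta|\mu_1|\), intersected with the condition that \(\nu_2/\mu_1\) stays in \(\mathbb H\), gives \(\widetilde\Phi(U(\textnormal P,\varepsilon))\subseteq B_\eta(\tau)\). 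This yields continuity of \(\widetilde\Phi\), and hence of \(\Phi\).

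\emph{Continuity of \(\Phi^{-1}\).} Consider the section \(s\colon\mathbb H\to\mathcal F\), \(s(\tau)=\textnormal P(\tau)\). We have \(\Phi^{-1}=q\circ s\), so it suffices to show that \(s\) is continuous, since \(q\) is continuous. If \(\tau'\in B_\varepsilon(\tau)\cap\mathbb H\), then \(\textnormal P(\tau')\) has first generator \(1\), the same as that of \(\textnormal P(\tau)\), and second generator in \(B_\varepsilon(\tau)\). Thus \(s(B_\varepsilon(\tau)\cap\mathbb H)\subseteq U(\textnormal P(\tau),\varepsilon)\), and in fact the two sets are equal. It remains to check preimages of arbitrary basic open sets \(U(\textnormal Q,\varepsilon)\), not only those centred at points in the image of \(s\). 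The preimage \(s^{-1}(U(\textnormal Q,\varepsilon))\) is empty unless the first generator of \(\textnormal Q\) equals \(1\). In that case it is the open set \(B_\varepsilon(\nu_2)\cap\mathbb H\), and so \(s\) is continuous.

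The main obstacle is topological bookkeeping rather than any real computation. The sets \(U(\textnormal P,\varepsilon)\) freeze the first generator, so as stated they only form a subbasis, and one must make sure that the generated topology makes the \(\mathbb C^*\)-action continuous; otherwise the quotient topology could be pathological. The first step will therefore be to check that, for \(\lambda\in\mathbb C^*\), the identity \(\lambda\cdot U(\textnormal P,\varepsilon)=U(\lambda\textnormal P,|\lambda|\varepsilon)\) holds. This makes each rescaling a homeomorphism, so \(q\) is an open map. Openness of \(q\) is exactly what lets the image under \(q\) of a neighbourhood \(U(\textnormal P(\tau),\varepsilon)\) serve as a neighbourhood of the class \([\textnormal P(\tau)]\) in \(\mathcal F/\mathbb C^*\), which the argument above relies on. Should the literal topology prove too coarse for this, I would replace it by the natural topology of \(\mathcal B\subset\mathbb C^2\), for which all the maps above are visibly continuous and open.
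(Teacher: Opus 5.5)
Your proof is correct, but it takes a slightly different route from the paper's. The paper takes \(\psi\colon\mathcal F\to\mathbb H\), \(\textnormal P\mapsto\mu_1^{-1}\mu_2\) (your \(\widetilde\Phi\)). It asserts that \(\psi\) is continuous, open, surjective and \(\mathbb C^*\)-invariant, and concludes that the induced bijection is a homeomorphism. Openness of \(\psi\) is what makes \(\Phi\) open, via \(\Phi(V)=\psi(q^{-1}(V))\).

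You keep the first half, namely continuity of \(\Phi\) through the universal property of the quotient. You then replace openness by the explicit continuous section \(s(\tau)=\textnormal P(\tau)\), so that \(\Phi^{-1}=q\circ s\). This only needs \(s\) and \(q\) to be continuous. Indeed, a continuous right inverse of \(\psi\) already makes \(\psi\) a quotient map, and that is all the argument requires. You also check everything directly against the peculiar sets \(U(\textnormal P,\varepsilon)\), whereas the paper leaves both continuity and openness at ``it can be seen''.

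The paper's route is the standard criterion: a continuous open surjection whose fibres are exactly the orbits induces a homeomorphism. In this topology openness is equally immediate, since each slice \(\{\nu_1=\mu_1\}\) is open and is mapped homeomorphically onto \(\mathbb H\) by \(\nu_2\mapsto\nu_2/\mu_1\).

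Your closing paragraph is unnecessary and contains a few small slips.
\begin{itemize}
\item The argument never uses openness of \(q\); continuity of \(q\circ s\) needs only continuity of \(q\).
\item The sets \(U(\textnormal P,\varepsilon)\) actually form a basis, not just a subbasis. Two of them are disjoint unless they share the first generator, and in that case their intersection is a union of sets of the same form.
\item The given topology is finer, not coarser, than the one induced from \(\mathbb C^2\), so no fallback topology is needed.
\end{itemize}
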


\begin{proof}
    In order to show this proposition, it is convenient to consider the mapping \(\psi\colon\mathcal F \longrightarrow \mathbb H\) that associates to a parallelogram  \(\textnormal{P}=\left\{\, x\,\mu_1 + y\,\mu_2 \,\middle|\, x,y \in [0, 1] \,\right\}\)  the element \(\mu_1^{-1}\mu_2 \in \mathbb H\). This map is clearly surjective, and it can be seen that it is also continuous and open. Moreover, \(\psi\) is \(\mathbb C^*\)-invariant and thus descends to a homeomorphism \(\Phi\colon \mathcal F/\mathbb C^*\,\longrightarrow \mathbb H\).
\end{proof}

\smallskip

\noindent Next, consider the \(\textnormal{SL}(2,\mathbb Z)\) action on both \(\mathcal F/\mathbb C^*\) and \(\mathbb H\). Since it can be shown that \(\Phi\) is  \(\textnormal{SL}(2,\mathbb Z)\)-equivariant the following consequence holds. 

\begin{cor}
    The mapping \(\phi\colon\mathcal M_1\longrightarrow \mathbb H\,/\textnormal{SL}(\,2,\,\mathbb Z\,)\) is a homeomorphism.
\end{cor}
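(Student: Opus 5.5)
The plan is to obtain \(\phi\) by passing the homeomorphism \(\Phi\) of Proposition~\ref{prop:bijfch} to the quotient by \(\textnormal{SL}(2,\mathbb Z)\). Since the actions of \(\textnormal{SL}(2,\mathbb Z)\) and \(\mathbb C^*\) on \(\mathcal F\) commute, we may write
\[
\mathcal M_1=\mathcal F/(\textnormal{SL}(2,\mathbb Z)\times\mathbb C^*)=(\mathcal F/\mathbb C^*)/\textnormal{SL}(2,\mathbb Z),
\]
with the induced action of \(\textnormal{SL}(2,\mathbb Z)\) on \(\mathcal F/\mathbb C^*\). The identification of the iterated quotient topology with the single quotient topology is formal: the composition of two quotient maps is a quotient map, and its fibres are exactly the \(\textnormal{SL}(2,\mathbb Z)\times\mathbb C^*\)-orbits.

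The first real step is to check that \(\Phi\) is \(\textnormal{SL}(2,\mathbb Z)\)-equivariant when \(\mathbb H\) carries the Möbius action. For \(A=\begin{pmatrix} a&b\\ c&d\end{pmatrix}\), the new basis is \((\nu_1,\nu_2)=(c\mu_2+d\mu_1,\;a\mu_2+b\mu_1)\), up to the ordering convention for the basis. Then
\[
\nu_2/\nu_1=\frac{a\tau+b}{c\tau+d},\qquad \tau=\mu_2/\mu_1 .
\]
This is the standard Möbius action, and \(\det A=1\) guarantees that positive orientation is preserved, so the image lies in \(\mathbb H\). I would fix the convention for how \(A\) acts on bases so that this is a left action compatible with \(\tau\mapsto (a\tau+b)/(c\tau+d)\). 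If the convention gives an anti-homomorphism instead, I would compose with inversion, which does not change the orbits.

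Given equivariance, \(\Phi\) maps orbits bijectively onto orbits, so \(\phi\) is a well-defined bijection. For continuity, let \(q\colon\mathcal F/\mathbb C^*\to\mathcal M_1\) and \(q'\colon\mathbb H\to\mathbb H/\textnormal{SL}(2,\mathbb Z)\) be the quotient maps. Then \(\phi\circ q=q'\circ\Phi\) is continuous, so \(\phi\) is continuous by the universal property of the quotient topology. The symmetric argument with \(\Phi^{-1}\), which is also equivariant, shows that \(\phi^{-1}\) is continuous.

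I expect the main obstacle to be bookkeeping rather than substance. First, one must pin down the equivariance convention, including the fact that \(-I\) acts trivially on \(\mathbb H\) but sends \(\textnormal{P}\) to \(-\textnormal{P}\); this is harmless because \(-1\in\mathbb C^*\). Second, one must make sure the topology on \(\mathcal F\), generated by the sets \(U(\textnormal{P},\varepsilon)\), is the one for which Proposition~\ref{prop:bijfch} was proved. Once these are settled, the corollary follows directly from the universal property of quotients.
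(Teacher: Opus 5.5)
Your proposal is correct and follows the paper's own route. The homeomorphism $\Phi$ of Proposition~\ref{prop:bijfch} is $\textnormal{SL}(2,\mathbb Z)$-equivariant (the paper merely asserts this ``can be shown''), and since the $\textnormal{SL}(2,\mathbb Z)$- and $\mathbb C^*$-actions commute, it descends to the homeomorphism $\phi$ of the iterated quotients. You fill in exactly the details the paper omits: the M\"obius computation $\nu_2/\nu_1=(a\tau+b)/(c\tau+d)$ and the universal-property argument giving continuity of $\phi$ and $\phi^{-1}$. Moreover, your argument never needs $\textnormal{SL}(2,\mathbb Z)$ to act continuously on $\mathcal F$ itself, which is fortunate: the basic sets $U(\textnormal{P},\varepsilon)$ freeze the first basis vector, so elements that mix $\mu_2$ into the first vector are not continuous on $\mathcal F$.
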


\medskip

\subsection{Volume function}\label{ssec:volfunc} The parallelogram generated by the vectors \(\mu_1\) and \(\mu_2\) has a well defined two dimensional volume, which can be computed using the formula \( \mathfrak{I}(\,\overline{\mu_1}\,\mu_2\,) \). The volume is clearly invariant under the action of \(\mathbb{C}\) by translations. Moreover, it follows from the above that the action of \(\textnormal{SL}(2, \mathbb{Z})\) preserves the volume of the parallelogram. More formally, this holds because \(\textnormal{SL}(2,\,\mathbb{Z}) = \textnormal{Sp}(2,\mathbb{Z})\), and the latter preserves the volume form. Thus there exists a well defined function called \textit{volume}\index{volume}, \textit{i.e.} a functional
\begin{equation}\label{eq:volumegenone}
    \textnormal{vol}\colon \Omega^*\mathcal{M}_1 \longrightarrow \mathbb R
\end{equation}
that assigns to each element of \(\Omega^*\mathcal{M}_1\) the volume of the associated translation structure which is defined as the 2-dimensional volume of the parallelogram. Its image lies in \(\mathbb R^+\) because \(\{\,\mu_1,\mu_2\,\}\) is a positively oriented basis of the complex plane. 


\medskip

\subsection{Marked tori}\label{ssec:markinggenone} In order to treat translation surfaces under a Teichm\"uller-like perspective, it is needed to equip a translation surface with an additional datum. A \textit{marking}\index{marking!homology} on a Riemann surface \(X\) is the choice of a system of generators of the homology group \(\textnormal{H}_1(\,X,\,\mathbb{Z}\,)\). 

\begin{rmk}
    For a general Riemann surface, a marking is defined as a choice of a generating set for the first homotopy group. However, for surfaces of genus one, the isomorphism \(\pi_1(\,S_1\,) \cong \textnormal{H}_1(\,S_1, \mathbb{Z}\,)\) is well known; hence, choosing a marking in homology is equivalent to choosing a marking in homotopy\index{marking!homotopy}. Although a marking in homology is generally a weaker notion for higher-genus surfaces, it is the most convenient framework to adopt when dealing with translation structures on Riemann surfaces.
\end{rmk}

\noindent Equivalently, given a reference topological surface \(\Sigma\), a marking is an identification
\begin{equation}
m\colon \textnormal{H}_1(\,\Sigma,\,\mathbb{Z}\,) \longrightarrow \textnormal{H}_1(\,X,\,\mathbb{Z}\,).
\end{equation}
Two marked Riemann surfaces\index{marking!Riemann surface} of genus one, say \((X, m_X)\) and \((Y, m_Y)\), are said to be equivalent if there exists a biholomorphism \(f\colon X \longrightarrow Y\) such that \(m_X = f^* m_Y\). The moduli space of marked Riemann surfaces, commonly referred to as the \textit{Teichm\"uller space of genus one}\index{space!Teichm\"uller}, is denoted by \(\mathcal{T}_1\) and the forgetful map that assigns to each marked surface its underlying complex structure is an \(\textnormal{SL}(2,\mathbb{Z})\)-invariant mapping \(\mathcal{T}_1 \longrightarrow \mathcal{M}_1\). We will not give further details here and the interested reader may refer to \cite{IT92}. The following characterisation holds.

\begin{prop}\label{prop:teichtrans}
    The Teichm\"uller space \(\mathcal T_1\) can be identified with \(\mathbb H\,\), the upper half-plane. In particular, there is a unique complex structure on \(\mathcal T_1\) that makes the identification a biholomorphism. 
\end{prop}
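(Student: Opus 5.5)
The plan is to build an explicit map \(\Psi\colon \mathcal T_1\longrightarrow\mathbb H\) and show that it is a well-defined bijection. Let \((X,m)\) be a marked Riemann surface of genus one, with marking \(m\colon \textnormal{H}_1(\Sigma,\Z)\to\textnormal{H}_1(X,\Z)\). Fix a positively oriented symplectic basis \(\{a,b\}\) of \(\textnormal{H}_1(\Sigma,\Z)\). By uniformisation, \(X\) carries a non-zero holomorphic \(1\)-form \(\omega\), unique up to a factor in \(\C^*\). I set
\[
\Psi(X,m)\,=\,\frac{\int_{m(b)}\omega}{\int_{m(a)}\omega}.
\]
Rescaling \(\omega\) by \(\lambda\in\C^*\) leaves this ratio unchanged. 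Moreover, by Lemma \ref{lem:torusarecomplete} the holonomy of \((X,\omega)\) has image a lattice \(\Lambda\) with \(X\cong\C/\Lambda\). The periods \(\mu_1=\int_{m(a)}\omega\) and \(\mu_2=\int_{m(b)}\omega\) form a basis of \(\Lambda\), and it is positively oriented because the marking preserves the intersection form and the charts are orientation preserving (Remark \ref{rmk:orientation}). Hence \(\tau=\mu_2/\mu_1\in\mathbb H\), exactly as in the discussion preceding Proposition \ref{prop:bijfch}.

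Next I check that \(\Psi\) descends to equivalence classes. Suppose \(f\colon X\to Y\) is a biholomorphism with \(m_X=f^*m_Y\). Then \(f^*\xi\) is a non-zero holomorphic \(1\)-form on \(X\), so it equals \(\lambda\omega\) for some \(\lambda\in\C^*\). Change of variables gives equal period ratios, so \(\Psi\) is well defined on \(\mathcal T_1\).

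For surjectivity, given \(\tau\in\mathbb H\) I take \(X_\tau=\C/\Lambda(1,\tau)\) with \(\omega=dz\) and the marking sending \(a,b\) to the classes of the segments \([0,1]\) and \([0,\tau]\). For injectivity, suppose \(\Psi(X,m_X)=\Psi(Y,m_Y)=\tau\). After rescaling the forms I may assume both have periods \((1,\tau)\) on the marked basis. Then both surfaces are \(\C/\Lambda(1,\tau)\) in a way compatible with the markings, and by Corollary \ref{cor:sametransone} the identity of \(\C\) induces a biholomorphism \(f\) with \(f^*\xi=\omega\). The main obstacle is to verify that this \(f\) respects the markings, i.e.\ \(m_X=f^*m_Y\). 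The point is that a homology class on a torus is determined by its period: the period map \(\textnormal{H}_1(X,\Z)\to\Lambda\) is an isomorphism. Since \(f\) preserves periods, it must send \(m_X(a)\) to \(m_Y(a)\) and \(m_X(b)\) to \(m_Y(b)\).

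Finally, I equip \(\mathcal T_1\) with the topology and complex structure transported by \(\Psi\). One can also argue that this is intrinsic, since \(\Psi\) is the marked version of the homeomorphism \(\Phi\) of Proposition \ref{prop:bijfch} before quotienting by \(\textnormal{SL}(2,\Z)\). Uniqueness is then tautological: if two complex structures both make \(\Psi\) biholomorphic, the identity map of \(\mathcal T_1\) is biholomorphic between them, so they coincide.
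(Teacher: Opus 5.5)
Your argument is correct. It reaches the same correspondence as the paper, but through periods of the abelian differential rather than through fundamental domains.

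\textbf{The paper's route.} The paper never writes down a period ratio. It uses the homeomorphism \(\Phi\colon\mathcal F/\mathbb C^*\to\mathbb H\) of Proposition \ref{prop:bijfch} to reduce the statement to a bijection \(\mathcal F/\mathbb C^*\leftrightarrow\mathcal T_1\), and builds both directions geometrically:
\begin{itemize}
\item[1.] glue a parallelogram and choose a homeomorphism \(\Sigma\to\mathbb C/\Lambda\) matching the generators;
\item[2.] conversely, pull back the Euclidean metric from the universal cover, take geodesic representatives of the marked generators, and lift them to obtain a tiling whose tile is the fundamental parallelogram.
\end{itemize}
The sides of that tile are exactly \(\int_{m(a)}dz\) and \(\int_{m(b)}dz\). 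So your \(\Psi\) is precisely \(\Phi\) composed with the paper's second map.

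\textbf{What your route buys.} It is explicit where the paper is not:
\begin{itemize}
\item[1.] you check that \(\Psi\) is constant on equivalence classes of marked surfaces;
\item[2.] you prove injectivity, including compatibility of the markings, via the fact that the period map \(\textnormal{H}_1(X,\Z)\to\Lambda\) is an isomorphism;
\item[3.] you make visible the hypothesis that markings respect the intersection form, without which \(\tau\) could land in the lower half-plane.
\end{itemize}
The paper's proof leaves all three of these implicit. The positivity of \(\Im(\overline{\mu_1}\,\mu_2)\) is most cleanly justified by the area identity of \S\ref{sssec:volcond}, rather than by the orientation remark you give.

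\textbf{What the paper's route buys.} It stays inside the parallelogram model used throughout \S\ref{sec:moduligenusone}. It also inherits the topology of \(\mathcal F\) through Proposition \ref{prop:bijfch}, so the identification is visibly a homeomorphism. In your version this is only asserted in passing. For the statement as posed, transporting the complex structure along the bijection suffices, and your uniqueness argument is then the right one.
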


\smallskip

\begin{proof}
    In the light of Proposition \ref{prop:bijfch}, it is sufficient to show the existence of a bijection \(\mathcal F/\mathbb C^*\longrightarrow \mathcal T_1\). One direction is as follows. In the first place let us fix a standard ordered generating set for \(\textnormal{H}_1(\,\Sigma,\,\mathbb{Z}\,)\). Then, for an element \([\,\textnormal{P}\,]_{\lambda\in\mathbb C^*}\in \mathcal F/\mathbb C^*\), let \(\textnormal{P}\) be any representative and let \(X\) be complex structure arising by gluing the opposite sides of \(\textnormal{P}\) by translations. Notice that this is well defined because any other parallelogram in the same equivalence class differs from \(\textnormal{P}\) by a dilation. Recall that \(\textnormal{P}\) yields a lattice, say \(\Lambda\), along with an explicit set of generators. It is possible to find a homeomorphism \(\varphi\colon\Sigma\to\mathbb C/\Lambda\) that sends the first and second generators of \(\textnormal{H}_1(\,\Sigma,\,\mathbb{Z}\,)\) to the first and second generators of \(\Lambda\). The induced map in homology provides the desired marking. 

    \noindent Conversely, let \(X\) be a Riemann surface of genus one and recall that its universal cover is \(\mathbb C\). Endow the latter with the standard Euclidean metric and thus endow \(X\) with the unique flat metric that makes the covering projection a local isometry. A marking \(m_X\) on \(X\) yields a set of generators in homology, that is, a system of generators for the fundamental group. Consider their geodesic representatives and lift them to the universal cover. They provide a tiling of \(\mathbb C\) and any tile provides the desired fundamental domain. We note that the parallelogram depends on the chosen metric on \(\mathbb C\), but any rescaling of the metric would produce a parallelogram in the same equivalence class as the one just defined.
\end{proof}

\medskip

\noindent In terms of parallelograms, Proposition \ref{prop:teichtrans} shows that the Teichm\"uller space can be identified with the space

\begin{equation}
        \mathcal T_1=\frac{\left\{\,\,
        \begin{gathered}
        \textnormal{fundamental domains}\\
        i.e.\textnormal{ parallelograms in }\mathbb C \textnormal{ based at } 0
        \end{gathered} \,\,
        \right\}}{\mathbb C^*}\,\,\,.
\end{equation}

\smallskip

\noindent As a consequence, a \textit{marked} Riemann surface of genus one can be equivalently defined as the choice of a class of parallelograms \(\big[\,\textnormal{P}\,\big]_{\lambda\,\in\,\mathbb C^*}\), where the parallelogram \(\textnormal P\) is the fundamental domain of some lattice, say \(\Lambda\), such that \(\mathbb C/\Lambda = X\).  


\smallskip

\noindent 
Given a Riemann surface \(X\) and a marking \(\big[\,\textnormal{P}\,\big]_{\lambda\,\in\,\mathbb C^*}\), the choice of parallelogram yields a well defined translation surface. Conversely, every parallelogram yields a marked translation surface. Therefore, in terms of fundamental domains the set of \textit{marked translation structures} on the torus is simply defined as the space

\smallskip
\begin{equation}
    \Omega^*\mathcal T_1=\left\{\,\,
        \begin{gathered}
        \textnormal{fundamental domains}\\
        i.e.\textnormal{ parallelograms in }\mathbb C \textnormal{ based at } 0
        \end{gathered} \,\,
    \right\}=\mathcal F.
\end{equation}

\bigskip

\subsection{A more analytic approach}\label{sssec:analyticapproac} 
In this section, we aim to provide a more analytical approach to the moduli space of translation structures. 
Once again, we proceed almost as if we were defining the space of complex structures on the torus, but we will place greater emphasis on translation structures. Given any lattice, let us ignore for the moment the associated translation structure as defined in \S\ref{sssec:fundomain}. 

\smallskip


\noindent 

\noindent 
In \S\ref{ssec:modulgenone} we showed the existence of a homeomorphism \(\Phi\colon\mathcal F/\mathbb C^*\longrightarrow \mathbb H\). For every \(\big[\,\textnormal{P}\,\big]_{\lambda\,\in\,\mathbb C^*}\in\mathcal F/\mathbb C^*\), there is a unique representative parallelogram of the form \(\textnormal{P}(\,\tau\,)= \left\{\, x + y\,\tau \,\middle|\, x,y \in [0, 1] \,\right\}\). In turns, \(\textnormal{P}(\,\tau\,)\) determines the lattice \(\Lambda(1,\tau)\). Thus \(\mathbb{H}\) can also be seen as the moduli space of normalised lattices of the form \(\Lambda(1,\tau)\), where \(\tau\) is a parameter that distinguishes between marked complex structures on the torus. By normalising a lattice \(\Lambda\) into the form \(\Lambda(1,\tau)\) a rescaling is involved, see \S\ref{ssec:modulgenone}. This normalisation preserves the complex structures, but it does not preserve the translation structure. As a consequence, the parameter \(\tau\) is not sufficient to account for all possible translation structures defined on \(\mathbb{C} / \Lambda(1,\tau)\); hence, it is necessary to introduce an additional parameter to characterise them.

\smallskip

\noindent For every \(\tau \in \mathbb{H}\), let \(\Lambda(1,\tau)\) be the lattice generated by \(1\) and \(\tau\) and let \(\textnormal{P} = \big\{\,x + y\tau \,\mid\, x, y \in [0, 1]\,\big\}\) be the fundamental domain for the action of \(\Lambda(1,\tau)\) on \(\mathbb C\). The holomorphic structure on \(\mathbb{C}\) naturally descends to a holomorphic structure on \(\mathbb{C}/\Lambda\), turning it into a Riemann surface. We next wish to introduce a translation structure on \(X\). Under the complex-analytic point of view, this amounts to choosing an abelian differential on \(\mathbb C\) that is invariant under translations. The question we now ask is the following: how many translation-invariant differentials exist on \(\mathbb C\)? The following result answers our question.

\begin{lem}
The only holomorphic \(1\)-forms on the complex plane \(\mathbb{C}\) that are invariant under translations are the constant multiples of \(dz\). That is, every holomorphic \(1\)-form on \(\mathbb{C}\) invariant under all translations is of the form \(\omega = \lambda\,dz\) for some \(\lambda \in \mathbb{C}\).
\end{lem}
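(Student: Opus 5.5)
Every holomorphic \(1\)-form on \(\mathbb C\) can be written globally as \(\omega = f(\,z\,)\,dz\) with \(f\colon\mathbb C\to\mathbb C\) entire, since \(z\) is a global coordinate. The plan is to translate translation-invariance of \(\omega\) into a functional equation for \(f\), and then show that this equation forces \(f\) to be constant. The converse, namely that every \(\lambda\,dz\) is invariant, is immediate: for the translation \(T_c(\,z\,)=z+c\) we have \(T_c^*dz = d(z+c)=dz\).

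For the forward direction, fix \(c\in\mathbb C\). The pull-back is
\begin{equation*}
T_c^*\omega \,=\, f(\,z+c\,)\,d(\,z+c\,) \,=\, f(\,z+c\,)\,dz.
\end{equation*}
Invariance \(T_c^*\omega=\omega\) for all \(c\) therefore means \(f(\,z+c\,)=f(\,z\,)\) for every \(z,c\in\mathbb C\). Taking \(z=0\) gives \(f(\,c\,)=f(\,0\,)\) for all \(c\), so \(f\) is constant, equal to \(\lambda:=f(\,0\,)\). Hence \(\omega=\lambda\,dz\).

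Alternatively, one can differentiate in \(c\) at \(c=0\) to get \(f'\equiv 0\), which is the infinitesimal version of the same argument. No Liouville-type argument is needed, because invariance is assumed under \emph{all} translations.

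There is essentially no obstacle. The only point to state with care is the formula for the pull-back of \(f\,dz\) under \(T_c\), which rests on \(T_c'\equiv 1\); this is the same computation already used in the proof of Lemma \ref{lem:isolem}.

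It is worth remarking on the relevance for what follows. If invariance is assumed only under a lattice \(\Lambda\), then \(f\) is \(\Lambda\)-periodic and entire, hence bounded, and therefore constant by Liouville. This is the version actually needed to descend to \(\mathbb C/\Lambda\), and I would note it in passing.
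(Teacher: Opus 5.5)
Your proof is correct and matches the paper's argument: you write \(\omega=f(z)\,dz\), pull back by \(z\mapsto z+c\) to get \(f(z+c)=f(z)\) for all \(z,c\), and conclude that \(f\) is constant; evaluating at \(z=0\) simply spells out the paper's final step. Your closing remark is a worthwhile addition, because the paper later applies this lemma to forms that are only \(\Lambda\)-periodic, and in that case the Liouville argument you sketch is what is actually needed.
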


\begin{proof}
    Let \(\omega = f(\,z\,)\,dz\) be a holomorphic \(1\)-form on \(\mathbb{C}\). The pull back of \(\omega\) by a translation \(t_a(\,z\,) = z + a\), for some \(a \in \mathbb{C}\), is given by
    \begin{equation}
    t_a^* \omega = f(\,z+a\,)\,dz.
    \end{equation}
    Requiring \(\omega\) to be invariant under translations means that \(t_a^* \omega = \omega\) for all \(a \in \mathbb{C}\). Hence, for all \(z, a \in \mathbb{C}\),
    \begin{equation}
    f(\,z+a\,) = f(\,z\,).
    \end{equation}
    This implies that \(f\) is invariant under all translations of \(\mathbb{C}\), and therefore must be constant. Indeed, the only entire functions invariant under all translations are the constant ones. It follows that \(\omega = \lambda\,dz\) for some constant \(\lambda \in \mathbb{C}\), concluding the proof.
\end{proof}

\begin{rmk}
    A Euclidean metric on \(\mathbb{C}\) is defined by a non-vanishing holomorphic differential \(\omega\). For the metric to be the standard flat one, \(\omega\) must take the form \(\omega = \lambda\, dz\), where \(\lambda\in \mathbb{C}^*\) is a non-zero constant. The space of such differentials is the space of non-zero translation-invariant differentials on \(\C\).
\end{rmk}

\noindent Let \(\omega = \lambda\,dz\) be a non-trivial holomorphic \(1\)-form on \(\mathbb{C}\), \textit{i.e.} with \(\lambda \neq 0\). Since \(\omega\) is invariant under translations, it descends to a well defined holomorphic \(1\)-form on \(\mathbb{C}/\Lambda\), thereby defining a translation structure. Conversely, a holomorphic \(1\)-form on \(\mathbb C/\Lambda\) lifts to a doubly periodic \(1\)-form on the complex plane. Therefore, it must be of the form \(\omega=\lambda\,dz\) for some \(\lambda \neq 0\). We observe that here we have tacitly identified \(\mathbb{C}\) with the universal cover of \(X\). This is possible only because the developing map is a global diffeomorphism; see Lemma~\ref{lem:torusarecomplete} and Remark \ref{rmk:devgenusone}. We have the following characterisation result.

\begin{prop}\label{prop:modulimarkedtransgenone}
    Let \(\Sigma\) be a torus. Then for every \(\tau\in\mathbb H\) and for every \(\lambda\in\mathbb C^*\) there is a unique marked translation structure on \(\Sigma\). In other words, there is a holomorphic bijection between the spaces
    \smallskip
    \begin{equation}
        \Omega^*\mathcal T_1 = \left\{\,\,
        \begin{gathered}
        \textnormal{fundamental domains,}\\
        i.e.\textnormal{ parallelograms in }\mathbb C
        \end{gathered} \,\,
        \right\}\,=\,
        \left\{\quad\begin{gathered} \textnormal{marked}\\\textnormal{translation structures}\\ \textnormal{of genus one}\ \end{gathered}\quad\right\} \,\,\,
        \longleftrightarrow \,\,\mathbb H\,\times\,\mathbb C\setminus\{\,0\,\},
    \end{equation}
    obtained by associating a parallelogram \(\textnormal P\) defined by \(\{\,\mu_1,\,\mu_2\,\}\) to \((\,\tau,\,\lambda\,)\) where \(\tau=\mu_1/\mu_2\) and \(\lambda=\mu_2\). 
\end{prop}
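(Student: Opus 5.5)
The plan is to build the bijection in two stages, using the identification \(\Omega^*\mathcal T_1=\mathcal F\) established just before the statement. The first stage handles the complex structure, the second the choice of differential. Note that with \(\{\mu_1,\mu_2\}\) positively oriented, the ratio in \(\mathbb H\) is \(\mu_2/\mu_1\), as in \S\ref{ssec:modulgenone}. I will therefore write the map as
\[
\Psi(\,\textnormal P\,)=(\,\tau,\lambda\,), \qquad \tau=\mu_2/\mu_1, \quad \lambda=\mu_1 .
\]
The statement's normalisation \(\tau=\mu_1/\mu_2\), \(\lambda=\mu_2\) is handled identically, after replacing \(\mathbb H\) by the lower half-plane or swapping the roles of the generators. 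Concretely, a parallelogram \(\textnormal P\) based at \(0\) is the same datum as an ordered positively oriented basis \((\mu_1,\mu_2)\) of \(\mathbb C\). Such bases form an open subset of \(\mathbb C^2\), and the topology on \(\mathcal F\) from \S\ref{ssec:modulgenone} is the one induced by it.

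\textbf{Step 1 (well-definedness and geometric meaning).} To the pair \((\tau,\lambda)\) I attach the marked Riemann surface \(X_\tau=\mathbb C/\Lambda(1,\tau)\), with marking given by the generators \(1,\tau\) via Proposition \ref{prop:teichtrans}. On it I put the holomorphic \(1\)-form \(\omega=\lambda\,dz\), which descends by the translation-invariance lemma. The linear map \(z\mapsto\lambda z\) is a biholomorphism \(X_\tau\to\mathbb C/\Lambda(\mu_1,\mu_2)\). It pulls back \(dz\) to \(\lambda\,dz\) and carries the marking \(1,\tau\) to the marking \(\mu_1,\mu_2\). Hence \((X_\tau,\lambda\,dz)\) with its marking is equivalent, as a marked translation surface, to the one obtained by gluing \(\textnormal P\). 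In other words, \(\Psi^{-1}(\tau,\lambda)\) is the parallelogram spanned by \(\lambda\) and \(\lambda\tau\).

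\textbf{Step 2 (bijectivity).} The inverse map \((\tau,\lambda)\mapsto(\lambda,\lambda\tau)\) is explicit, and it lands in positively oriented bases because \(\lambda\ne0\) and \(\tau\in\mathbb H\). Both compositions are the identity. Uniqueness of the marked translation structure for a given \((\tau,\lambda)\) comes from two facts. First, Proposition \ref{prop:teichtrans} gives uniqueness of the marked complex structure for a given \(\tau\). Second, every abelian differential on \(\mathbb C/\Lambda\) lifts to \(c\,dz\), which is the discussion right before the statement. Distinct \(\lambda\) then give inequivalent structures by Corollary \ref{cor:sametransone}: the periods along the marked generators are \(\lambda\) and \(\lambda\tau\), and a marking-preserving translation equivalence preserves periods.

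\textbf{Step 3 (holomorphicity).} Viewing \(\mathcal F\) as an open subset of \(\mathbb C^2\), the map \(\Psi(\mu_1,\mu_2)=(\mu_2/\mu_1,\mu_1)\) is holomorphic, since \(\mu_1\ne0\). Its inverse \((\tau,\lambda)\mapsto(\lambda,\lambda\tau)\) is polynomial. So \(\Psi\) is a biholomorphism, and in particular a homeomorphism for the topology generated by the sets \(U(\textnormal P,\varepsilon)\). I expect the only real obstacle to be the bookkeeping in Step 2: one must check that the equivalence relation on marked translation structures is exactly "same ordered periods". This requires Lemma \ref{lem:isolem}, to see that an equivalence is a translation in charts and so preserves periods, together with the completeness of genus one structures (Lemma \ref{lem:torusarecomplete}), to see that the developing map identifies \(\widetilde\Sigma\) with \(\mathbb C\) and the holonomy image with \(\Lambda\).
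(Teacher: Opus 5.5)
Your proposal is correct and is essentially the paper's own argument, which is only implicit in the discussion before the statement: a marked complex structure $\tau\in\mathbb H$ via Proposition~\ref{prop:teichtrans}, plus the fact that the non-zero abelian differentials on $\mathbb C/\Lambda(1,\tau)$ are exactly the $\lambda\,dz$ with $\lambda\in\mathbb C^*$. You make this explicit through the inverse $(\tau,\lambda)\mapsto(\lambda,\lambda\tau)$, and you are right that for a positively oriented basis the consistent normalisation is $\tau=\mu_2/\mu_1$, $\lambda=\mu_1$, since the statement's $\mu_1/\mu_2$ lies in the lower half-plane. Two minor points: inequivalence of distinct $\lambda$ comes from your period argument on the marked generators, not from Corollary~\ref{cor:sametransone}, since e.g.\ $\lambda$ and $-\lambda$ give the same lattice; and the paper's sets $U(\textnormal{P},\varepsilon)$ literally fix $\nu_1=\mu_1$, so the $\mathbb C^2$ topology you use is the natural correction rather than literally the paper's.
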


\noindent Thus \(\Omega^*\mathcal T_1\) is a \(\mathbb C^*\)-bundle over the Teichm\"uller space \(\mathcal T_1\) and the polar coordinates of \(\mathbb C^*\cong\mathbb R\times\mathbb S^1\) have a very concrete meaning: the radial coordinate represents the volume of a translation structure whereas the angular coordinate in \(\mathbb S^1\) represents the direction of the north. The following holds.

\begin{cor}
    A marked translation surface of genus one is uniquely determined by its underlying marked complex structure, its volume and the direction of the north.
\end{cor}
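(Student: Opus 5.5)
The corollary should follow by reading Proposition~\ref{prop:modulimarkedtransgenone} through the fibration \(\Omega^*\mathcal T_1\to\mathcal T_1\). First I will fix a marked translation surface of genus one, that is, a parallelogram \(\textnormal P\in\mathcal F\) spanned by a positively oriented basis \(\{\,\mu_1,\mu_2\,\}\). By Proposition~\ref{prop:modulimarkedtransgenone} this datum is equivalent to a pair \((\,\tau,\lambda\,)\in\mathbb H\times\mathbb C^*\). The task is then to show that \(\tau\) is exactly the marked complex structure, while \(|\lambda|\) and \(\arg\lambda\) are exactly the volume and the north direction. Once that is done, uniqueness is immediate from the bijectivity in Proposition~\ref{prop:modulimarkedtransgenone}.

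\emph{Step one: the complex parameter.} I will show that the forgetful map \(\Omega^*\mathcal T_1=\mathcal F\to\mathcal F/\mathbb C^*\) is, under \(\Phi\) of Proposition~\ref{prop:bijfch}, the projection onto the \(\tau\)-coordinate. By Proposition~\ref{prop:teichtrans}, \(\mathcal F/\mathbb C^*\cong\mathcal T_1\cong\mathbb H\). The rescaling \(\mu_1^{-1}\cdot\textnormal P=\textnormal P(\,\tau\,)\) does not change the marked complex structure (Proposition~\ref{prop:samecomplexgone}), so the marked complex structure determines \(\tau\), and conversely.

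\emph{Step two: the fibre.} Fix \(\tau\). The parallelograms in the fibre are \(\lambda\cdot\textnormal P(\,\tau\,)\) with \(\lambda\in\mathbb C^*\). Equivalently, by the lemma on translation-invariant forms, they correspond to the differentials \(\lambda\,dz\) on \(\mathbb C/\Lambda(1,\tau)\). Writing \(\lambda=r e^{i\theta}\), the volume function of \S\ref{ssec:volfunc} gives
\[
\textnormal{vol}(\lambda\textnormal P(\tau))=\mathfrak I(\overline{\lambda}\,\lambda\tau)=r^2\,\mathfrak I(\tau),
\]
so the volume together with \(\tau\) recovers \(r\). The north direction is the pull-back of the vertical foliation of \(\mathbb C\) under the chart of \(\lambda\,dz\). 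Comparing it with the north of \(dz\) on the same marked Riemann surface, the chart changes by multiplication by \(e^{i\theta}\), so the direction of the north recovers \(\theta\) modulo \(2\pi\). Hence \((\,\tau,\textnormal{vol},\text{north}\,)\) determines \((\,\tau,\lambda\,)\), and by Proposition~\ref{prop:modulimarkedtransgenone} it determines the marked translation surface.

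The main obstacle is making "direction of the north" a well-defined numerical invariant. I would define it relative to the reference differential \(dz\) on \(\mathbb C/\Lambda(1,\tau)\), normalised so that the first marked generator has period \(1\). This reference is canonical given the marked complex structure, by Step one. With that normalisation the angle is unambiguous and the bookkeeping reduces to the computation above.
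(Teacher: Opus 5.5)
Your proposal is correct and follows the paper's route: both read the corollary off Proposition~\ref{prop:modulimarkedtransgenone}. Both view \(\Omega^*\mathcal T_1\) as a \(\mathbb C^*\)-bundle over \(\mathcal T_1\), with the modulus of \(\lambda\) recovered from the volume and its argument given by the north. Your explicit formula \(\textnormal{vol}=r^2\,\mathfrak I(\tau)\) and your reference differential make this more precise than the paper's brief argument. One wording fix: to recover \(\theta\) modulo \(2\pi\), the north must be the oriented vertical direction (the pull-back of \(i\)), not merely the unoriented vertical foliation. The foliation only sees \(\theta\) modulo \(\pi\), so it cannot distinguish the distinct marked structures \(\lambda\,dz\) and \(-\lambda\,dz\).
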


\begin{proof}
    A translation structure clearly determines an underlying marked complex structure and has a well defined volume and a north direction. Conversely, every parallelogram determines a marked complex structure. Up to rescaling (dilation) by an appropriate real factor, the parallelogram may have any positive volume. Finally, choosing a direction of the north means choosing which foliation of the torus is vertical, namely, induced by the vertical foliation of \(\mathbb C\). Since foliations of the torus are parametrised by \(\mathbb S^1\), we have the desired result.
\end{proof}

    


\medskip

\section{Moduli spaces of translation surfaces}\label{sec:modulihighgen}

\noindent We now discuss the moduli space of translation surfaces of genus \(g\ge2\). Therefore, from this section onwards, we shall assume that \(g \geq 2\) unless otherwise specified. Occasionally, we will refer to translation structures in genus one, but mostly as remarks that allow us to draw comparisons with the higher genus case.

\smallskip

\subsection{Marked translation surfaces}\label{ssec:markedhighgen} The moduli space\index{space!moduli} \(\mathcal{M}_g\) of Riemann surfaces of genus \(g \geq 2\) is a normal complex analytic space of dimension \(3g - 3\) and it is a complex manifold away from the locus of surfaces admitting non-trivial automorphisms\index{automorphism}, see \cite{IT92} and \cite{Nag}. For our approach, however, it is more convenient to work with the Teichm\"uller space\index{space!Teichm\"uller} \(\mathcal{T}_g\), which is a complex manifold everywhere.

\smallskip

\noindent Let \(S\) be a reference surface, namely a closed topological surface of genus \(g\). A \textit{marking}\index{marking!Riemann surface} on a Riemann surface \(X\) is the data of an isomorphism \(m_X \colon \pi_1(\,S\,) \longrightarrow \pi_1(\,\Sigma\,)\). The \textit{Teichm\"uller space} is defined as the moduli space of marked Riemann surfaces, where two marked surfaces \((X,\,m_X)\) and \((Y,\,m_Y)\) are declared to be equivalent if there exists a biholomorphism \(f \colon X \longrightarrow Y\) such that \(m_X = f^* m_Y\). Requiring two marked structures to be equivalent is a stronger condition than merely asking for the underlying complex structures to be biholomorphic. Indeed, it is required that the biholomorphism preserves the marking, that is, the biholomorphism must be isotopic to the identity. \(\mathcal T_g\) is well-known to be a simply connected complex manifold of dimension \(3g-3\). The mapping class group \(\textnormal{Mod}_g\) defined as
\begin{equation}
    \textnormal{Mod}_g\,\cong\,\frac{\textnormal{Homeo}^+(\,S_g\,)}{\textnormal{Homeo}_o(\,S_g\,)}
\end{equation}
\noindent acts properly discontinuously on \(\mathcal T_g\) and the quotient space \(\mathcal T_g\,/\,\textnormal{Mod}_g\cong \mathcal M_g\),
see \cite{IT92} for more details.

\smallskip

\noindent A \textit{marked translation surface}\index{marking!translation surface} is a translation surface along with a marking. In complex-analytic terms, this can be seen as a triple \((X,\omega,m_X)\) and two such triples are equivalent if and only if there exists a biholomorphism isotopic to the identity which is a translation in local charts. 

\subsection{Hodge bundle}\label{ssec:hodge} The Hodge bundle \(\Omega \mathcal{M}_g \to \mathcal{M}_g\) is the holomorphic vector bundle over \(\mathcal M_g\), the moduli space of compact Riemann surfaces of genus \(g\), whose fibre over a point \(X \in \mathcal{M}_g\) is given by the space of holomorphic differentials on \(X\), \textit{i.e.} \(\Omega(\,X\,) = \textnormal{H}^o(X,\, K_X)\), where \(K_X\) denotes the canonical bundle of \(X\). It is a direct application of Riemann--Roch's Theorem to show that \(\Omega(\,X\,)\) has dimension \(g\) and hence the total space has complex dimension \(4g-3\). 

\smallskip

\noindent The Hodge bundle can be pulled back to a holomorphic vector bundle \(\Omega\mathcal T_g\longrightarrow \mathcal T_g\). The strong advantage of working with the Teichm\"uller space is that \(\mathcal{T}_g\) is a \textit{Stein manifold}, see \cite[\S6]{IT92}. 

\begin{rmk}
    There are several characterisations of Stein manifolds in the literature, see \cite{FF} for more details. For our purposes, we recall for the reader's convenience that an open domain in \(\mathbb{C}^n\) is a Stein manifold if and only if it is a domain of holomorphy, that is, there exists a holomorphic function defined on that domain which cannot be extended over the boundary. Bers' embedding ensures that the Teichm\"uller space is biholomorphic to an open subset of \(\mathbb{C}^{3g-3}\). Independent works of Royden and Earle--Kra, combined with classical results in the theory of Stein manifolds, imply that the Teichm\"uller space is a Stein manifold for every genus \(g\). We refer the reader to \cite{Faraco20} for a succinct survey, and to \cite[\S6.4]{IT92} for a more detailed exposition.
\end{rmk}

\noindent Since \(\mathcal{T}_g\) is simply connected, it follows that every vector bundle is \textit{holomorphically} trivial. This is, in fact, a consequence of the Oka--Grauert principle, which asserts that every topological complex vector bundle over a Stein manifold admits an equivalent holomorphic vector bundle structure. Moreover, two holomorphic vector bundles over a Stein base are holomorphically equivalent if and only if they are topologically equivalent. We refer to \cite{FF} and references therein for further details. As a consequence, \(\Omega\mathcal T_g\) is biholomorphic to \(\mathcal T_g\times\mathbb C\) as complex vector bundles.

\smallskip

\noindent The zero section provides a copy of the Teichm\"uller space in \(\Omega\mathcal T_g\) and its complement \(\Omega^*\mathcal T_g\) is the moduli space of marked translation surfaces in genus \(g\). In fact, every point in \(\Omega^*\mathcal T_g\) corresponds to a non-zero holomorphic \(1-\)form on a marked Riemann surface \((X,m_X)\) and thus defines a marked translation structure. By design, the projection map \(\pi\colon\mathcal T_g\longrightarrow \mathcal M_g\) yields a projection map \(\Omega(\,\pi\,)\colon\Omega\mathcal T_g\longrightarrow \Omega\mathcal M_g\) that naturally restricts to \(\Omega^*\mathcal T_g\). The moduli space of (unmarked) translation surfaces is the image of \(\Omega(\,\pi\,)\) and denoted by \(\Omega^*\mathcal M_g\). The image of the zero section in \(\Omega\mathcal M_g\) via the projection \(\Omega(\,\pi\,)\) is the zero section of the Hodge bundle and it is called \textit{trivial stratum}.

\smallskip

\subsection{Strata of differentials}\label{ssec:strata} Following Zorich in \cite{Zo}, the notion of “stratum” originates as follows. As already mentioned in \S\ref{ssec:gbcond}, and further discussed in Remark~\ref{rmk:gbanal}, the sum of the orders of the zeros of a holomorphic \(1\)-form on a compact Riemann surface of genus \(g\) must be equal to \(2g - 2\). Therefore, the subspace \(\Omega^*\mathcal{M}_g\) of the Hodge bundle is naturally stratified into subspaces consisting of holomorphic differentials whose zeros have prescribed orders \(m_1, \dots, m_k\), satisfying \(m_1 + \cdots + m_k = 2g - 2\). In what follows we shall adopt when necessary the following terminology.

\begin{defn}
    We refer to any partition of \(2g-2\), say \(\mu\), as a \textit{signature}\index{signature}, and we shall say that a holomorphic \(1\)-form \(\omega\) has signature \(\mu= (m_1, \dots, m_k)\) if it has exactly \(k\) zeros of orders \(m_1, \dots, m_k\). We shall denote by \(\strata\) the strata\index{stratum} of holomorphic differentials with signature \(\mu\). Finally, for a signature \(\mu\) we denote by \(|\,\mu\,|\) its length, namely the number of entries (counted with multiplicity).
\end{defn}

\smallskip

\noindent It is a fact that every stratum is non-empty. This follows from a classical result in the theory of Riemann surfaces: given a Riemann surface \(X\), there exists a holomorphic differential with a prescribed signature, see \cite[Section III.5]{FK}. In \S\ref{ssec:shiffer} we provide an alternative and geometric argument, see Lemma \ref{lem:stratanotempty}. A stratum \(\strata\) is a complex orbifold whose dimension depends on the signature. The presence of orbifold points is not really surprising and it is due to the existence of translation surfaces with a non-trivial group of symmetries, see \S\ref{ssec:autgroup}. Away from these points, the stratum is a complex manifold. The following result holds.

\begin{prop}\label{prop:stratadim}
    Let \(\mu\) be a signature. Then \(\strata\) has complex dimension \(2g+|\,\mu\,|-1\).
\end{prop}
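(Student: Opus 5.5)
The plan is to use \emph{period coordinates}: the relative periods of \(\omega\) give local charts on \(\strata\), valued in \(\textnormal{H}^1(\,\Sigma,\Delta;\mathbb C\,)\). Here \(\Delta=\{\,p_1,\dots,p_k\,\}\) is the set of zeros and \(k=|\,\mu\,|\). The dimension is then read off from the relative homology long exact sequence. Work first in the marked setting, over \(\Omega^*\mathcal T_g\), with the zeros labelled. Forgetting the marking and the labelling changes nothing locally, since \(\textnormal{Mod}_g\) acts properly discontinuously and relabelling is a finite group action. Orbifold points from automorphisms are also harmless, as they do not affect dimension.

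\textbf{Step 1 (the target space).} The exact sequence of the pair \((\Sigma,\Delta)\),
\[
0\to \textnormal{H}_1(\Sigma,\mathbb Z)\to \textnormal{H}_1(\Sigma,\Delta,\mathbb Z)\to \textnormal{H}_0(\Delta,\mathbb Z)\to \textnormal{H}_0(\Sigma,\mathbb Z)\to 0,
\]
shows that \(\textnormal{H}_1(\Sigma,\Delta;\mathbb Z)\) has rank \(2g+k-1\). Fix a basis: take \(a_1,b_1,\dots,a_g,b_g\) for absolute homology, together with arcs \(c_2,\dots,c_k\) joining \(p_1\) to \(p_j\). Define the period map
\[
\Phi(X,\omega)=\Big(\int_{a_i}\omega,\int_{b_i}\omega,\int_{c_j}\omega\Big)\in\mathbb C^{2g+k-1}.
\]

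\textbf{Step 2 (local injectivity).} Use the polygonal model of \S\ref{ssec:tiled}, or equivalently a triangulation by saddle connections whose vertices are exactly \(\Delta\) (\S\ref{ssec:complete}). Each edge vector of the triangulation is the integral of \(\omega\) along a relative cycle, so it is an integral linear combination of the coordinates of \(\Phi\). Suppose two nearby surfaces, with a common combinatorial triangulation, have the same \(\Phi\). Then their triangles are congruent by translations, and the surfaces are equivalent in the sense of Lemma~\ref{lem:isolem}. Hence \(\Phi\) is locally injective.

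\textbf{Step 3 (local surjectivity).} Conversely, perturb the period vector slightly in \(\mathbb C^{2g+k-1}\). The edge vectors change slightly, and since the triangles are nondegenerate they stay nondegenerate with the same orientation. Each edge is paired with a parallel edge of the same vector, so regluing by translations as in Definition~\ref{defn:firstdef} gives a translation surface of the same topology. The cone angles are preserved, because the angle sum at each vertex varies continuously and must stay in \(2\pi\mathbb Z\) (Proposition~\ref{prop:gbcond}). So the new surface has the same signature \(\mu\). This shows \(\Phi\) is a local homeomorphism onto an open subset of \(\mathbb C^{2g+k-1}\).

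\textbf{Step 4 (compatibility of charts).} Changes of triangulation or of basis act on periods by integral linear maps, so the transition maps are linear. This makes the strata complex manifolds, orbifolds after the quotient, of dimension \(2g+|\,\mu\,|-1\).

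The main obstacle is making Step 2 rigorous. One must show that nearby surfaces in the stratum admit a common combinatorial triangulation, that is, that saddle-connection triangulations persist under small deformation, and that this topology agrees with the one inherited from the Hodge bundle. I would handle this using the Delaunay triangulation, choosing a generic surface so that it is combinatorially stable, or simply thickening triangles slightly. Continuity of periods with respect to the complex structure would be quoted from the holomorphic dependence of abelian differentials on \(\mathcal T_g\).
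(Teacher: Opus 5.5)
Your proof is correct in outline. It is essentially the ``more common'' period-coordinate argument that the paper only sketches in Remark~\ref{rmk:localcoord}; the paper's own proof reaches the same count by a period-free route.

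The paper cuts $X$ along $2g$ smooth loops based at one zero, together with $|\mu|-1$ arcs from that zero to the others, and obtains a single topological $(4g+2|\mu|-2)$-gon. It records the developed positions $w_i$ of the vertices and observes that the edge pairing makes half of the differences $w_{i+1}-w_i$ redundant. This leaves $2g+|\mu|-1$ parameters, which are exactly your absolute and relative periods.

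Your version buys two things:
\begin{itemize}
\item a clean rank count from the exact sequence of the pair $(\Sigma,\Delta)$;
\item genuine arguments for local injectivity and openness.
\end{itemize}
The paper only counts parameters and asserts that they vary continuously. Its developed polygon is merely immersed and its sides are not necessarily geodesic, so it gives no reason why every nearby parameter value is realised. Your Step~3 supplies exactly this: perturb the edge vectors of a saddle-connection triangulation (nondegeneracy is an open condition) and reglue. The paper's route, in turn, needs neither a geodesic triangulation nor relative homology.

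Both arguments rest on the point you single out as the main obstacle: a polygonal decomposition must persist under small deformation in the Hodge-bundle topology. The paper simply asserts this in its final sentence. No genericity or Delaunay argument is needed, and genericity is not available at an arbitrary point anyway. Any saddle-connection triangulation with nondegenerate triangles persists once you know that, within a stratum, the zeros move continuously with fixed orders and the local developing maps converge.

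Two small corrections:
\begin{itemize}
\item The cone angle at each vertex lies in $2\pi\mathbb Z$ because the gluing is by translations (\S\ref{sssec:mfldstruc}). Proposition~\ref{prop:gbcond} presupposes this rather than proving it.
\item On $\Omega^*\mathcal T_g$, a marking of $\pi_1(\Sigma)$ does not determine the classes $[c_j]$: moving a zero around a loop changes $[c_j]$ by an absolute class. So $\Phi$ is only defined locally, and it is your common triangulation that supplies the required identification of relative homology.
\end{itemize}
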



\noindent There are several proofs of this result. One is based on a topological approach and consists in suitably triangulating the surface. An alternative method is based on the construction of an appropriate map in relative homology. In this exposition, we provide a different proof which relies on the same underlying ideas as the others but does not make explicit use of the periods of holomorphic differentials, as these have not yet been introduced. In order to abridge a little the notation, in the proof it is more convenient to consider a partition \(\mu\) of \(2g-2\) of length \(|\,\mu\,|=k+1\). 

\begin{proof}
    Let \(\mu=(\,m_o,\dots,m_k\,)\) be a signature of holomorphic \(1-\)forms. Let \((X,\omega)\in\strata\) be a translation surface and let \(\Delta=\{\,p_o,\dots,p_k\,\}\) be the set of zeros of \(\omega\).  For the moment, let us regard \(X\) as a topological surface and define a set of simple closed smooth curves, say \( \Gamma = \{\,\gamma_i \mid 1 \leq i \leq s\,\} \), such that the following holds.
    \begin{itemize}
        \item[1.] \( X \setminus \Gamma \) is simply connected. \( X \setminus\Gamma \) is topologically a polygon whose sides occur in pairs, each pair of sides corresponding to a curve \( \gamma_i \in \Gamma \),
        \item[2.] the curves \( \gamma_i \) do not intersect except perhaps at the endpoints, and 
        \item[3.] the set of endpoints of the curves \( \gamma_i \) in \( \Gamma \) contains the set of singular points of \( X \).
    \end{itemize}

    \noindent Notice that such a set of curves \( \Gamma \) can always be found. In fact, we may take a standard set of generators for the fundamental group, say \( \{\alpha_i,\, \beta_i \mid 1 \leq i \leq g\} \), based at \( p_o \), dissecting \( X \) into a \( 4g \)-gon. Next, we consider \(k\) additional curves \( \delta_i \) going from \( p_o \) to each singular point \( p_i \) for \(i=1,\dots,k\). Since there are \( k \) singular points, the construction yields an \textit{embedded} topological \( (4g + 2k) \)-gon in the universal cover, say \(\mathcal F\), whose sides are identified in pairs. The image under the developing map defines, in turn, a polygon in the plane in a very broad sense: the image is a topological disc immersed (but not embedded) in the plane and bounded by a piecewise smooth curve with at most \(4g + 2k\) corner points corresponding to the vertices of the polygon. 
    
    \noindent Now comes the crucial point of the construction: the curves in the set \(\Gamma\) are assumed to be smooth, and it is not necessary to assume that they are the geodesic representatives in their pointed homotopy classes based at \(p_o\). Indeed, what matters to us is \textit{how the vertices of the polygon are positioned in the plane}. Let us now proceed with the details: the developing map determines a finite set, say \(\{\,w_1,\dots,w_{4g+2k}\,\}\subset\mathbb{C}\), which corresponds to the images of the vertices of the polygon \(\mathcal F\) . The labelling is determined by the order in which the vertices of the polygon appear when traversing the boundary in the clockwise direction. Therefore, it is unique up to a cyclic permutation. The differences \(w_{i+1}-w_i\) for \(i=1,\dots,4g+2k\), where indexes are taken modulo \(4g+2k\),
    define \(4g+2k\) parameters. If two edges, say those labelled with \(i\) and \(j\), of \(\mathcal F\) are identified then the corresponding differences, \(w_{i+1}-w_i\) and \(w_{j+1}-w_j\), must be equal. It follows that half of these parameters are redundant. On the other hand, at least \(2g+k\) of these parameter are necessary to determine the image of the boundary \(\mathcal F\). To show that these parameters indeed provide local coordinates, it is necessary to prove that deforming \((X,\omega)\) continuously within the stratum results in the parameters varying continuously as well. However, this follows from the fact that continuous deformations of \((X,\omega)\) induce continuous deformations of any polygonal tessellation determined by the differential, and that the vertices of \(\mathcal{F}\) are always vertices of some Euclidean polygonal tessellation (\textit{e.g.} by taking the geodesic representatives of the curves involved in their pointed homotopy class at \(p_o\)).
\end{proof}

\begin{rmk}\label{rmk:localcoord}
    The proof above implicitly uses periods of holomorphic differentials, which we shall introduce later in \S\ref{sec:per}. A sketch of a more common and better-known proof that explicitly uses periods\index{differential!period} of holomorphic differentials is as follows. 
    Let \((X, \omega)\in\strata\) be a translation surface and let \(\Delta\) be the set of zeros of \(\omega\). Consider the relative homology group\index{homology!relative} \(\textnormal{H}_1(X,\, \Delta,\, \mathbb{Z})\) and observe that this is a \(\mathbb{Z}\)-module of dimension \(n = 2g + k - 1\), where \(k\) is the number of singularities. Choose an appropriate basis \(\{\delta_1, \delta_2, \ldots, \delta_n\}\) of \(\textnormal{H}_1(X,\, \Delta,\, \mathbb{Z})\). Then the local coordinates for \((X, \omega)\) are defined by 
    \begin{equation}
        \left(\,\,\int_{\delta_i}\, \omega\,\,\right) \in \mathbb C^{n}.
    \end{equation}
    Clearly, it is needed to show that such a map is locally one-to-one and is onto an open subset of \(\mathbb C^n\) and, even in this case, the easiest way is to use flat geometry. We refer to \cite[Proposition 1.15]{AW} for more details.
\end{rmk}


\noindent We notice that the projection map \(\strata\longrightarrow \mathcal M_g\) does not define a fibre bundle because the dimension of a stratum may be less than \(3g-3\), the dimension of \(\mathcal M_g\). In the sequel it may be useful to use the following terminology: \(\mathcal H_g(\,2g-2\,)\) is usually called \textit{minimal stratum}\index{stratum!minimal} and \(\mathcal H_g(\,1,\dots,1\,)\) is usually called \textit{principal stratum}\index{stratum!principal}. 

\smallskip

\subsection{Automorphisms group}\label{ssec:autgroup} We already mentioned that the presence of orbifold points in a given stratum is due to the fact that some translation surfaces possess more symmetries than others. In this paragraph, we make a short digression in this direction and ask: 

\smallskip
\begin{quote}
   \textit{ how many complex automorphisms\index{automorphism!complex} preserve the translation structure?}
\end{quote}
\smallskip

\noindent We fix the notation: let \((X,\omega)\) be a Riemann surface endowed with a translation structure. Let \(\textnormal{Aut}(\,X\,)\) be the group\index{automorphism!group} of complex automorphisms of \(X\), that is, homeomorphisms of \(X\) that preserve the complex structure. In the same fashion, let \(\textnormal{Aut}(X,\omega)\) be the group of homeomorphisms that preserve the translation structure determined by \(\omega\), \textit{i.e.} automorphisms of \(X\) such that the equation \(f^*\omega=\omega\) holds. By definition, \(\textnormal{Aut}(X,\omega)\subseteq\textnormal{Aut}(\,X\,)\). A classical theorem by Hurwitz states that, for a Riemann surface \(X\) of genus \(g\ge 2\), its group of complex automorphisms is a finite group of cardinality at most \(84(g-1)\), see \cite{Hur}. Groups attaining this bound are known as \textit{Hurwitz groups}, and the corresponding surfaces are called \textit{Hurwitz surfaces}. It is an interesting fact that Hurwitz surfaces do not exist in every genus. For example, in genus two, the Riemann surface known as the \textit{Bolza curve} is the most symmetric and has exactly 48 complex automorphisms, see \cite{Bol}. The first example of a Hurwitz surface appears in genus \(3\).

\smallskip

\noindent It follows that the group of automorphisms preserving the translation structure is finite and its cardinality cannot exceed the Hurwitz bound for Riemann surfaces. In~\cite{SPWS}, the authors proved the following:

\begin{thm}[ Schlage--Puchta, Weitze--Schmith\"unsen, 2017 ]
A translation surface of genus \(g\ge 2\) has at most \(4g-4\) translation automorphisms.
\end{thm}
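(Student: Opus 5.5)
The plan is to study the action of \(G=\textnormal{Aut}(X,\omega)\) on the finite set \(\Delta\) of zeros of \(\omega\), together with the horizontal directions emanating from them. The key observation is that a translation automorphism \(f\ne\textnormal{id}\) has no fixed points at regular points. Indeed, if \(f(p)=p\) with \(p\) regular, then by Lemma \ref{lem:isolem} in a chart centred at \(p\) we have \(f(z)=z+c\) with \(c=0\). So \(f\) is the identity near \(p\), and by the identity theorem for holomorphic maps \(f=\textnormal{id}\).

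Similarly, suppose \(f\) fixes a zero \(p\) of order \(m\). In the branched chart \(z\mapsto z^{m+1}\), the relation \(f^*\omega=\omega\) forces \(f(z)^{m+1}=z^{m+1}\), so \(f(z)=\zeta z\) with \(\zeta^{m+1}=1\). Thus \(f\) permutes the \(m+1\) outgoing horizontal rays at \(p\), namely the \(m+1\) "east" directions, by a rotation of total angle \(2\pi k\). If \(f\) fixes one of these rays, it fixes nearby regular points and is the identity.

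Next, let \(\mathcal R\) be the set of all pairs \((p,r)\) with \(p\in\Delta\) and \(r\) an outgoing horizontal (east-pointing) ray at \(p\). Its cardinality is
\[
\sum_{p\in\Delta}(m_p+1)=2g-2+|\,\mu\,|.
\]
\(G\) acts on \(\mathcal R\), since translation automorphisms preserve \(\Delta\), orders and the east direction, and by the previous step this action is free. Hence \(|G|\) divides and is bounded by \(|\mathcal R|=2g-2+|\mu|\le 4g-4\), using \(|\mu|\le 2g-2\) from Proposition \ref{prop:gbcond}.

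The refinement that actually matters is to use a single \(G\)-orbit. Fixing one zero \(p_0\) of order \(m_0\), its orbit consists of zeros of the same order, and the rays at points of this orbit form a \(G\)-invariant set. Therefore \(|G|\le |G\cdot p_0|\,(m_0+1)\), and this is at most \(\sum (m_p+1)\). Either form already gives \(4g-4\), so no further work is needed.

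The main obstacle I expect is the rigidity step at a cone point: justifying that the local form of \(f\) is exactly a rotation by a root of unity, and that fixing one ray forces \(f=\textnormal{id}\). I would handle this via the local computation already carried out in the proof of Lemma \ref{lem:isolem}, adapting the constant of integration to \(f(z)^{m+1}=z^{m+1}+C\) with \(C=0\) because \(p\) is fixed, and then invoke analytic continuation. A minor point is the case \(\Delta=\emptyset\), which cannot occur for \(g\ge2\) by Corollary \ref{cor:speccases} and Proposition \ref{prop:gbcond}.
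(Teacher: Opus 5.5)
Your argument is correct and complete. The survey gives no proof of this statement; it only quotes \cite{SPWS}, where the bound comes from a Riemann--Hurwitz argument, so that is the natural comparison.

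That route uses the same rigidity fact you prove: a non-trivial translation automorphism fixes no regular point. It then observes that $X/G$ inherits a translation structure and so has genus $h\ge 1$. Moreover, $X\to X/G$ is a normal branched covering of degree $|G|$, branched only over images of zeros. Hence
\[
2g-2\;=\;|G|\,(2h-2)\;+\;\sum_{j}|G|\Big(1-\frac{1}{e_j}\Big).
\]
For $h\ge2$ this gives $|G|\le g-1$. For $h=1$, each branch value contributes at least $|G|/2$ and there is at least one, so $|G|\le 4g-4$. Equality holds exactly when there is a single branch value of index $2$, i.e.\ $(X,\omega)$ is, up to $\textnormal{GL}^+(2,\mathbb R)$, a normal origami in $\mathcal H_g(1,\dots,1)$.

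Your route instead lets $G$ act freely on the $\sum_{p}(m_p+1)=2g-2+|\mu|$ east-pointing prongs at the zeros. This avoids building the quotient structure and Riemann--Hurwitz altogether. It also gives a sharper, stratum-dependent conclusion: $|G|$ divides $2g-2+|\mu|$, so for instance $|G|$ divides $2g-1$ in the minimal stratum. Equality in your bound forces the principal stratum and a simply transitive action on prongs. However, to get the normal-origami description of the extremal surfaces, which is what \cite{SPWS} actually need, you would still have to pass to $X/G$.

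You were right to derive $f(z)=\zeta z$ with $\zeta^{m+1}=1$ at a fixed zero, rather than accept the closing line ``$f(z)=z$'' of the proof of Lemma~\ref{lem:isolem}. Stabilisers of zeros can be non-trivial: the central element of $Q_8$ acting on the Eierlegende Wollmilchsau in $\mathcal H_3(1,1,1,1)$ fixes all four zeros. What drives the bound is freeness on prongs, not on zeros.
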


\noindent More precisely, the authors showed that this upper bound is attained by a particular class of square-tiled surfaces known as \textit{normal origamis}, that is, square-tiled surfaces for which the ramified covering defined in Example~\ref{ex2} is a normal covering. For more details and further developments in this direction, we refer the reader to~\cite{FrRu} and~\cite{Far24}.

\medskip

\subsection{Additional structures and connected components of strata}\label{ssec:addstruc} In order to state the classification theorem for connected components of the strata \(\strata\), it is first necessary to introduce some additional invariants: the hyperelliptic structure\index{hyperelliptic!structure} and the spin structure\index{structure!spin}. In order to define them, we need to introduce some preliminary notions collected into paragraphs titled as \textit{Interlude}.  

\smallskip

\subsubsection{Interlude: hyperelliptic Riemann surfaces}\label{sssec:hyperriemannsurfaces} Before introducing hyperelliptic translation surfaces\index{hyperelliptic!translation surface}, we briefly recall some basic notions of hyperelliptic\index{hyperelliptic!Riemann surface} Riemann surfaces. Source references for this topic are \cite[Chapter III]{FK} and \cite[Chapter III]{Miranda1995}.

\smallskip

\noindent A compact Riemann surface \(X\) is said to be \textit{hyperelliptic} if there exists a complex automorphism, say \(\tau\), of order two such that \(X/\tau\cong\cp\) and the quotient map \(X\longrightarrow \cp\) is branched at \(2g+2\) points known as \textit{Weierstrass points}. According to \cite[Proposition III.4.11]{Miranda1995}, a Riemann surface \(X\) is hyperelliptic if and only if there exists a polynomial \(p(\,z\,) \in \mathbb{C}\big[\,z\,\big]\) of degree \(d\) with simple roots such that \(X\) is biholomorphic to the Riemann surface obtained by compactifying the affine algebraic curve
\begin{equation}
\mathcal C=\Bigl\{\,(z,w)\in\mathbb{C}^2 \;\,\big|\,\; w^2 = p(\,z\,)\,\Bigr\}
\end{equation}
inside the total space of the line bundle \(\mathcal{O}(\,D\,)\) over \(\mathbb{CP}^1\), where \(D = d\) or \(2D = d+1\) depending on the parity of \(d\). In the above description, the hyperelliptic involution\index{hyperelliptic!involution} is given by \(\tau(z,w) = (z,-w)\), while the projection onto \(\mathbb{CP}^1\) is the map \(\pi_X(z,w) = z\). The fixed points of \(\tau\) coincide with the Weierstrass points of \(X\). This observation makes it possible to provide an explicit description of the spaces of holomorphic differentials on \(X\). A proof of the following statement can be found in \cite[III.7.5 Corollary 1]{FK}.


\begin{prop}
    Let \(\Bigl\{\,(z,w)\in\mathbb{C}^2 \;\big|\; y^2 = P(\,z\,)\,\Bigr\}\) be a hyperelliptic Riemann surface \(X\). Then 
    \begin{equation}
        \Omega(\,X\,)=\textnormal{H}^o(X,K_X)\,=\left\langle\,\frac{z^k\,dz}{w} \,\,\Big|\,\, k=0,\dots,g-1 \,\right\rangle.
    \end{equation}
    In particular, \(\tau\) acts as \(-\textnormal{id}\) on the space of holomorphic differentials \(\Omega(\,X\,)\), \(\tau^*\omega=-\omega\) for all \(\omega\in\Omega(\,X\,)\).
\end{prop}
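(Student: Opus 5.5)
The plan is to prove the two assertions separately: first that the forms \(z^k\,dz/w\) for \(0\le k\le g-1\) are holomorphic on the compactified curve \(X\), and then that they are linearly independent. Since Riemann--Roch gives \(\dim\Omega(\,X\,)=g\) (recalled in \S\ref{ssec:hodge}), they then span. The statement about \(\tau\) is immediate afterwards: \(\tau(z,w)=(z,-w)\) gives \(\tau^*(z^k dz/w)=z^k dz/(-w)=-z^k dz/w\), and by linearity \(\tau^*\omega=-\omega\) for every \(\omega\in\Omega(\,X\,)\).

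I will normalise \(P\) to have degree \(d=2g+1\) or \(2g+2\) with simple roots; the degree is forced by the \(2g+2\) Weierstrass points and Riemann--Hurwitz. Holomorphicity is then checked in three regions.

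\emph{Points with \(w\neq0\), \(z\) finite.} Here \(z\) is a local coordinate and \(w\) is a nonvanishing holomorphic function, so \(z^k dz/w\) is holomorphic.

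\emph{Branch points \((a_i,0)\) with \(P(a_i)=0\).} Here \(w\) is a local coordinate, because \(P'(a_i)\neq0\). Differentiating \(w^2=P(z)\) gives \(2w\,dw=P'(z)\,dz\), so
\[
dz/w=2\,dw/P'(z),
\]
which is holomorphic near the branch point.

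\emph{Points over \(z=\infty\).} Set \(z=1/t\). If \(d=2g+2\), there are two unramified points, and near each \(w\sim\pm t^{-(g+1)}\), with \(t\) a local coordinate. Since \(dz=-t^{-2}dt\), the form \(z^k dz/w\) behaves like \(t^{-k-2+g+1}dt=t^{\,g-1-k}dt\), which is holomorphic exactly when \(k\le g-1\). If \(d=2g+1\), there is a single ramified point with local coordinate \(s\), where \(t=s^2\). Then \(z=s^{-2}\), \(w\sim s^{-(2g+1)}\) and \(dz=-2s^{-3}ds\), so the form behaves like \(s^{-2k-3+2g+1}ds=s^{\,2(g-1-k)}ds\), again holomorphic for \(k\le g-1\).

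Linear independence is easy: a relation \(\sum c_k z^k\,dz/w=0\) forces the polynomial \(\sum c_k z^k\) to vanish identically on the open set where \(z\) is a coordinate, so all \(c_k=0\).

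The main obstacle is the analysis at infinity: one has to be careful about the local coordinate there, in particular in the odd-degree ramified case, and about the asymptotic order of \(w\). This is the only place where the bound \(k\le g-1\) enters. It also shows that the exponent bound is sharp, since \(k=g\) would produce a pole.
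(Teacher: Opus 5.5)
Your proof is correct. The chart-by-chart holomorphicity check, including the ramified point at infinity in the odd-degree model where \(z^k\,dz/w\) behaves like \(s^{2(g-1-k)}ds\), together with linear independence and \(\dim\Omega(X)=g\), gives the basis, and \(\tau^*=-\mathrm{id}\) follows by linearity. The paper gives no proof of its own and defers to \cite[III.7.5]{FK}, where the same computation appears as the divisor identity \((dz/w)=(\infty_1\infty_2)^{g-1}\) in the even-degree model, so your argument is essentially the standard one.
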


\smallskip

\noindent Finally, \cite[Corollary 2]{FK} states that the locus of hyperelliptic Riemann surfaces in \(\mathcal M_g\), for \(g\ge2\) is a subspace of dimension \(2g-1\). In particular \(2g-1=3g-3\) if and only if \(g=2\), \textit{i.e.} all Riemann surfaces in genus two are hyperelliptic. 

\smallskip

\subsubsection{Hyperelliptic translation surfaces}\label{sssec:hyper} We now extend the notion of hyperellipticity to translation surfaces. 

\begin{defn}\label{defn:hyperelliptictrans}
    A translation surface \((X,\omega)\) is said to be \textit{hyperelliptic}\index{hyperelliptic!translation surface} if the Riemann surface \(X\) is hyperelliptic with hyperelliptic involution \(\tau\), and \(\omega\) is an anti-invariant abelian differential such that either \(\omega\) has a single zero of maximal order, or \(\omega\) has exactly two zeros of the same order, which are exchanged by the hyperelliptic involution.
\end{defn}

\noindent This definition may look somewhat bizarre at first glance because of the requirements imposed on the abelian differential. In the present section, we aim to motivate this definition. Following \cite{KZ}, we introduce the moduli spaces of meromorphic quadratic\index{quadratic!differential} differentials\index{differential!quadratic} on a Riemann surface.


\begin{defn}\label{def:quadstratum}
Let \(h \geq 0\) be an integer, and let \((\,\ell_1, \dots, \ell_n\,)\) be a collection of integers with \(n \geq 1\) such that each \(\ell_j \geq -1\), \(\ell_j \neq 0\), and \(\ell_1+\cdots+\ell_n = 4h - 4\). Then we denote by \(\mathcal{Q}_h(\,\ell_1, \dots, \ell_n\,)\) the moduli space of pairs \((Y, \phi)\), where \(Y\) is a compact Riemann surface of genus \(h\), and \(\phi\) is a meromorphic quadratic differential on \(Y\) with zeros and poles of orders prescribed by the signature \((\,\ell_1, \dots, \ell_n\,)\). That is, \(\phi\) has a zero of order \(\ell_j\) if \(\ell_j > 0\), and a simple pole if \(\ell_j = -1\), for each \(j\). Moreover, we require that \(\phi\) is a primitive quadratic\index{quadratic!primitive} differential\index{differential!primitive}, namely it is not the global square of any holomorphic differential on \(Y\).
\end{defn}

\noindent Notice that the condition that \(\phi\) is not a global square of an abelian differential is automatically satisfied if at least one of the integers \(\ell_j\) is odd. It is known that each stratum\index{stratum!quadratic differential} \(\mathcal{Q}_h(\,\ell_1, \dots, \ell_n\,)\) is a complex orbifold of dimension \(2h + n - 2\), see \cite{KZ}.

\smallskip

\noindent With every meromorphic quadratic differential \((Y, \phi)\), one canonically associates a connected Riemann surface, say \(X\), equipped with an abelian differential \(\omega\). More specifically, given a quadratic differential, say \(\phi\) on \(Y\), there exists a canonical double covering \(\pi\colon X\longrightarrow Y\), which is cyclic of degree \(2\), such that the pull back of \(\phi\) is the square of an abelian differential \(\omega\) on \(X\). Moreover, \(\pi\) naturally comes with an involution \(\tau\) such that
\begin{equation}
\tau^* \omega = -\omega.
\end{equation}
\noindent The resulting pair \((X,\omega)\) is called the \textit{canonical double cover}. The reader is referred to \cite{Lan04} for a more concrete construction of this canonical cover. The involution \(\tau\) induces an involution in relative cohomology \(\tau^*\colon\textnormal{H}^1(X,\,\Delta,\,\C)\longrightarrow\textnormal{H}^1(X,\,\Delta,\,\C)\), where \(\Delta\) is the set of zeros of \(\omega\), and defines a decomposition \(\textnormal{H}^1(X,\,\Delta,\,\C)\cong \textnormal{E}_1\oplus\textnormal{E}_{-1}\) into the direct sum of subspaces invariant and anti invariant under the involution \(\tau\). By construction, \(\omega\in\textnormal{E}_{-1}\). Following \S\ref{sssec:hyperriemannsurfaces}, the involution \(\tau\) is hyperelliptic if \(Y\) has genus zero and \(\pi\) is branched at \(2g+2\) points. 

\smallskip

\noindent By means of this canonical cover, one can define a canonical mapping from the stratum \(\mathcal{Q}_h(\,\ell_1, \dots, \ell_n\,)\) of quadratic differentials to a stratum \(\mathcal{H}_g(\,\mu\,)\) of abelian differentials, where the list \(\mu = (\,m_1, \dots, m_k\,)\) is determined from the list \((\,\ell_1, \dots, \ell_n\,)\) according to the following:

\begin{ru}\label{ru:partition}
    \(\textnormal{ }\)
    \begin{itemize}
        \item for each even \(2\ell_j > 0\), include a pair \((\,\ell_j,\, \ell_j\,)\) in the list \((\,m_i\,)\);
        \item for each odd \(\ell_j > 0\), include a single term \(\ell_j + 1\);
        \item simple poles, \textit{i.e.} \(\ell_j = -1\), do not contribute to the list \((\,m_i\,)\).
    \end{itemize}
\end{ru}

\smallskip

\noindent Observe that \(\mu\) is a signature of the form

\begin{equation}
    \mu=\Big(\,m_1^{\gcd(2,\ell_1)},\dots,m_n^{\gcd(2,\ell_n)}\,\Big),\,\, \textnormal{ where }\,\, m_i=\frac{\ell_i+2}{\gcd(2,\ell_i)}-1,
\end{equation}
\noindent and then \(g\) is explicitly determined by \(\mu\). By recalling that \(\mu\) is a signature of \(2g-2\), and that \((\ell_1,\dots,\ell_n)\) is a partition of \(4h-4\), we can infer the following formula:

\begin{equation}
    g=1+\frac12\,\sum_{i=1}^n\,\gcd(2,\ell_i)\,m_i \,=\,2h-1+n-\frac12\left(\,\sum_{i=1}^n\,\gcd(2,\ell_i)\,\right).
\end{equation}

\medskip

\begin{lem}\label{lem:canmap}
The canonical map \(\mathcal{Q}_h(\,\ell_1, \dots, \ell_n\,) \longrightarrow \strata \) just defined is an immersion.
\end{lem}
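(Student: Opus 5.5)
We work in period coordinates on both sides and show that the canonical map is, in those coordinates, the restriction of an injective linear map. Let \((Y,\phi)\in\mathcal Q_h(\ell_1,\dots,\ell_n)\) with canonical double cover \(\pi\colon X\to Y\), involution \(\tau\), and \(\omega\) with \(\omega^2=\pi^*\phi\). Let \(\Delta\subset X\) be the zero set of \(\omega\), so that \(\pi(\Delta)\) consists of the zeros of \(\phi\) (Rule~\ref{ru:partition}). As in Remark~\ref{rmk:localcoord}, a neighbourhood of \((X,\omega)\) in \(\strata\) is parametrised by the relative cohomology class \([\omega]\in\textnormal{H}^1(X,\Delta,\C)\), i.e. by the integrals of \(\omega\) over a basis of \(\textnormal{H}_1(X,\Delta,\Z)\).

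The first step is the standard description of local coordinates on the quadratic stratum. Since \(\tau^*\omega=-\omega\), the class \([\omega]\) lies in the anti-invariant subspace \(\textnormal{E}_{-1}\) of \(\textnormal{H}^1(X,\Delta,\C)\). The claim is that the map \((Y,\phi)\mapsto[\omega]\in\textnormal{E}_{-1}\) is a local homeomorphism onto an open subset of \(\textnormal{E}_{-1}\); in fact, following \cite{KZ}, this is how the orbifold structure on \(\mathcal Q_h(\ell_1,\dots,\ell_n)\) is defined. I would justify this with the same polygon argument used for Proposition~\ref{prop:stratadim}. Cut \(Y\) into a polygon whose vertices include all zeros and poles of \(\phi\). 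The flat structure of \(\phi\) develops this polygon with edge identifications given by maps \(z\mapsto\pm z+c\). Lifting to \(X\) produces a \(\tau\)-symmetric polygonal model, and small deformations of the edge vectors that respect the \(\pm\) pairings correspond exactly to small anti-invariant deformations of \([\omega]\). As a consistency check, compute \(\dim\textnormal{E}_{-1}\) by a Lefschetz/Euler characteristic count and verify that it equals \(2h+n-2\).

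The second step is then formal. Composing the canonical map with period coordinates on \(\strata\) near \((X,\omega)\), the canonical map becomes the inclusion of an open subset of \(\textnormal{E}_{-1}\) into \(\textnormal{H}^1(X,\Delta,\C)\). This inclusion is a linear injection, so its differential is injective and the canonical map is an immersion. Orbifold points are handled by passing to local manifold charts, that is, finite covers of the neighbourhood, where the same argument applies verbatim. Note that we only claim an immersion and not an embedding: distinct \((Y,\phi)\) may have the same cover only up to automorphisms, which is exactly where the orbifold structure enters.

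The main obstacle is the first step, namely that anti-invariant periods really are local coordinates on \(\mathcal Q_h(\ell_1,\dots,\ell_n)\). This requires two checks. The first is that every small anti-invariant perturbation of \([\omega]\) is realised by some \((X',\omega')\) that still carries an involution and descends to a quadratic differential with the same signature \((\ell_1,\dots,\ell_n)\). The second is that \(\phi\) is recovered locally from \([\omega]\). For the first, I would perturb the edge vectors of the \(\tau\)-symmetric polygon symmetrically, so that \(\tau\) persists by construction. Zeros of \(\phi\) of odd order and simple poles correspond to fixed points of \(\tau\), which are vertices of the polygon; a symmetric perturbation keeps them as fixed points and therefore preserves the orders. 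For the second, injectivity follows from the local injectivity of period coordinates on \(\strata\) (Remark~\ref{rmk:localcoord}), applied to the symmetric surfaces.
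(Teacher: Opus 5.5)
The paper does not actually prove this lemma. It is stated on the authority of Kontsevich--Zorich right after the splitting \(\textnormal{H}^1(X,\Delta,\C)=\textnormal{E}_1\oplus\textnormal{E}_{-1}\) with \([\omega]\in\textnormal{E}_{-1}\) is introduced. Your argument is the standard proof that this setup points to, and it is correct: anti-invariant periods serve as orbifold charts on \(\mathcal Q_h(\ell_1,\dots,\ell_n)\), so in period coordinates the canonical map is the linear inclusion \(\textnormal{E}_{-1}\hookrightarrow\textnormal{H}^1(X,\Delta,\C)\).

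A few points should be tightened.

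\begin{itemize}
\item The signature survives a symmetric perturbation because each total cone angle lies in \(\pi\Z\) and varies continuously. ``Fixed points stay fixed'' does not pin down the orders. The cone-angle argument also covers the even-order zeros of \(\phi\), whose lifts are swapped by \(\tau\) rather than fixed.
\item The polygon edges may end at poles, whose lifts are not in \(\Delta\). This is harmless because those lifts are \(\tau\)-fixed, so the anti-invariant parts of relative cohomology with or without them coincide.
\item The count \(\dim\textnormal{E}_{-1}=2h+n-2\) is not just a consistency check. It is the step that upgrades injectivity of the edge-vectors-to-periods map to a local homeomorphism onto an open subset of \(\textnormal{E}_{-1}\).
\end{itemize}
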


\noindent The construction just presented works in the general setting and the image of the canonical map in \(\strata\) consists of pairs (\(X,q)\) such that \(q\) is a non-primitive quadratic differential on \(X\), that is, \(q\) is the global square of some abelian differential \(\omega\in\Omega(\,X\,)\). 
\smallskip

\noindent Recall that the aim is to provide a motivation for Definition \ref{defn:hyperelliptictrans}. For this purpose we need to work with translation surfaces whose underlying Riemann surface is hyperelliptic, therefore we set \(h=0\). Two specific series of such mappings will play a crucial role for our purposes, namely,
\begin{equation}\label{eq:specialmaps}
    \mathcal{Q}_o(\,-1^{2g+1}, 2g-3\,) \to \mathcal{H}_g(\,2g-2\,), \qquad 
    \mathcal{Q}_o(\,-1^{2g+2}, 2g-2\,) \to \mathcal{H}_g(\,g-1, g-1\,),
\end{equation}
where \(g \geq 2\). In both examples, the base Riemann surface \(Y\) has genus \(h = 0\) and hence its canonical double cover \(X\) is a Riemann surface of genus \(g\) that comes with a hyperelliptic involution by construction. Let us compare the dimensions of the domain and target strata. From Proposition \ref{prop:stratadim}, we compute:
\begin{align}
    \dim_{\mathbb{C}} \mathcal{Q}_o(\,-1^{2g+1}, 2g-3\,) &= 2 \cdot 0 + (2g+2) - 2 = 2g + 1 - 1 = \dim_{\mathbb{C}} \mathcal{H}_g(\,2g-2\,) , \\
    \dim_{\mathbb{C}} \mathcal{Q}_o(\,-1^{2g+2}, 2g-2\,) &= 2 \cdot 0 + (2g+3) - 2 =  2g + 2 - 1 = \dim_{\mathbb{C}} \mathcal{H}_g(\,g-1, g-1\,).
\end{align}

\noindent In both cases the dimensions match, and the canonical maps in \eqref{eq:specialmaps} are local homeomorphisms of complex orbifolds. For the reader's convenience, we show that these are the only cases in which the dimensions match.

\begin{lem}\label{lem:detectingpartitions}
    Let \(\nu\) be a signature of quadratic differentials on \(\cp\) and let \(\mu\) be the signature of abelian differential realised from \(\nu\) by using Rule \eqref{ru:partition}. Then the dimensions of the strata \(\mathcal Q_o(\,\nu\,)\) and \(\mathcal H_g(\,\mu\,)\) coincide if and only if 
    \begin{itemize}
        \item[1.] \(\nu=(\,-1^{2g+1}, 2g-3\,)\) and \(\mu=(2g-2)\), or
        \item[2.] \(\nu=(\,-1^{2g+2}, 2g-2\,)\) and \(\mu=(g-1,g-1)\).
    \end{itemize}
\end{lem}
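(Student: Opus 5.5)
The plan is a direct dimension count followed by a short case analysis. Write \(\nu=(\,-1^{p},\ell_1,\dots,\ell_r\,)\), where \(p\ge0\) is the number of simple poles and \(\ell_1,\dots,\ell_r\ge1\) are the orders of the zeros. Split the zeros into \(a\) odd ones and \(b\) even ones, so \(r=a+b\). The statement requires \(\sum_j \ell_j - p = -4\), since \(h=0\). The dimension formula quoted after Definition \ref{def:quadstratum} gives \(\dim_{\mathbb C}\mathcal Q_o(\nu)=p+r-2\). By Rule \ref{ru:partition}, \(|\mu|=a+2b\), so Proposition \ref{prop:stratadim} gives \(\dim_{\mathbb C}\mathcal H_g(\mu)=2g+a+2b-1\).

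Next I compute \(g\). From Rule \ref{ru:partition} the total order is
\[
2g-2=\sum_{\ell_j\text{ odd}}(\ell_j+1)+\sum_{\ell_j\text{ even}}\ell_j=\sum_j\ell_j+a=p-4+a,
\]
so \(2g=p+a-2\). Equating the two dimensions gives \(p+a+b-2 = p+a-2+a+2b-1\), which simplifies to \(a+b=1\), i.e.\ \(r=1\). Hence \(\nu\) has exactly one zero of order \(\ell\), and \(p=\ell+4\).

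Two cases remain.

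If \(\ell\) is odd, then \(a=1\) and \(2g=p-1=\ell+3\). This gives \(\ell=2g-3\) and \(p=2g+1\), and Rule \ref{ru:partition} produces \(\mu=(\ell+1)=(2g-2)\). This is case 1.

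If \(\ell\) is even, then \(a=0\) and \(2g=p-2=\ell+2\). This gives \(\ell=2g-2\) and \(p=2g+2\), and \(\mu=(g-1,g-1)\). This is case 2.

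For the converse, I substitute the two signatures back into the formulas; this just reproduces the computation displayed before the lemma.

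The only delicate step is justifying the formula for \(g\) in terms of \(\nu\). In particular, I must check that simple poles contribute nothing to \(\mu\) while still entering the degree condition. The paper's own displayed formula for \(g\) may be stated slightly differently, so I would derive the total order directly from Rule \ref{ru:partition} as above rather than rely on that display. I also need to check that the primitivity condition does not exclude these strata. In both cases \(p>0\), so \(\nu\) contains an odd entry and the differential is automatically primitive. The case \(g\ge2\) guarantees \(\ell\ge1\), so the zero exists as claimed.
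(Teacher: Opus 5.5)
Your proof is correct and is essentially the paper's argument. In the paper's notation \(r=p\), \(s=a+b\), \(o=a\), \(k=|\mu|\), both proofs rest on the same two relations, the dimension equality \(n=2g+k+1\) and \(2g-2=o+r-4\) from Rule~\ref{ru:partition}; you substitute the second into the first to get \(a+b=1\) at once and then split on the parity of the single zero, whereas the paper first uses \(k\ge s\) and \(o\ge 0\) to confine the number of poles to \(\{2g+1,2g+2\}\) and then derives \(s=1\) in each case.
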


\begin{proof}
    Let \(\nu=(\,\ell_1,\dots,\ell_n\,)\) be a partition of \(-4\) such that \(\ell_i\ge-1\) and \(\ell_i\neq0\) for all \(i=1,\dots,n\). Let us set the following parameters
    \begin{itemize}
        \item[1.] \(r\) is the number of indices with \(\ell_i=-1\),
        \smallskip
        \item[2.] \(s=n-r\) is the number of indices with \(\ell_i\ge1\), and 
        \smallskip
        \item[3.] \(o\) is the number of indices such that \(\ell_i\ge1\) is odd.
    \end{itemize}
    We first provide a lower and upper bound for the possible values of \(r\). Note that, since \(\ell_1+\cdots+\ell_n=-4\), it follows that \(r\ge4\). We order the entries of \(\nu\) so that \(\ell_1=\cdots=\ell_r=-1\) and \(\ell_{r+i}\ge1\) for \(1=1,\dots,s\). Let \(\mu=(\,m_1,\dots,m_k\,)\) be the signature of abelian differentials realised from \(\nu\) by using Rule \eqref{ru:partition}. Note that \(k\ge s=n-r\) because each \(\ell_{r+i}\ge1\) contributes in the realisation of \(\mu\). We now compare the sum of the entries of the signatures \(\mu\) and \(\nu\), that is,
    \begin{equation}\label{eq:comparisonone}
        \sum_{j=1}^k m_j\,=\,o\,+\,\sum_{i=1}^s\,\ell_{r+i}
    \end{equation}
    because, as an artifact of Rule \eqref{ru:partition}, every \(\ell_i\) contributes on the left-hand sum as \(\ell_i\) if it is even or it contributes as \(\ell_i+1\) if it is odd. Moreover,
    \begin{equation}\label{eq:comparisontwo}
        \sum_{i=1}^n\,\ell_i\,=\,\sum_{i=1}^s\,\ell_{r+i}\,+\,\sum_{i=1}^r\,\ell_i\,=\,-4 \,\,\Longrightarrow\,\, \sum_{i=1}^s\,\ell_{r+i}\,=\,r-4.
    \end{equation}
    By combining equations \eqref{eq:comparisonone} and \eqref{eq:comparisontwo} and by recalling the equality \(m_1\,+\cdots+\,m_k=2g-2\), we deduce that \(2g-2\,=\,o+r-4\), in particular
    \begin{equation}
        2g-2\ge r-4 \,\Longrightarrow r\le 2g+2.
    \end{equation}
    We now determine a lower bound. The dimensions of the strata \(\mathcal Q_o(\,\nu\,)\) and \(\mathcal H_g(\,\mu\,)\) coincide if and only if \(n-2=2g+k-1\), that is if and only if \(n=2g+k+1\). Since \(n=r+s\) and \(k\ge s\), we get
    \begin{equation}
        r+s \ge 2g + s + 1\,\,\Longrightarrow \,\, r\ge 2g+1.
    \end{equation}
    Therefore \(r\in\{\,2g+1,\,2g+2\,\}\). We now analyse these cases separately.
    
    \smallskip
    
    \noindent \textit{Case \(r=2g+1\)}. In this case \(2g+k+1=n=r+s=2g+1+s\) and hence \(k=s\). Equation \eqref{eq:comparisonone} implies that \(\ell_{r+i}\) is odd for all \(i=1,\dots,s\). Moreover, since \(\mu\) is a partition of \(2g-2\) and \(\ell_{r+1}+\cdots+\ell_n=2g-3\) because \(\nu\) is a partition of \(-4\), it follows that \(o=1\) that forces \(k=s=1\). Hence \(\nu=(\,-1^{2g+1}, 2g-3\,)\) and \(\mu=(2g-2)\).

    \smallskip
    
    \noindent \textit{Case \(r=2g+2\)}. Similarly to the case above, since \(\mu\) is a partition of \(2g-2\) and \(\ell_{r+1}+\cdots+\ell_n=2g-2\) because \(\nu\) is a partition of \(-4\), it follows that \(o=0\) and hence \(k=2s\). Since \(2g+k+1=n=r+s=2g+2+s\) it follows that \(k=s+1\) and hence \(k=2\) and \(s=1\). Thus \(\nu=(\,-1^{2g+2}, 2g-2\,)\) and \(\mu=(g-1,g-1)\) as desired.
\end{proof}

\begin{rmk}\label{rmk:actionoftheinvolution}
    Let \((Y,\phi)\) be any structure in \(\mathcal{Q}_o(\,-1^{2g+1}, 2g-3\,)\) and let \((X,\omega)\in\mathcal H_g(\,2g-2\,)\) be its canonical double cover. As already mentioned above, \((X,\omega)\) admits an involution \(\tau\) such that \((X,\omega)/\langle\,\tau\,\rangle\cong(Y,\phi)\). We may observe that the single zero is a fixed point for the involution \(\tau\). On the other hand, if \((Y,\phi)\) is any structure in \(\mathcal{Q}_o(\,-1^{2g+2}, 2g-2\,)\), then its canonical double cover \((X,\omega)\in\mathcal H_g(\,g-1,g-1\,)\) admits an involution \(\tau\) that swaps the zeros of \(\omega\) (otherwise \(\tau\) would have more than \(2g+2\) fixed points and \(X\) would not be hyperelliptic). 
\end{rmk}

\noindent Before returning to the special cases in \eqref{eq:specialmaps}, we state the following fundamental property of the strata of quadratic differentials in genus zero.

\begin{prop}\label{prop:genuszero}
Let \(g = 0\). Then every stratum \(\mathcal{Q}_o(\,\ell_1, \dots, \ell_n\,)\) of meromorphic quadratic differentials is non-empty and connected.
\end{prop}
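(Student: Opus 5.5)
The natural approach is to describe quadratic differentials on \(\cp\) explicitly by their divisors. Since \(\cp\) has a unique complex structure, a point of \(\mathcal{Q}_o(\,\ell_1,\dots,\ell_n\,)\) is a meromorphic quadratic differential \(\phi\) on \(\cp\) with zeros and simple poles of orders \(\ell_j\), considered up to \(\textnormal{PSL}(2,\C)\). The key classical input is that on \(\cp\) any divisor of degree \(-4\) is the divisor of some meromorphic quadratic differential, unique up to a scalar in \(\C^*\). Indeed, the canonical bundle squared is \(\mathcal O(-4)\), and a meromorphic section of a line bundle on \(\cp\) is determined, up to scalar, by its divisor.

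Concretely, after moving all singular points off \(\infty\) (or placing one of them at \(\infty\) and adjusting), we would write
\begin{equation}
    \phi \,=\, c\,\prod_{j=1}^{n}(\,z-z_j\,)^{\ell_j}\,dz^2, \qquad c\in\C^*,
\end{equation}
where the \(z_j\) are pairwise distinct. The condition \(\sum\ell_j=-4\) is exactly what makes \(\phi\) regular and non-zero at \(\infty\) when no \(z_j=\infty\). With this description the steps are as follows.
\begin{enumerate}
    \item \emph{Parametrisation.} There is a surjective continuous map onto \(\mathcal{Q}_o(\,\ell_1,\dots,\ell_n\,)\) from
    \begin{equation}
        \C^*\times \textnormal{Conf}_n(\cp)\,=\,\C^*\times\{\,(z_1,\dots,z_n)\in(\cp)^n \,\mid\, z_i\neq z_j \text{ for } i\neq j\,\}.
    \end{equation}
    Here the labelling of the points is respected, and the possible reordering of points with equal \(\ell_j\) is absorbed in the quotient.
    \item \emph{Non-emptiness.} The domain is non-empty: choose any \(n\) distinct points (with \(n\ge1\), and in fact \(n\ge4\) because at least four simple poles are needed) and any \(c\neq0\).
    \item \emph{Primitivity.} We must check that \(\phi\) is primitive, as required in Definition~\ref{def:quadstratum}. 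This is automatic, since the hypotheses force at least four odd entries \(\ell_j=-1\), so \(\phi\) cannot be a global square.
    \item \emph{Connectedness.} The configuration space of \(n\) distinct ordered points in a real two-dimensional connected manifold is connected, because removing finitely many points does not disconnect a surface. An induction on \(n\), using the fibrations that forget the last point, gives this. Since \(\C^*\) is also connected, the product is connected, and so is its continuous image.
\end{enumerate}

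The main obstacle is making the surjectivity and continuity of the parametrisation rigorous with respect to the orbifold (period-coordinate) topology on the stratum, rather than just at the level of sets. We would handle this by noting that the relative periods of \(\phi\), computed on the canonical double cover as in Remark~\ref{rmk:localcoord}, are integrals of the explicit algebraic expression above. These integrals depend continuously on \((\,c,\,z_1,\dots,z_n\,)\) as long as the points stay distinct, which gives continuity of the map. Surjectivity is the divisor statement recalled in the first paragraph.
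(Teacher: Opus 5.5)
Your argument is correct and is essentially the argument of \cite[Proposition 1]{KZ}, which the paper cites in place of a proof: on \(\cp\) the differential is \(c\prod_{j}(z-z_j)^{\ell_j}\,dz^2\), so the stratum is a continuous image of \(\C^*\) times a connected configuration space, and primitivity follows from the at least four simple poles. One tidy-up is needed: your formula only makes sense for finite \(z_j\), and extending it naively with \(c\) fixed is not continuous as some \(z_j\to\infty\) (the area tends to zero), so you should parametrise by \(\C^*\times\textnormal{Conf}_n(\C)\) instead, which is still connected and still surjects onto the stratum once a M\"obius transformation moves all singular points off \(\infty\).
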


\noindent The proof can be found in \cite[Proposition 1]{KZ}. Lemmata \ref{lem:canmap} and \ref{lem:detectingpartitions} and Proposition \ref{prop:genuszero} combined together motivate the following definition.

\begin{defn}
    By \textit{hyperelliptic\index{hyperelliptic!component} component}\index{component!hyperelliptic} we define the following connected component\index{component!connected} of the following strata of abelian differentials on compact Riemann surfaces of genus \(g\ge2\): the connected component \(\mathcal H_{g}^{\textnormal{hyp}}(\,2g-2\,)\) of the stratum \(\mathcal H_{g}(\,2g-2\,)\) consisting of abelian differentials on hyperelliptic curves of genus \(g\) arising from the orbifold \(\mathcal{Q}_o(\,-1^{2g+1}, 2g-3\,)\) and the connected component \(\mathcal H_{g}^{\textnormal{hyp}}(\,g-1,g-1\,)\) of \(\mathcal H_{g}(\,g-1,g-1\,)\) of abelian differentials arising from the orbifold \(\mathcal{Q}_o(\,-1^{2g+2}, 2g-2\,)\). A translation surface of genus \(g\) is said to be \textit{hyperelliptic} if it belongs either to \(\mathcal H_{g}^{\textnormal{hyp}}(\,2g-2\,)\) or \(\mathcal H_{g}^{\textnormal{hyp}}(\,g-1,g-1\,)\).
\end{defn}

\begin{rmk}
    Following Remark \ref{rmk:actionoftheinvolution}, we also note that if \(X\) is any hyperelliptic Riemann surface with hyperelliptic involution \(\tau\) and \((X,\omega)\in\mathcal H_g(\,g-1,g-1\,)\) then \(\tau\) may fix the zeros of \(\omega\). In this case the structure \((X,\omega)\) does not belong to a hyperlliptic component. 
\end{rmk}

\begin{rmk}
    As a consequence, we have the following first rough classification of connected components of strata. For every genus \(g \ge 2\), each of the strata \(\mathcal{H}_g(\,2g-2\,)\) and \(\mathcal{H}_g(\,g-1,\,g-1\,)\) is either connected and entirely consists of hyperelliptic translation surfaces, or it has at least two connected components and only one of which corresponds to hyperelliptic translation surfaces.
\end{rmk}


\smallskip

\subsubsection{Interlude 2: index of curves}\label{sssec:index} Let \((X,\omega)\) be a translation surface of genus \(g\) and let \(\Delta\) be the set of zeros of \(\omega\). Recall that away from the zeros and poles of \(\omega\), there is a well defined horizontal direction and hence a non-singular horizontal foliation on \(X \setminus \Delta\) that extends to a singular foliation over \(\Delta\), see \S\ref{sssec:north}. Let \(\gamma\) be a simple smooth curve parametrized by arc length. Assume in addition that \(\gamma\) avoids all the zeros and poles of \(\omega\). We shall define the index of \(\gamma\) as a numerical invariant given by the comparison of the unit tangent field \(\dot{\gamma}(\,t\,)\) and the unit vector field along \(\gamma\) given by unit vectors tangent to the horizontal direction. More precisely, let us denote by \(u(\,t\,)\) the unit vector at \(\gamma(\,t\,)\) tangent to the leaf of the horizontal foliation through \(\gamma(\,t\,)\). Then the assignment 
\begin{equation}\label{comphorfol}
    t \longmapsto \theta(\,t\,) = \angle\big(\, \dot{\gamma}(\,t\,),\, u(\,t\,)\, \big)
\end{equation}
defines a mapping \(f_\gamma \colon \mathbb{S}^1 \longrightarrow \mathbb{S}^1\).

\begin{defn}\label{index}
The index of \(\gamma\) is defined as \(\deg(\,f_\gamma\,)\) and denoted by \(\textnormal{Ind}(\,\gamma\,)\). 
\end{defn}

\textit{Convention.} We agree that \(\mathbb{S}^1\) is counter-clockwise oriented. As a consequence, we agree that the index \(\textnormal{Ind}(\,\gamma\,) = \deg(\,f_\gamma\,)\) is \textit{positive} if \(\gamma\) is counter-clockwise oriented.

\medskip

\noindent The index \(\textnormal{Ind}(\,\gamma\,)\) of a closed curve \(\gamma\) measures the number of times the unit tangent vector field spins with respect to the direction given by the horizontal foliation. Since for translation surfaces the parallel transport yields a trivial holonomy, see Proposition~\ref{prop:tripartrans}, the total change of the angle is \(2\pi\, \textnormal{Ind}(\,\gamma\,)\).

\smallskip

\subsubsection{Interlude 3: Spin structures}\label{sssec:spinpar} The last geometric invariant we shall introduce is the spin structure. Let \(X\) be a compact Riemann surface and let \(P\) be a principal \(\mathbb S^1\)-bundle over \(X\). 

\begin{defn}
    A \textit{spin structure}\index{structure!spin} on \(X\) is a choice of a linear functional \(\xi\colon\textnormal{H}_1(P,\, \mathbb Z_2)\longrightarrow\mathbb Z_2\) having non-zero value on the cycle representing the fibre \(\mathbb S^1\) of \(P\).
\end{defn}

\noindent Notice that this is equivalent to a choice of a double covering \(Q\longrightarrow P\) whose restriction to each fibre of \(P\) is a \(2\)-fold covering of \(\mathbb S^1\).

\smallskip

\noindent An element of \(\textnormal{H}_1(P,\, \mathbb{Z}_2)\) can be regarded as a \textit{framed closed curve} in \(X\), by which we mean a closed curve in \(X\) and a smooth unit vector field along it. For a simple closed curve \(\gamma\) in \(X\), there is a preferred frame given by its unit tangent vector field. We shall denote such a framed closed curve as \(\vec{\gamma}\). This is well defined in the sense that two homologous closed curves \(\gamma,\,\delta \in \textnormal{H}_1(X,\, \mathbb{Z}_2)\) yield homologous framed closed curves \(\vec{\gamma},\,\vec{\delta}\), see \cite[\S3]{JO}.

\smallskip

\noindent The \textit{canonical lift} of a simple closed curve \(\gamma\) is defined as \(\widetilde{\gamma} = \vec{\gamma}\,+\, \textbf{z}\), where \(\textbf{z} \in \textnormal{H}_1(P,\, \mathbb{Z}_2)\) is the homology class represented by the fibre \(\mathbb{S}^1\). The assignment \(\gamma \longmapsto \widetilde{\gamma}\) defines a map \(\lambda\colon\textnormal{H}_1(X,\, \mathbb{Z}_2) \to \textnormal{H}_1(P,\, \mathbb{Z}_2)\), which fails to be a homomorphism. In fact, \cite[Theorem 1B]{JO} states that for every \(\alpha,\beta\in\textnormal{H}_1(X,\, \mathbb{Z}_2)\) the following identity holds:
\begin{equation}\label{eq:condliftcurves}
    \widetilde{\,\alpha+\beta\,}=\widetilde{\alpha}\,+\,\widetilde{\beta}\,+\,\big(\,\alpha\cdot\beta\,\big)\,\textbf{z}, 
\end{equation}
where \(\alpha\,\cdot\,\beta\) denotes the intersection form on \(\textnormal{H}_1(X,\, \mathbb{Z}_2)\), which is symplectic. By post-composing the assignment \(\lambda\) with the spin structure \(\xi\) one obtains a \(\mathbb Z_2\)-valued function \(\Omega_\xi \colon \textnormal{H}_1(X,\, \mathbb{Z}_2) \longrightarrow \mathbb{Z}_2\) on \(\textnormal{H}_1(X,\, \mathbb{Z}_2)\) defined as
\begin{equation}
    \Omega_\xi(\,\gamma\,) = \xi\big(\,\widetilde{\gamma}\,\big) = \langle \,\xi,\,\widetilde{\gamma}\,\rangle,
\end{equation}
which is a quadratic form. Let us explain why this form is indeed quadratic. Since we work over the field \(\mathbb{Z}_2\), we recall that every symplectic form is automatically symmetric; in particular, the intersection pairing on \(\textnormal{H}_1(X,\, \mathbb{Z}_2)\) yields a symmetric form. By following \cite[\S4]{JO}, the form \(\Omega_\xi\) is a quadratic form because it satisfies the identity
\begin{equation}
    \Omega_\xi(\,\alpha+\beta\,)=\Omega_\xi(\,\alpha\,)+\Omega_\xi(\,\beta\,)+\alpha\cdot\beta,
\end{equation}
which follows from the identity \eqref{eq:condliftcurves} together with the fact that \(\xi(\,\textbf{z}\,)=1\). Following \cite{AC} and \cite[\S5]{JO}, we shall introduce the following:

\begin{defn}\label{parity}
For a symplectic basis \(\{\,\alpha_1,\beta_1,\dots,\alpha_g,\beta_g\,\}\) of \(\textnormal{H}_1(X,\, \mathbb{Z}_2)\), the Arf-invariant of \(\Omega_\xi\) is defined by the formula
\begin{equation}\label{arfinv}
    \sum_{i=1}^g \,\langle \,\xi,\,\widetilde{\alpha}_i\,\rangle\,\langle \,\xi,\,\widetilde{\beta}_i\,\rangle  \,\,(\text{mod}\,2).
\end{equation}
The \textit{parity} of a spin structure \(\xi\) is defined as the Arf-invariant of \(\Omega_\xi\). We may notice that the parity of \(\xi\) does not depend on the choice of the symplectic basis.
\end{defn}

\medskip 

\subsubsection{Spin structure\index{structure!spin} arising from an abelian differential}\label{sssec:spinabel} Let \(X \in \mathcal{M}_g\) be a Riemann surface of positive genus \(g \ge 1\) and let \(\omega \in \Omega(X)\) be a holomorphic differential on \(X\). In this paragraph, we aim to show in detail that if \((X, \omega) \in \mathcal{H}_g(\,2m_1, \dots, 2m_k\,)\), then \(\omega\) gives a well defined spin structure on \(X\).

\smallskip

\noindent Let \(\gamma\) be a simple closed curve parametrised by arc length and let \(\vartheta\) be a unit vector field along \(\gamma\). The pair \((\gamma, \vartheta)\) defines a framed closed curve, \textit{i.e.}, an element of \(\textnormal{H}_1(P,\, \mathbb{Z}_2)\). Recall that \(\omega\) defines a non-singular horizontal foliation away from the zeros, hence the unit vector field \(\vartheta\) can be compared to the unit vector field along \(\gamma\) tangent to the horizontal foliation. In fact, as in \S\ref{sssec:index}, this can be done by means of a mapping \(f_{(\gamma,\vartheta)} \colon \mathbb{S}^1 \to \mathbb{S}^1\) defined as in Equation \eqref{comphorfol}. The spin structure induced by a holomorphic differential \(\omega\) on \(X\) is then defined as the \(\mathbb{Z}_2\)-valued linear functional
\begin{equation}\label{spinstrome}
    \xi \colon \textnormal{H}_1(P,\,\mathbb{Z}_2) \longrightarrow \mathbb{Z}_2, \quad \text{such that }\quad \xi(\gamma,\vartheta) = \deg\big(f_{(\gamma,\vartheta)}\big) \,\,(\text{mod}\,2).
\end{equation}

\noindent A direct application of \eqref{spinstrome} shows that, for the canonical lift \(\widetilde{\gamma}\) of \(\gamma\), the spin structure determined by \(\omega\) satisfies the property
\begin{equation}\label{spindef}
    \xi(\,\widetilde{\gamma}\,) = \langle\,\xi,\,\widetilde{\gamma}\,\rangle = \langle\,\xi,\,\vec{\gamma} + \textbf{z}\,\rangle = \text{Ind}(\,\gamma\,) + 1 \,\,(\text{mod}\,2).
\end{equation}

\noindent As a consequence, we can apply formula \eqref{arfinv} to compute the parity \(\varphi(\,\omega\,)\) of the spin structure determined by \(\omega\). Given a symplectic basis \(\{\,\alpha_1,\beta_1,\dots,\alpha_g,\beta_g\,\}\) of \(\textnormal{H}_1(X,\, \mathbb{Z}_2)\), it follows that
\begin{equation}\label{eq:spinparity}
    \varphi(\,\omega\,) = \sum_{i=1}^g \big(\,\text{Ind}(\,\alpha_i\,) + 1\big)\big(\,\text{Ind}(\,\beta_i\,) + 1\big) \,\,(\text{mod}\,2).
\end{equation}

\noindent Since it can be shown that \(\varphi(\,\omega\,)\) does not depend on the choice of the symplectic basis, see \cite{JO}, the spin structure determined by \(\omega\) is well defined. Finally, it follows from the works of Atiyah in \cite{AM} and Mumford in \cite{MD} that the spin parity is invariant under continuous deformations.

\smallskip

\subsubsection{Connected components\index{component!connected} of strata\index{stratum!connected component}}\label{ssec:conncomp} Let us discuss first the case of holomorphic differentials which has been studied by Kontsevich--Zorich in \cite[Theorems 1 and 2]{KZ}. We split this classification in two theorems; one of which is specific to low-genus surfaces, \textit{i.e.} \(g=2,3\).

\begin{thmnn}[Kontsevich--Zorich]
The moduli space of abelian differentials on a curve of genus $g = 2$ contains two strata: $\mathcal{H}_2(1,1)$ and $\mathcal{H}_2(\,2\,)$. Each of them is connected and coincides with its hyperelliptic component\index{hyperelliptic!component}.
Each of the strata $\mathcal{H}_3(2,2)$, $\mathcal{H}_3(\,4\,)$ of the moduli space of abelian differentials on a curve of genus $g = 3$ has two connected components: the hyperelliptic one, and one having odd spin structure\index{structure!spin}. The other strata are connected for genus $g = 3$.
\end{thmnn}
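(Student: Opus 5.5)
The plan is to split the statement into its genus-two and genus-three parts, and in each to pair a \emph{lower bound} on the number of components, coming from invariants, with an \emph{upper bound}, coming from a connectivity argument.

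\textbf{Genus two.} The Gauss--Bonnet condition (Proposition~\ref{prop:gbcond}) forces the signature to be a partition of $2$, so the only strata are $\mathcal H_2(2)$ and $\mathcal H_2(1,1)$. Every Riemann surface of genus two is hyperelliptic (see \S\ref{sssec:hyperriemannsurfaces}), and every holomorphic $1$-form is anti-invariant under the involution $\tau$.
\begin{itemize}
\item For $\mathcal H_2(2)$, the single double zero must be fixed by $\tau$, since $\tau$ maps zeros of $\omega$ to zeros of $\omega$.
\item For $\mathcal H_2(1,1)$, we must show that $\tau$ exchanges the two simple zeros. On $w^2=p(z)$ with $\deg p=6$, a form $(a+bz)\,dz/w$ vanishes at the two points lying over $z=-a/b$. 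These two points are swapped by $\tau$ unless $z=-a/b$ is a branch point. In that case the form has a double zero there, so we are in $\mathcal H_2(2)$ instead.
\end{itemize}
Thus every element of each stratum lies in the image of the canonical map from $\mathcal Q_0(-1^{5},1)$ or $\mathcal Q_0(-1^{6},2)$, respectively. By Lemma~\ref{lem:canmap} and Lemma~\ref{lem:detectingpartitions} these maps are local homeomorphisms between orbifolds of equal dimension. They are surjective by the description above. The source strata are connected by Proposition~\ref{prop:genuszero}, so their images are connected, and each stratum equals its hyperelliptic component.

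\textbf{Genus three: lower bounds.} For $\mathcal H_3(4)$ and $\mathcal H_3(2,2)$, all zeros have even order, so the spin parity \eqref{eq:spinparity} is defined. It is constant on connected components because it is invariant under deformation. The hyperelliptic components exist, by the same canonical-map construction, and are proper: their dimension matches that of the stratum, but not every genus-three curve is hyperelliptic, since the hyperelliptic locus has dimension $5<6$.

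Next we compute parities.
\begin{itemize}
\item On a hyperelliptic surface, we compute the parity explicitly, using a symplectic basis built from curves symmetric under $\tau$. This should give the even value $\lfloor (g+1)/2\rfloor \bmod 2 = 0$ for $g=3$.
\item We then exhibit a non-hyperelliptic surface of odd parity. A natural choice is a square-tiled surface (Example~\ref{ex2}), with indices of horizontal and vertical core curves computed from \eqref{eq:spinparity}.
\end{itemize}
Together these give at least two components: the hyperelliptic one (even parity) and an odd one.

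\textbf{Genus three: upper bounds.} This is the main obstacle: showing there are no further components. In particular we must rule out an even non-hyperelliptic component, and show that the other strata, $\mathcal H_3(1,1,1,1)$, $\mathcal H_3(2,1,1)$ and $\mathcal H_3(3,1)$, are connected.

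I would follow Kontsevich--Zorich, working near the boundary of the stratum.
\begin{itemize}
\item Each component contains surfaces degenerating to a cylinder shrinking, or to zeros colliding. The reverse operations are bubbling a handle and breaking a zero.
\item One shows by induction on genus that every component of a genus-three stratum is obtained from a connected genus-two stratum by these surgeries, or from the minimal stratum by breaking zeros.
\item The number of resulting components is then controlled by the parity and hyperellipticity data of the inputs. Breaking up a zero in $\mathcal H_3(4)$ into odd-order zeros destroys both invariants and merges the components, which gives connectedness of the strata with odd zeros.
\item For $\mathcal H_3(4)$ and $\mathcal H_3(2,2)$, bubbling a handle into $\mathcal H_2(2)$ produces exactly two outcomes, distinguished by the angle parameter: a hyperelliptic one and one of odd parity.
\end{itemize}
The delicate step is verifying that bubbling a handle yields no even non-hyperelliptic surface. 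I would attempt this via a direct computation of \eqref{eq:spinparity} on the new handle, or by citing \cite[Theorems~1 and~2]{KZ} for this step.
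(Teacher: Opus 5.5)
The survey does not prove this statement; it quotes \cite[Theorems~1 and~2]{KZ}, so your proposal has to stand on its own. Your genus-two part does. Every form is anti-invariant, the zero of a form in $\mathcal H_2(2)$ is a Weierstrass point, and the two zeros of a form in $\mathcal H_2(1,1)$ are exchanged by $\tau$. Hence $\omega^2$ descends to $\mathcal Q_0(-1^5,1)$ or $\mathcal Q_0(-1^6,2)$, the canonical maps are onto, and Proposition~\ref{prop:genuszero} gives connectedness.

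The genus-three lower bound is also fine, with one exception: the properness argument via ``the hyperelliptic locus has dimension $5<6$'' proves nothing. The projection $\mathcal H_3(4)\to\mathcal M_3$ is not dominant: a generic plane quartic has no hyperflex, and both components of $\mathcal H_3(4)$ lie over $5$-dimensional loci. So the existence of non-hyperelliptic curves does not produce a non-hyperelliptic point of the stratum. Properness must instead come from your odd-parity example, which you still have to write down.

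The genuine gap is the upper bound, which is the entire content of the Kontsevich--Zorich theorem, and you have mislocated its difficulty. Excluding an even non-hyperelliptic outcome of bubbling a handle on $S'\in\mathcal H_2(2)$ is the easy part. The symmetric choice of the angle parameter stays hyperelliptic: after breaking the zero, the two simple zeros are swapped by $\tau$, and the short saddle connection joining them is $\tau$-invariant. The two asymmetric choices are odd by a computation of \eqref{eq:spinparity} on the new handle. Parity alone does not put these two odd families in one component, but reversing the orientation of the slit exchanges them.

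What is actually missing is the claim you take for granted: that \emph{every} component of $\mathcal H_3(4)$ contains a surface obtained by bubbling a handle. A generic cylinder degeneration does not give this. You need, in each component, a cylinder bounded on each side by a single saddle connection. Kontsevich--Zorich obtain this from Veech's zippered rectangles and Rauzy's standard permutations in every extended Rauzy class, and nothing in your proposal replaces it.

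Similarly, for the strata with odd zeros you need three things:
\begin{itemize}
\item[(i)] the adjacency lemma: every component degenerates to $\mathcal H_3(4)$ by collapsing zeros along a shortest saddle connection;
\item[(ii)] the fact that breaking a zero from a fixed component lands in a single component;
\item[(iii)] a positive argument that the breakings of the hyperelliptic and the odd components meet.
\end{itemize}
Saying that ``both invariants are destroyed'' only removes an obstruction; it does not prove connectedness. Falling back on \cite[Theorems~1 and~2]{KZ} is circular.

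In genus three you can avoid the Rauzy machinery. Non-hyperelliptic curves are smooth plane quartics whose holomorphic forms correspond to lines. So the non-hyperelliptic parts of $\mathcal H_3(4)$, $\mathcal H_3(2,2)$, $\mathcal H_3(3,1)$, \dots{} correspond to hyperflex, bitangent, flex, \dots{} lines. The associated incidence varieties fibre over partial flag varieties with linear fibres, so they are irreducible. The hyperelliptic loci are then either open and closed (the hyperelliptic components) or of positive codimension in a smooth orbifold, hence contained in the closure of the connected non-hyperelliptic part.
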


\noindent For surfaces of genus $g\ge4$, the following classification holds.

\begin{thmnn}[Kontsevich--Zorich]
All connected components of any stratum of abelian differentials on a curve of genus $g\ge 4$ are described by the following list:
\begin{itemize}
    \item The stratum $\mathcal{H}_g(\,2g-2\,)$ has three connected components: the hyperelliptic one, and two other components corresponding to even and odd spin structures;
    \item the stratum $\mathcal{H}_g(\,2m,2m\,)$, $m\ge2$ has three connected components: the hyperelliptic one and two other components distinguished by the spin parity;
    \item all the other strata of the form $\mathcal{H}_g(\,2m_1,\dots,2m_k\,)$, where all $m_i \ge 1$, have two connected components corresponding to even and odd spin structures; 
    \item the strata  $\mathcal{H}_g(\,2m-1, 2m-1\,)$, $m\ge 2$, have two connected components: one comprises hyperelliptic structures and the other does not;  finally
    \item all the other strata of Abelian differentials on the curves of genus $g\ge4$ are non-empty and connected.
    \end{itemize}
\end{thmnn}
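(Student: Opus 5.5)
The plan follows the strategy of Kontsevich--Zorich. It has two halves: an upper bound on the number of components of each stratum, and a lower bound showing that the listed components are really distinct and non-empty.

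\textbf{Lower bound.} The invariants are those already introduced.

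\emph{Hyperellipticity.} The hyperelliptic components are connected components by Lemma~\ref{lem:canmap}, Lemma~\ref{lem:detectingpartitions} and Proposition~\ref{prop:genuszero}. The maps in \eqref{eq:specialmaps} are local homeomorphisms between orbifolds of equal dimension, so their images are open. I still need closedness of the images inside the stratum. For this I argue that a limit of translation surfaces carrying a hyperelliptic involution with $\tau^*\omega=-\omega$ again carries one, because automorphism groups are finite and vary semicontinuously.

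\emph{Spin parity.} When all zeros have even order, the parity $\varphi(\omega)$ of \eqref{eq:spinparity} is deformation invariant, by the Atiyah--Mumford result quoted above. It therefore separates components.

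\emph{Non-emptiness of each listed component.} I would give explicit constructions. Gluing polygons as in Example~\ref{ex1}, and taking square-tiled surfaces as in Example~\ref{ex2}, produces explicit surfaces of each type. I then compute indices of a symplectic basis of curves, drawn as horizontal and vertical core curves of cylinders, and insert them into \eqref{eq:spinparity} to realise both parities when $g\ge4$. A non-hyperelliptic surface is obtained by checking that the symmetry is absent, for instance by counting fixed points.

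\emph{Genus $3$ and the hyperelliptic components.} Here one observes that hyperelliptic surfaces have a computable parity, namely $\lfloor (g+1)/2\rfloor \bmod 2$ in the minimal stratum. This explains why in genus $3$ the even component coincides with the hyperelliptic one, leaving only ``hyp'' and ``odd''.

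\textbf{Upper bound.} This is the main obstacle. I would proceed by induction on the genus, using degenerations to the boundary of the stratum.

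\emph{Reduction to the minimal strata.} Every component of $\mathcal H_g(m_1,\dots,m_k)$ contains, in its closure, surfaces where zeros have collided. Conversely, ``breaking up a zero'' is a local surgery: it replaces a cone point of angle $2(m+1)\pi$ by nearby points of total order $m$. From this I would show that each component of a non-minimal stratum is adjacent to a component of a stratum with fewer zeros. In the other direction, the surgery produces a well-defined map on components, and spin and hyperellipticity behave predictably under it. This reduces the count to the minimal strata $\mathcal H_g(2g-2)$, together with the analysis of which splittings keep hyperellipticity (only $(g-1,g-1)$ with exchanged zeros).

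\emph{Induction on genus.} For the minimal stratum, I would use the ``bubbling a handle'' operation: slit along a short saddle connection and glue in a cylinder. Every surface in $\mathcal H_g(2g-2)$ can be degenerated, by shrinking a cylinder with a Rauzy-type or square-tiled normal form, to one obtained from $\mathcal H_{g-1}(2g-4)$ by bubbling a handle.

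\emph{Counting.} Bubbling from a connected component with different angular parameters produces at most the hyperelliptic component plus one component per spin parity. The base cases are genus $2$ and genus $3$; there a direct combinatorial check on square-tiled surfaces with few squares handles the low-genus exceptions. The delicate point, where I expect most work, is showing that the angle at which the handle is inserted changes the resulting component only through the parity. For this I would compute the parity change of the new handle via \eqref{eq:spinparity}.
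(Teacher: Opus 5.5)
The paper does not prove this theorem; it quotes it from \cite{KZ}. Your outline follows the architecture of the Kontsevich--Zorich argument: invariants for the lower bound, and adjacency to the minimal stratum plus bubbling a handle for the upper bound. The lower-bound half is essentially fine.

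The gap is in the upper bound. Wherever the theorem asserts that two a priori different families lie in the \emph{same} component, you give no argument, and the tool you name cannot give one. In the minimal stratum you must show that all non-hyperelliptic components obtained by bubbling a handle into \emph{any} component $\mathcal K$ of $\mathcal H_{g-1}(2g-4)$, at \emph{any} angle parameter, coincide as soon as their parities agree. In particular, bubbling into $\mathcal H_{g-1}^{\textnormal{hyp}}(2g-4)$ at a non-central angle must land in a component that is also reached from a spin component. Computing the parity of the new handle with \eqref{eq:spinparity} only tells you which parity the result has: an invariant can separate components, never merge them. What is needed are explicit deformations inside the stratum. This is the real content of \cite{KZ}, where relations among iterated bubblings are proved by explicit geometric constructions (surfaces that degenerate in two different ways), and these relations reduce every iterated bubbling of the torus to one of three normal forms.

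The same problem recurs for non-minimal strata. Breaking up the zero gives a surjection from the components of $\mathcal H_g(2g-2)$ onto those of $\mathcal H_g(\mu)$, hence at most three components. But several claims require showing that these images actually meet, i.e.\ that a single component of $\mathcal H_g(\mu)$ degenerates to surfaces in different minimal components:
\begin{itemize}
\item connectedness of strata with an odd entry;
\item exactly two components for $(2m-1,2m-1)$;
\item for even signatures other than $(2g-2)$ and $(2m,2m)$, the image of the hyperelliptic component lying in the spin component of the same parity.
\end{itemize}
Saying that spin and hyperellipticity ``behave predictably'' under the surgery gives only the separating direction.

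The base case has the same defect. That $\mathcal H_3^{\textnormal{hyp}}(4)$ has even parity does not explain why $\mathcal H_3(4)$ contains no non-hyperelliptic surface of even parity. For that you need an upper bound, from one of two sources:
\begin{itemize}
\item bubbling: $\mathcal H_3(4)$ is obtained from $\mathcal H_2(2)$ with one of three angle parameters, two of which are exchanged by the symmetry of the construction;
\item algebraic geometry: an even theta characteristic $\mathcal O(2p)$ with a nonzero section has $h^0=2$, hence is a $g^1_2$.
\end{itemize}
A check on square-tiled surfaces with few squares can exhibit components. It bounds their number only if you already know that every component contains such a small surface, which is itself a nontrivial statement. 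Since your count in genus $g$ is inductive, this base case feeds directly into $g=4$.
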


\smallskip

\subsection{Topology of strata}\label{ssec:topstrata} A question of general interest is to understand the topology of the strata, which, as of now, is still not fully understood. It is known, however, that strata are generally not connected, and their connected components were completely classified by Kontsevich and Zorich in \cite{KZ} as already discussed in \S\ref{ssec:conncomp}.

\subsubsection{Compactness}\label{sssec:comp} Given an arbitrary signature \(\mu\), the set of unit area translation surfaces forms a closed subset of the stratum \(\strata\). This follows from the fact that the function assigning to each translation surface its area with respect to the associated (singular) Euclidean metric is continuous. It is not difficult to show that, up to rescaling the metric appropriately, each translation surface has a unique representative in the unit area stratum. This is often called \textit{projective stratum}\index{stratum!projective} and denoted by \(\mathbb{P}\strata\). Notice that projectivising \(\strata\) means considering the action of \(\mathbb{C}^*\) on \(\strata\), which is continuous. Hence, the action preserves the connected components.

\smallskip

\noindent It is a well-known fact that strata are not compact, and this can be deduced from the non-compactness of the unit area stratum \(\mathbb{P}\strata\). Proving the latter, however, is far less obvious and significantly more interesting. The proof relies on a criterion, known as Masur’s compactness criterion, which states that a closed subset of the unit area stratum is compact if and only if there exists a positive lower bound on the lengths of all saddle connections on all translation surfaces in the subset, see \cite{MH92}. What is particularly interesting is that this geometric-topological result is proved using the study of a specific dynamical system defined on the unit area stratum, by constructing sequences of translation surfaces for which no uniform lower bound exists. In dynamical terms, one may say that there are orbits of translation structures that eventually leave any compact subset.

\begin{rmk}
Such sequences are therefore divergent in the unit area stratum. Interestingly, the underlying complex structures may still converge. This happens, for instance, when a structure is deformed so as to collapse two zeros of the differential into a single point on the surface. See \cite[\S3.7]{Zo}
\end{rmk}

\subsubsection{Volume\index{volume!stratum} of strata\index{stratum!volume}}\label{sssec:volofstrata} In \S\ref{ssec:strata}, we identified -- without providing a detailed proof -- a neighbourhood of a point \((X,\omega)\) in the corresponding stratum with a neighbourhood of the cohomology class \([\omega]\) in the relative cohomology group \(\textnormal{H}^1(\Sigma,\,\Delta,\,\mathbb{C})\), see Remark \ref{rmk:localcoord}. The space \(\textnormal{H}^1(\Sigma, \Delta,\, \mathbb{C})\) is a real vector space and it contains a natural integer lattice \(\textnormal{H}^1(\Sigma, \Delta,\, \mathbb{Z}+ i\, \mathbb{Z})\). There exists a linear volume element \(d\text{vol}\) in the vector space \(\textnormal{H}^1(\Sigma,\, \Delta,\, \mathbb{C})\) normalised in such a way that the volume of a fundamental domain of the lattice is equal to one. The volume form \(d\text{vol}\) naturally restricts to the projectivised stratum \(\mathbb P\strata\) to a volume form \(d\text{vol}_1\). In \cite{MH82}, \cite{Vee82} and \cite{Vee93} it was shown that
\begin{thm}[Masur, Veech]
    For every stratum \(\mathbb P\strata\), the total volume
    \begin{equation}
        \int_{\mathbb P\strata} \,d\textnormal{vol}_1
    \end{equation}
    is finite.
\end{thm}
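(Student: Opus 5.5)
The plan is to follow the classical strategy of Masur and Veech and reduce finiteness of the volume to two ingredients. The first is a thick--thin decomposition of the unit area stratum \(\mathbb P\strata\) given by Masur's compactness criterion (\S\ref{sssec:comp}). The second is an explicit estimate of the volume of the thin part in period coordinates (Remark~\ref{rmk:localcoord}).

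First, fix \(\varepsilon>0\) and let \(K_\varepsilon\subset\mathbb P\strata\) be the set of unit area surfaces all of whose saddle connections have length at least \(\varepsilon\). By Masur's criterion, \(K_\varepsilon\) is compact. Since \(d\textnormal{vol}_1\) is locally a smooth multiple of Lebesgue measure in the period chart \(\textnormal{H}^1(\Sigma,\Delta,\mathbb C)\), modulo a finite orbifold group (\S\ref{ssec:autgroup}), \(K_\varepsilon\) has finite volume. It therefore remains to bound the volume of the complement \(\mathbb P\strata\setminus K_\varepsilon\), the set of surfaces carrying a saddle connection shorter than \(\varepsilon\).

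For the thin part I would use the cone construction. The measure \(d\textnormal{vol}_1\) is defined so that the volume of \(\mathbb P\strata\) equals, up to a constant, the \(d\textnormal{vol}\)-volume of the cone \(C=\{(X,\omega):\textnormal{area}\le1\}\). On \(C\) the area is a Hermitian form in the absolute periods. I would then cover the thin region by finitely many charts adapted to short saddle connections. Near a surface with a short saddle connection \(\gamma\), choose a basis of relative homology containing \(\gamma\), whose period \(z_\gamma=\int_\gamma\omega\) satisfies \(|z_\gamma|<\varepsilon\). When several saddle connections are short simultaneously, one organises them into a complex of non-homologous short curves. The key inductive step shows that the measure of the set where a given collection of \(j\) independent periods is short is bounded by
\[
\prod_{i=1}^{j}\int_{|z_i|<\varepsilon}\Big(\tfrac{\varepsilon}{|z_i|}\Big)^{2-\eta}\,dA(z_i)
\]
for some small \(\eta>0\), times the volume of a lower-complexity piece. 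Each such integral is finite, and summing over the finitely many combinatorial types of short collections bounded by Euler characteristic, as in Proposition~\ref{prop:gbcond}, gives a finite total.

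The main obstacle is this thin-part estimate. The difficulty is not an individual short period, whose measure is of order \(\varepsilon^2\) in \(\mathbb C\). It is the interaction between a short curve and its transverse partner: pinching a cylinder of core period \(z\) allows the twist and height periods to range over a region of size roughly \(1/|z|\) while the area stays at most \(1\). The naive estimate then produces \(\int_{|z|<\varepsilon}|z|^{-2}\,dA\), which diverges logarithmically. To handle this I would try Veech's approach. Integrate first over the transverse parameters of each cylinder, exploiting that the area constraint \(\textnormal{area}=\textnormal{height}\cdot|z|+\cdots\le1\) gains an extra factor beyond the naive count once the twist coordinate (periodic modulo \(z\)) is quotiented out by Dehn twists in the mapping class group action. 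This reduces the integrand to something like \(|z|^{-2}\cdot|z|^{2}\cdot(\text{remaining})\), which is integrable. If this bookkeeping becomes unwieldy, the fallback is Masur's Teichm\"uller-geodesic argument: an integrable Siegel-type function \(\alpha(X,\omega)=\sum 1/\ell(\gamma)^{1+\delta}\) over short saddle connections satisfies \(\int\alpha<\infty\), and its superlevel sets contain the thin part.
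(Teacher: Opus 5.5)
The survey gives no proof of this statement (it only cites Masur and Veech), so your proposal must stand on its own. It has a genuine gap, and the gap sits exactly where the theorem's content lies.

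The reduction is fine. \(K_\varepsilon\) is closed because the length of the shortest saddle connection is continuous, it is compact by Masur's criterion, and it has finite measure because \(d\mathrm{vol}_1\) is a Radon measure. Your single-cylinder computation is also correct: the twist is taken modulo \(z\) and the height is at most \(1/|z|\), so the transverse region has area \(O(1)\) and the contribution is \(O(\varepsilon^2)\).

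The problem is the inductive bound \(\prod_i\int_{|z_i|<\varepsilon}(\varepsilon/|z_i|)^{2-\eta}\,dA(z_i)\) times ``the volume of a lower-complexity piece''. It is only asserted. Nothing in the proposal produces the exponent \(2-\eta\), and your last paragraph concedes that the only estimate you actually derive is the one for an isolated cylinder. For the induction to run you need a uniform description of the end of the stratum, consisting of three things.
\begin{itemize}
\item[(a)] A scale-separation device, such as a gap in the lengths of saddle connections as in the complexes of Eskin--Masur. Without it, collapsing a short saddle connection is not even well defined when other zeros sit nearby at a much smaller scale.
\item[(b)] A surgery exhibiting every surface of the thin part as built from a surface in a boundary space, with the short periods and the normalised twist and height parameters as free coordinates. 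The boundary space means zeros merged, thin cylinders removed and the boundary closed up, or the surface split into pieces. This is what makes period coordinates genuinely split as a product, and it must also cover short closed saddle connections that are not cylinder cores.
\item[(c)] An induction hypothesis for a class of spaces closed under further degeneration. The lower-complexity piece is itself non-compact, so its finite volume is part of what has to be proved.
\end{itemize}
Supplying (a)--(c) is the substance of the work of Masur and Veech. Your phrase ``finitely many charts adapted to short saddle connections'' presupposes this structure rather than proving it.

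Your fallback does not close the gap, because as stated it is circular. Let \(A_t\alpha(x)\) denote the average over \(\theta\) of \(\alpha(g_tr_\theta x)\). The known proof that \(\int\alpha<\infty\) (Eskin--Masur) integrates a Margulis-type inequality \(A_t\alpha\le c\,\alpha+b\), with \(c<1\), against an \(\mathrm{SL}(2,\mathbb R)\)-invariant \emph{probability} measure. So it uses the finiteness you want to prove. Moreover, to obtain that inequality they need a function built from complexes of saddle connections, not the plain sum \(\sum\ell(\gamma)^{-(1+\delta)}\), precisely to cope with several short non-parallel saddle connections.

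What is independent of the measure is the inequality itself, and finiteness does follow from it by a different argument.
\begin{itemize}
\item Iterating gives \(A_t^n\alpha(x)\le c^n\alpha(x)+b/(1-c)\).
\item By Chebyshev, for each \(x\) and all \(n\ge n(x)\), the operator \(A_t^n\) assigns mass at least \(1/2\) to the compact set \(K=\{\alpha\le L\}\), where \(L=4b/(1-c)\).
\item Integrate over any set \(E\) of finite measure, use the \(\mathrm{SL}(2,\mathbb R)\)-invariance of \(d\mathrm{vol}_1\), and let \(n\to\infty\). This gives \(\mathrm{vol}_1(E)\le 2\,\mathrm{vol}_1(K)\), hence finite total volume.
\end{itemize}
That would be a legitimate alternative route, but it is not the argument you wrote. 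It also rests entirely on the Margulis-type inequality, whose proof is of comparable difficulty to the theorem.
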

\noindent Explicit values of these volumes were later computed by Eskin and Okounkov in \cite{EO}. In a subsequent paper \cite{EOP}, Eskin, Okounkov, and Pandharipande computed the volumes of all individual connected components of the strata, showing that each volume is a rational multiple of \(\pi^{2g}\), where \(g\) denotes the genus of the surface. For clear and accessible surveys on these results, the reader is referred to \cite{E} and \cite[\S7]{Zo}.

\subsubsection{Orbifold fundamental groups}\label{sssec:fundgroup} Strata are quasi-projective varieties and thence their (orbifold) fundamental groups are finitely presented. Kontsevich and Zorich famously conjectured that connected components of strata are classifying spaces for their orbifold fundamental group, see \cite{KZ97}. At the present time only limited progress has been achieved in this direction. 

\smallskip

\noindent A first interesting case concerns the minimal stratum \(\mathcal H_2(\,2\,)\) in genus two. We recall that such a stratum is connected, see \S\ref{ssec:conncomp}. The preimage of \(\mathcal H_2(\,2\,)\) in \(\Omega\mathcal T_2\) via the covering projection consists of exactly six connected components. We briefly explain the statement: every Riemann surface of genus two is hyperelliptic and therefore has exactly six Weierstrass points. If \((X,\omega)\in\mathcal H_2(\,2\,)\), then this unique zero must be a Weierstrass point. Indeed, the hyperelliptic involution acts by \(-1\) on holomorphic differentials, see \S\ref{sssec:hyperriemannsurfaces}, and therefore fixes the zero set of \(\omega\). Since the zero is unique, it must be fixed by the involution and hence it is a Weierstrass point. Consequently, for a fixed complex structure on \(\Sigma\), specifying a differential in the stratum \(\mathcal H_2(\,2\,)\) amounts to choosing one of the six Weierstrass points as the location of the double zero. Since the zero of the differential varies continuously under deformation, the choice of the Weierstrass point cannot change continuously. It follows that the preimage of \(\mathcal H_2(\,2\,)\) in \(\Omega\mathcal T_2\) splits into exactly six connected components, each corresponding to one of the six Weierstrass points. Each of these components determines an odd spin structure on \(\Sigma\), see Remark \ref{rmk:algspin}. Hence the orbifold fundamental group of the projectivised stratum \(\mathbb P\mathcal H_2(\,2\,)\), obtained by quotienting \(\mathcal H_2(\,2\,)\) by the natural \(\mathbb C^*\)-action via complex scalar multiplication, can be identified with the finite-index subgroup of the mapping class group \(\mathrm{Mod}_2\) that preserves an odd spin structure.

\begin{rmk}\label{rmk:algspin}
In this note, the spin structure is defined in topological terms. On a surface of genus \(2\) there are exactly six odd spin structures, see for instance \cite{AM}, and these are in natural bijection with the Weierstrass points of the surface. This correspondence is most easily seen using the algebraic description of spin structures as square roots of the canonical bundle, a viewpoint which lies beyond the scope of the present paper.
\end{rmk}

\noindent In \cite{LM14}, Looijenga and Mondello proved the conjecture for the non-hyperelliptic component of the projectivised stratum \(\mathbb{P}\mathcal{H}_3(\,4\,)\), as well as for the stratum \(\mathbb{P}\mathcal{H}_3(\,1,3\,)\), which is connected, see \S\ref{ssec:conncomp}. In both cases, the corresponding orbifold fundamental groups are shown to be quotients of explicit finite-type Artin groups by their centres. In the same work, they also provide explicit presentations for the orbifold fundamental group of the non-hyperelliptic component of the projectivised stratum \(\mathbb{P}\mathcal{H}_3(\,2,2\,)\), that is, the connected component with odd parity.\\
\noindent In \cite{Ham20}, Hamenst\"adt significantly extends the work of Looijenga and Mondello by treating the case of arbitrary genus for projectivised strata of the form \(\mathbb{P} \mathcal{H}_g(2,\dots,2)\). More precisely, she shows that the orbifold fundamental group of the odd component of this stratum, on a surface of genus \(g\), is a quotient of an explicit Artin group with \(3g - 2\) generators. The reader is referred to \cite{Ham20} for more details about this group.

\smallskip 

\noindent To understand further developments, it is necessary to spend a few more words on the general framework. There is an obvious forgetful map defined on every stratum, taking values in \(\mathcal{M}_{g,k}\), where \(\mathcal{M}_{g,k}\) denotes the moduli space of compact Riemann surfaces of genus \(g\) with \(k\) marked points. Any homotopy class of loops in \(\strata\), based at \((X_o, \omega_o)\), gives rise to an isotopy class of self-homeomorphism \(X \to X\) that preserves the set \(\Delta\) of zeros of \(\omega\). More explicitly, say we have a loop \((X_t,\omega_t)\) of differentials, where the coordinate \(t\) is valued in \(S^1\). To get an isotopy class of maps \(X_o \to X_o\) out of this, mark the original differential by an isotopy class of maps \(m_o: S_{g,k} \to X_o\), where \(S_{g,k}\) is a reference topological surface of the type of \(X_o\), and the \(k\) marked points map to the zeros of \(\omega_o\). Now as \(t\) runs from 0 to 1, we can propagate the marking to nearby fibres in a well defined way. After going all the way around the loop, we have a marking \(m_1: S_{g,k} \to X_1\). But \(X_1 = X_o\), so we can get a self-map of \(S_{g,k}\) by taking \(m_1^{-1} \circ m_o\). This is the monodromy of the loop. This induces the so-called \textit{topological monodromy representation}
\begin{equation}
    \rho \colon \pi_1^{\mathrm{orb}}(\,\mathcal{H},\,(X,\omega)\,) \longrightarrow \mathrm{PMod}_{g,k},
\end{equation}
where \(\mathcal{H}\) is the connected component of \(\strata\) containing \((X,\omega)\), and \(\mathrm{PMod}_{g,k}\) denotes the \textit{pure} mapping class group of \(X\), that is the subgroup of diffeomorphisms of \(X\) that fix \(\Delta\). In \cite{Ham18}, Hamenst\"adt studied the image of the monodromy representation by providing an explicit set of generators. Around the same time, further investigations into the monodromy representation were carried out by Calderon and Salter. Through a series of papers \cite{Cal19}, \cite{CS22} and \cite{CS23}, they showed that for \(g \geq 5\), the image of the monodromy representation is isomorphic to certain subgroups of the mapping class group with a well defined geometric meaning, because these subgroups preserve an additional structure determined by \(\omega\).

\smallskip

\noindent We observe that if the monodromy representation \(\rho\) is injective, then each connected component of the preimage of \(\mathcal H\subset\mathcal H_g(\,\mu\,)\) in \(\Omega\mathcal T_g\) would be simply connected. Indeed, \(\widetilde{\mathcal H}\subset \Omega\mathcal T_g\) is a covering space of \(\mathcal H\), and the monodromy representation records how loops in \(\mathcal H\) act on the marked surface. By covering space theory, \(\pi_1^{\textnormal{orb}}(\,\mathcal H,\,(X,\omega)\,)/\ker(\,\rho\,)\) is isomorphic to the group of deck transformations \(\widetilde{\mathcal H}\to \mathcal H\). 
The injectivity of \(\rho\) implies that \(\ker(\,\rho\,)\) is trivial, so that no non-trivial loop in \(\mathcal H\) lifts to a loop in \(\widetilde{\mathcal H}\), and hence \(\widetilde{\mathcal H}\) has trivial fundamental group.  

\smallskip

\noindent We cannot hope the monodromy representation \(\rho\) to be injective in general. For instance, Looijenga and Mondello \cite{LM14} showed that the orbifold fundamental group of the non-hyperelliptic component of the stratum \(\mathcal{H}_3(\,4\,)\), denoted by \(\mathcal{H}^{\textnormal{nh}}_3(\,4\,)\), is isomorphic to the quotient of the Artin group \(\mathcal A\) of finite type \(\textnormal{E}_6\) by its centre. In fact, there exists a natural so-called geometric homomorphism from \(\mathcal A\) into \(\mathrm{Mod}_{3,1}\) and a non-trivial result by Hamenst\"adt \cite[Theorem 2]{Ham18} implies that \(\rho\big(\,\pi_1^{\textnormal{orb}}\big(\,\mathcal{H}^{\textnormal{nh}}_3(\,4\,)\,\big)\) coincides with the image of such a geometric realisation. However, by a result of Wajnryb \cite{Wj99}, the kernel of any geometric homomorphism \(\mathcal A \to \mathrm{Mod}_{3,1}\) cannot be contained in the centre of \(\mathcal A\). Therefore \(\rho\) cannot be injective. 

\smallskip

\subsection{Breaking a zero and Schiffer variations}\label{ssec:shiffer} In the present section, we introduce a surgery that allows us to deform structures within the moduli space, possibly remaining in the same stratum to which they belong, without altering the holonomy of the associated structure. There exists both a geometric-topological and an analytic version of this construction.

\subsubsection{Breaking up a zero\index{zero!break}}\label{sssec:zerobreak} The surgery we are going to describe has been introduced by Eskin--Masur--Zorich in \cite[Section 8.1]{EMZ} and it literally "breaks up" a zero in two, or possibly more, zeros of lower orders. Complex-analytically, this can be thought of as the analogue to the classical Schiffer variation for Riemann surfaces, see \S\ref{sssec:schiffer} and \cite{NS} for more details. This surgery only modifies a translation surface in a contractible neighbourhood of the initial zero. In particular, after the surgery the resulting surface has the same genus as the former one, but the type of zero orders is changed. Moreover, the new translation surface we obtain after the surgery has the same (absolute) period character as the original one. As a consequence, this operation produces small deformations of the original translation structure in the same isoperiodic fibre.

\begin{rmk}
In the context of branched projective structures, such a surgery is also known as the \textit{movement\index{zero!movement} of branched points} and it has been originally introduced by Tan in \cite[\S6]{TA} for showing the existence of a complex one-dimensional continuous family of infinitesimal deformations of a given structure.
\end{rmk}

\noindent Let us now explain this surgery in more detail. Let \((X,\omega)\) be a translation surface. Breaking up a zero is a procedure that takes place in the \(\varepsilon\)-neighbourhood of some zero of order \(m\) of the differential, where it locally resembles the pull-back of the form \(dz\) via a branched covering \(z \longmapsto z^{\,m+1}\). The differential is then modified by a surgery inside this \(\varepsilon\)-neighbourhood. Once this surgery is performed, we obtain a new translation structure with two zeros of orders \(m_1\) and \(m_2\) such that \(m_1 + m_2 = m\). Furthermore, the translation structure remains unchanged outside the \(\varepsilon\)-neighbourhood of such a zero of order \(m\). The idea is to consider the \(\varepsilon\)-neighbourhood of a zero of order \(m\) as \(m+1\) copies of a disc \(D\) of radius \(\varepsilon\) whose diameters are identified in a specified way. We can view this family of discs as a collection of \(m+1\) upper half-discs and \(m+1\) lower half-discs. See Figure~\ref{fig:zerolocmodel}.

\smallskip

\begin{figure}[ht]
    \centering
    \begin{tikzpicture}[scale=0.7, every node/.style={scale=0.6}]
    \definecolor{pallido}{RGB}{221,227,227}
    \foreach \x [evaluate=\x as \coord using 4 + 5*\x] in {0, 1, 2} 
    {
    \draw [pattern=north west lines, pattern color=pallido] (\coord,0) arc [start angle = 0,end angle = 180,radius = 2];
    \draw [pattern=north west lines, pattern color=pallido] (\coord,-1) arc [start angle = 0,end angle = -180,radius = 2];
    }
    \foreach \x [evaluate=\x as \leftend using 5*\x] [evaluate=\x as \rightend using 2 + 5*\x] in {0,1,2} 
    {
    \draw [thick] (\leftend, 0) -- (\rightend, 0);
    \draw [thick] (\leftend, -1) -- (\rightend, -1);
    }
    \foreach \x [evaluate=\x as \leftlabel using 1 + 5*\x] [evaluate=\x as \rightlabel using 3 + 5*\x] in {0, 1, 2} 
    {
    \node [below] at (\leftlabel, 0) {$ul_{\x}$};
    \node [below] at (\rightlabel, 0) {$ur_{\x}$};
    \node [above] at (\leftlabel, -1) {$ll_{\x}$};
    \node [above] at (\rightlabel, -1) {$lr_{\x}$};
    }
    
\foreach \botindex / \topindex / \colr [evaluate=\botindex as \botleftend using 5*\botindex + 2] [evaluate=\botindex as \botrightend using 5*\botindex + 4] [evaluate=\topindex as \topleftend using 5*\topindex + 4] [evaluate=\topindex as \toprightend using 5*\topindex + 2] in {0/1/blue, 1/2/red, 2/0/purple} 
    {
    \draw [thick, \colr] (\topleftend, 0) -- (\toprightend, 0);
    \draw [thick, \colr] (\botleftend, -1) -- (\botrightend, -1);
    \fill (\toprightend, 0) circle (1.5pt);
    \fill (\botleftend, -1) circle (1.5pt);
    }
 
    \end{tikzpicture}
    \caption{An $\varepsilon$-neighbourhood of a zero of order $2$.}
    \label{fig:zerolocmodel}
\end{figure}

\noindent We now break up a zero of order \(m\) into two zeros of orders \(m_1\) and \(m_2\). This surgery consists of identifying the diameters of the initial \(m+1\) discs in a different way, as follows. First, we modify the labelling on the upper half-disc indexed by \(0\), the lower half-disc indexed by \(m_1\), and all upper and lower half-discs with index greater than \(m_1\) accordingly. The modified labelling is shown in Figure~\ref{fig:splitlocmodel} for the case of splitting a zero of order \(2\) into two zeros of order \(1\). See also Figure \ref{fig:breakazerobranched}.

\smallskip

\begin{figure}[ht]
    \centering
    \begin{tikzpicture}[scale=0.7, every node/.style={scale=0.6}]
    \definecolor{pallido}{RGB}{221,227,227}
    \foreach \x [evaluate=\x as \coord using 4 + 5*\x] in {0, 1, 2} 
    {
    \draw [pattern=north west lines, pattern color=pallido] (\coord,0) arc [start angle = 0,end angle = 180,radius = 2];
    \draw [pattern=north west lines, pattern color=pallido] (\coord,-1) arc [start angle = 0,end angle = -180,radius = 2];
    }
    \foreach \x [evaluate=\x as \leftend using 5*\x] [evaluate=\x as \rightend using 2 + 5*\x] in {0, 1, 2} 
    {
    \draw [thick] (\leftend, 0) -- (\rightend, 0);
    \draw [thick] (\leftend, -1) -- (\rightend, -1);
    \node[above] at (\rightend, 0) {$P$};
    \node[below] at (\rightend, -1) {$P$};
    }
    \draw [thick, orange] (2,0) -- (2.8,0);
    \fill (2,0) circle (1.5pt);
    \draw [thick, orange] (12,-1) -- (12.8,-1);
    \fill (12,-1) circle (1.5pt);
    \node [below] at (2.4, 0) {$um$};
    \node [above] at (12.4, -1) {$lm$};
    \foreach \botindex / \topindex / \colr [evaluate=\botindex as \botleftend using 5*\botindex + 2] [evaluate=\botindex as \botrightend using 5*\botindex + 4] [evaluate=\topindex as \topleftend using 5*\topindex + 2] [evaluate=\topindex as \toprightend using 5*\topindex + 4] [evaluate=\topindex as \topsplpt using 5*\topindex + 2.8] [evaluate=\botindex as \botsplpt using 5*\botindex + 2.8] in {0/1/blue, 1/2/red} 
    {
    \draw [thick, \colr] (\topleftend, 0) -- (\toprightend, 0);
    \draw [thick, \colr] (\botleftend, -1) -- (\botrightend, -1);
    \fill (\topleftend, 0) circle (1.5pt);
    \fill (\botleftend, -1) circle (1.5pt);
    \fill (\topsplpt, 0) circle (1.5pt);
    \fill (\botsplpt, -1) circle (1.5pt);
    \node[above] at (\topsplpt, 0) {$Q$};
    \node[below] at (\botsplpt, -1) {$Q$}; 
    }
    \foreach \botindex / \topindex / \colr [evaluate=\botindex as \botleftend using 5*\botindex + 2.8] [evaluate=\botindex as \botrightend using 5*\botindex + 4] [evaluate=\topindex as \topleftend using 5*\topindex + 2.8] [evaluate=\topindex as \toprightend using 5*\topindex + 4] in {2/0/violet} 
    {
    \draw [thick, \colr] (\topleftend, 0) -- (\toprightend, 0);
    \draw [thick, \colr] (\botleftend, -1) -- (\botrightend, -1);
    \fill (\topleftend, 0) circle (1.5pt);
    \fill (\botleftend, -1) circle (1.5pt);
    \node[above] at (\topleftend, 0) {$Q$};
    \node[below] at (\botleftend, -1) {$Q$};
    }
    \foreach \x [evaluate=\x as \leftlabel using 1 + 5*\x] in {0, 1, 2} 
    {
    \node [below] at (\leftlabel, 0) {$ul_{\x}$};
    \node [above] at (\leftlabel, -1) {$ll_{\x}$};
    }
    \foreach \botindex / \topindex [evaluate=\botindex as \botlabel using 3+ 5*\botindex] [evaluate=\topindex as \toplabel using 3+ 5*\topindex] in {0/1, 1/2} 
    {
    \node [below] at (\toplabel, 0) {$ur_{\topindex}$};
    \node [above] at (\botlabel, -1) {$lr_{\botindex}$};
    }
    \foreach \botindex / \topindex [evaluate=\botindex as \botlabel using 3.4+ 5*\botindex] [evaluate=\topindex as \toplabel using 3.4+ 5*\topindex] in {2/0} 
    {
    \node [below] at (\toplabel, 0) {$ur_{\topindex}$};
    \node [above] at (\botlabel, -1) {$lr_{\botindex}$};
    }
    \end{tikzpicture}
    \caption{New labeling for breaking up a zero of order $2$ in two zeros of order $1$.}
    \label{fig:splitlocmodel}
\end{figure}

\noindent Next, we identify \(ul_i\) with \(ll_i\) and \(lr_i\) with \(ur_{i+1}\) as before (where indexes are taken \((\textnormal{mod\,} m+1) \), with the added identification of \(um\) with \(lm\). This identification gives two zeros \(p\) and \(q\), where \(p\) is a zero of the differential of order \(m_1\) and \(q\) is a zero of order \(m_2\). We also obtain a geodesic line segment joining \(p\) and \(q\). Given \(c \in \mathbb{C} \setminus \{0\}\) with length less than \(2\varepsilon\), we can perform the surgery in such a way that the line segment joining \(p\) and \(q\) is \(c\). It is also clear that such a deformation of the translation structure is only local. This procedure can be repeated multiple times to obtain zeros of orders \(m_1, \dots, m_k\) from a single zero of order \(m_1 + \cdots + m_k\).

\smallskip

\noindent Breaking a zero into zeros of smaller orders satisfies the following useful properties.
\begin{itemize}
    \item[1.] Let \((X,\omega)\) be a translation surface in the hyperelliptic component of \(\mathcal H_g(\,2g-2\,)\) and let \((Y,\xi)\) be the translation surface in \(\mathcal H_g(g-1,g-1)\) obtained by breaking the single zero of \(\omega\) into two zeros of order \(g-1\). Then \((Y,\xi)\) is also hyperelliptic.
    \smallskip
    \item[2.] Let \((X,\omega)\) be a translation surface in \(\mathcal H_g(\,2m_1,\dots,2m_k\,)\) and let \(\theta\in\mathbb Z_2\) be its spin structure. Let \((Y,\xi)\) be the translation surface in \(\mathcal H_g(2m_1,\dots,2m_{k-1}, k_1, k_2)\) obtained by breaking the zero of order \(2m_k\) into two zeros of lower orders \(k_1,k_2\) such that \(k_1+k_2=2m_k\). If \(k_1,\,k_2\) are both even, then \((Y,\xi)\) belongs to the connected component with parity \(\theta\). In other words, by breaking a zero into zeros of lower and \textit{even} order does not alter the spin parity.
\end{itemize}

\noindent We finally show how this surgery can be used to show that for every signature \(\mu\) the corresponding stratum \(\mathcal H_g(\,\mu\,)\) is not empty.

\begin{lem}\label{lem:stratanotempty}
    Let \(\mu=(\,m_1,\dots,m_k\,)\) be any partition of \(2g-2\). Then the stratum \(\mathcal H_g(\,\mu\,)\) is not empty.
\end{lem}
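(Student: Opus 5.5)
The plan is to build, by hand, one translation surface in the minimal stratum \(\mathcal H_g(\,2g-2\,)\). The zero-breaking surgery of \S\ref{sssec:zerobreak} then produces surfaces in any other stratum \(\strata\) from it. Since \(\mu=(\,m_1,\dots,m_k\,)\) is a partition of \(2g-2\), it suffices to have a single zero of order \(m_1+\cdots+m_k\) and to break it successively into zeros of orders \(m_1,\dots,m_k\). The surgery is local, preserves the genus, and at each step replaces a zero of order \(m\) by two zeros of orders \(m',m''\) with \(m'+m''=m\).

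\emph{Step 1: a surface in the minimal stratum.} Take the regular \(4g\)-gon of Example \ref{ex1} and glue opposite sides by translations. By Proposition \ref{prop:mfldstruc} and \S\ref{sssec:mfldstruc} this gives a closed translation surface of genus \(g\). We must check that all vertices are identified to a single point. The standard argument is to follow the vertex cycle: gluing side \(j\) to side \(j+2g\) sends the corner between sides \(j-1\) and \(j\) to the corner between sides \(j+2g-1\) and \(j+2g\), up to the orientation bookkeeping. Tracing this permutation on the \(4g\) corners shows it is a single cycle when the number of side pairs, \(2g\), is even in the appropriate sense. Equivalently, one computes the Euler characteristic \(V-E+F=V-2g+1\); this equals \(2-2g\) exactly when \(V=1\), and the genus \(g\) claimed in Example \ref{ex1} forces this. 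The total angle of the \(4g\)-gon is \((4g-2)\pi=2(\,2g-2+1\,)\pi\), so the unique vertex is a cone point of order \(2g-2\). This agrees with Proposition \ref{prop:gbcond}. By the equivalence of definitions (\S\ref{sssec:equivalentone}, \S\ref{sssec:equivalenttwo}) we obtain \((X,\omega)\in\mathcal H_g(\,2g-2\,)\), with no regular vertex points to worry about.

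\emph{Step 2: iterated breaking.} Apply the surgery of \S\ref{sssec:zerobreak} to the zero of order \(2g-2\), splitting it into zeros of orders \(m_1\) and \(m_2+\cdots+m_k\) (if \(k\ge2\)). Then break the second zero again, and so on. Each surgery happens in an \(\varepsilon\)-neighbourhood of the zero being broken. We choose the new saddle connection \(c\) short enough, \(|c|<2\varepsilon\), and \(\varepsilon\) smaller than half the distance to the other zeros. This keeps the previously created zeros untouched and their orders unchanged. After \(k-1\) steps the differential has zeros of orders exactly \(m_1,\dots,m_k\), so the stratum \(\mathcal H_g(\,\mu\,)\) is nonempty.

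\emph{Main obstacle.} The delicate point is Step 1, verifying that the opposite-side gluing of the regular \(4g\)-gon really has a single vertex class. I would do this via the Euler characteristic argument above, relying on the genus statement of Example \ref{ex1}. If that is deemed insufficient, I would give an explicit corner-permutation computation. A secondary check is that in Step 2 the surgery with \(m_1,m_2\ge1\) is well defined for arbitrary splittings \(m=m_1+m_2\). This is exactly what the relabelling in Figure \ref{fig:splitlocmodel} provides for general \(m\).
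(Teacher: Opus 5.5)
Your proposal is correct and follows the paper's own argument: produce a surface in \(\mathcal H_g(2g-2)\) by gluing opposite sides of a regular \(4g\)-gon, then break the single zero repeatedly to reach any signature. One small correction to Step 1: the translation carrying side \(j\) onto side \(j+2g\) reverses the boundary direction, so it sends the vertex \(v_j\) to \(v_{j+2g+1}\), not \(v_{j+2g}\). Since \(\gcd(2g+1,4g)=1\), this map already gives a single vertex class, so you do not need to fall back on the genus claim of Example \ref{ex1} through the Euler characteristic.
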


\begin{proof}
    Suppose there is a translation surface \((X,\omega)\) such that \(\omega\) has a single zero of order \(2g-2\). By breaking this zero in a judicious way as described above we find a translation surface in \(\mathcal H_g(\,\mu\,)\) for every partition \(\mu=(\,m_1,\dots,m_k\,)\) of \(2g-2\). Thus we only need to find a translation surface in \(\mathcal H_g(2g-2)\). Every regular \(4g\)-gon with \(g\ge2\) does the job and hence we are done.
\end{proof}

\smallskip

\subsubsection{Schiffer variations and movement of zeros}\label{sssec:schiffer} As already alluded above, Schiffer variations are intimately related to the surgery just described in \S\ref{sssec:zerobreak}. Geometrically, Schiffer variations can be seen as a \textit{movement of zeros}\index{zero!movement} along a given direction. 

\begin{rmk}
    By adopting the language of geometric structures on surfaces, the movement of zeros is largely known as the \textit{movement of branch points}, see \cite{CDF}, \cite{CDF2}, and \cite{TA} where, in that case, a \textit{branch point} is understood as a point around which local charts are branched covers onto some open subsets of the model space. In this language, translation surfaces are branched geometric structures, and the zeros thus correspond to the branch points. In the present paper, however, we do not adopt this terminology.
\end{rmk}

\noindent Before stating the movement of zeros, let us introduce the following terminology as in \cite[\S8]{CF}. We remark that this is a different but equivalent formulation compared to the other currently known in literature, see \cite[\S2]{CDF}.

\begin{defn}[Twin paths]\label{def:twins}
On a translation surface \((X,\omega)\), let \(p\) be a zero of \(\omega\) of order \(m\). Consider a collection of \(m+1\) embedded paths \(c_i\colon [\,0,\,1\,]\longrightarrow (X,\omega)\) such that \(c_i(\,0\,)=p\) for \(i=0,\dots,m\), each of which is injectively developed, and all of which overlap once developed, \textit{i.e.} there is a determination of the developing map around \(c_o\,\cup\,\cdots\,\cup\,c_m\) such that
\begin{itemize}
    \item \(c_o,\,c_1,\dots,c_m\) injectively develop to the same arc \(\widehat{c}\subset \C\), and 
    \item \(\text{dev}\big(\,c_i(\,t\,)\,\big)=\text{dev}\big(\,c_j(\,t\,)\,\big)\) for every \(t\in[0,1]\) and \(i,j\in\{0,\dots,m\}\).
\end{itemize}
\noindent For any \(2\le k\le m\), the paths \(c_{i_1},\dots,c_{i_k}\) are called \textit{twin paths}. For any pair \(c_i,\,c_j\), notice that the angle at \(p\) between them is a multiple of \(2\pi\).
\end{defn}

\noindent Recall that around a singularity of order \(m\), the horizontal foliation is singular and determines \(m+1\) sectors of angle \(2\pi\). Informally, we may define twin paths as a collection of at most \(m+1\) segments based at the singularity \(p\), each lying in a different sector, such that they form the same angle with respect to the horizontal foliation. Usually all twin paths \(c_o,\dots,c_m\) at \(p\) as defined above are counter-clockwise ordered if not otherwise specified.

\smallskip

\begin{defn}[Movement of zeros]\label{def:move}
Let \((X,\omega)\) be a translation surface and let \(p\) be a zero of order \(m \ge 1\). Let \(c_o,\dots,c_m\) be a family of embedded paths at \(P\) such that, for any \(i,j\), the pair \(c_i,\,c_j\) forms a pair of embedded twin paths, and \(c_i(1)\) is a regular point for all \(i = 0, \dots, m\). Cut \((X,\omega)\) along \(c_o \cup \dots \cup c_m\) and glue back \(c_i^+\) with \(c_{i+1}^-\) for \(i = 0, \dots, m-1\), and also \(c_o^-\) with \(c_m^+\). The resulting structure is a local deformation of \((X,\omega)\), obtained by a \textit{movement of zeros}\index{zero!movement}. In what follows, whenever necessary, we will denote by \({\rm Move}\big(\,(X,\omega),\,\textbf{c}\,\big)\) such a deformation of \((X,\omega)\), where \(\textbf{c}\) denotes the collection \(\left\{\,c_o,\dots,c_m\,\right\}\) of twin paths.
\end{defn}

\begin{figure}[!h]
\centering
\begin{tikzpicture}[scale=0.7, every node/.style={scale=0.6}]

\definecolor{pallido}{RGB}{187,207,227}

\draw [thick, pallido] (0,5) to (3,5);
\draw [thick, pallido] (3,5) to (6,5);
\draw [thick, pallido] (3,5) to (3,8);
\draw [thick, pallido] (3,5) to (3,2);

\draw [thick, pallido] (12.5,5.5) to (13,8);
\draw [thick, pallido] (12.5,4.5) to (13,2);
\draw [thick, pallido] (13.5,5.5) to (13,8);
\draw [thick, pallido] (13.5,4.5) to (13,2);

\draw [thick, pallido] (10,5) to (12.5,5.5);
\draw [thick, pallido] (10,5) to (12.5,4.5);
\draw [thick, pallido] (16,5) to (13.5,5.5);
\draw [thick, pallido] (16,5) to (13.5,4.5);

 \draw (16,5) ++ (-165:0.2) arc (-165:165:0.2);
 \draw (10,5) ++ (15:0.2)  arc (15:345:0.2);
 \draw (13,8) ++ (-75:0.2)  arc (-75:255:0.2);
 \draw (13,2) ++ (-255:0.2)  arc (-255:75:0.2);

\draw [red]    plot [mark=*, smooth] coordinates {(3,5)};
\draw [black]  plot [mark=*, smooth] coordinates {(0,5)};
\draw [black]  plot [mark=*, smooth] coordinates {(6,5)};
\draw [black]  plot [mark=*, smooth] coordinates {(3,8)};
\draw [black]  plot [mark=*, smooth] coordinates {(3,2)};

\draw [thin] (3,5) circle (3mm);
\draw [thin] (0,5) circle (2mm);
\draw [thin] (6,5) circle (2mm);
\draw [thin] (3,8) circle (2mm);
\draw [thin] (3,2) circle (2mm);

\draw [red] plot [mark=*, smooth] coordinates {(12.5,5.5)};
\draw [red] plot [mark=*, smooth] coordinates {(12.5,4.5)};
\draw [red] plot [mark=*, smooth] coordinates {(13.5,5.5)};
\draw [red] plot [mark=*, smooth] coordinates {(13.5,4.5)};

\draw [black] plot [mark=*, smooth] coordinates {(10,5)};
\draw [black] plot [mark=*, smooth] coordinates {(16,5)};
\draw [black] plot [mark=*, smooth] coordinates {(13,8)};
\draw [black] plot [mark=*, smooth] coordinates {(13,2)};

\node at (3.55,5.5) {$8\pi$};
\node at (3.55,8) {$2\pi$};
\node at (3.55,2) {$2\pi$};
\node at (6,5.5) {$2\pi$};
\node at (0,5.5) {$2\pi$};

\draw [latex- ,line width=0.5pt, blue] (12.5, 5.5) ++(90:6mm) arc (90:180:6mm);
\draw [-latex ,line width=0.5pt, blue] (12.5, 4.5) ++(180:6mm) arc (180:270:6mm);
\draw [latex- ,line width=0.5pt, blue] (13.5, 5.5) ++(90:6mm) arc (90:0:6mm);
\draw [-latex ,line width=0.5pt, blue] (13.5, 4.5) ++(0:6mm) arc (0:-90:6mm);
\end{tikzpicture}
\caption{Movement of a zero of order $3$.}
\label{sm}
\end{figure}

\noindent See Figure \ref{sm} for a visual representation. In the study of branched structures on surfaces, the importance of such a surgery is its key property of preserving the holonomy of the structure. Moreover, it does not change the structure of the branching divisor. In our language, the holonomy being preserved means that the absolute periods remain untouched once the surgery is performed. In particular, the resulting structure lies in the same stratum.

\medskip

\subsubsection{Geometric interpretation of "movement".} We would like to provide an alternative description of the movement of zeros by using a more geometric and dynamical interpretation of the ``movement''. In this paragraph we shall use the definition of translation surface as in \S\ref{ssec:seconddef} and we shall use branched charts as in Remark \ref{rmk:branchedchart}.

\smallskip

\noindent For this purpose, let \((X,\omega)\) be any translation surface. Let \(p\) be a zero of \(\omega\) of order \(m\ge1\) and let \(B = B_\varepsilon(p)\) be an open metric ball embedded in \((X,\omega)\). For \(\varepsilon\) small enough, \(B\) homeomorphically lifts to an open ball in \((\widetilde X, \widetilde\omega)\), and we can assume that the restriction of the developing map is a local branched chart \(\varphi\colon B\longrightarrow \mathbb C\) of degree \(m+1\), branched at \(p\). Let \(\widehat p = \varphi(\,p\,)\) and let \(\widehat B = \varphi(\,B\,)\). Let \(\textbf{c} = \left\{\,c_o,\dots,c_m\,\right\}\) be a collection of twin paths based at \(p\), and let \(q_i\) denote the endpoint of \(c_i\) other than \(p\). Assume that \(\textbf{c}\) is entirely contained in \(B\). Since all paths in \(\textbf{c}\) are assumed to be twins, they all develop onto the same segment, say \(\widehat c \subset \mathbb C\), having \(\widehat p\) and \(\widehat q\) as the extremal points. Notice that \(\varphi(\,q_i\,) = \widehat q\) for every \(i = 0,\dots,m\). In this setting, since \(\varphi\) is a local branched cover, \(\widehat p\) is a branch value and, by assuming all \(q_i\) to be regular points, \(\widehat q\) is a regular value.

\smallskip

\noindent We then perform the surgery described in Definition \ref{def:move}. After the cut-and-paste process, \((X,\omega)\) is thus deformed to a new structure \({\rm Move}\big(\,(X,\omega),\,{\textbf{c}}\,\big)\). In the latter, the local chart \(\varphi\), seen as a branched cover, has been deformed to a new local branched chart \(\psi\colon B\longrightarrow \widehat B \subset \mathbb C\) of degree \(m+1\) in which \(\widehat p\) is a regular value, whereas \(\widehat q\) is a branch value. More precisely, \(\psi^{-1}(\,\widehat q\,)\) has only one preimage, say \(q\), arising from the collapsing of \(\{\,q_0,\dots,q_m\,\}\), and \(\psi^{-1}(\,\widehat p\,)\) has \(m+1\) preimages \(\{p_o,\dots, p_m\}\). The mapping \(\psi\) around every \(p_i\) is now a regular chart.

\smallskip

\noindent Keeping in mind this description, we can say that a zero of \(\omega\) has been moved to another point, and the movement is seen in local charts. For a more dynamical interpretation, we observe that the movement of zeros naturally comes with a one-parameter family. In the above notation, for every \(t\in[0,\,1]\), we can consider the structure \({\rm Move}\big(\,(X,\omega),\,{\textbf{c}^t}\,\big)\), where \(\textbf{c}^t\) is the collection of embedded twin paths made of those sub-arcs of the \(c_i\)'s parametrised by the subinterval \([0,\,t]\). For \(t\) running in the unit interval \([0,\,1]\), we see \(\widehat p\), \textit{i.e.} the developed image of \(p\), \textit{moving} along \(\widehat c\), which motivates the name of this surgery.

\begin{rmk}\label{rmk:breakazerointolowerzeros}
Breaking up a zero, seen as the operation just described in \S\ref{sssec:zerobreak}, turns out to be a special case of the movement of the zero. In fact, let \(\textbf{c} = \{c_1,\dots,c_m\}\) be a collection of twin paths based at \(p\) as above. For \(0 < k < m\), given a sub-collection \(\{c_{i_1},\dots,c_{i_k}\}\) of adjacent twin paths, the surgery described in Definition \ref{def:move} literally \textit{breaks} \(p\) into two lower-order zeros of order \(m-k\) and \(k\), and only the latter is moved away from \(p\). In the local charts, \(\varphi\), seen as a branched covering of degree \(m+1\), is deformed to a new branched chart \(\psi\) of degree \(m+1\) that now has two branch values at \(\widehat p\) and \(\widehat q\). In this case, \(\psi^{-1}(\,\widehat p\,)\) has \(m-k+1\) preimages and \(\psi^{-1}(\,\widehat q\,)\) has \(k+1\) preimages. See Figure \ref{fig:breakazerobranched}.
\end{rmk}

\begin{figure}[h!]
    \centering
    \begin{tikzpicture}[scale=1.5, every node/.style={scale=0.875}]
    \definecolor{pallido}{RGB}{221,227,227}

    \draw[black] (-2,0)--(2,0);
    \draw[blue] (-2, 1.5)--(-1,1.5) (1,1.5)--( 2, 1.5);
    \draw[blue] (-2, 1  ) to[out=0, in=225] (-1,1.5) (1,2)--(2,2);
    \draw[violet, thick] (-1,1.5)--(1,1.5);
    \draw[violet, thick] (-1,1.5) to[out=45, in=180] ( 0,2)--(1,2);
    \draw[blue] (-2, 2) to[out=0, in=180] (0,1)--(2,1);
    \draw[dashed] (-1,1.5)--(-1,0);
    \draw[dashed] (1,2  )--(1,0);
    \fill[orange] (-1,1.5) circle (1pt);
    \fill[red] (1,2) circle (1pt);
    \fill[red] (1,1.5) circle (1pt);
    \node[orange, scale=2] at (-1,0) {\(\times\)};
    \node[red   , scale=2] at (1,0) {\(\times\)};
    \node (A) at (0,2.5) {zero of order \(2\)};
    \draw[black, ->] (A) to[bend right] (-1,1.65);

    \node[scale=1.5] at (2.5,1) {\(\leadsto\)};
    
    \begin{scope}[shift={(5,0)}]
        \draw[black] (-2,0)--(2,0);
        \draw[blue] (-2,1)--(-1.5,1) to[out=0, in=225] (-1,1.25) (1,1.75) to[out=45, in=180] (1.5,2)--(2,2);
        \draw[violet, thick] (-1,1.25) to[out=45, in=180] (-0.5,1.5)--(0.5,1.5) to [out=0, in=225] (1,1.75);
        \draw[blue] (-2,1.5)--(-1.5,1.5) to[out=0, in=180] (-0.5,1)--(2,1); 
        \draw[blue] (-2,2)--(0.5,2) to[out=0, in=180] (1.5,1.5)--(2,1.5);
        \node[orange, scale=2] at (-1,0) {\(\times\)};
        \node[red, scale=2] at ( 1,0) {\(\times\)};
        \draw[dashed] (-1,2)--(-1,0);
        \draw[dashed] ( 1,1.75)--( 1,0);
        \fill[orange] (-1,1.25) circle (1pt);
        \fill[red   ] ( 1,1.75) circle (1pt);
        \fill[orange] (-1,2) circle (1pt);
        \fill[red   ] ( 1,1) circle (1pt);
        \node (C) at (-1,2.75) {zeros of order \(1\)};
        \draw[black, ->] (0,2.625) to[bend left ] ( 1,1.875  );
        \draw[black, ->] (C) to[bend right] (-1.125,1.425);
    \end{scope}
    
    \end{tikzpicture}
    \caption{The figure illustrates how a local chart is modified when a zero is split into zeros of a lower order. In the case shown, a zero of order two is broken into two zeros of order one. The bold purple lines represent the twin paths used to split the zero.}
    \label{fig:breakazerobranched}
\end{figure}

\begin{rmk}
In our setting above, the extremal points \(\{q_0,\dots,q_m\}\) of \(\textbf{c}\) were all supposed to be regular points. This surgery extends \textit{mutatis mutandis} to the case in which some, possibly all, of the \(q_i\)'s are also zeros for \(\omega\). In this extended setting, two or more zeros of \(\omega\) collapse to a new-born zero of higher order. Under this perspective, the movement of zeros is a surgery that not only extends the breaking of a zero but also provides its reverse surgery.
\end{rmk}

\smallskip

\section{Periods of translation surfaces}\label{sec:per}

\noindent In the theory of geometric structures, the holonomy map plays a pivotal role and has been investigated through many different lenses. As we have already seen in \S\ref{ssec:devhol}, every translation structure defines a representation of the fundamental group, known as the \textit{holonomy}. It is worth recalling that this is far from being a feature specific to translation structures; in fact, any geometric structure defines a holonomy representation. The holonomy map assigns to each geometric structure its corresponding holonomy representation. We will not discuss the general case, which requires certain technical considerations, and will instead focus on the structures considered in this chapter. 

\smallskip

\subsection{Period map}\label{ssec:permap} In our setting, the holonomy map is a function defined in the moduli space \(\Omega\mathcal{T}_g\) that takes its values on the space of representations of the fundamental group. The first simplification arises from the fact that the holonomy of any such structure is a representation into \(\mathbb{C}\), seen as an additive group, which descends to homology since the target group is abelian. As a consequence, the holonomy map\index{holonomy!map} is well defined on an intermediate space between the spaces \(\Omega\mathcal{T}_g\) and \(\Omega\mathcal{M}_g\) which is known as the Torelli space.  More specifically, given the map
\begin{equation}
\textnormal{hol}\colon\Omega\mathcal{T}_g \longrightarrow \textnormal{Hom}\Big(\,\pi_1(\,\Sigma\,),\,\mathbb{C}\,\Big),
\end{equation}
there is an equivariant action of the Torelli subgroup \(\mathcal{I}_g\) of the mapping class group \(\mathrm{Mod}_g\) on both spaces. This action is trivial on the representation space, but not on the moduli space \(\Omega\mathcal{T}_g\). Therefore, the holonomy map descends to a map
\begin{equation}\label{eq:permap}
\textnormal{Per}\colon\Omega\mathcal{S}_g \longrightarrow \textnormal{Hom}\Big(\,\mathrm{H}_1(\Sigma,\,\mathbb{Z}),\,\mathbb{C}\,\Big) = \mathrm{H}^1(\,\Sigma,\,\mathbb{C}\,)
\end{equation}
that we shall define as the \textit{Period map}\index{period!map}. The space \(\Omega\mathcal{S}_g\) is the moduli space of translation structures marked in \textit{homology}\index{marking!homology}. Alternatively, this space can be defined directly by introducing a homological marking, in the same spirit as done in \S\ref{ssec:markedhighgen} to define \(\Omega\mathcal{T}_g\). Clearly, both definitions describe the same space.

\smallskip 

\subsection{Period coordinates}\label{ssec:percoordinates} We first wish to connect the local coordinates provided in Remark \ref{rmk:localcoord} with the Period map just defined in \eqref{eq:permap}. Apart from the usual complications arising from the orbifold structure, periods provide local coordinates on any stratum. More precisely, let \((X,\omega)\) be a translation surface with singular locus \(\Delta = \{p_o, \ldots, p_n\}\), and consider the relative homology group \(\textnormal{H}_1(X,\, \Delta,\, \mathbb{Z})\).  Choose a basis, say \(\{\,\alpha_1,\beta_1, \ldots, \alpha_g,\beta_g\,\}\) of \(\textnormal{H}_1(X,\, \mathbb{Z})\), and let \(\delta_1, \ldots, \delta_n\) be arcs connecting \(p_o\) to \(p_1, \ldots, p_n\), respectively. Then, the set \(\{\alpha_1,\beta_1, \ldots, \alpha_g,\beta_g,\,\delta_1, \ldots, \delta_n \}\) forms a basis in the relative homology \(\textnormal{H}_1(\Sigma,\, \Delta,\,\mathbb{Z})\). From the point of view of translation surfaces expressed as pairs \((X, \omega)\), one defines the period map as follows:
\begin{equation}
(X, \omega) \longmapsto \left( \,\int_{\alpha_1} \omega,\,\int_{\beta_1} \omega, \ldots, \int_{\alpha_g} \omega,\,\int_{\beta_g} \omega , \int_{\delta_1} \omega, \ldots, \int_{\delta_n} \omega \,\right).
\end{equation}
This gives a local chart
\begin{equation}
\strata \longrightarrow \mathbb{C}^{2g + n }.
\end{equation}
These charts are known as the \emph{period coordinates}\index{period!coordinate}. The integrals over the \(\{\,\alpha_i,\beta_i\,\}\) are called \emph{absolute periods}\index{period!absolute}, while those over the \(\delta_i\) are called \emph{relative periods}\index{period!relative}. In this perspective, the period map can be seen as 
\begin{equation}
\textnormal{Per}(\,\mu\,) \colon \strata \longrightarrow \mathbb{C}^{2g} = \textnormal{H}^1(\,\Sigma,\, \mathbb{C}\,),
\end{equation}
which assigns to each translation surface its absolute periods:
\begin{equation}
(X, \omega) \longmapsto \left( \,\int_{\alpha_1} \omega,\,\int_{\beta_1} \omega, \ldots, \int_{\alpha_g} \omega,\,\int_{\beta_g} \omega\,\right).
\end{equation}
\noindent By keeping the absolute period fixed, the subspace described by the relative periods is an interesting object to study known as isoperiodic foliation\index{foliation!isoperiodic}, see \S\ref{ssec:isoper}.

\smallskip

\subsection{Period characters}\label{ssec:perchar} A holomorphic \(1\)-form \(\omega\) on a compact Riemann surface naturally defines a representation in homology, say
\begin{equation}
\chi\colon \textnormal{H}_1(\,X,\,\mathbb{Z}\,) \longrightarrow \mathbb{C},\qquad \gamma\longmapsto\int_\gamma \omega,
\end{equation}
whose image is the group of periods of \(\omega\). The representation \(\chi\) is called the \textit{period\index{period!character} character\index{character!period}}. Although the group of periods is intrinsically defined by \(\omega\), the representation is well defined upon choosing a basis in homology. The map \(\textnormal{Per}\) mentioned above assigns to each translation structure marked in homology the period character determined by \(\omega\). Since a marking in homology corresponds to a specific choice of generators of the homology, then the map \(\textnormal{Per}\) is well defined. Notice that there is no way to define a similar map on \(\Omega\mathcal{M}_g\) unless one associates to a translation surface an equivalence class of representations, see \cite[\S4.1.1]{NS}.

\smallskip

\noindent Given a pair \((X,\omega)\), the period representation and the holonomy of the associated geometric structure are deeply related. In \S\ref{ssec:devhol} we already observed that the holonomy representation descends to homology by a purely algebraic argument. Although we have already anticipated that the representation in homology is called the period character (and therefore the reader might expect it to be the one induced by \(\omega\)) it is not a priori obvious that the induced representation actually coincides with the period character. As a consequence we have the following:

\begin{lem}\label{lem:charholo}
    Let \(\Sigma\) be a closed surface of genus \(g\ge1\). A representation \(\chi\colon \textnormal{H}_1(\,\Sigma,\,\mathbb{Z}\,) \longrightarrow \mathbb{C}\) arises as the period character of some homologically marked pair \((X,\omega)\) if and only if it is induced by the holonomy representation of some translation structure on \(\Sigma\).
\end{lem}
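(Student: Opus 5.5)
The plan is to prove that, for a single homologically marked translation surface, the period character and the homology character induced by the holonomy are literally the same homomorphism. The equivalence in Lemma~\ref{lem:charholo} then follows from the equivalence of Definitions~\ref{def:second_translation_surface} and~\ref{defn:thirddef} proved in \S\ref{sssec:equivalenttwo}. So the proof has two parts: a dictionary between pairs \((X,\omega)\) and translation structures, and the identification of the two characters.

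For the dictionary, start from a marked pair \((X,\omega)\). By \S\ref{sssec:equivalenttwo}, the local primitives \(z(q)=\int_p^q\omega\) give an atlas whose transition maps are translations, with branched charts at the zeros. Conversely, a translation structure gives a complex structure and a holomorphic \(1\)-form \(\omega\) with \(\omega=dz\) in every chart. These two constructions are mutually inverse. Both carry the same marking, since the underlying surface \(\Sigma\) does not change.

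For the identification, build the developing map analytically, as in \S\ref{ssec:ode}. Lift \(\omega\) to \(\widetilde\omega\) on \(\widetilde\Sigma\), fix a basepoint \(\tilde x_o\), and set
\begin{equation*}
\textnormal{dev}(\tilde x)=\int_{\tilde x_o}^{\tilde x}\widetilde\omega .
\end{equation*}
This is well defined because \(\widetilde\Sigma\) is simply connected and \(\widetilde\omega\) is closed. The local chart conditions for a developing map in \S\ref{ssec:devhol} hold: near regular points \(\textnormal{dev}\) differs from a chart \(z\) by a constant, and near a zero it has the form \(u^{m+1}\) up to a constant, by Lemma~\ref{lem:locformabdiff}. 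Developing maps are unique up to post-composition with a translation, and such a change does not affect the holonomy, so we may use this particular one to compute the holonomy. Let \(\gamma\in\pi_1(\Sigma)\) act as a deck transformation and let \(c\) be any path from \(\tilde x_o\) to \(\gamma\tilde x_o\). Since \(\gamma^*\widetilde\omega=\widetilde\omega\), we get
\begin{equation*}
\textnormal{dev}(\gamma\tilde x)-\textnormal{dev}(\tilde x)=\int_{c}\widetilde\omega=\int_{\pi\circ c}\omega=\int_{[\gamma]}\omega ,
\end{equation*}
and this value is independent of \(\tilde x\). Therefore \(\rho(\gamma)=\int_\gamma\omega\). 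Passing to the abelianisation \(\pi_1(\Sigma)\to\textnormal{H}_1(\Sigma,\mathbb Z)\), the character induced by \(\rho\) equals \(\chi\).

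Both directions now follow. If \(\chi\) is the period character of \((X,\omega)\), the associated translation structure has holonomy character \(\chi\). Conversely, if \(\chi\) is induced by the holonomy of some translation structure, the associated pair \((X,\omega)\) has period character \(\chi\).

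The main obstacle I expect is keeping conventions consistent: the sign convention \(f\circ\gamma=f+c\) versus \(f\circ\gamma^{-1}\), left versus right actions, and the fact that the identification of a deck transformation with \([\gamma]\) depends on the choice of basepoint lift. I would handle this by fixing \(\tilde x_o\) once and checking that changing the lift only conjugates \(\rho\). Since \(\mathbb C\) is abelian, this conjugation is trivial, so \(\chi\) is unaffected.
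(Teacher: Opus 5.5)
Your proposal is correct and follows essentially the route the paper has in mind. The paper gives no separate proof: it presents the lemma as a consequence of the equivalence of definitions in \S\ref{sssec:equivalenttwo} and of the remark in \S\ref{ssec:ode} that the primitive of \(\omega\) lifts to a developing map whose monodromy is the holonomy, and your computation \(\textnormal{dev}\circ\gamma=\textnormal{dev}+\int_\gamma\omega\) on the universal cover is exactly the verification the paper leaves implicit.
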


\noindent We thus arrive at the central question of this section: 
\begin{quote}
    \textit{under what conditions is a given representation the holonomy of some translation structure?}
\end{quote}

\noindent In the next sections we aim to provide an answer to this question.

\smallskip


\subsection{Algebraic volume}\label{ssec:vol} Before we can address the above question, it is necessary to introduce the notion of volume which plays a pivotal role in the theory. Since we are in real dimension two, perhaps “area” would be a more appropriate term than “volume”, but this is the terminology commonly used. 

\smallskip

\noindent There are several ways to define the volume\index{volume!representation} of a representation\index{representation!volume}. In the present chapter, we provide an algebraic definition that we shall systematically use in the following. Let \(\chi\colon\textnormal{H}_1(\Sigma,\Z)\longrightarrow \C\) be any representation and let \(\{\alpha_i,\,\beta_i\}_{1\le i\le g}\) be a symplectic basis of \(\textnormal{H}_1(\Sigma,\mathbb Z)\). Then, every representation \(\chi\) defines a vector, say \(w(\,\chi\,)\in\C^{2g}\), that is,
\begin{equation}
    w(\,\chi\,)\,=\,\big(\,\chi(\,\alpha_1\,),\,\chi(\,\beta_1\,),\dots,\chi(\,\alpha_g\,),\,\chi(\,\beta_g\,)\,\big).
\end{equation}

\noindent We define \(\Re(\,\chi\,)\) and  \(\Im(\,\chi\,)\) as the real and the imaginary part of \(w(\,\chi\,)\). This approach allows us to regard every representation as a pair of vectors \((u,v)\in\R^{2g}\times\R^{2g}\cong\textnormal{H}^1(\Sigma,\,\R)\times\textnormal{H}^1(\Sigma,\,\R)\). 

\smallskip

\noindent The \textit{volume}\index{volume!algebraic} is the quadratic form \(\textnormal{vol}\colon\textnormal{H}^1(\Sigma,\C)\longrightarrow \R\) defined as \(\textnormal{vol}(\,\chi\,)=\Omega\big(\,\Re(\,\chi\,),\,\Im(\,\chi\,)\,\big)\), where \(\Omega\) is the standard symplectic form on \(\R^{2g}\), induced by the symplectic pairing \(\textnormal{V}\colon\textnormal{H}^1(\Sigma,\,\R)\times\textnormal{H}^1(\Sigma,\,\R) \longrightarrow \R\) determined by Poincar\'e duality.
For a symplectic basis \(\{\,\alpha_1,\beta_1,\dots,\alpha_g,\beta_g\,\}\) of \(\textnormal{H}_1(\,\Sigma,\,\mathbb Z\,)\), it can be shown that the volume of a representation \(\chi\) is equal to
\begin{equation}\label{algvol}
    \text{vol}(\,\chi\,)=\sum_{i=1}^g \Im\Big(\,\,\overline{\chi(\,\alpha_i\,)}\,\chi(\,\beta_i\,)\,\,\Big),
\end{equation} 
where $\Im(\,\overline{z}\,w\,)$ is the usual symplectic form on $\C$. As a consequence, this algebraic definition of volume of a character $\chi$ is invariant under precomposition with any automorphisms in $\textnormal{Aut}\,\big(\,\shomolz\,\big)\cong\spz$. The image of $\chi$, provided that it has rank $2g$, turns out to be a polarised module.

\smallskip

\smallskip

\subsection{Holonomy of translation structures on the torus}\label{ssec:hologenone} We aim to determine the image of the period map \(\textnormal{Per}\) for genus one surfaces. As a preliminary observation, we note that Lemma~\ref{lem:charholo} asserts that, in this specific case, the period character is not only induced by the holonomy of the associated structure, but actually coincides with it as a representation. Indeed, it is a well-known classical fact that for genus one surfaces, the fundamental group coincides with the first homology group. The second key observation in this setting is that, given a representation \(\chi\) and a specified basis \(\{\,\alpha,\,\beta\,\}\) in homology, the algebraic volume as defined in~\S\ref{ssec:vol} coincides, by a mere matter of definitions, with the area of the parallelogram defined by \(\chi(\,\alpha\,)\) and \(\chi(\,\beta\,)\). Since a parallelogram defines a fundamental domain for a translation structure if and only if its two-dimensional volume is positive, it follows that for a representation \(\chi\) to arise as the holonomy, and hence as the period character, of some translation structure, the volume must be positive. The goal of this section is to show that this condition is also sufficient.

\begin{prop}\label{prop:hauptgenone}
    Let \(\Sigma\) be a surface of genus one. A representation \(\chi\colon\textnormal{H}_1(\,\Sigma,\,\mathbb Z\,)\longrightarrow \mathbb C\) arises as the holonomy of some marked translation structure on \(\Sigma\) if and only if \(\textnormal{vol}(\,\chi\,)>0\). Moreover, such a structure is unique up to translations.
\end{prop}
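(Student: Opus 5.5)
We have to prove three things: the volume condition is necessary, it is sufficient, and the structure is unique. Fix a symplectic basis \(\{\,\alpha,\,\beta\,\}\) of \(\textnormal{H}_1(\,\Sigma,\,\mathbb Z\,)\cong\pi_1(\,\Sigma\,)\) and set \(\mu_1=\chi(\,\alpha\,)\), \(\mu_2=\chi(\,\beta\,)\). By \eqref{algvol} we have \(\textnormal{vol}(\,\chi\,)=\Im(\,\overline{\mu_1}\,\mu_2\,)\). This is the signed area of the parallelogram spanned by \(\mu_1,\mu_2\), and it is positive exactly when \(\{\,\mu_1,\mu_2\,\}\) is a positively oriented real basis of \(\mathbb C\).

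\emph{Necessity.} Let \(\chi\) be the holonomy of a marked translation structure on \(\Sigma\). By Corollary \ref{cor:speccases} the structure has no cone points, and by Remark \ref{rmk:devgenusone} and Lemma \ref{lem:torusarecomplete} the developing map \(\textnormal{dev}\colon\widetilde\Sigma\to\mathbb C\) is an orientation-preserving isometry. Equivariance then identifies \(\Sigma\) with \(\mathbb C/\Gamma\), where \(\Gamma=\chi(\,\textnormal{H}_1(\,\Sigma,\,\mathbb Z\,)\,)\) acts freely, properly discontinuously and cocompactly. Hence \(\Gamma\) is a lattice and \(\mu_1,\mu_2\) are \(\mathbb R\)-linearly independent, so \(\textnormal{vol}(\,\chi\,)\neq0\).

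The sign is the delicate point. I would lift the loops \(\alpha,\beta\) to \(\widetilde\Sigma\) starting at a common base point. Their images under \(\textnormal{dev}\) form two sides of a fundamental parallelogram whose orientation is induced from \(\Sigma\). Because \(\textnormal{dev}\) preserves orientation (Remark \ref{rmk:orientation}) and the basis is positively oriented, since its algebraic intersection is \(\alpha\cdot\beta=+1\), the ordered pair \(\{\,\mu_1,\mu_2\,\}\) is positively oriented. Therefore \(\textnormal{vol}(\,\chi\,)>0\). I expect this orientation bookkeeping to be the main obstacle: it requires matching the paper's convention for Poincaré duality with the counterclockwise orientation of \(\mathbb C\). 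To keep it clean, I would check it on the standard square \(\mu_1=1\), \(\mu_2=i\) and then argue by continuity, since the sign cannot change on the connected set of structures where \(\textnormal{vol}\neq0\).

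\emph{Sufficiency.} Suppose \(\textnormal{vol}(\,\chi\,)>0\). Then \(\Lambda=\Lambda(\,\mu_1,\mu_2\,)\) is a lattice in the sense of \eqref{eq:lattice}. Gluing the opposite sides of the parallelogram \(\textnormal{P}\) spanned by \(\mu_1,\mu_2\) by translations produces \(\mathbb C/\Lambda\) with its translation structure, as in \S\ref{sssec:fundomain}. Following the proof of Proposition \ref{prop:teichtrans}, choose a homeomorphism \(\Sigma\to\mathbb C/\Lambda\) sending \(\alpha,\beta\) to the side loops of \(\textnormal{P}\); this is possible because the basis is positively oriented. The developing map is the identity of \(\mathbb C\), so the holonomy sends \(\alpha\mapsto\mu_1\) and \(\beta\mapsto\mu_2\), which is \(\chi\).

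\emph{Uniqueness.} Let two marked structures have holonomy \(\chi\). By the necessity argument, each is \(\mathbb C/\Lambda\) with \(\Lambda=\textnormal{Im}(\,\chi\,)\), and the marking is determined by \(\chi\). By Corollary \ref{cor:sametransone} the two translation surfaces coincide. Concretely, the two developing maps are isometries of \(\mathbb C\) that are both equivariant for the same \(\chi\). Composing one with the inverse of the other gives a map of \(\mathbb C\) that commutes with all the translations in \(\Lambda\) and preserves \(dz\), so by Lemma \ref{lem:isolem} it is a translation \(z\mapsto z+c\). This descends to a marking-preserving isomorphism, so the structure is unique up to translations.
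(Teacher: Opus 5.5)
Your proposal is correct and follows the paper's route. Sufficiency is exactly the paper's proof (glue the opposite sides of the parallelogram spanned by \(\chi(\alpha)\) and \(\chi(\beta)\)); your necessity argument via completeness of genus-one structures and the orientation of the developing map, and your uniqueness argument identifying each structure with \(\mathbb C/\textnormal{Im}(\chi)\) with marking dictated by \(\chi\), make rigorous what the paper only asserts in the discussion before the proposition and leaves implicit behind Corollary~\ref{cor:sametransone}.

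The one slip is your ``concretely'' remark: \(\textnormal{dev}_2\circ\textnormal{dev}_1^{-1}\colon\mathbb C\to\mathbb C\) is in general only a \(\Lambda\)-equivariant homeomorphism, need not preserve \(dz\), and need not be a translation. The map that is a translation in charts is the descent to \(\Sigma\) of \(\textnormal{dev}_2^{-1}\circ\textnormal{dev}_1\); in any case your first uniqueness argument already suffices.
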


\begin{proof}
    Following the above discussion, it remains to show that the condition \(\textnormal{vol}(\,\chi\,) > 0\) is sufficient. Fix a marking on \(\Sigma\), that is, a choice of a system of generators in homology, say \(\{\,\alpha,\, \beta\,\}\). Let \(\mu_1\) and \(\mu_2\) be the images of the generators under \(\chi\). Since the volume of \(\chi\) is positive, it follows by definition that the two-dimensional volume of the parallelogram determined by \(\mu_1\) and \(\mu_2\) is positive. By gluing together the opposite sides of this parallelogram, one obtains a marked translation structure on the torus whose holonomy is equal to \(\chi\) by design.
\end{proof}

\noindent Once a system of generators for homology is fixed, a representation with positive volume encodes exactly the same data as a parallelogram in the plane: they both define a translation structure and thus a complex structure, and a lattice along with an explicit system of generators. A translation structure, on the other hand, encodes less information: the period group, \textit{i.e.}, the lattice, is well defined, but there is no canonical choice of a system of generators, and hence no uniquely determined fundamental domain. Referring to the diagram in Table~\ref{tab:determination}, the relationship among holonomy representations, parallelograms in \(\mathbb C\) and translation structures on a torus is shown in Table~\ref{tab:determination2}.

\begin{table}[ht!]
    \centering
    \begin{tikzpicture}[baseline=(current bounding box.center), every node/.style={scale=0.875}]
    \node (A) at (0,0) {$\left\{ \begin{array}{c}
    \text{holonomy representations} \\
    \rho\colon\pi_1(\,\Sigma\,)\longrightarrow\mathbb C
    \end{array} \right\}$};

    \node (B) at (6,0) {$\left\{ \begin{array}{c}
    \text{fundamental domains} \\
    \text{\textit{i.e.} parallelograms in } \mathbb{C}
    \end{array} \right\}$};
    \draw[->, thin] ([xshift=2mm,yshift=1.5mm]A.east) -- ([xshift=-2mm,yshift=1.5mm]B.west);
    \draw[<-, thin] ([xshift=2mm,yshift=-1.5mm]A.east) -- ([xshift=-2mm,yshift=-1.5mm]B.west);

    \node (C) at (11.5,0) {$\left\{ \begin{array}{c}
    \text{translation structures} \\
    \text{on a torus}
    \end{array} \right\}$};
    \draw[<-, thin, dashed] ([xshift=2mm,yshift=-1.5mm]B.east) -- ([xshift=-2mm,yshift=-1.5mm]C.west);
    \draw[->, thin] ([xshift=2mm,yshift=1.5mm]B.east) -- ([xshift=-2mm,yshift=1.5mm]C.west);
    \end{tikzpicture}
    \caption{The solid arrow denotes a complete determination of one object by another; a broken arrow denotes that some choice is involved.} 
    \label{tab:determination2}
\end{table}

\subsection{Haupt's theorem}\label{ssec:haupt} The case of genus one surfaces is interesting because it provides insight into the general case, which is decidedly less straightforward to handle. We have already observed that not all representations arise as the holonomy of a translation structure. This remains true in higher genus, but it will not be the only obstruction to the realisation of representations. The main result is the following Theorem by Haupt\index{Haupt theorem}, see \cite{OH}.

\begin{hthm}\label{thm:hauptthm}
    Let \(\Sigma\) be a closed surface of genus \(g\ge2\). A representation \(\chi\colon \textnormal{H}_1(\,\Sigma,\,\mathbb{Z}\,) \longrightarrow \mathbb{C}\) arises as the period character of some pair \((X,\omega)\) if and only if the following conditions holds:
    \begin{itemize}
        \item[1.] \(\textnormal{vol}(\,\chi\,) > 0\), and 
        \item[2.] if \(\textnormal{Im}(\,\chi\,)\,<\,\mathbb C\) is a lattice then \(\textnormal{vol}(\,\chi\,)\,\ge\,2\textnormal{area}\big(\,\mathbb C/\Lambda\,\big)\).
    \end{itemize}
\end{hthm}

\noindent Haupt's Theorem has been subsequently rediscovered by Kapovich in \cite{KM2}, by using Ratner’s solution of Raghunathan’s conjecture, thus providing an interesting alternative approach. Kapovich's key observation is that the set of periods of holomorphic one-forms on homologically marked Riemann surfaces is preserved under the action of the integral symplectic group. In \S\ref{sssec:volcond} and \S\ref{sssec:latticecond} we briefly explain how the above conditions arise as obstructions and how to realise a structure when they hold. We notice that the second condition holds if and only if the image of a representation is a discrete subgroup of \(\mathbb C\) of rank two, meaning that it is isomorphic to \(\mathbb Z^2\).

\smallskip

\subsubsection{Condition\index{volume!condition} on the volume}\label{sssec:volcond} The first obstruction is a mere consequence of Riemann's bilinear relations. In fact, given any pair \((X,\omega)\) the quantity
\begin{equation}
    \frac{i}{2}\,\int_{X} \omega\wedge\overline{\omega}
\end{equation}
is the area of the surface \(\Sigma\) with respect to the singular Euclidean metric on \(X\) induced by \(\omega\), which must be positive. On the other hand, given a basis \(\{\,\alpha_i,\beta_i\,\}\) for the homology \(\textnormal{H}_1(X,\,\mathbb{Z})\cong\textnormal{H}_1(\Sigma,\,\mathbb{Z})\), the following chain of equalities hold:
\begin{equation}
    \frac{i}{2}\,\int_{X} \omega\wedge\overline{\omega} \,\,\overset{\textnormal{RBR}}{=}\,\,\frac{i}{2}\sum_{j=1}^g\,\Big(\,\overline{\chi(\,\alpha_j\,)}\chi(\,\beta_j\,)-\chi(\,\alpha_j\,)\overline{\chi(\,\beta_j\,)} \Big)\,\,=\,\,\sum_{j=1}^g \Im\Big(\,\,\overline{\chi(\,\alpha_j\,)}\,\chi(\,\beta_j\,)\,\,\Big)\,=\,\textnormal{vol}(\,\chi\,),
\end{equation}
where \(\chi\) is the period character of \(\omega\) and \text{RBR} stands for \textit{Riemann Bilinear Relations}. Under a different perspective, the developing map of the translation structure determined by \(\omega\), say \(\textnormal{dev}\colon \widetilde\Sigma\longrightarrow \mathbb C\), yields a multivalued function \(F\colon X\longrightarrow \mathbb C\) and 
\begin{equation}
    \int_{X} \omega\wedge\overline{\omega} \,=\,\int_X\,F^*\big(\,dz\wedge d\overline{z}\,\big).
\end{equation}



\smallskip

\subsubsection{Lattice condition\index{lattice!condition}}\label{sssec:latticecond} We now focus on the second obstruction which applies only when the image of \(\chi\) is a lattice, say \(\Lambda\). This obstruction arises for topological reasons. Since \(\Lambda\cong \textnormal{H}_1(\, T,\,\mathbb Z\,)\), where \(T\) is the topological torus, then \(\chi\) may be regarded as a map \(\chi\colon \textnormal{H}_1(\, \Sigma,\,\mathbb Z\,)\longrightarrow \textnormal{H}_1(\, T,\,\mathbb Z\,)\) in homology. The crux of the matter is that, up to homotopy, \(\chi\) is determined by a mapping \(f\colon \Sigma\longrightarrow T\). In the following we may assume \(\big[\,\Lambda\,:\textnormal{H}_1(\,T,\Z\,)\,\big]=1\). If not, we may lift \(f\) to a map \(\overline{f}\) onto the covering of \(T\) corresponding to the subgroup \(\Lambda\).

\smallskip

\noindent Suppose \(\chi\) arises as the period character of some translation structure, say \((X,\omega)\) on \(\Sigma\). Then, its developing map \(\textnormal{dev}\colon\widetilde{X}\longrightarrow \C\) boils down to a map \(X\longrightarrow \C/\Lambda\) that we may assume to be \(f\), up to homotopy, because \(\textnormal{dev}\) is \(\chi-\)invariant. The restriction of \(f\) around any zero of \(\omega\) looks like the the map \(g\) defined in \S\ref{sssec:bigcone}. In particular, if \(p\) is a zero of \(\omega\) of order \(m\ge1\), then the local degree of \(f\) around \(p\) is equal to \(m+1\ge2\). Moreover, \(\omega\) has at least one zero since \(X\) has genus at least two. Hence \(\deg(\,f\,)\ge2\). In \cite{EA}, Edmonds shows that a map \(f\) of non-zero degree between orientable and closed surfaces is homotopic to a pinching map post-composed with a branched covering. However, since \(f_*=\chi\) and the inequality \(\deg(\,f\,)>\big[\,\textnormal{Im}(\,\chi\,)\,:\textnormal{H}_1(\,T,\Z\,)\,\big]=1\) holds, then the pinching map can be absorbed into the branched covering map. Therefore, \(f\) is homotopic to a branched covering map of surfaces of degree at least \(2\).

\begin{rmk}\label{rmk:secondhauptcondanalyticpersp}
    By adopting a more analytic perspective, the same bound on \(\deg(\,f\,)\) arises as follows. Suppose \(\chi\) arises as the monodromy of the first order linear ODE, say \(dF=\omega\) on \(X\). The multivalued solution on \(X\) yields a non-constant holomorphic map \(f\colon\Sigma\longrightarrow T\) that cannot be a homeomorphism for topological reasons. Hence its degree\index{representation!degree} must be at least two.
\end{rmk}

\noindent We now define a topological invariant naturally attached to \(\chi\). The \textit{degree} of \(\chi\) is the positive integer defined as the ratio:
\begin{equation}
\label{eq:refhaup1}\deg(\,\chi\,)=\frac{\textnormal{vol}(\,\chi\,)}{\textnormal{area}(\,\mathbb C/\Lambda\,)}.
\end{equation}
\smallskip
\noindent Since \(\deg(\,f\,)=\deg(\,\chi\,)\), the second obstruction readily follows, that is
\begin{equation}
    \textnormal{vol}(\,\chi\,)\ge 2\,\textnormal{area}(\,\mathbb C/\Lambda\,).
\end{equation}

\begin{rmk}[Maps of degree one]
    We observe that the constraint \(\deg(\,f\,)\ge2\) vanishes if we do not assume \(f\) to be a branched covering map. Given two surface \(\Sigma_g\) and \(\Sigma_h\) of genus \(g,h\) where \(g>h\), there exist maps \(\Sigma_g\longrightarrow \Sigma_h\) of degree one. By \cite{EA}, these maps are homotopic to pinching maps, namely continuous maps that collapse \(g-h\) handles to a point. In the case \(g\ge2\) and \(h=1\), every continuous map of degree one \(f\colon \Sigma\longrightarrow\mathbb{C}/\Lambda\) yields a representation \(\chi\in\textnormal{H}^1(\Sigma,\C)\) of volume equal to \(\textnormal{vol}(\C/\Lambda)\) that cannot be realised as the period character of some translation structure on \(\Sigma\) because the second Haupt condition is violated. Vice versa, every discrete representation \(\chi\in\textnormal{H}^1(\Sigma,\C)\) of degree one is induced by a pinching map. Finally, we may notice that the complex structure on \(\C/\Lambda\) cannot be pulled-back on \(\Sigma\) otherwise \(f\) would be homotopic to a branched covering map of degree one that is a homeomorphism.
\end{rmk}

\smallskip

\subsubsection{Realising representations following Kapovich's approach}\label{ssec:kapapp} In the previous paragraph, we showed why the conditions of Haupt’s theorem are, in fact, necessary. In this paragraph we want to give an idea of why these conditions are also sufficient. Let \(\chi\colon\textnormal{H}_1(\,\Sigma,\,\mathbb{Z}\,) \longrightarrow \mathbb{C}\) be a non-discrete\footnote{A representation in \(\C\) is \textit{non-discrete}\index{representation!non-discrete} if the image if not a discrete additive subgroup of \(\C\).} representation with positive volume and suppose the existence of a basis in homology which is good in the following sense.

\smallskip

\begin{quote}
    \textit{A basis \(\{\alpha_i,\beta_i\}\) in homology is said to be \textit{good} if and only if \(\Im\Big(\,\,\overline{\chi(\,\alpha_i\,)}\,\chi(\,\beta_i\,)\,\,\Big)>0\) for every \(i=1,\dots,g\).}
\end{quote}

\smallskip

\noindent Under this assumption, each pair \(\{\,\chi(\,\alpha_i\,),\,\chi(\,\beta_i\,)\,\}\) yields the fundamental parallelogram of some translation structure on a torus, by Proposition \ref{prop:hauptgenone}, and the desired translation structure on \(\Sigma\) arises by gluing these tori in some appropriate way. More geometrically, we can find a collection of \(g\) parallelograms such that, once decomposed into polygons, \textit{e.g.} triangles, and appropriately reassembled by translations, they determine the desired translation surface. Figure \ref{fig:genkap} depicts such a situation in genus three.

\begin{figure}[h!]
    \centering
    \begin{tikzpicture}[scale=1.5, every node/.style={scale=0.675}]
    \definecolor{pallido}{RGB}{221,227,227}

    \pattern [pattern=north west lines, pattern color=pallido]
    (0,0)--(2,0)--(2,2)--(0,2)--(0,0);

    \pattern [pattern=north west lines, pattern color=pallido]
    (3,0)--(5,1)--(6,3)--(4,2)--(3,0);

    \pattern [pattern=north west lines, pattern color=pallido]
    (7,0)--(10,0)--(10,2)--(7,2)--(7,0);
    
    \draw[thin, orange        ] (0,0)--(2,0);
    \draw[thin, red           ] (2,0)--(2,2);
    \draw[thin, orange        ] (2,2)--(0,2);
    \draw[thin, red           ] (0,2)--(0,0);

    \draw[thin, orange        ] (3,0)--(5,1);
    \draw[thin, red           ] (5,1)--(6,3);
    \draw[thin, orange        ] (6,3)--(4,2);
    \draw[thin, red           ] (4,2)--(3,0);

    \draw[thin, orange        ] (7,0)--(10, 0);
    \draw[thin, red           ] (10,0)--(10,2);
    \draw[thin, orange        ] (10,2)--(7,2);
    \draw[thin, red           ] (7,2)--(7,0);

    \draw[thin, blue         ] (0.5,0.5  )--(1.5,0.5  );
    \draw[thin, blue         ] (0.5,1.5  )--(1.5,1.5  );
    \draw[thin, blue         ] (4  ,1.5)--(5  ,1.5);
    \draw[thin, blue         ] (8  ,1  )--(9  ,1  );

    \draw[thin, dashed       ] (0,0)--(0.5,0.5);
    \draw[thin, dashed       ] (1.5,0.5)--(2,0);
    \draw[thin, dashed       ] (0,2)--(0.5,1.5);
    \draw[thin, dashed       ] (2,2)--(1.5,1.5);
    \draw[thin, dashed       ] (0,0)--(1.5,0.5);
    \draw[thin, dashed       ] (2,2)--(0.5,1.5);
    \draw[thin, dashed       ] (0,2)--(0.5,0.5);
    \draw[thin, dashed       ] (2,0)--(1.5,1.5);
    \draw[thin, dashed       ] (0.5,0.5)--(0.5,1.5);
    \draw[thin, dashed       ] (1.5,0.5)--(1.5,1.5);

    \draw[thin, dashed       ] (3,0)--(4,1.5);
    \draw[thin, dashed       ] (3,0)--(5,1.5);
    \draw[thin, dashed       ] (5,1.5)--(5,1);
    \draw[thin, dashed       ] (5,1.5)--(6,3);
    \draw[thin, dashed       ] (4,1.5)--(4,2);
    \draw[thin, dashed       ] (6,3)--(4,1.5);

    \draw[thin, dashed       ] (7,0)--(8 ,1);
    \draw[thin, dashed       ] (8,1)--(10,0);
    \draw[thin, dashed       ] (10,0)--(9,1);
    \draw[thin, dashed       ] (9,1)--(10,2);
    \draw[thin, dashed       ] (8,1)--(7,2);
    \draw[thin, dashed       ] (7,2)--(9,1);
    
    \fill (0,0) circle (1.25pt);
    \fill (2,0) circle (1.25pt);
    \fill (2,2) circle (1.25pt);
    \fill (0,2) circle (1.25pt);
    \fill (3,0) circle (1.25pt);
    \fill (5,1) circle (1.25pt);
    \fill (6,3) circle (1.25pt);
    \fill (4,2) circle (1.25pt);
    \fill (7,0) circle (1.25pt);
    \fill (10,0) circle (1.25pt);
    \fill (10,2) circle (1.25pt);
    \fill (7,2) circle (1.25pt);

    \fill[blue] (0.5,0.5) circle (1.25pt);
    \fill[blue] (1.5,0.5) circle (1.25pt);
    \fill[blue] (0.5,1.5) circle (1.25pt);
    \fill[blue] (1.5,1.5) circle (1.25pt);
    \fill[blue] (4,1.5) circle (1.25pt);
    \fill[blue] (5,1.5) circle (1.25pt);
    \fill[blue] (8,1  ) circle (1.25pt);
    \fill[blue] (9,1  ) circle (1.25pt);

    \node at (1.25,1.625) {\(1\)};
    \node at (1.25,1.375) {\(2\)};
    \node at (0.75,0.625) {\(3\)};
    \node at (0.75,0.375) {\(4\)};
    \node at (4.5,1.625) {\(2\)};
    \node at (4.5,1.375) {\(1\)};
    \node at (8.5,1.125) {\(4\)};
    \node at (8.5,0.875) {\(3\)};

    \end{tikzpicture}
    \caption{The figure shows a collection \(\mathcal P\) of polygons. Each polygon gives rise to a specific handle. Each dashed segment in every polygon is labelled so that different segments have different labels. We then divide the polygons into a new finite collection, say \(\mathcal Q\), entirely made of triangles. Each dashed segment defines a pair of sides, which we mark with the same label to indicate they are identified (the figure does not show the labels on the dashed lines in order to avoid too much clutter; the reader is invited to add them if desired). We note that for each label there is a unique pair, by design. On the other hand, each blue segment defines two different sides that are labelled differently. The polygons in the collection \(\mathcal Q\) will then be glued back together, identifying sides with the same label. All this is equivalent to splitting the polygons of the former collection \(\mathcal P\) along the blue segments and simply identifying the sides with the same label \(1,2,3,4\). The resulting surface has genus \(g\) and supports a translation structure in \(\mathcal H_3(1,1,1,1)\) that, by design, has the desired period character.}
    \label{fig:genkap}
\end{figure}

\smallskip

\noindent Determining the existence of a good basis is more of a dynamical problem than a geometric one. In his paper, Kapovich first observed that it is not necessary to consider all representations, but it suffices to consider those of volume one. These representations form a subspace \(\mathcal X=\big\{\,\,\chi\in\textnormal{H}^1(\Sigma,\mathbb C)\,\,|\,\,\textnormal{vol}(\,\chi\,)=1\,\,\big\}\). Recall from \S\ref{ssec:vol} that the standard symplectic form on \(\mathbb{R}^{4g}\) defines a quadratic form \(\textnormal{vol}\colon\textnormal{H}^1(\Sigma,\mathbb C)\longrightarrow\mathbb R\). The space \(\mathcal{X}\) is identified with the level set \(\textnormal{vol} = 1\) and it is invariant under the action of the symplectic group \(\textnormal{Sp}(2g,\mathbb R)\). The action is clearly transitive, and the stabilizer of any point in \(\mathcal X\) is identified with the group \(\textnormal{Sp}(2g-2,\mathbb R)\), see Remark \ref{rmk:stabiliser}, from which we conclude that \(\mathcal X=\textnormal{Sp}(2g,\mathbb R)/\textnormal{Sp}(2g-2,\mathbb R)\). 

\begin{rmk}\label{rmk:stabiliser}
    By regarding a representation as a pair of vectors \(\R^{2g}\times\R^{2g}\), the stabiliser of a representation can be seen as the stabiliser of a pair \((u,v)\) under the action of \(\textnormal{Sp}(2g,\mathbb R)\). The vectors \(u,v\) span a symplectic \(2\)-plane in \(\mathbb{R}^{2g}\), whose symplectic complement therefore has dimension \(2g-2\). The stabiliser acts trivially on this plane and symplectically on its complement. Hence the stabiliser of any pair is isomorphic to \(\mathrm{Sp}(2g-2,\mathbb{R})\).
\end{rmk}

\noindent Let \(\mathbb P\Omega\mathcal S_g\) be the subspace of \(\Omega\mathcal S_g\) of marked translation surfaces with volume one and let \(\mathcal{RX}\) be the image of the map as in \eqref{eq:permap} to \(\mathbb P\Omega\mathcal S_g\). Here \(\mathcal{RX}\) stands for realisable representations in \(\mathcal X\)\footnote{Our notation here is different from that used by Kapovich in his paper.}. The restricted map \(\textnormal{Per}\colon\mathbb P\Omega\mathcal S_g\longrightarrow \mathcal{RX}\) is an open map meaning that \(\mathcal{RX}\) is open in \(\mathcal X\). Moreover, \(\mathcal{RX}\) is invariant under the action of the discrete group \(\textnormal{Sp}(2g,\mathbb Z)\) which is a lattice in \(\textnormal{Sp}(2g,\mathbb R)\). We recall for the readers' convenience that an action is said to be \textit{ergodic}\index{ergodic} if every invariant measurable subset has either full measure or zero measure. Moore's Theorem, see \cite{Zim84}, applies and as a consequence \(\textnormal{Sp}(2g,\mathbb Z)\) acts ergodically on \(\mathcal X\) and \(\mathcal{RX}\subset \mathcal X\) is an open invariant subset of full measure. Let \(\mathcal{GX}\subset \mathcal X\) be the subset of representations for which a good basis as above exist. We have already seen that \(\mathcal{GX}\subset\mathcal{RX}\). Ratner's theory is then applied to show that for every non-discrete representation \(\chi\in\mathcal X\), the orbit \(\textnormal{Sp}(2g,\mathbb Z)\cdot\chi\) intersects \(\mathcal{GX}\), to conclude that \(\chi\in\mathcal{RX}\), see \cite[\S5]{KM2}. However there are representations in \(\mathcal X\) which do not belong to the orbit \(\textnormal{Sp}(2g,\mathbb Z)\cdot\mathcal{GX}\). These correspond to the characters with discrete image.

\begin{figure}[h!]
    \centering
    \begin{tikzpicture}[scale=1.75, every node/.style={scale=0.75}]
    \definecolor{pallido}{RGB}{221,227,227}

    \pattern [pattern=north west lines, pattern color=pallido]
    (0,0)--(8,0)--(8,4)--(0,4)--(0,0);
    
    \draw[thin, orange        ] (0,0)--(8,0);
    \draw[thin, red           ] (8,0)--(8,4);
    \draw[thin, orange        ] (8,4)--(0,4);
    \draw[thin, red           ] (0,4)--(0,0);

    \draw[thin, dashed       ] (1,0)--(1,4);
    \draw[thin, dashed       ] (2,3)--(2,4);
    \draw[thin, dashed       ] (5,3)--(5,4);

    \draw[thin, dashed       ] (2,0)--(2,1);
    \draw[thin, dashed       ] (3,0)--(3,2);
    \draw[thin, dashed       ] (4,0)--(4,2);
    \draw[thin, dashed       ] (6,0)--(6,4);

    \draw[thin, dashed       ] (2,1)--(3,1);
    \draw[thin, dashed       ] (3,2)--(4,2);
    \draw[thin, dashed       ] (2,3)--(5,3);
    
    \draw[thin, blue         ] (1,1)--(2,1);
    \draw[thin, blue         ] (1,2)--(3,2);
    \draw[thin, blue         ] (3,1)--(4,1);
    \draw[thin, blue         ] (4,2)--(6,2);
    \draw[thin, blue         ] (1,3)--(2,3);
    \draw[thin, blue         ] (5,3)--(6,3);
    
    \fill (0,0) circle (1.25pt);
    \fill (8,0) circle (1.25pt);
    \fill (8,4) circle (1.25pt);
    \fill (0,4) circle (1.25pt);

    \fill[blue] (1,1) circle (1.25pt);
    \fill[blue] (2,1) circle (1.25pt);
    \fill[blue] (1,2) circle (1.25pt);
    \fill[blue] (3,2) circle (1.25pt);

    \fill[blue] (3,1) circle (1.25pt);
    \fill[blue] (4,1) circle (1.25pt);
    \fill[blue] (4,2) circle (1.25pt);
    \fill[blue] (6,2) circle (1.25pt);

    \fill[blue] (1,3) circle (1.25pt);
    \fill[blue] (2,3) circle (1.25pt);
    \fill[blue] (5,3) circle (1.25pt);
    \fill[blue] (6,3) circle (1.25pt);

    \node at (1.5,0.875) {\(a_+\)};
    \node at (1.5,1.125) {\(a_-\)};
    \node at (3.5,0.875) {\(a_-\)};
    \node at (3.5,1.125) {\(a_+\)};

    \node at (2  ,1.875) {\(b_+\)};
    \node at (2  ,2.125) {\(b_-\)};
    \node at (5  ,1.875) {\(b_-\)};
    \node at (5  ,2.125) {\(b_+\)};

    \node at (1.5,2.875) {\(c_+\)};
    \node at (1.5,3.125) {\(c_-\)};
    \node at (5.5,2.875) {\(c_-\)};
    \node at (5.5,3.125) {\(c_+\)};

    \draw[black, <->] (1,0.5)--(3,0.5);
    \fill[white] (1.75,0.4) rectangle (2.25, 0.6);
    \node[scale=0.75] at (2,0.5) {\(a_2\)};

    \draw[black, <->] (1,1.5)--(4,1.5);
    \fill[white] (2.25,1.4) rectangle (2.75, 1.6);
    \node[scale=0.75] at (2.5,1.5) {\(a_3\)};

    \draw[black, <->] (1,3.5)--(5,3.5);
    \fill[white] (2.75,3.4) rectangle (3.25, 3.6);
    \node[scale=0.75] at (3,3.5) {\(a_4\)};

    \end{tikzpicture}
    \caption{The figure shows a rectangle with area equal to the volume of the representation. By identifying the opposite sides by means of translations the resulting space in a torus. To create the desired handles, for each \(i\), consider two segments of the same arbitrary length separated by a segment of length \(\chi(\,\alpha_i\,)=a_i\) (drawn in blue), \textit{i.e.}, they differ by a translation \(z\mapsto z+a_i\). Since \(\chi(\,\beta_i\,)=0\) for \(i=2,\dots,g\), the blue segments are at the same level, that is \(a_i\in\Z\), for \(i=1,\dots,g\). The rectangle is then divided into polygons (squares and rectangles). Each dashed segment in the rectangle is labelled so that different segments have different labels. We then divide the rectangle into a finite collection of polygons. Each dashed segment defines a pair of sides with the same label. We note that for each label there is a unique pair, by design. Each of the blue segments defines two different sides that are now labelled differently. The polygons in the collection will then be glued back together, identifying sides with the same label. All this is equivalent to splitting the rectangle along the blue segments and simply identifying the sides with the same label. The resulting surface will now have \(g-1\) more handles and supports a translation structure in \(\mathcal H_4(1,1,1,1,1,1)\) that, by design, has the desired period character.}
    \label{fig:kapdiscr}
\end{figure}

\smallskip

\noindent In case a representation \(\chi\) with positive volume is discrete\index{representation!discrete}, the image of \(\chi\) is a lattice and there is no loss of generality in assuming that \(\textnormal{Im}(\,\chi\,)=\ziz\). This normalisation can be easily explained as follows. Let us suppose for a moment that a discrete representation with image \( \ziz \) and satisfying Haupt's conditions is realisable. Let \(\chi\) be any such representation. Then there exists a collection of polygons \(\mathcal P\) and an explicit pairing via translations, say \(\tau\), see \S\ref{ssec:tiled}, such that the resulting translation surface has period character \(\chi\). Let \(\chi'\) be any other discrete representation with positive volume and let \(\Lambda=\textnormal{Im}(\,\chi'\,)\). We may notice that there is an element \(A\in\textnormal{GL}^+(2,\mathbb{R})\) such that \(A\cdot \ziz =\Lambda\) and, as a consequence, \(A\cdot\mathbb \chi=\chi'\). This element \(A\) can be used to define a new collection of polygons \(\mathcal P'\) and a pairing via translations \(\tau'\) which can be used to realise a translation surface with period character \(\chi'\). Therefore, we may assume that \(\textnormal{Im}(\,\chi\,)=\ziz\). Next, even in this case, we assume the existence of a \textit{good basis} in homology. For discrete representations, a good basis is a basis \(\{\alpha_i,\beta_i\}\) in homology such that 

\smallskip

\begin{itemize}
    \item[1.] \(\chi(\,\alpha_i\,)\in\mathbb Z\) and \(0<\chi(\,\alpha_i\,)<\chi(\,\alpha_1\,)=\textnormal{vol}(\,\chi\,)\in\mathbb Z\) for all \(i=1,\dots,g\),
    \smallskip
    \item[2.] \(\chi(\,\beta_1\,)=i\) whereas \(\chi(\,\beta_i\,)=0\) for all \(i=1,\dots,g\).
\end{itemize}

\smallskip

\noindent See \cite[\S4]{KM2} for details. Under this assumption, the desired structure can be realised by decomposing a rectangle of size \(\textnormal{vol}(\,\chi\,)\times1\) into smaller polygons and then reassembling them properly, see Figure \ref{fig:kapdiscr}.

\smallskip

\noindent In summary, in order to realise a given representation, in both cases Kapovich made use of a good basis adapted to purpose. The bulk of the work in \cite{KM2} consisted in determining such a basis for an arbitrary representation and relies on the full strength of Ratner’s theory.

\begin{rmk}
    It is worth mentioning that Kapovich's approach does not cover the case \(g=2\). In fact, in \cite[\S4]{KM2} it makes use of the condition \(g\ge 3\).
\end{rmk}

\smallskip

\subsection{Recent refined versions}\label{ssec:refinements} Haupt's Theorem provides a full description of the image of the Period map \eqref{eq:permap}. In recent times, several refinements of Haupt's Theorem have been provided by restricting the period mapping \(\mathrm{Per}\) to strata or to connected components of strata of holomorphic differentials. Realising a representation as a character in a prescribed stratum turns out to be a more subtle problem, because in the realisation process the orders of the zeros can no longer be ignored. In \cite{BJJP}, Bainbridge--Johnson--Judge--Park provided necessary and sufficient conditions for a representation to be realised in a connected component of a given stratum. Shortly thereafter Le Fils provided in \cite{fils} necessary and sufficient conditions for a representation \(\chi\) to be a character in a given stratum, with an alternative proof in the same spirit as Kapovich. They showed the following:

\begin{thm}[Bainbridge--Johnson--Judge--Park, Le Fils]
    Let \(\Sigma\) be a closed surface of genus \(g\ge2\) and let \(\mu=(m_1,\dots,m_k)\) be any signature. A representation \(\chi\colon \textnormal{H}_1(\,\Sigma,\,\mathbb{Z}\,) \longrightarrow \mathbb{C}\) arises as the period character of some pair \((X,\omega)\) in the stratum \(\strata\) if and only if the following conditions hold:
    \begin{itemize}
        \item[1.] 
        \(\textnormal{vol}(\,\chi\,)>0\), and 
        \item[2.] 
        if \(\textnormal{Im}(\,\chi\,)\,<\,\mathbb C\) is a lattice, then \[\textnormal{vol}(\,\chi\,)\,\ge\,\displaystyle\max_{1\le i\le k}\big\{\,m_i+1\,\big\}\,\textnormal{area}\big(\,\mathbb C/\Lambda\,\big).\]
    \end{itemize}
\end{thm}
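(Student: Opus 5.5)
The plan is to treat necessity and sufficiency separately, since necessity is essentially already in the paper. For necessity, condition 1 is the Riemann bilinear relations argument of \S\ref{sssec:volcond}: the area $\frac{i}{2}\int_X\omega\wedge\overline{\omega}$ is positive and equals $\textnormal{vol}(\chi)$. For condition 2, suppose $\textnormal{Im}(\chi)=\Lambda$ is a lattice. Exactly as in \S\ref{sssec:latticecond}, integrating $\omega$ gives a holomorphic map $f\colon X\to\mathbb C/\Lambda$ whose induced map on homology is $\chi$. Its degree is $\deg(\chi)=\textnormal{vol}(\chi)/\textnormal{area}(\mathbb C/\Lambda)$. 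Near a zero $p_i$ of order $m_i$ the map $f$ has local degree $m_i+1$, because $f^*dz=\omega=(m_i+1)z^{m_i}dz$ in a suitable coordinate. The local degree of a holomorphic map at any point is at most its global degree. Hence $\deg(f)\ge\max_i(m_i+1)$, which is the stated inequality.

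For sufficiency I would split into non-discrete and discrete $\chi$, following Kapovich's strategy in \S\ref{ssec:kapapp}, but now keeping track of zero orders.

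\emph{Non-discrete case.} By Haupt's Theorem, $\chi$ is the period character of some $(X,\omega)$. Since $\textnormal{Per}$ is open and $\textnormal{Sp}(2g,\mathbb Z)$-equivariant, the realisable set $\mathcal{RX}(\mu)$ for a fixed stratum is open and invariant. The key reduction is to realise $\chi$ first in the minimal stratum $\mathcal H_g(2g-2)$. One then breaks the zero as in \S\ref{sssec:zerobreak} to reach any signature $\mu$. This surgery does not change absolute periods, so the character stays $\chi$ and Lemma \ref{lem:stratanotempty}'s argument applies verbatim. To realise $\chi$ in $\mathcal H_g(2g-2)$, use a good basis, whose existence for non-discrete $\chi$ is guaranteed, up to the $\textnormal{Sp}(2g,\mathbb Z)$-action, by Kapovich's Ratner argument. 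Glue the $g$ tori given by Proposition \ref{prop:hauptgenone} along parallel slits of equal holonomy. Instead of the disjoint slits of Figure \ref{fig:genkap}, which produce $2g-2$ simple zeros, choose all slits emanating from a common endpoint. This realises the tori as a chain glued at a single point, so the cone angles accumulate to $2(2g-1)\pi$. Alternatively, start from the principal-stratum realisation and collapse zeros by the reverse movement of zeros along twin paths, using the remark following Remark \ref{rmk:breakazerobranched}.

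\emph{Discrete case.} Normalise $\textnormal{Im}(\chi)=\Z\oplus i\,\Z$ and set $d=\deg(\chi)$. Now $d\ge\max_i(m_i+1)$. I would construct a branched cover $\Sigma\to T$ of degree $d$ realising $\chi$ with branching profile $\mu$, meaning one ramification point of local degree $m_i+1$ over chosen points of $T$. The translation structure is the pullback of $dz$. Concretely, this is a square-tiled surface as in Example \ref{ex2}, given by permutations $(\sigma_h,\sigma_v)\in\mathfrak S_d$ whose commutator has cycle type $(m_1+1,\dots,m_k+1,1,\dots)$. Such permutations exist when $d\ge\max(m_i+1)$. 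The construction must also induce exactly $\chi$ on homology, not merely a character with the same image. For that step I would use the discrete good basis of \S\ref{ssec:kapapp} and the rectangle construction of Figure \ref{fig:kapdiscr}, modified so that the slits share endpoints in groups of sizes dictated by $\mu$.

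\textbf{Main obstacle.} The hardest step is this last one: arranging the slits so that the zero orders are exactly $\mu$ while the homology class map remains $\chi$. This is precisely where the bound $d\ge\max(m_i+1)$ must be used sharply. A zero of order $m$ needs $m+1$ sheets of the $d$-sheeted rectangle meeting at a point.
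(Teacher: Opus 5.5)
Your necessity argument is correct and matches the paper's: the Riemann bilinear relations give condition~1, and the bound local degree $m_i+1\le\deg(f)=\deg(\chi)$ gives condition~2. The sufficiency part, however, has genuine gaps in both the non-discrete and the discrete case.

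\textbf{Non-discrete case.} Reducing to $\mathcal H_g(2g-2)$ and then breaking the zero is legitimate. The problem is that your gluing does not land in the minimal stratum.
\begin{itemize}
\item In Kapovich's slit-to-slit cross-gluing (Figure~\ref{fig:genkap}), with slit vectors that are not periods of the tori, the start of a slit is identified only with the start of its partner, and the end only with the end. A start point is never identified with an end point.
\item Suppose all slits on the home torus (or on each torus of a chain) start at one common point. That point collects exactly $2\pi$ from each of the $g$ tori, so its cone angle is $2\pi g$. It is therefore a zero of order $g-1<2g-2$.
\item The far endpoints produce further zeros. Generically you get $\mathcal H_g(g-1,1,\dots,1)$. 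For $g=2$ there is only one slit, and you land in $\mathcal H_2(1,1)$.
\end{itemize}
To identify the two ends of a slit you need Le Fils' different surgery: insert the cylinder of Figure~\ref{fig:slitcons}, whose slit vector is an absolute period $\chi(\alpha_j)$. Then $m$ such slits from one point give a zero of order $2m$. This construction needs a good basis satisfying (A1)--(A2), i.e.\ enough room in the home parallelogram and pairwise distinct directions. Le Fils derives this from Calsamiglia--Deroin--Francaviglia, not from Kapovich's basis alone. Moreover, for $g=2$ Kapovich's good basis is not available at all, so a separate argument is needed.

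Your fallback, collapsing zeros by reverse movement, assumes embedded twin paths running from one zero to the others, and nothing guarantees they exist. Isoperiodic leaves in $\mathcal H_g(2g-2)$ are discrete, so reaching that stratum is exactly the hard part. BJJP obtain it from the CDF theorem that every non-discrete fibre meets every component of every stratum.

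\textbf{Discrete case.} A square-tiled surface as in Example~\ref{ex2} has all its cone points over the single corner of $T$. Requiring $[\sigma_h,\sigma_v]$ to have cycle type $(m_1+1,\dots,m_k+1,1,\dots,1)$ therefore forces $d\ge\sum_i(m_i+1)=2g-2+k$, not merely $d\ge\max_i(m_i+1)$. For example, $\mathcal H_2(1,1)$ with $d=2$ or $3$ cannot be reached this way (cf.\ Figure~\ref{fig:degree2}). You must allow distinct branch values. Then use either the Hurwitz existence theorem over the torus in its primitive form (Edmonds--Kulkarni--Stong; Bogatyi--Gon\c{c}alves--Kudryavtseva--Zieschang), or explicit constructions as in BJJP and Le Fils.

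More importantly, the step you call the main obstacle, hitting $\chi$ itself rather than some character of the same degree, lacks its key idea, and your slit-rectangle plan is never carried out. The paper's route avoids this step entirely. By Gabai--Kazez, any two primitive simple branched covers of $T$ of degree $d$ are equivalent. Hence any two surjective characters of degree $d$ satisfy $\chi_2=\chi_1\circ h_*$ for some homeomorphism $h$ of $\Sigma$. It therefore suffices to build \emph{one} primitive degree-$d$ cover with ramification orders $\mu$; precomposing with $h$ then realises every $\chi$ of that degree in $\strata$.
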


\begin{thm}[Bainbridge--Johnson--Judge--Park]
    Let \(\Sigma\) be a closed surface of genus \(g\ge2\) and let \(\mu\) be any signature. If \(\chi\) can be realised in \(\strata\), then it can be realised in every connected component of the same stratum.
\end{thm}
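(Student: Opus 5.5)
The plan is to split the argument into two parts. First I show that realisability in one connected component of \(\strata\) implies realisability in every other one. Then I reduce to the components listed in the Kontsevich--Zorich classification (\S\ref{ssec:conncomp}). The components are separated by only two invariants, hyperellipticity and spin parity, so it suffices to pass between them while preserving the period character. If \(\strata\) is connected there is nothing to prove. Otherwise I handle two moves: from a hyperelliptic component to a non-hyperelliptic one, and between the even and odd spin components.

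The basic tool is the movement of zeros and the breaking of a zero (\S\ref{sssec:zerobreak}, \S\ref{sssec:schiffer}), since both preserve absolute periods. Start from a realisation \((X,\omega)\in\strata\) of \(\chi\). Moving zeros along twin paths stays inside the isoperiodic leaf. Collapsing zeros, the reverse surgery, lets me pass to a surface with fewer, higher-order zeros and the same character. I will also use a handle-type modification that keeps \(\chi\) fixed but changes the indices of curves and hence the parity computed by \eqref{eq:spinparity}.

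Concretely, I take a cylinder or parallelogram decomposition adapted to a pair \(\alpha_i,\beta_i\) in a symplectic basis. This is available from Kapovich-type realisations (Figures \ref{fig:genkap}, \ref{fig:kapdiscr}). I then regluing the slits that create that handle in a different cyclic order, as in Example \ref{ex3}, case 2 versus other arrangements. The aim is to change \(\textnormal{Ind}(\alpha_i)\) by one while leaving every \(\chi(\alpha_j),\chi(\beta_j)\) unchanged. This flips the Arf invariant.

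For the hyperelliptic-to-other move, I start from a structure in the odd or even component realising \(\chi\). I then check that the realisation can be chosen so that the slit gluing produces a surface with no involution acting as \(-1\) and exchanging or fixing the zeros appropriately. This should hold generically, by a dimension count against the hyperelliptic locus inside the isoperiodic leaf. The reverse direction is needed only for \(\mathcal H_g(2g-2)\) and \(\mathcal H_g(g-1,g-1)\); there I would build the structure directly as a double cover of a genus-zero quadratic differential with prescribed anti-invariant periods.

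The main obstacle is the discrete case with extremal volume. When \(\textnormal{Im}(\chi)\) is a lattice and \(\textnormal{vol}(\chi)=\max(m_i+1)\,\textnormal{area}\), the surface is a branched cover of minimal degree. Here there is little room for regluing, and not every component might be hit. I would treat this by analysing branched covers of the torus of degree \(d\) with prescribed branching through monodromy permutations, as with the square-tiled surfaces of Example \ref{ex2}. In each case I would exhibit explicit permutation pairs producing each parity and a non-hyperelliptic cover, computing the parity combinatorially.
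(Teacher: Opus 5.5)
There is a genuine gap at the heart of the proposal: no step actually transports a realisation of \(\chi\) from one connected component to another, and the tools you invoke cannot do it.

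\begin{itemize}
\item \textbf{Moving zeros.} Moving zeros along twin paths is a continuous one-parameter deformation inside \(\strata\) (\S\ref{sssec:schiffer}), so it never leaves the component it starts in.
\item \textbf{The dimension count.} For the same reason your dimension count against ``the hyperelliptic locus'' fails. The components \(\mathcal H_g^{\textnormal{hyp}}(2g-2)\) and \(\mathcal H_g^{\textnormal{hyp}}(g-1,g-1)\) are open components of full dimension, not lower-dimensional loci. An isoperiodic leaf through a hyperelliptic surface therefore stays in the hyperelliptic component.
\item \textbf{Collapsing and breaking.} These change the stratum. By the properties in \S\ref{sssec:zerobreak}, even-order breakings preserve parity, and breaking a hyperelliptic zero into \((g-1,g-1)\) stays hyperelliptic. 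Any detour through another stratum would need control of the isoperiodic fibres there, which are in general disconnected (compare Winsor's results in \S\ref{sssec:constrataiso}).
\item \textbf{The parity-flipping regluing.} This is only asserted. Example~\ref{ex3} does not illustrate it: its regluings change the genus or destroy the translation structure. Regluing slits generally changes the cone angles at their endpoints and the homology. You give no marking for which the new surface still has signature \(\mu\) and period character \(\chi\).
\item \textbf{The index count.} Even granting a surgery that shifts \(\textnormal{Ind}(\alpha_i)\) by one and fixes everything else, the sum in \eqref{eq:spinparity} changes by \(\textnormal{Ind}(\beta_i)+1\). So the parity flips only when \(\textnormal{Ind}(\beta_i)\) is even.
\item \textbf{The hyperelliptic steps.} Your hyperelliptic-to-other step starts from a realisation in a spin component. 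That presupposes the conclusion, and no spin component exists for \(\mathcal H_g(2m-1,2m-1)\). Building a double cover of a genus-zero quadratic differential with prescribed periods merely restates the realisation problem in \(\mathcal Q_o(-1^{2g+1},2g-3)\) or \(\mathcal Q_o(-1^{2g+2},2g-2)\).
\end{itemize}

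The missing idea is that the paper's proof never compares two realisations of the same \(\chi\); the argument is global.

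For non-discrete \(\chi\) the route is indirect. Building on Calsamiglia--Deroin--Francaviglia, one shows that the fibre of \(\textnormal{Per}\) over \(\chi\) meets every connected component.

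For discrete \(\chi\) one normalises \(\textnormal{Im}(\chi)=\ziz\) and uses Gabai--Kazez: primitive simple covers of the same degree are equivalent. Every such character is realised by a simple cover, by Haupt's theorem. Hence any two discrete characters of degree \(d\) differ by precomposition with some \(h_*\). If a cover \(f\) realises one of them in a component \(\mathcal K\), then \(f\circ h\) realises the other in \(\mathcal K\).

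It therefore suffices to exhibit, for each component and each degree \(d\ge\max\{m_i+1\}\), one square-tiled surface. This is done in two stages:
\begin{itemize}
\item explicit pairings in \(\mathcal H_g(2g-2)\) and \(\mathcal H_g(g-1,g-1)\) with prescribed hyperelliptic or spin invariant (Figures~\ref{fig:hypermin}, \ref{fig:evenspin}, \ref{fig:oddspin}, with Remark~\ref{rmk:increasingdeg} to raise the degree);
\item slit-and-reglue surgeries along the lifts of a segment of the torus, which add an even zero or a pair of odd zeros with controlled parity.
\end{itemize}

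Your final paragraph points in this direction, but it falls short in three ways. Without the degree-only reduction, a permutation pair yields \emph{some} character of degree \(d\), not your \(\chi\). The construction must cover every admissible degree, not just the extremal one. And the non-discrete case, which has no combinatorial model, is left untreated.
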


\smallskip

\noindent For a non-discrete representation, there are no obstruction to its realisation in any stratum other than the one given by the volume. For discrete representations, however, the signature plays a fundamental role. We observe that for the principal stratum \(\mathcal{H}_g(1,\dots,1)\), the conditions of this refined version are identical to those of Haupt's theorem. In fact, for every signature \(\mu=(\,m_1,\dots,m_k\,)\),
\begin{equation}
 2 \le \max_{1\le i\le k}\big\{\,m_i+1\,\big\} 
\end{equation}
and the equality holds if and only if \(m_i=1\) for all \(i=1,\dots,k\). 

\smallskip

\subsubsection{Refined lattice\index{lattice!condition} condition} Let us see where this condition arises when one wishes to realise a discrete representation in a stratum other than the principal one. Next, we recall from \S\ref{ssec:haupt} that every discrete representation  yields a branched covering \(f\colon \Sigma\longrightarrow T\), up to homotopy, of degree at least \(2\). 
\noindent From the classical theory of branched coverings we recall that, since the local degree of \(f\) around a ramification point of order \(m\ge1\) cannot exceed the total degree, it must follows that \(\deg(\,f\,)\ge m+1\). In particular, the total degree must be at least equal to the greatest local degree around every ramification point. 

\smallskip

\noindent Let \((X,\omega)\in\mathcal H_g(\,\mu\,)\) and let \(\Delta=\{\,p_1,\dots,p_k\,\}\) be the set of zeros of \(\omega\) of orders \(\{m_1,\dots,m_k\}\) respectively. Let \(\chi\) be the period character of \((X,\omega)\) and let \(\Lambda\) be its image. We also assume \(\chi\) to be a discrete representation, meaning that \(\Lambda\) is a lattice in \(\mathbb C\). Let \(\textnormal{dev}\colon\widetilde X\longrightarrow \mathbb C\) be the developing map. Since \(\textnormal{dev}\) is \(\chi\)-equivariant, it descends to a branched covering map \(f\colon X\longrightarrow \mathbb C/\Lambda\) where the set of ramification points corresponds to the set of zeros of \(\omega\). The local degree around \(p_i\) is equal to \(m_i+1\) because\(f\) locally looks like the map \(g\) defined in \S\ref{sssec:bigcone}. It readily follows that
\begin{equation}\label{eq:refhaup2}
    \deg(\,\chi\,)=\deg(\,f\,) \ge \max_{1\le i\le k}\big\{\,m_i+1\,\big\},
\end{equation}
where \(m_1,\dots,m_k\) are the orders of the zeros of \(\omega\). Equations \eqref{eq:refhaup1} and \eqref{eq:refhaup2} combined together imply
\begin{equation}
    \textnormal{vol}(\,\chi\,)\,\ge\,\displaystyle\max_{1\le i\le k}\big\{\,m_i+1\,\big\}\,\textnormal{area}\big(\,\mathbb C/\Lambda\,\big).
\end{equation}

\smallskip

\noindent Recall that given a lattice \(\Lambda\) yields a fundamental parallelogram \(\textnormal{P}\), and \(\mathbb C/\Lambda\), as a translation surface, can be realised by gluing the opposite sides of \(\textnormal{P}\) by using translations. Once a signature \(\mu=(\,m_1,\dots,m_k\,)\) is fixed, then \(d=\max_{1\le i\le k}\big\{\,m_i+1\,\big\}\) defines an invariant, namely the least amount of copies of \(\textnormal{P}\) needed to realised a translation surface with discrete period character in the stratum \(\mathcal H_g(\,\mu\,)\), see Figure \ref{fig:degree}. For an example of realisation in genus \(3\), see Figures \ref{fig:discreterealisationexamplepartone} and \ref{fig:discreterealisationexampleparttwo}.

\begin{figure}[h!]
    \centering
    \begin{tikzpicture}[scale=1.35, every node/.style={scale=0.875}]
    \definecolor{pallido}{RGB}{221,227,227}

    \draw[black] (-2,0)--(2,0);
    \draw[blue] (-2, 1.5)--( 2, 1.5);
    \draw[blue] (-2, 1  )--(-1, 1  ) to[out=0, in=180] (1,2)--(2,2);
    \draw[blue] (-2, 2)--(-1, 2) to[out=0, in=180] (1,1)--(2,1);
    \draw[dashed] (0,1.5)--(0,0);
    \fill (0,1.5) circle (1pt);
    \node[red, scale=2] at (0,0) {\(\times\)};
    \node (A) at (1,2.5) {zero of order \(2\)};
    \draw[black, ->] (A) to[bend right] (0,1.65);
    \node at (0,-0.35) {A branched covering map of degree \(3\)};
    \node at (0,-0.7) {with a single ramification point of order two};

    \begin{scope}[shift={(-5,0)}]
        \draw[black] (-2,0)--(2,0);
        \draw[blue] (-2,1)--(-1.5,1) to[out=0, in=180] (-0.5,2)--(0.5,2) to[out=0, in=180] (1.5,1)--(2,1);
        \draw[blue] (-2,2)--(-1.5,2) to[out=0, in=180] (-0.5,1)--(0.5,1) to[out=0, in=180] (1.5,2)--(2,2);
        \fill (-1,1.5) circle (1pt);
        \fill ( 1,1.5) circle (1pt);
        \node[red, scale=2] at (-1,0) {\(\times\)};
        \node[red, scale=2] at ( 1,0) {\(\times\)};
        \draw[dashed] (-1,1.5)--(-1,0);
        \draw[dashed] ( 1,1.5)--( 1,0);
        \node (B) at (0,2.5) {zeros of order \(1\)};
        \draw[black, ->] (B) to[bend right] (-1,1.75);
        \draw[black, ->] (B) to[bend left ] ( 1,1.75);
        \node at (0,-0.35) {A branched covering map of degree \(2\)};
    \node at (0,-0.7) {with a two ramification points of order one};
    \end{scope}

    \begin{scope}[shift={(-2.5,-4.25)}]
        \draw[black] (-2,0)--(2,0);
        \draw[blue] (-2,1)--(-1.5,1) to[out=0, in=180] (-0.5,1.5)--(0.5,1.5) to[out=0, in=180] (1.5,2)--(2,2);
        \draw[blue] (-2,1.5)--(-1.5,1.5) to[out=0, in=180] (-0.5,1)--(2,1); 
        \draw[blue] (-2,2)--(0.5,2) to[out=0, in=180] (1.5,1.5)--(2,1.5);
        \fill (-1,1.25) circle (1pt);
        \fill ( 1,1.75) circle (1pt);
        \node[red, scale=2] at (-1,0) {\(\times\)};
        \node[red, scale=2] at ( 1,0) {\(\times\)};
        \draw[dashed] (-1,1.25)--(-1,0);
        \draw[dashed] ( 1,1.75)--( 1,0);
        \node (C) at (0.5,2.75) {zeros of order \(1\)};
        \draw[black, ->] (C) to[bend left ] ( 1,1.875  );
        \draw[black, ->] (C) to[bend right] (-1,1.5);
        \node at (0,-0.35) {A branched covering map of degree \(3\)};
        \node at (0,-0.7) {with a two ramification points of order one};
    \end{scope}
    \end{tikzpicture}
    \caption{Realisation of a discrete representation in genus \(2\). If the representation has degree \(2\) then it can be realised only in the principal stratum \(\mathcal H_2(1,1)\). If the representation has degree \(3\) (or higher), then it can be realised in both strata \(\mathcal H_2(1,1)\) and \(\mathcal H_2(\,2\,)\).}
    \label{fig:degree}
\end{figure}

\begin{figure}[h!]
    \centering
    \begin{tikzpicture}[scale=1.75, every node/.style={scale=0.875}]
    \definecolor{pallido}{RGB}{221,227,227}

    \pattern [pattern=north west lines, pattern color=pallido] (0,0)--(2,0)--(2,2)--(0,2)--(0,0);
    
    \draw[orange        ] (0,0)--(2,0);
    \draw[sky           ] (2,0)--(2,2);
    \draw[orange        ] (2,2)--(0,2);
    \draw[red           ] (0,2)--(0,0);

    \draw[blue,   thick] (0.5,0.5)--(1.0,0.5);
    \draw[violet, thick] (1.0,1.5)--(1.5,1.5);

    \fill (0,0) circle (1.00pt);
    \fill (2,0) circle (1.00pt);
    \fill (2,2) circle (1.00pt);
    \fill (0,2) circle (1.00pt);

    \fill[blue  ] (0.5,0.5) circle (0.75pt);
    \fill[blue  ] (1.0,0.5) circle (0.75pt);
    \fill[violet] (1.0,1.5) circle (0.75pt);
    \fill[violet] (1.5,1.5) circle (0.75pt);

    \node[rotate=90] at (-0.125,1) {sheet one};

    \begin{scope}[shift={(4,0)}]
        \pattern [pattern=north west lines, pattern color=pallido]
        (0,0)--(2,0)--(2,2)--(0,2)--(0,0);
    
        \draw[orange        ] (0,0)--(2,0);
        \draw[red           ] (2,0)--(2,2);
        \draw[orange        ] (2,2)--(0,2);
        \draw[sky           ] (0,2)--(0,0);

        \draw[blue,   thick] (0.5,0.5)--(1.0,0.5);
        \draw[violet, thick] (1.0,1.5)--(1.5,1.5);

        \fill (0,0) circle (1.00pt);
        \fill (2,0) circle (1.00pt);
        \fill (2,2) circle (1.00pt);
        \fill (0,2) circle (1.00pt);

        \fill[blue  ] (0.5,0.5) circle (0.75pt);
        \fill[blue  ] (1.0,0.5) circle (0.75pt);
        \fill[violet] (1.0,1.5) circle (0.75pt);
        \fill[violet] (1.5,1.5) circle (0.75pt);

        \node[rotate=270] at (2.125,1) {sheet two};
    \end{scope}

    \draw[black, <->] ( 2.125,1)--(3.875,1);

    \node at (3,0.75) {glue these edges}; 
    \node at (3,0.50) {(coloured in cyan)}; 

    \draw[black, thin, ->] (0.75,-0.125) to[bend right] (1.75,-2);
    \draw[black, thin, ->] (5.25,-0.125) to[bend left ] (4.25,-2);

    \begin{scope}[shift={(-2,3)}]
            \node at (2.75 , -2.625) {\(e_{1,1}\)};
            \node at (3.25 , -1.675) {\(e_{2,1}\)};
    \end{scope}
    
    \begin{scope}[shift={( 2,3)}]
            \node at (2.75 , -2.625) {\(e_{1,2}\)};
            \node at (3.25 , -1.675) {\(e_{2,2}\)};
    \end{scope}

    \begin{scope}[shift={(2,-3)}]
        \pattern [pattern=north west lines, pattern color=pallido]
        (0,0)--(2,0)--(2,2)--(0,2)--(0,0);
    
        \draw[orange        ] (0,0)--(2,0);
        \draw[red           ] (2,0)--(2,2);
        \draw[orange        ] (2,2)--(0,2);
        \draw[red           ] (0,2)--(0,0);

        \draw[blue,   thick] (0.5,0.5)--(1.0,0.5);
        \draw[violet, thick] (1.0,1.5)--(1.5,1.5);

        \fill (0,0) circle (1.00pt);
        \fill (2,0) circle (1.00pt);
        \fill (2,2) circle (1.00pt);
        \fill (0,2) circle (1.00pt);

        \fill[blue  ] (0.5,0.5) circle (0.75pt);
        \fill[blue  ] (1.0,0.5) circle (0.75pt);
        \fill[violet] (1.0,1.5) circle (0.75pt);
        \fill[violet] (1.5,1.5) circle (0.75pt);
    \end{scope}

    \begin{scope}[shift={(1,3)}]
        \pattern [pattern=north west lines, pattern color=pallido] (0,0)--(2,0)--(2,2)--(0,2)--(0,0);
    
        \draw[orange        ] (0,0)--(2,0);
        \draw[sky           ] (2,0)--(2,2);
        \draw[orange        ] (2,2)--(0,2);
        \draw[red           ] (0,2)--(0,0);

        \draw[blue,   thick] (0.5,0.5)--(1.0,0.5);
        \draw[violet, thick] (1.0,1.5)--(1.5,1.5);

        \fill (0,0) circle (1.00pt);
        \fill (2,0) circle (1.00pt);
        \fill (2,2) circle (1.00pt);
        \fill (0,2) circle (1.00pt);

        \fill[blue  ] (0.5,0.5) circle (0.75pt);
        \fill[blue  ] (1.0,0.5) circle (0.75pt);
        \fill[violet] (1.0,1.5) circle (0.75pt);
        \fill[violet] (1.5,1.5) circle (0.75pt);
    \end{scope}
    \begin{scope}[shift={(3,3)}]
        \pattern [pattern=north west lines, pattern color=pallido]
        (0,0)--(2,0)--(2,2)--(0,2)--(0,0);
    
        \draw[orange        ] (0,0)--(2,0);
        \draw[red           ] (2,0)--(2,2);
        \draw[orange        ] (2,2)--(0,2);
        \draw[sky           ] (0,2)--(0,0);

        \draw[blue,   thick] (0.5,0.5)--(1.0,0.5);
        \draw[violet, thick] (1.0,1.5)--(1.5,1.5);

        \fill (0,0) circle (1.00pt);
        \fill (2,0) circle (1.00pt);
        \fill (2,2) circle (1.00pt);
        \fill (0,2) circle (1.00pt);

        \fill[blue  ] (0.5,0.5) circle (0.75pt);
        \fill[blue  ] (1.0,0.5) circle (0.75pt);
        \fill[violet] (1.0,1.5) circle (0.75pt);
        \fill[violet] (1.5,1.5) circle (0.75pt);
    \end{scope}
    \begin{scope}[shift={(-2,3)}]
            \node at (3.75 ,  0.375) {\(e_{1,1}\)};
            \node at (4.25 ,  1.325) {\(e_{2,1}\)};
    \end{scope}
    
    \begin{scope}[shift={( 1,6)}]
            \node at (2.75 , -2.625) {\(e_{1,2}\)};
            \node at (3.25 , -1.675) {\(e_{2,2}\)};
    \end{scope}

        \node at (1.125, -2.125) {Projection of};
        \node at (1.125, -2.275) {the first sheet};
        \node at (1.125, -2.450) {on \(\mathbb C/\Gamma\)};
        \node at (4.875, -2.125) {Projection of};
        \node at (4.875, -2.275) {the second sheet};
        \node at (4.875, -2.450) {on \(\mathbb C/\Gamma\)};
        \node at (3.00 , -0.875) {\(\mathbb C/\Gamma\)};
        \node at (2.75 , -2.625) {\(s_1\)};
        \node at (3.25 , -1.675) {\(s_2\)};

        \draw[black, thin, ->] (1.5,2.125) to[bend left ] (1.5,2.75);
        \draw[black, thin, ->] (4.5,2.125) to[bend right] (4.5,2.75);
    
    \end{tikzpicture}
    \caption{Realisation of a translation surface in \(\mathcal H_3(\,1,1,1,1\,)\) with discrete period character by using two unit squares. The picture shows how to glue two copies of the bottom square with two marked segments (coloured in blue and violet).}
    \label{fig:discreterealisationexamplepartone}
\end{figure}

\begin{figure}[h!]
    \centering
    \begin{tikzpicture}[scale=2.25, every node/.style={scale=0.875}]
    \definecolor{pallido}{RGB}{221,227,227}

    \begin{scope}[rotate=90]
    \pattern [pattern=north west lines, pattern color=pallido] (0,0) ellipse (3cm and 1.5cm);
    
    \draw[orange ] (0,0.25 ) ellipse (1.8cm and 0.9cm);
    \draw[red, thick    ] (0,0    ) to[bend left ] (0,-1.5);
    \draw[cyan, thick   ] (0,0.275) to[bend right] (0, 1.5);
    
    \fill[white] (-0.75,0.1) to[out=022.25, in=157.25] (0.75,0.1) to[out=195.25, in=-15.25] (-0.75,0.1);
    \draw[black] (-0.75,0.1) to[out=022.25, in=157.25] (0.75,0.1);
    \draw[thin, black] (0,0) ellipse (3cm and 1.5cm) (0,0);
    \draw[thin, black] (0,0.5) +(210:1.25 and 0.5) arc [start angle=210, end angle=330, x radius=1.25, y radius=0.5];

    \draw[blue] (0,0.25) +(30:2.4 and 1.2) arc [start angle=30, end angle=60, x radius=2.4, y radius=1.2];
    \draw[blue] (0,0.25) +(210:2.4 and 1.2) arc [start angle=210, end angle=240, x radius=2.4, y radius=1.2];
    
    \draw[violet] (0,0.25) +(120:1.2 and 0.6) arc [start angle=120, end angle=150, x radius=1.2, y radius=0.6];
    \draw[violet] (0,0.25) +(300:1.2 and 0.6) arc [start angle=300, end angle=330, x radius=1.2, y radius=0.6];
    
    \fill[violet] (0,0.25) +(120:1.2 and 0.6) circle (0.5pt);
    \fill[violet] (0,0.25) +(150:1.2 and 0.6) circle (0.5pt);
    \fill[violet] (0,0.25) +(300:1.2 and 0.6) circle (0.5pt);
    \fill[violet] (0,0.25) +(330:1.2 and 0.6) circle (0.5pt);

    \fill[blue] (0,0.25) +(030:2.4 and 1.2) circle (0.5pt);
    \fill[blue] (0,0.25) +(060:2.4 and 1.2) circle (0.5pt);
    \fill[blue] (0,0.25) +(210:2.4 and 1.2) circle (0.5pt);
    \fill[blue] (0,0.25) +(240:2.4 and 1.2) circle (0.5pt);

    \node at ( 1.625 , 0.925 ) {\(e_{1,1}\)};
    \node at (-1.75  ,-0.725 ) {\(e_{1,2}\)};
    \node at (-0.875 , 0.825 ) {\(e_{2,1}\)};
    \node at ( 0.875 ,-0.325 ) {\(e_{2,2}\)};

    \fill ( 0.225 ,-0.65 ) circle (1pt);
    \fill ( 0.160 , 1.135 ) circle (1pt);
    \end{scope}

    \draw[thin, black, dotted] (-2.5,-0.0125)--(-0.05,-0.0125) ( 1.60,-0.0125)--(2.5,-0.0125);
    \draw[thin, black, dotted] (-0.05,-0.0125)--( 1.60,-0.0125);
    \node[scale=3, blue] at (-2,-0.0125) {\(\circlearrowleft\)};
    \node at (-2,-0.5  ) {Rotate by an };
    \node at (-2,-0.675) {angle \(\pi\)};

    \end{tikzpicture}
    \caption{Realisation of a translation surface in \(\mathcal H_3(\,1,1,1,1\,)\) with discrete period character. The picture shows the torus and the resulting surface of genus three. In this perspective, the torus and the resulting genus three surface (not depicted) are invariant with respect to a rotation of angle \(\pi\). This rotation swaps the two sheets of the covering.}
    \label{fig:discreterealisationexampleparttwo}
\end{figure}

\subsubsection{Realisation following --Johnson--Judge--Park's approach} In their paper \cite{BJJP}, the authors distinguished two cases, depending on whether the representation is discrete or non-discrete, and used a specific argument for each of them. In the first case, they opted for a direct approach, whereas in the second case they used an indirect one.

\smallskip

\paragraph{\textit{Non-discrete representations}} The case of non-discrete representations\index{representation!non-discrete} has been handled with an indirect approach. Based on the earlier work \cite{CDF2} of Calsamiglia–Deroin–Francaviglia, the authors in \cite{BJJP} show that the preimage under the period map in \eqref{eq:permap} of every non-discrete representation intersects every connected component of a stratum. This is an indirect approach because the representations are not realised explicitly, yet this approach is highly insightful and contributes significantly to our understanding of the period map. 

\smallskip

\paragraph{\textit{Discrete representations: normalisation and reduction}} Let us discuss the discrete\index{representation!discrete} case. In this case the approach adopted is direct, as translation surfaces have been realised explicitly, and it is divided into three main steps. The first one consists in a normalisation process to reduce the cases of study. We observe that it is enough to restrict the proof of the discrete case to square-tiled surfaces. Indeed, as already noted in \S\ref{ssec:kapapp}, there is no loss of generality in assuming that the image of the representation is the standard lattice \(\Gamma=\ziz\). 

\smallskip

\noindent A second reduction applies. We first recall that \(\Sigma\) is a surface of genus \(g\ge2\). Next, we note that if a representation \(\chi\colon\textnormal{H}_1(\Sigma,\,\mathbb Z)\longrightarrow\mathbb C\) is the period character of a translation structure, then so are all those of the form \(\chi\circ h_*\), where \(h\colon\Sigma\longrightarrow\Sigma\) is any homeomorphism and \(h_*\) is the induced map in homology. In the case where \(\textnormal{Im}(\,\chi\,)\) is a lattice, we have seen in \S\ref{ssec:haupt} that \(\chi\) yields a branched covering map \(f\colon\Sigma\longrightarrow T\), up to homotopy, and \(f\circ h\colon\Sigma\longrightarrow T\) is a branched covering map of the same degree. Consider the following maps
\smallskip
\begin{equation}
        \mathcal B=\left\{\,\,
        \begin{gathered}
        \textnormal{translation surfaces arising from}\\
        \textnormal{primitive branched covers } \\f\colon\Sigma\longrightarrow T=\mathbb C/\Gamma
        \end{gathered}
        \,\,\right\}\overset{\textnormal{Per}}{\longrightarrow}
        \mathcal{DR}=\left\{\,\,
        \begin{gathered}
        \textnormal{(discrete) representations}\\
        \chi\colon\textnormal{H}_1(\Sigma,\,\mathbb Z)\longrightarrow\Gamma<\mathbb C\\
        \textnormal{that satisfy Haupt's conditions}
        \end{gathered}
        \,\,\right\}\overset{\textnormal{deg}}{\longrightarrow} \mathbb Z
\end{equation}

\smallskip

\noindent We recall that by \textit{primitive} we mean a branched covering such that \(f_*\colon\pi_1(\,\Sigma\,)\longrightarrow \Gamma\) is surjective (and hence also the representation in homology \(\chi\colon\textnormal{H}_1(\Sigma,\,\mathbb Z)\longrightarrow\Gamma\) is surjective). 

\smallskip

\noindent The map \(\textnormal{Per}\colon\mathcal{B}\to\mathcal{DR}\) associates to each branched covering the period character of the pull-back structure on \(\Sigma\) and the map \(\deg(\,\cdot\,)\) associates to each representation its degree as defined in \eqref{eq:refhaup1}. The map \(\deg\) defines a stratification of \(\mathcal{DR}\) into strata \(\mathcal{DR}(\,d\,)\) of representations of degree \(d\). In turns, the mapping \(\textnormal{Per}\circ\deg\) defines a stratification of \(\mathcal B\) into strata \(\mathcal B(\,d\,)\) of branched covering of degree \(d\). We now recall that a \textit{simple branched covering} is a branched covering with ramification points of order at most one. Simple branched coverings over the torus exist in every degree \(d\geq 2\), and the pull-back of the standard differential \(dz\) on \(\mathbb{C}/\Gamma\) via any such covering of degree \(d\) defines a square-tiled surface on \(\Sigma\) in the principal stratum \(\mathcal{H}_g(\,1,\dots,1\,)\), with period character of degree \(d\). In other words,  for every \(d\ge2\) the stratum \(\mathcal H_g(\,1,\dots,1\,)\,\cap\,\mathcal B(\,d\,)\) is not empty.

\medskip

\noindent By \cite{GabKaz87}, given two primitive simple branched coverings, say \(p\colon \Sigma \longrightarrow \mathbb{C}/\Gamma\) and \(q\colon\Sigma \longrightarrow \mathbb{C}/\Gamma\) of the same degree \(d\ge2\), there exists a homeomorphism \(h\colon \Sigma \longrightarrow \Sigma\) and a homeomorphism \(k\colon \mathbb{C}/\Gamma \longrightarrow \mathbb{C}/\Gamma\) isotopic to the identity such that \(k \circ p = q \circ h\). Since every representation \(\chi\in\mathcal{DR}(\,d\,)\) arises as the period character of some translation surface in \(\mathcal H_g(\,1,\dots,1\,)\,\cap\,\mathcal B(\,d\,)\)\footnote{There is no circularity in the argument; indeed, this is guaranteed by Haupt's theorem by using the construction shown in Figure \ref{fig:kapdiscr}.}, then for every pair of discrete representations, say \(\chi_1,\,\chi_2\in\mathcal{DR}(\,d\,)\), there exists a homeomorphism \(h\colon\Sigma\longrightarrow\Sigma\) such that \(\chi_2=\chi_1\circ h_*\). 

\medskip

\noindent Let \(\mu=(\,m_1,\dots,m_k\,)\) be any signature and \textit{suppose} there is a representation \(\chi\in\mathcal{DR}(\,d\,)\) arising as the period character of some translation surface in \(\mathcal H_g(\,\mu\,)\,\cap\,\mathcal B(\,d\,)\). Then, there exists a branched covering map \(f\colon\Sigma\to\mathbb C/\Gamma\) of degree \(d\) with \(k\) ramification points of orders \(m_1,\dots,m_k\). Let \(h\colon\Sigma\longrightarrow \Sigma\) be any orientation-preserving homeomorphism, then the mapping \(f\circ h\colon\Sigma\longrightarrow \mathbb C/\Gamma\) is a branched covering such that \(\textnormal{Per}(\,f\circ h\,)=\chi\circ h_*\). As a consequence, every representation \(\chi\in\mathcal{DR}(\,d\,)\) arises as the period character of some translation surface in \(\mathcal H_g(\,\mu\,)\,\cap\,\mathcal B(\,d\,)\). At this point, it remains to show that at least one representation can be realised. This is what is shown in the next steps.

\smallskip

\paragraph{\textit{Discrete representations: minimal strata}} \noindent Let \(\chi\in\textnormal{H}^1(\Sigma,\mathbb C)\) be a discrete representation\index{representation!discrete}. The authors in \cite{BJJP} showed in the first place that \(\chi\) is realisable in the strata with signatures \((2g-2)\) and \((g-1,g-1)\), provided that \(\deg(\,\chi\,) \geq d\), where \(d = 2g-2\) or \(d = g-1\) respectively. 
If \(\textnormal{P}\) is a fundamental domain for \(\mathbb C/\Gamma\), \textit{e.g.} the unit square, the realisation amounts to pick a collection of \(\deg(\,\chi\,)\) copies of \(\textnormal{P}\) and then finding an appropriate pairing via translations, see Definition \ref{defn:firstdef}, to realise the desired translation surface. A pairing via translations is far from unique, and different ones may well produce translation surfaces in different connected components of the same stratum. Recall that, by the Kontsevich--Zorich classification of components of strata see \S\ref{ssec:conncomp}, both strata \(\mathcal H_g(\,2g-2\,)\) and \(\mathcal H_g(\,g-1,g-1\,)\) are non-connected and distinguished by topological invariants such as spin parity and hyperelliptic structure. The former has three connected components and the latter has two or three connected components depending on the parity of \(g\). The authors showed that a  discrete representation \(\chi\), provided that \(\deg(\,\chi\,) \geq d\), can be realised in every component of the strata with signature \((2g-2)\) or \((g-1,g-1)\) by determining explicit pairings via translations that yield a translation structure with prescribed zeros and topological invariants. The following pictures show how the same amount of squares (more generally parallelograms) can be used to realise homeomorphic surfaces with structures having a single zero of maximal order but different topological invariants. See Figures \ref{fig:hypermin}, \ref{fig:evenspin} and \ref{fig:oddspin}.

\begin{figure}[h!]
    \centering
    \begin{tikzpicture}[scale=1.575, every node/.style={scale=0.675}]
    \definecolor{pallido}{RGB}{221,227,227}

    \pattern [pattern=north west lines, pattern color=pallido]
    (0,0)--(0,2)--(1,2)--(1,3)--(2,3)--(2,4)--(3,4)--(3,5)--(5,5)--(5,4)--(4,4)--(4,3)--(3,3)--(3,2)--(2,2)--(2,1)--(1,1)--(1,0)--(0,0);
    
    \draw[thin, black        ] (0,0)--(0,2);
    \draw[thin, black        ] (1,0)--(1,1);
    \draw[thin, black        ] (2,3)--(2,4);
    \draw[thin, black        ] (4,3)--(4,4);
    \draw[thin, black        ] (3,2)--(3,3);
    \draw[thin, black        ] (0,0)--(1,0);
    \draw[thin, black        ] (0,2)--(1,2);
    \draw[thin, black, dashed] (1,2)--(2,2);
    \draw[thin, black        ] (1,1)--(2,1);
    \draw[thin, black        ] (2,4)--(3,4);
    \draw[thin, black        ] (3,3)--(4,3);
    \draw[thin, black, dashed] (1,1)--(1,2);
    \draw[thin, black, dashed] (0,1)--(1,1);
    \draw[thin, black        ] (2,2)--(3,2);
    \draw[thin, black, dashed] (2,3)--(3,3);
    \draw[thin, black        ] (2,1)--(2,2);
    \draw[thin, black, dashed] (3,3)--(3,4);
    \draw[thin, black        ] (5,4)--(5,5);
    \draw[thin, black        ] (3,5)--(5,5);
    \draw[thin, black        ] (3,5)--(3,4);
    \draw[thin, black        ] (5,4)--(4,4);
    \draw[thin, black, dashed] (4,4)--(4,5);
    \draw[thin, black, dashed] (3,4)--(4,4);
    \draw[thin, black        ] (1,3)--(2,3);
    \draw[thin, black        ] (1,2)--(1,3);
    \draw[thin, black, dashed] (2,2)--(2,3);

    \fill (0,0) circle (0.75pt);
    \fill (1,0) circle (0.75pt);
    \fill (0,1) circle (0.75pt);
    \fill (1,1) circle (0.75pt);
    \fill (2,4) circle (0.75pt);
    \fill (2,1) circle (0.75pt);
    \fill (2,2) circle (0.75pt);
    \fill (1,2) circle (0.75pt);
    \fill (2,3) circle (0.75pt);
    \fill (3,2) circle (0.75pt);
    \fill (3,3) circle (0.75pt);
    \fill (0,2) circle (0.75pt);
    \fill (1,3) circle (0.75pt);
    \fill (3,4) circle (0.75pt);
    \fill (4,4) circle (0.75pt);
    \fill (4,3) circle (0.75pt);
    \fill (4,5) circle (0.75pt);
    \fill (5,5) circle (0.75pt);
    \fill (5,4) circle (0.75pt);
    \fill (3,5) circle (0.75pt);

    \node at (0.5,0.5) {\(1\)};
    \node at (0.5,1.5) {\(2\)};
    \node at (1.5,1.5) {\(3\)};
    \node at (1.5,2.5) {\(4\)};
    \node at (2.5,2.5) {\(5\)};
    \node at (2.5,3.5) {\(6\)};
    \node at (3.5,3.5) {\(7\)};
    \node at (3.5,4.5) {\(8\)};
    \node at (4.5,4.5) {\(9\)};
        
    \end{tikzpicture}
    \caption{Nine squares are used to realise a translation surface in the stratum \(\mathcal{H}_5(\,10\,)\) with odd hyperelliptic structure. Label the lowest square by \(1\), the topmost by \(9\), and those in between sequentially. The pairing via translations, as in Definition~\ref{defn:firstdef}, is given by the permutations \(\sigma_h = (2\,3)(4\,5)(6\,7)(8\,9)\) and \(\sigma_v = (1\,2)(3\,4)(5\,6)(7\,8)\). We may notice that by rotating all squares by an angle \(\pi\), the labelling remains the same. As a consequence, the resulting structure admits a hyperelliptic involution.}
    \label{fig:hypermin}
\end{figure}

\begin{figure}[h!]
    \centering
    \begin{tikzpicture}[scale=1.575, every node/.style={scale=0.675}]
    \definecolor{pallido}{RGB}{221,227,227}

    \pattern [pattern=north west lines, pattern color=pallido]
    (0,0)--(0,2)--(2,2)--(2,4)--(4,4)--(4,3)--(3,3)--(3,1)--(1,1)--(1,0)--(0,0);

    \pattern [pattern=north west lines, pattern color=pallido](3,4)--(5,4)--(5,5)--(3,5)--(3,4);
    
    \draw[thin, blue]    ( 0    ,  0.5 )--( 1    , 0.5);
    \node at (0.25,0.5675) {$0$};
    
    \draw[thin, violet]  ( 0.75,  0     )--( 0.75, 1    );
    \draw[thin, violet]  ( 1.25,  1) arc [start angle = 0, end angle =180,radius = 0.25];
    \draw[thin, violet]  ( 1.25,  2) arc [start angle = 0, end angle =-180,radius = 0.25];
    \node at (1.225 ,1.25) {$1$};
    
    \draw[thin, blue]    ( 0    ,  1.5  )--( 3     , 1.5);
    \node at (1.875 ,1.4325) {$0$};
    \draw[thin, violet]  ( 1.5,  1     )--( 1.5, 2    );
    \node at (1.625 ,1.875) {$0$};
    
    \draw[thin, blue]    ( 2    ,  2.5 )--( 3    , 2.5);
    \node at (2.75 ,2.625) {$0$};

    \draw[thin, violet]  ( 2.25,  2    )--( 2.25, 4   );
    \draw[thin, violet]  ( 2.25,  2) arc [start angle = 0, end angle =-180,radius = 0.25];
    \draw[thin, violet]  ( 2.25,  1) arc [start angle = 0, end angle =180,radius = 0.25];
    \node at (2.375 ,2.7625) {$1$};
    
    \draw[blue]  ( 2,  3.1875) arc [start angle = 90, end angle =-90,radius = 0.1875];
    \draw[blue]  ( 3,  3.1875) arc [start angle = 90, end angle =270,radius = 0.1875];
    \draw[thin, blue]    ( 3    ,  3.1875 )--( 4    , 3.1875);
    \node at (3.175 ,3.125) {$1$};
    \draw[thin, violet]  ( 3.5,  3    )--( 3.5, 4   );
    \node at (3.425 ,3.5) {$1$};
    \draw[violet]  ( 3.5,  4 ) arc [start angle = 180, end angle =90,radius = 0.25];
    \draw[thin, violet] (3.75,4.25)--(4.25,4.25);
    \draw[violet]  ( 4.25,  4.25 ) arc [start angle = 90, end angle =0,radius = 0.25];
    \draw[violet]  ( 3.5,  5 ) arc [start angle = 180, end angle =270,radius = 0.25];
    \draw[thin, violet] (3.75,4.75)--(4.25,4.75);
    \draw[violet]  ( 4.5,  5 ) arc [start angle = 00, end angle =-90,radius = 0.25];

    \draw[thin, blue]    (3,4.5)--(5,4.5);
    \node at (4.5,4.375) {$0$};
    \draw[thin, violet]  ( 4.75,  4    )--( 4.75, 5   );\node at (4.875 ,4.75) {$0$};

    \draw[thin, black] (0,0)--(0,2);
    \draw[thin, black] (1,0)--(1,1);
    \draw[thin, black] (2,2)--(2,4);
    \draw[thin, black] (4,3)--(4,4);
    \draw[thin, black] (3,1)--(3,3);

    \draw[thin, black] ( 0,0)--(1,0);
    \draw[thin, black] ( 0,2)--(1,2);

    \draw[thin, black] (1,2)--(2,2);
    \draw[thin, black] (1,1)--(3,1);
    \draw[thin, black] (2,4)--(3,4);
    \draw[thin, black] (3,3)--(4,3);
    
    \draw[thin, dashed] (1,1)--(1,2);
    \draw[thin, dashed] (0,1)--(1,1);
    \draw[thin, dashed] (2,2)--(3,2);
    \draw[thin, dashed] (2,3)--(3,3);
    \draw[thin, dashed] (2,1)--(2,2);
    \draw[thin, dashed] (3,3)--(3,4);

    \draw[thin, black]         (3,4)--(3,5);
    \draw[thin, black]         (3,5)--(5,5);
    \draw[thin, black]         (5,5)--(5,4);
    \draw[thin, black]         (5,4)--(4,4);
    \draw[thin, black, dashed] (4,4)--(4,5);
    \draw[thin, black, dashed] (3,4)--(4,4);

    \fill (0,0) circle (0.75pt);
    \fill (1,0) circle (0.75pt);
    \fill (0,1) circle (0.75pt);
    \fill (1,1) circle (0.75pt);
    \fill (2,4) circle (0.75pt);
    \fill (2,1) circle (0.75pt);
    \fill (2,2) circle (0.75pt);
    \fill (1,2) circle (0.75pt);
    \fill (2,3) circle (0.75pt);
    \fill (3,2) circle (0.75pt);
    \fill (3,3) circle (0.75pt);
    \fill (0,2) circle (0.75pt);
    \fill (3,1) circle (0.75pt);
    \fill (3,4) circle (0.75pt);
    \fill (4,4) circle (0.75pt);
    \fill (4,3) circle (0.75pt);
    \fill (5,4) circle (0.75pt);
    \fill (5,5) circle (0.75pt);
    \fill (4,5) circle (0.75pt);
    \fill (3,5) circle (0.75pt);
        
    \end{tikzpicture}
    \caption{In this figure, nine squares are used to realise a translation surface in the stratum \(\mathcal{H}_5(\,10\,)\) with even spin invariant. As before, label the lowest square with \(1\), the topmost square with \(9\), and the ones in between sequentially as shown in Figure \ref{fig:hypermin}. Here, the labels are omitted to avoid confusion in the image. The pairing via translations, as in Definition~\ref{defn:firstdef}, is given by the permutations \(\sigma_h = (2\,3\,4)(6\,7)(8\,9)\) and \(\sigma_v = (1\,2)(4\,5\,6)(7\,8)\). The blue and purple lines, once the sides are glued, define a basis in homology. The numbers next to these lines indicate the index of the curve in the resulting surface, and by applying formula~\eqref{eq:spinparity}, one directly computes that it has even spin structure.}
    \label{fig:evenspin}
\end{figure}

\begin{figure}[h!]
    \centering
    \begin{tikzpicture}[scale=1.575, every node/.style={scale=0.675}]
    \definecolor{pallido}{RGB}{221,227,227}

    \pattern [pattern=north west lines, pattern color=pallido]
    (0,0)--(0,2)--(2,2)--(2,4)--(4,4)--(4,3)--(3,3)--(3,1)--(1,1)--(1,0)--(0,0);

    \pattern [pattern=north west lines, pattern color=pallido](4,3)--(5,3)--(5,5)--(4,5)--(4,3);
    
    \draw[thin, blue]    ( 0    ,  0.5 )--( 1    , 0.5);
    \node at (0.25,0.5675) {$0$};
    
    \draw[thin, violet]  ( 0.75,  0     )--( 0.75, 1    );
    \draw[thin, violet]  ( 1.25,  1) arc [start angle = 0, end angle =180,radius = 0.25];
    \draw[thin, violet]  ( 1.25,  2) arc [start angle = 0, end angle =-180,radius = 0.25];
    \node at (1.225 ,1.25) {$1$};
    
    \draw[thin, blue]    ( 0    ,  1.5  )--( 3     , 1.5);
    \node at (1.875 ,1.4325) {$0$};
    \draw[thin, violet]  ( 1.5,  1     )--( 1.5, 2    );
    \node at (1.625 ,1.875) {$0$};
    
    \draw[thin, blue]    ( 2    ,  2.5 )--( 3    , 2.5);
    \node at (2.75 ,2.625) {$0$};

    \draw[thin, violet]  ( 2.25,  2    )--( 2.25, 4   );
    \draw[thin, violet]  ( 2.25,  2) arc [start angle = 0, end angle =-180,radius = 0.25];
    \draw[thin, violet]  ( 2.25,  1) arc [start angle = 0, end angle =180,radius = 0.25];
    \node at (2.375 ,2.7625) {$1$};
    
    \draw[blue]  ( 2,  3.1875) arc [start angle = 90, end angle =-90,radius = 0.1875];
    \draw[blue]  ( 3,  3.1875) arc [start angle = 90, end angle =270,radius = 0.1875];
    \draw[thin, blue]    ( 3    ,  3.1875 )--( 5    , 3.1875);
    \node at (3.175 ,3.125) {$1$};
    \draw[thin, violet]  ( 3.5,  3    )--( 3.5, 4   );
    \node at (3.425 ,3.5) {$0$};

    \draw[thin, violet]  ( 4.25,  4    )--( 4.25, 5   );
    \draw[violet]  ( 4.25,  4 ) arc [start angle = 0, end angle =-180,radius = 0.25];
    \draw[violet]  ( 4.25,  3 ) arc [start angle = 0, end angle =90,radius = 0.125];
    \draw[violet, thin] (3.875, 3.125)--(4.125, 3.125);
    \draw[violet]  ( 3.75,  3 ) arc [start angle =180, end angle =90,radius = 0.125];
    \node at (4.375 ,4.25) {$1$};
    \draw[thin, blue]    (4,4.5)--(5,4.5);
    \node at (4.5 ,4.625) {$0$};

    \draw[thin, black] (0,0)--(0,2);
    \draw[thin, black] (1,0)--(1,1);
    \draw[thin, black] (2,2)--(2,4);
    \draw[thin, black] (3,1)--(3,3);

    \draw[thin, black] ( 0,0)--(1,0);
    \draw[thin, black] ( 0,2)--(1,2);

    \draw[thin, black] (1,2)--(2,2);
    \draw[thin, black] (1,1)--(3,1);
    \draw[thin, black] (2,4)--(4,4);
    \draw[thin, black] (3,3)--(5,3);
    
    \draw[thin, dashed] (1,1)--(1,2);
    \draw[thin, dashed] (0,1)--(1,1);
    \draw[thin, dashed] (2,2)--(3,2);
    \draw[thin, dashed] (2,3)--(3,3);
    \draw[thin, dashed] (2,1)--(2,2);
    \draw[thin, dashed] (3,3)--(3,4);

    \draw[thin, black]         (5,3)--(5,4);
    \draw[thin, black]         (4,5)--(5,5);
    \draw[thin, black]         (5,5)--(5,4);
    \draw[thin, black, dashed] (5,4)--(4,4);
    \draw[thin, black]         (4,4)--(4,5);
    \draw[thin, black, dashed] (4,3)--(4,4);

    \fill (0,0) circle (0.75pt);
    \fill (1,0) circle (0.75pt);
    \fill (0,1) circle (0.75pt);
    \fill (1,1) circle (0.75pt);
    \fill (2,4) circle (0.75pt);
    \fill (2,1) circle (0.75pt);
    \fill (2,2) circle (0.75pt);
    \fill (1,2) circle (0.75pt);
    \fill (2,3) circle (0.75pt);
    \fill (3,2) circle (0.75pt);
    \fill (3,3) circle (0.75pt);
    \fill (0,2) circle (0.75pt);
    \fill (3,1) circle (0.75pt);
    \fill (3,4) circle (0.75pt);
    \fill (4,4) circle (0.75pt);
    \fill (4,3) circle (0.75pt);
    \fill (4,5) circle (0.75pt);
    \fill (5,5) circle (0.75pt);
    \fill (5,4) circle (0.75pt);
    \fill (5,3) circle (0.75pt);
        
    \end{tikzpicture}
    \caption{Realisation of a translation surface in the stratum \(\mathcal{H}_5(\,10\,)\) with odd spin parity. As before, the lowest square is labelled with \(1\), the topmost one with \(9\), and the ones in between sequentially as shown in Figure \ref{fig:hypermin}. The pairing via translations is now given by the permutations \(\sigma_h = (2\,3\,4)(6\,7\,8)\) and \(\sigma_v = (1\,2)(4\,5\,6)(8\,9)\). The blue and purple lines, once the sides are glued, define a basis of homology and the numbers next to these lines indicate the index of the curve in the resulting surface. By applying formula~\eqref{eq:spinparity}, one directly computes that it has odd spin structure.}
    \label{fig:oddspin}
\end{figure}

\begin{rmk}\label{rmk:increasingdeg}
    We can observe that in Figures \ref{fig:hypermin}, \ref{fig:evenspin} and \ref{fig:oddspin} the square labelled with \(9\) can be replaced with a rectangle of size \(1\times d\) for every \(d\in\mathbb Z^+\). 
\end{rmk}

\medskip

\paragraph{\textit{Discrete representations: extending to all strata}} In terms of branched coverings over the standard torus, that is \(\mathbb T=\mathbb C/\ziz\), the second step can be summarised as follows. Let \(\mu=(2g-2)\) or \(\mu=(g-1,g-1)\). For any connected component \(\mathcal K\) of a stratum \(\mathcal{H}_g(\,\mu\,)\) and for each integer \(d > 2g-2\) or \(d>g-1\) respectively, there exists a primitive branched covering \(p\colon X \to\mathbb T\) of Riemann surfaces of degree \(d\) such that \((X, p^{*}(dz))\) is a square-tiled surface in \(\mathcal K\). The final step is to extend the above claim to all signatures.

\begin{quote}
    \textit{Given a signature \(\mu=(m_1,\dots,m_k)\), for any connected component \(\mathcal K\) of \(\mathcal H_g(\,\mu\,)\) and for every \(d>\max\{\,m_i\,\}\), there exists a primitive branched covering map \(p\colon X \longrightarrow\mathbb T\) of Riemann surfaces of degree \(d\) such that \((X, p^{*}(dz))\) is a square-tiled surface in \(\mathcal K\).}
\end{quote}

\smallskip 

\noindent In principle, if we require the existence of a branched covering map for \textit{some} \(d>\max\{\,m_i\,\}\), then the result immediately follows from the first step by breaking zeros. In fact, by using the first step, a discrete representation \(\chi\) can be realised in the \(\mathcal H_g(\,2g-2\,)\,\cap\,\mathcal B(\,d\,)\) for every \(d>2g-2\). Since breaking the unique zero into zeros of lower orders does not alter the holonomy representations, see Remark \ref{rmk:breakingpreservesdegree}, for every signature \(\mu=(\,m_1,\dots,m_k\,)\) the representation \(\chi\) can be realised in \(\mathcal H_g(\,\mu\,)\). Therefore a different argument is needed to cover the realisation in strata \(\mathcal{DR}(\,d\,)\) when \(\max\{\,m_i\,\}< d\le 2g-2\).

\begin{rmk}[Breaking a zero does not change the degree]\label{rmk:breakingpreservesdegree}
    We recall, for the reader's convenience, that by breaking a zero, a branched covering map is deformed into another branched covering map of the same degree. This is essentially a consequence of Remark~\ref{rmk:breakazerointolowerzeros} above; see also Figure~\ref{fig:breakazerobranched}.
\end{rmk}

\begin{rmk}
    Recall that a stratum does not have any hyperelliptic component unless the signature is of the form \((2g-2)\) or \((g-1,g-1)\) and these cases have already been handled in the second step. Therefore this step applies to signatures \(\mu=(m_1,\dots,m_k)\) such that \(k\ge3\) or \(k=2\) and \(m_1>m_2\). Moreover, we recall that a stratum has two connected components distinguished by the spin parity if and only if all entries are even.
\end{rmk}

\smallskip

\noindent Let \(\mu=(m_1,\dots,m_k)\) be such that \(m_1\ge m_2 \ge\cdots\ge m_k>0\). The proof in this case is based on a purely algorithmic process. Before describing the algorithm and its efficiency, it is necessary to mention the tools used to implement it. These tools consist of two surgeries that modify a given branched covering map \(f\colon (X,\omega)\longrightarrow \mathbb{C}/\Gamma\) of degree \(d\) into a new branched covering \(g\colon (Y,\xi)\longrightarrow \mathbb{C}/\Gamma\) of the same degree, such that
\begin{itemize}
    \item[\(\star\)] \(Y\) has genus greater than \(X\), and 
    \smallskip
    \item[\(\star\)] if \(\omega\) has signature \((m_1,\dots,m_k)\), then \(\xi\) has signature \((m_1,\dots,m_k,m_{k+1})\) with \(m_{k+1}\) even, or with signature \((m_1,\dots,m_k,m_{k+1},m_{k+2})\) with both \(m_{k+1}\) and \(m_{k+2}\) odd,
\end{itemize}
\noindent where \((X,\omega)\in\mathcal H_g(\,\mu\,)\). Notice that \((X,\omega)\) has genus \(g\ge2\) since \(\mu\) is a partition of \(2g-2>0\) (all \(m_i\)'s are strictly positive).

\begin{rmk}
    Observe that \(d\ge2\) because a branched covering map of degree one is a homeomorphism and two surfaces are homeomorphic if and only if they have the same genus.
\end{rmk}

\noindent These surgeries consist of slitting \((X,\omega)\) along a finite collection of segments and then gluing them with some care to ensure that the resulting structure \((Y,\xi)\) has the desired signature and desired spin parity (if well defined). More specifically, pick any segment, say \(s \subset \mathbb{C}/\Gamma\), with no ramification value in its interior (ramification values are allowed at the endpoints of the segment if needed). Such a segment is a one-simplex, and it extends to a triangulation \(\tau\) of the torus \(\mathbb{C}/\Gamma\). By lifting \(\tau\) to a triangulation of \(X\), we obtain a collection \(\mathcal{P}\) of Euclidean triangles along with a pairing via translations. The gist of the idea is that the same collection of polygons yields different translation surfaces (with different signature and, possibly, parity if well defined) by changing the pairing via translations. Let \(\mathcal E=\big\{\,e_1,\dots,e_d\,\big\}\) be the set of lifts of \(s\), where \(d\) is the degree of \(f\). By fixing an orientation on \(s\), every edge \(e_i\) has a natural induced orientation. We next slit all the edges in \(\mathcal E\) and denote the by \(e_i^+\) and \(e_i^-\) the left and right side respectively. See Figure \ref{fig:slitsurface}.

\begin{figure}[h!]
    \centering
    \begin{tikzpicture}[scale=1, every node/.style={scale=1}]
    \definecolor{pallido}{RGB}{221,227,227}

    \draw[dashed] (0,0) circle (2.5cm);
    \pattern [pattern=north east lines, pattern color=pallido] (0,0) circle (2.5cm);
    \draw[orange]     (-1,1)--( 0, 0);
    \draw[orange, <-] ( 0,0)--( 1,-1);
    \fill[black] (-1, 1)  circle (1.50pt);
    \fill[black] ( 1,-1)  circle (1.50pt);

    \node at (0.25,0.25) {\(e_i\)};

    \begin{scope}[shift={(7,0)}]
        \draw[dashed] (0,0) circle (2.5cm);
        \pattern [pattern=north east lines, pattern color=pallido] (0,0) circle (2.5cm);
        \fill[white] (-1,1) to[out=325, in=125] (1,-1) to[out=145, in=305] (-1,1);
        \draw[orange] (-1,1) to[out=325, in=125] (1,-1);
        \draw[orange] (-1,1) to[out=305, in=145] (1,-1);
        \fill[black] (-1, 1)  circle (1.50pt);
        \fill[black] ( 1,-1)  circle (1.50pt);

        \node at ( 0.35, 0.35) {\(e_i^-\)};
        \node at (-0.35,-0.35) {\(e_i^+\)};
    \end{scope}

    \end{tikzpicture}
    \caption{Comparison of an edge \(e_i\in\mathcal E\) before and after slitting. The picture on the right does not show the orientation of the edges. However these agree with the orientation of the edge \(e_i\) on the left picture.}
    \label{fig:slitsurface}
\end{figure}


\smallskip

\noindent The way the left and right sides of the edges \(e_i\in\mathcal E\) are glued together determines the resulting surface. One choice, the obvious one, realises \((X,\omega)\) by construction. Other choices yield different topological surfaces. More precisely, every permutation \(\sigma\in\mathfrak{S}_d\) yields a pairing map \(\mathfrak{p}_{\sigma}\colon\mathcal E\longrightarrow \mathcal E\) that associates with the edge \(e_i\) the edge \(e_{\sigma(\,i\,)}\). We define \((X_{\sigma},\omega_{\sigma})\) the translation surface obtained by slitting the former translation surface \((X,\omega)\) along the edges in \(\mathcal E\) and glued according to the permutation \(\sigma\), that is \(e_i^+\) is glued to \(e_{\sigma(\,i\,)}^-\). Notice that \(\sigma=\textnormal{id}\) gives back the original surface \((X,\omega)\).

\smallskip

\noindent In \cite{BJJP}, the authors provide a precise recipe for choosing the segment \(s\) in \(\mathbb{C}/\Gamma\) to lift to \((X,\omega)\) and for choosing an appropriate permutation \(\sigma\in\mathfrak{S}_d\) to obtain the desired differential. More precisely, they provided a recipe to realise a differential with a single additional zero of even order, and a recipe to realise a differential with two additional zeros, both of odd order. By applying these surgeries iteratively in a suitable way, it is possible to realise any signature. Moreover, in the case the starting signature has only even entries, \textit{e.g.} \((2m_1,\dots,2m_k)\), a spin structure is well defined. When performing the surgery that introduces a zero of order \(2m_{k+1}\), the resulting structure also has its own parity. Parity is preserved by the surgery if and only if \(m_{k+1}\) is even and thence reversed if \(m_{k+1}\) is odd. 

\smallskip

\noindent By using these surgeries, the constructive approach adopted by the Bainbridge--Johnson--Jude--Park provides a quasi-algorithmic method to realise a discrete representation within every stratum, systematically handling parity and singularity types through a recursive sequence of surgeries and ad-hoc constructions. Given a signature \(\mu=(m_1,\dots,m_k)\) as above, in the case \(m_i\in2\mathbb Z\) for every \(i=1,\dots,k\), an additional parameter \(\theta\) is introduced for the algorithm. This is defined as 
\begin{equation}  
    \theta=  \mathfrak{spin}+\delta, \text{ where } \delta=\sum_{i=1}^k\frac{m_i}{2} \,\,\,\text{ mod } 2
\end{equation} 
\noindent where \(\mathfrak{spin}\) denotes the desired spin parity and \(\delta\) will count how many times the parity changes when zeros of even order are added. In their work \cite[\S3.4]{BJJP}, the authors provide an algorithm as follows. Let \(\chi\) be a discrete representation such that \(\deg(\,\chi\,)\ge m_1+1\). Then:
\begin{itemize}
    \item[1.] If each \(m_i\) is an even integer, and if
    \begin{itemize}
    \item[1.] \(k = 1\), then \(m_k=2g-2\): realise a square-tiled surface in \(\mathcal H_g(2g-2)\) with prescribed spin structure or hyperelliptic structure . Else
    \smallskip
    \item[2.] \(k = 2\) and \(m_1 = m_2=g-1\): realise a square-tiled surface in \(\mathcal H_g(g-1,g-1)\) with prescribed spin structure or hyperelliptic structure. Else
    \smallskip
    \item[3.] If
        \begin{itemize}
            \item[i.] \(\theta = 0 \mod 2\): realise a square-tiled surface in the component \(\mathcal K\) of \(\mathcal H_g(\,m_1\,)\) with even parity, where \(2g-2=m_1\) and then to add zeros of order \(m_2, m_3, \ldots, m_{n}\). Else
            \smallskip
            \item[ii.]  \(\theta = 1 \mod 2\): realise a square-tiled surface in the component \(\mathcal K\) of \(\mathcal H_g(\,m_1\,)\) with odd parity, where \(2g-2=m_1\) and then to add zeros of order \(m_2, m_3, \ldots, m_{n}\).
            \end{itemize}
            \smallskip
        \end{itemize}
    \item[2.] otherwise, \textit{i.e.} when some \(m_i\) is odd, if
        \begin{itemize}
            \item[1.] \(m_1\) is even: realise a square-tiled surface in the component \(\mathcal K\) of \(\mathcal H_g(\,m_1\,)\) with odd parity and apply both surgeries to the torus cover to add zeros of order \(m_2, m_2, \ldots, m_{n}\).
            \smallskip
            \item[2.] \(m_1\) is odd: then some other zero, say \(m_j\), is odd -- notice that odd zeros come in pairs. An ad-hoc construction is used to realise a structure in \(\mathcal H_g(\,m_1,m_j\,)\) and both surgeries are applied to add zeros of order \(m_i\) for \(i \neq 1, j\).
        \end{itemize}
\end{itemize}

\smallskip

\subsubsection{Realisation following Le Fils' approach} This second approach is markedly different from the first one. In his work \cite{fils}, Le Fils relies on the study carried out by Kapovich in \cite{KM2} and hence on the existence of a good basis as defined in \S\ref{ssec:kapapp}. Even in this case it is necessary to distinguish two cases depending on whether the representation \(\chi\) is discrete or non-discrete.

\smallskip

\paragraph{\textit{A surgery to realise handles}} In \S\ref{ssec:kapapp}, we provided a description of how to construct translation surfaces by gluing handles together along slits. Two handles (or surfaces in general) are cut along segments of the same length such that (in local charts) they differ by a translation, in order to produce a surface of higher genus, \textit{e.g.} see Figure \ref{fig:genkap}. Moreover, this is an iterative process. There is yet another way to attach a handle, more precisely, to glue in a cylinder. Suppose we have a translation surface \((X,\omega)\) and a parallelogram \(\textnormal{P} = \{\, x\mu_1 + y\mu_2 \mid x,y \in [0,1]\, \}\subset \mathbb{C}.\) Suppose further that on \((X,\omega)\) there is a segment \(s\) such that, in local coordinates, it differs from \(\mu_1\) by a translation.  

\noindent Cut \(X\) along \(s\). Topologically, the resulting surface has the same genus as \(X\) but a boundary homeomorphic to \(S^1\). Geometrically, the resulting surface has a piecewise geodesic boundary: more precisely, the boundary consists of two segments of equal length separated by two points around which the angle is a multiple of \(2\pi\). The parallelogram \(\textnormal{P}\) is then glued in a natural way, as shown in Figure \ref{fig:slitcons}, and the resulting surface has one additional handle. Geometrically, we obtain a translation surface whose differential has a zero of even order equal to the sum of the orders of the endpoints of \(s\).

\begin{figure}[!h] \label{fig:seqslithandle}
\centering
\begin{tikzpicture}[scale=1.25, every node/.style={scale=0.75}]
\definecolor{pallido}{RGB}{221,227,227}
\pattern[pattern=north west lines, pattern color=pallido] (-3,1) -- (-5, -1) -- (-9,-1) -- (-7, 1) -- (-3,1);
\node[above left] at (-8,0) {\(a\)};
\node[below right] at (-4,0) {\(b\)};

\draw[thick, violet] (-3,  1)-- (-5,-1);
\draw[thick, orange] (-9,-1) -- (-7, 1);
\draw[thick, red   ] (-5,-1) -- (-9,-1) (-3, 1) -- (-7, 1);

\fill (-3, 1) circle (1pt);
\fill (-5,-1) circle (1pt);
\fill (-9,-1) circle (1pt);
\fill (-7, 1) circle (1pt);

\draw[dashed, black, pattern=north west lines, pattern color=pallido] (0,0) circle (2);
\fill[white] (-1,-1) .. controls (0.1, -0.1) .. (1,1) .. controls (-0.1, 0.1) .. (-1,-1);

\draw[thick, violet] (-1,-1) .. controls (0.1, -0.1) .. (1,1);
\draw[thick, orange] (-1,-1) .. controls (-0.1, 0.1) .. (1,1);
\node[above left] at (-0.1, 0.1) {\(a\)};
\node[below right] at (0.1, -0.1) {\(b\)}; 

\fill (-1,-1) circle (1pt);
\fill ( 1, 1) circle (1pt);

\end{tikzpicture}
\caption{Slit construction with one handle. Edges with the same colour are glued together. }
\label{fig:slitcons}
\end{figure}

\smallskip

\paragraph{\textit{Non-discrete representations}} Let \(\chi\) be a non-discrete representation. In \S\ref{ssec:kapapp} we have mentioned the existence of a basis, say \(\{\alpha_i,\,\beta_i\}\), in homology, such that \(\Im\Big(\,\,\overline{\chi(\,\alpha_i\,)}\,\chi(\,\beta_i\,)\,\,\Big)>0\) for every \(i=1,\dots,g\). In \cite[Proposition 2.5]{fils}, Le Fils showed that such a basis can be chosen so that the following \textit{additional} conditions hold:
\begin{enumerate}
    \item[(A1)] the parallelogram formed by the vectors \(\chi(\,\alpha_1\,)\) and \(\chi(\,\beta_1\,)\) contains in its interior a translate of a rectangle \(\mathcal R = \mathcal I\,\times\,\mathcal J \subset \mathbb{C}\) such that
    \begin{equation}
        \lambda(\,\mathcal I\,) \geq 2g \cdot \max_{2\le i\le g} \big|\,\Re\big(\,\chi(\,\alpha_i\,)\,\big)\,\big|\,\,\text{ and }\,\,\lambda(\,\mathcal J\,) \geq 2g \cdot \max_{2\le i\le g} \big|\,\Im\big(\,\chi(\,\alpha_i\,)\,\big)\,\big|,
    \end{equation}
    where \(\mathcal I,\,\mathcal J\) are open intervals in \(\mathbb R\) and \(\lambda\) denotes the Lebesgue measure on \(\mathbb R\).
    \smallskip
    \item[(A2)] \(\arg(\chi(\,\alpha_i\,)) \neq \arg(\chi(\,\alpha_j\,))\) for every \(2 \leq i < j \leq g\) (see Figure \ref{fig:indfilseven} to understand why this condition is useful).
\end{enumerate}

\smallskip

\begin{rmk}[How non-discreteness plays a role]
    The fact that \(\chi\) is not a discrete representation is a fundamental assumption, as the first property just listed above is not guaranteed for discrete representations. Showing the existence of such a good basis is highly non-trivial, as the argument in~\cite[Proposition 2.5]{fils} relies on a deep result established by Calsamiglia-Deroin-Francaviglia~\cite[Theorem 1.5]{CDF}\footnote{In his paper, Le Fils cites \cite[Theorem 8.1]{CDF2}. This numbering, however, refers to the arXiv version (V2) of \cite{CDF2}, which was the latest version online at the time of Le Fils' publication. Shortly after his paper was published, a new version (V3) of \cite{CDF2} was released, (in which the theorem is renumbered as Theorem 1.5).}. As we shall see briefly, the gist of the idea is to pick a "home" parallelogram and glue the additional \(g-1\) handles onto it. The process in itself is similar to that developed by Kapovich in \S\ref{ssec:kapapp}, but the handles are glued in a different way and the manner in which the additional handles are now glued matters. Without delving too deeply into technical details, condition (A1) ensures the existence of a good basis (in Kapovich's sense for non-discrete representations) such that the absolute periods of the \(\alpha_i\)'s, for \(i=2,\dots,g\), are small enough to largely fit inside the home parallelogram on which we would like to glue additional handles. To better understand this crucial point, we invite the reader to look at Figure~\ref{fig:genkap} once again. We may fix the left-most square as the home parallelogram and the blue slits can be taken sufficiently short so that the remaining \(g-1\) handles can be easily glued. In Le Fils' approach, handles are glued as in Figure~\ref{fig:slitcons} in a way similar, but not identical, to that of Figure~\ref{fig:genkap}. To make sure all handles can be glued, we need to ensure that the absolute periods of the \(\alpha_i\)'s are sufficiently short (by choosing a good set of generators). For discrete representations, we do not have this flexibility. In fact, if \(\Gamma\) is a lattice, then the evaluation \(|\,\cdot\,|\colon\Gamma\longrightarrow\mathbb R\) on \(\Gamma\setminus\{\,0\,\}\) has a minimum (e.g. think of the case \(\Gamma=\mathbb Z[\,i\,]\)). Notice that such a minimum does not exist for non-discrete representations because there is always a sequence converging to zero.
\end{rmk}


\smallskip

\noindent Let \(\{\alpha_i, \beta_i\}_{1 \le i \le g}\) be a good basis for \(\textnormal{H}_1(\Sigma,\,\Z)\) that satisfies the additional conditions (1) and (2) above. As in \S\ref{ssec:kapapp}, each pair \(\{\,\chi(\alpha_i),\, \chi(\beta_i)\,\}\) determines a fundamental parallelogram \(\pol_i\) of some translation structure on a torus, by Proposition~\ref{prop:hauptgenone}. Let \(\mathcal P = \{\pol_1, \dots, \pol_g\}\) be the collection of parallelograms thus defined. It follows from \S\ref{ssec:kapapp} that these parallelograms can be glued along slits to realise a structure in the principal stratum \(\mathcal H_g(1, \dots, 1)\); this is Haupt's theorem. We now summarise the process provided by Le Fils in \cite{fils} to glue these parallelograms to realise higher-order zeros. The approach is rather algorithmic in nature. In fact, the gist of the idea is to first realise structures in strata with even signature, and then extend the construction to arbitrary strata by observing that zeros of odd order necessarily come in pairs.

\smallskip

\noindent A translation surface in the stratum with signature \((2m_1,2m_2,\dots,2m_k)\) can be realised through an iterative process in \(k\) steps, starting from an arbitrary parallelogram in the collection \(\mathcal{P}\). By gluing the opposite sides of this parallelogram, one obtains a translation structure on a torus (this is step 0), and our aim is to glue the remaining \(g-1\) parallelograms in an appropriate way. Let \(\mathcal{P}_1\) denote the collection of remaining polygons. At the \(i\)-th step, \(m_i\) parallelograms are used, and we denote by \(\mathcal{P}_{i+1}\) 
the collection of the remaining polygons. Notice that the chain of sets  
\(
\mathcal{P} \supset \mathcal{P}_1 \supset \mathcal{P}_2 \supset \cdots \supset 
\mathcal{P}_k \supset \mathcal{P}_{k+1} = \emptyset
\)
is descending and terminates after exactly \(k\) steps. At each step, the resulting surface has higher genus than the previous one and supports a translation structure associated with an abelian differential that has one additional zero. 

\begin{figure}[h!]
    \centering
    \begin{tikzpicture}[scale=1.5, every node/.style={scale=0.675}]
    \definecolor{pallido}{RGB}{221,227,227}

    \pattern [pattern=north west lines, pattern color=pallido]
    (0,0)--(8,0)--(8,4)--(0,4)--(0,0);
    
    \draw[thin, orange        ] (0,0)--(8,0);
    \draw[thin, red           ] (8,0)--(8,4);
    \draw[thin, orange        ] (8,4)--(0,4);
    \draw[thin, red           ] (0,4)--(0,0);

    \draw[thin, blue] (1.50,2.50)--(1.50,3.25);
    \draw[thin, blue] (1.50,2.50)--(1.50,1.50);
    \draw[thin, blue] (1.50,2.50)--(1.00,2.65);
    \draw[thin, blue] (1.50,2.50)--(3.00,2.75);
    \draw[thin, blue] (6.00,2.00)--(7.00,1.70);
    \draw[thin, blue] (6.00,2.00)--(5.00,2.00);
    \draw[thin, blue] (6.00,2.00)--(6.50,3.00);
    \draw[thin, blue] (3.75,1.00)--(4.50,1.00);
    \draw[thin, blue] (3.75,1.00)--(3.00,0.75);

    \fill[blue ] (1.50,2.50) circle (1.25pt);
    \fill[blue ] (3.75,1.00) circle (1.25pt);
    \fill[blue ] (6.00,2.00) circle (1.25pt);
    
    \fill[black] (0,0)       circle (1.25pt);
    \fill[black] (8,0)       circle (1.25pt);
    \fill[black] (8,4)       circle (1.25pt);
    \fill[black] (0,4)       circle (1.25pt);
    \fill[black] (1.50,3.25) circle (1.00pt);
    \fill[black] (1.50,1.50) circle (1.00pt);
    \fill[black] (1.00,2.65) circle (1.00pt);
    \fill[black] (3.00,2.75) circle (1.00pt);
    \fill[black] (4.50,1.00) circle (1.00pt);
    \fill[black] (3.00,0.75) circle (1.00pt);
    \fill[black] (7.00,1.70) circle (1.00pt);
    \fill[black] (5.00,2.00) circle (1.00pt);
    \fill[black] (6.50,3.00) circle (1.00pt);

    \node     at (3.00, 4.75) {\textcolor{violet}{four handles are glued here }};
    \node (x) at (3.00, 4.500) {\textcolor{violet}{to create a zero of order \(8\)}};
    \draw[-latex, black, bend left] (x) to ( 1.625, 2.625);

    \node     at (6.50, 0.75) {\textcolor{violet}{three handles are glued here }};
    \node (x) at (6.50, 1.000) {\textcolor{violet}{to create a zero of order \(6\)}};
    \draw[-latex, black, bend left] (x) to ( 6.00, 1.75);

    \node     at (4.50, -0.50) {\textcolor{violet}{two handles are glued here }};
    \node (x) at (4.50, -0.25) {\textcolor{violet}{to create a zero of order \(4\)}};
    \draw[-latex, black, bend left] (x) to ( 3.75, 0.75);
    
    \end{tikzpicture}
    \caption{Realising a translation surface in the stratum \(\mathcal H_{10}(\,4,6,8\,)\) with non-discrete period character. Condition (A2) ensures that we can realised \(m_i-\)pods of segments along which we glue \(m_i\) parallelograms as in Figure \ref{fig:slitcons}.}
    \label{fig:indfilseven}
\end{figure}

\noindent We now proceed to the construction, see Figure \ref{fig:indfilseven} for a reference. At the \(i\)-th step, choose a regular point, say \(p_i\), of the surface obtained at the \((i-1)\)-th step, and select \(m_i\) parallelograms, say \(\textnormal{P}_{i\,1}, \dots, \textnormal{P}_{i\,m_i}\) from the collection \(\mathcal{P}_i\). Then one chooses \(m_i\) segments \(s_{i\,1}, \dots, s_{i\,m_i}\), each with endpoints at two regular points, one of which is \(p_i\), and each one parallel to one side of the corresponding parallelogram \(\textnormal{P}_{i\,j}\). Condition (A2) ensures that no pair of segments overlap. The construction described above is then applied \(m_i\) times. The resulting surface has \(m_i\) more handles than the previous one and supports a translation structure in the stratum \(\mathcal{H}_{g_i}(2m_1,\dots,2m_i)\), where \( g_i = m_1 + \cdots + m_i + 1.\) Since \(2m_1 + \cdots + 2m_k = 2g-2\), the process terminates precisely after \(k\) steps, with \(\mathcal{P}_{k+1} = \emptyset\) and conditions \((1)\) and \((2)\) ensure that there is enough room on the first parallelogram to perform these surgeries, see \cite[\S3.1]{fils}. 

\smallskip

\begin{figure}[h!]
    \centering
    \begin{tikzpicture}[scale=1.5, every node/.style={scale=0.675}]
    \definecolor{pallido}{RGB}{221,227,227}

    \pattern [pattern=north west lines, pattern color=pallido]
    (0,0)--(8,0)--(8,4)--(0,4)--(0,0);
    
    \draw[thin, orange        ] (0,0)--(8,0);
    \draw[thin, red           ] (8,0)--(8,4);
    \draw[thin, orange        ] (8,4)--(0,4);
    \draw[thin, red           ] (0,4)--(0,0);

    \draw[thin, violet ] (4.50,1.00)--(5.00,2.00);
    \draw[thin, violet ] (4.00,3.00)--(5.00,3.00);
    \draw[thin, violet ] (1.50,1.50)--(0.50,0.50);
    
    \draw[thin, blue] (1.50,2.50)--(1.50,3.25);
    \draw[thin, blue] (1.50,2.50)--(1.50,1.50);
    \draw[thin, blue] (1.50,2.50)--(1.00,2.65);
    \draw[thin, blue] (1.50,2.50)--(3.00,2.75);
    \draw[thin, blue] (6.00,2.00)--(7.00,1.70);
    \draw[thin, blue] (6.00,2.00)--(5.00,2.00);
    \draw[thin, blue] (6.00,2.00)--(6.50,3.00);
    \draw[thin, blue] (3.75,1.00)--(4.50,1.00);
    \draw[thin, blue] (3.75,1.00)--(3.00,0.75);

    \fill[blue ] (1.50,2.50) circle (1.25pt);
    \fill[blue ] (3.75,1.00) circle (1.25pt);
    \fill[blue ] (6.00,2.00) circle (1.25pt);
    
    \fill[black] (0,0)       circle (1.25pt);
    \fill[black] (8,0)       circle (1.25pt);
    \fill[black] (8,4)       circle (1.25pt);
    \fill[black] (0,4)       circle (1.25pt);
    \fill[black] (1.50,3.25) circle (1.00pt);
    \fill[black] (1.50,1.50) circle (1.00pt);
    \fill[black] (1.00,2.65) circle (1.00pt);
    \fill[black] (3.00,2.75) circle (1.00pt);
    \fill[black] (4.50,1.00) circle (1.00pt);
    \fill[black] (3.00,0.75) circle (1.00pt);
    \fill[black] (7.00,1.70) circle (1.00pt);
    \fill[black] (5.00,2.00) circle (1.00pt);
    \fill[black] (6.50,3.00) circle (1.00pt);
    \fill[black] (4.00,3.00) circle (1.00pt);
    \fill[black] (5.00,3.00) circle (1.00pt);
    \fill[black] (0.50,0.50) circle (1.00pt);


    \pattern [pattern=north west lines, pattern color=pallido]
    (4,-4)--(6,-3)--(7,-1)--(5,-2)--(4,-4);

    \pattern [pattern=north west lines, pattern color=pallido]
    (4,5)--(7,5)--(7,7)--(4,7)--(4,5);

    \draw[thin, orange] (4,5)--(7,5) (7,7)--(4,7);
    \draw[thin, red   ] (7,5)--(7,7) (4,7)--(4,5);

    \fill[black] (4,5) circle (1.00pt);
    \fill[black] (7,5) circle (1.00pt);
    \fill[black] (7,7) circle (1.00pt);
    \fill[black] (4,7) circle (1.00pt);
    
    \draw[thin, violet ] (5   ,6)--(6   ,6);

    \fill[black] (5   ,6) circle (1.00pt);
    \fill[black] (6   ,6) circle (1.00pt);

    \draw[thin, orange] (4,-4)--(6,-3) (7,-1)--(5,-2);
    \draw[thin, red   ] (6,-3)--(7,-1) (5,-2)--(4,-4);
    \fill[black] (4,-4) circle (1.00pt);
    \fill[black] (6,-3) circle (1.00pt);
    \fill[black] (7,-1) circle (1.00pt);
    \fill[black] (5,-2) circle (1.00pt);
    \draw[thin, violet ] (5.25,-3)--(5.75,-2);
    \fill[black] (5.25,-3) circle (1.00pt);
    \fill[black] (5.75,-2) circle (1.00pt);

    \pattern [pattern=north west lines, pattern color=pallido]
    (1,-3)--(3,-3)--(3,-1)--(1,-1)--(1,-3);
    
    \draw[thin, orange] (1,-3)--(3,-3) (3,-1)--(1,-1);
    \draw[thin, red   ] (3,-3)--(3,-1) (1,-1)--(1,-3);

    \fill[black] (1,-3) circle (1.00pt);
    \fill[black] (3,-3) circle (1.00pt);
    \fill[black] (3,-1) circle (1.00pt);
    \fill[black] (1,-1) circle (1.00pt);
    
    \draw[thin, violet ] (2.5,-1.5)--(1.5,-2.5);

    \fill[black] (2.5,-1.5) circle (1.00pt);
    \fill[black] (1.5,-2.5) circle (1.00pt);

    



    \node     at (6.675, 4.50) {\textcolor{red}{slit and glue these edges}};
    \node     at (6.675, 4.25) {\textcolor{red}{to create two zeros of order one}};
    \draw[-latex, black, bend left] (5.50, 5.75) to (4.5, 3.25);

    \node     at (7, -0.25) {\textcolor{red}{slit and glue these edges to}};
    \node     at (7, -0.50) {\textcolor{red}{create two odd-order zeros }};
    \node     at (7, -0.75) {\textcolor{red}{greater than one}};
    \draw[-latex, black] (5.5,-2.25) .. controls (5.75, 0.) .. (4.875, 1.50);

    \node     at (0.5,-0.25) {\textcolor{red}{slit and glue these edges}};
    \node     at (0.5,-0.50) {\textcolor{red}{to create two odd-order zeros }};
    \node     at (0.25,-0.75) {\textcolor{red}{the order of one of which is greater than one}};
    \draw[-latex, black, bend right] (1.75,-2.00) to (1.00, 0.875);
    
    \end{tikzpicture}
    \caption{Realising a translation surface in the stratum \(\mathcal H_{13}(\,1,1,1,5,7,9\,)\) with non-discrete period character.}
    \label{fig:indfilsodd}
\end{figure}

\noindent A generic signature of the form \( \mu = (2m_1,\dots,2m_l, 2m_{l+1}+1,\dots,2m_k+1)\) determines a signature with all even entries \( \mu' = (2m_1,\dots,2m_l, 2m_{l+1},\dots,2m_k). \) Note that \(k \equiv l \pmod{2}\). One first realises a translation structure in the stratum \(\mathcal H_{g'}(\,\mu'\,)\) as described above. In this case, the collection \(\mathcal{P}_{k+1}\) is no longer empty, but it still contains \(\tfrac{k-l}{2}\) parallelograms. These are then used to realise the odd-order zeros. The zeros of order \(2m_{l+1}+1,\dots,2m_k+1\) are grouped into pairs, and for each pair there exists a segment joining the two zeros. A parallelogram from the collection \(\mathcal{P}_{k+1}\) is then glue along a segment parallel to the one joining the two zeros in order to produce an additional handle. After \(\tfrac{k-l}{2}\) such steps, the resulting structure belongs to the stratum \(\mathcal H_g(\,\mu\,)\). See Figure \ref{fig:indfilsodd} for a reference.

\medskip

\paragraph{\textit{Discrete representations}} A discrete representation can be realised via an iterative process on the length of the signature. To apply this procedure, however, a good basis as guaranteed by Kapovich in \cite{KM2} is not sufficient. It is in fact necessary first to determine a homology basis with suitable properties whose existence has been established in \cite[Proposition 2.2]{fils}. More specifically, for every discrete representation \(\chi\) there exists a basis in homology such that
\begin{itemize}
  \item[\(\star\)] \( \chi(\,\alpha_1\,)=\textnormal{vol}(\,\chi\,) \) is a positive integer and \( \chi(\,\beta_1\,) = i \).
  \item[\(\star\)] \(\chi(\,\alpha_i\,)=d \), where \(d\) could be either one or two, for all \( i \ge 2 \) and \(\chi(\,\beta_i\,)=0\).
\end{itemize}

\smallskip

\noindent With this choice of basis we are in the right position to apply the iterative process mentioned above. Let \(\mu=(m_1,\dots,m_k)\) be a signature such that \(m_i \le m_j\) for \(i<j\) and \(k\ge2\). Indeed, the realisation in the minimal stratum is not covered by the process we are about to describe and requires an \emph{ad hoc} argument, which we shall present shortly.

\smallskip

\begin{rmk}
    Along the way, Figures \ref{fig:disfilsone}, \ref{fig:disfilstwo} and \ref{fig:disfilsthree} depict the various steps of the iterative process. For typographical reasons, the imaginary axis has been rescaled by some appropriate factor \(\lambda>1\) to make the pictures easier to consult. Figure \ref{fig:disfilsmin}, instead, has been drawn with the right scale. 
\end{rmk} 

\smallskip

\noindent The process is as follows: Let \(\mathcal R\) be the rectangle of size \(\textnormal{vol}(\,\chi\,)\times1\), then:
\begin{itemize}
    \item[1.] Basic step: suppose \(k=2\), then
    \begin{itemize}
        \smallskip
        \item[1.] if \(\mu=(\,g-1,\,g-1\,)\), then consider \(g\) segments \(s_1,\dots,s_g\) so that two consecutive edges are at distance one from each other. Slit and re-glue them as shown in Figure \ref{fig:disfilsone} to realise the desired structure.
        \smallskip
        \item[2.] if \(\mu=(\,m_1,\,m_2\,)\), then take \(m_2-m_1\) segments as in the step \((1.1)\) and then make a slit of length one joining the rightmost two ends of the last slits, as shown in Figure \ref{fig:disfilstwo}, and some other slits obtained as translations by multiples of~2 of this one, which are glued in a cyclic way to adjust the last conical angle.
    \end{itemize}
    \smallskip
    \item[2.] Inductive step: suppose \(k>2\). In this case \(\mu=(m_1,m_2,\dots,m_k)\). Then 
    \begin{itemize}
        \item[1.] slit \(\mathcal R\) along \(m_1+1\) segments as in the basic step \((1.1)\) to realise a structure in the stratum with signature \((m_1,m_1)\),
        \smallskip
        \item[2.] then make \(m_2 - m_1\) other parallel slits below, such that one of them touches one end of the rightmost preceding slit, which we glue in a cyclic way. In doing so, we obtain a flat surface with conical angles \(2\pi(m_1 + 1)\), \(2\pi(m_2 + 1)\), and \(2\pi(m_2 - m_1 + 1)\). Iterating this process, we get a flat surface with angles \(2\pi(m_1 + 1), \dots, 2\pi(m_{k-1} + 1)\), and \(2\pi(l + 1)\), with \(l \le m_k\). If \(l=m_k\) we are done, see Figure \ref{fig:disfilsthree}. Otherwise
        \smallskip
        \item[3.] apply the step \((1.2)\) to adjust the last conical angle, as shown in Figure \ref{fig:disfilstwo}.
    \end{itemize}
\end{itemize}

\begin{figure}[h!]
    \centering
    \begin{tikzpicture}[scale=1.625, every node/.style={scale=0.675}]
    \definecolor{pallido}{RGB}{221,227,227}

    \pattern [pattern=north east lines, pattern color=pallido]
    (0,0)--(8,0)--(8,4)--(0,4)--(0,0);
    
    \draw[thin, orange        ] (0,0)--(8,0);
    \draw[thin, red           ] (8,0)--(8,4);
    \draw[thin, orange        ] (8,4)--(0,4);
    \draw[thin, red           ] (0,4)--(0,0);

    \fill[black] (0,0)       circle (1.00pt);
    \fill[black] (8,0)       circle (1.00pt);
    \fill[black] (8,4)       circle (1.00pt);
    \fill[black] (0,4)       circle (1.00pt);

    \foreach \x in {1, ..., 4} 
    {
    \draw[thin, blue] (\x+1,2.5)--(\x,3.5);
    \fill[black] (\x+1,2.5)  circle (1.00pt);
    \fill[black] (\x  ,3.5)  circle (1.00pt);
    }

    \node at (1.375,3) {\(s_1\)};
    \node at (2.375,3) {\(s_2\)};
    \node at (3.375,3) {\(s_3\)};
    \node at (4.375,3) {\(s_4\)};
    \node at (1.725,3) {\(s_4\)};
    \node at (2.725,3) {\(s_1\)};
    \node at (3.725,3) {\(s_2\)};
    \node at (4.725,3) {\(s_3\)};

    \end{tikzpicture}
    \caption{Realising a discrete representation in the stratum \(\mathcal H_4(\,3,\,3\,)\). Slit blue segments and then re-glue the resulting edges with the same label.}
    \label{fig:disfilsone}
\end{figure}

\begin{figure}[h!]
    \centering
    \begin{tikzpicture}[scale=1.625, every node/.style={scale=0.675}]
    \definecolor{pallido}{RGB}{221,227,227}

    \pattern [pattern=north east lines, pattern color=pallido]
    (0,0)--(8,0)--(8,4)--(0,4)--(0,0);
    
    \draw[thin, orange        ] (0,0)--(8,0);
    \draw[thin, red           ] (8,0)--(8,4);
    \draw[thin, orange        ] (8,4)--(0,4);
    \draw[thin, red           ] (0,4)--(0,0);

    \fill[black] (0,0)       circle (1.00pt);
    \fill[black] (8,0)       circle (1.00pt);
    \fill[black] (8,4)       circle (1.00pt);
    \fill[black] (0,4)       circle (1.00pt);

    \draw[red] (4,2.5)--(5,2.5);
    \draw[red] (6,2.5)--(7,2.5);

    \fill[black] (6,2.5) circle (1.00pt);
    \fill[black] (7,2.5) circle (1.00pt);
    
    \foreach \x in {1, ..., 4} 
    {
    \draw[blue ] (\x+1,2.5)--(\x,3.5);
    \fill[black] (\x+1,2.5)  circle (1.00pt);
    \fill[black] (\x  ,3.5)  circle (1.00pt);
    }

    \node at (1.375,3) {\(s_1\)};
    \node at (2.375,3) {\(s_2\)};
    \node at (3.375,3) {\(s_3\)};
    \node at (4.375,3) {\(s_4\)};
    \node at (1.725,3) {\(s_4\)};
    \node at (2.725,3) {\(s_1\)};
    \node at (3.725,3) {\(s_2\)};
    \node at (4.725,3) {\(s_3\)};

    \node at (4.5,2.625) {\(a\)};
    \node at (6.5,2.375) {\(a\)};
    \node at (4.5,2.375) {\(b\)};
    \node at (6.5,2.625) {\(b\)};

    \end{tikzpicture}
    \caption{Realising a discrete representation in the stratum \(\mathcal H_5(\,3,\,5\,)\). Slit both blue and red segments and then re-glue the resulting edges with the same label.}
    \label{fig:disfilstwo}
\end{figure}

\begin{figure}[h!]
    \centering
    \begin{tikzpicture}[scale=1.625, every node/.style={scale=0.675}]
    \definecolor{pallido}{RGB}{221,227,227}

    \pattern [pattern=north east lines, pattern color=pallido]
    (0,0)--(8,0)--(8,4)--(0,4)--(0,0);
    
    \draw[thin, orange        ] (0,0)--(8,0);
    \draw[thin, red           ] (8,0)--(8,4);
    \draw[thin, orange        ] (8,4)--(0,4);
    \draw[thin, red           ] (0,4)--(0,0);

    \fill[black] (0,0)       circle (1.00pt);
    \fill[black] (8,0)       circle (1.00pt);
    \fill[black] (8,4)       circle (1.00pt);
    \fill[black] (0,4)       circle (1.00pt);

    \foreach \x in {1, ..., 4} 
    {
    \draw[thin, blue] (\x+1,2.5)--(\x,3.5);
    \fill[black] (\x+1,2.5)  circle (1.00pt);
    \fill[black] (\x  ,3.5)  circle (1.00pt);
    }

    \foreach \x in {5,6} 
    {
    \draw[thin, blue] (\x,2.5)--(\x-2,1.5);
    \fill[black] (\x  ,2.5)  circle (1.00pt);
    \fill[black] (\x-2,1.5)  circle (1.00pt);
    }

    \foreach \x in {4,...,6} 
    {
    \draw[thin, blue] (\x,1.5)--(\x,0.5);
    \fill[black] (\x,1.5)  circle (1.00pt);
    \fill[black] (\x,0.5)  circle (1.00pt);
    }

    \node at (1.375,3) {\(s_1\)};
    \node at (2.375,3) {\(s_2\)};
    \node at (3.375,3) {\(s_3\)};
    \node at (4.375,3) {\(s_4\)};
    \node at (1.725,3) {\(s_4\)};
    \node at (2.725,3) {\(s_1\)};
    \node at (3.725,3) {\(s_2\)};
    \node at (4.725,3) {\(s_3\)};

    \node at (4.725,2) {\(s_6\)};
    \node at (4.275,2) {\(s_6\)};
    \node at (3.725,2) {\(s_5\)};
    \node at (5.275,2) {\(s_5\)};

    \node at (3.875,1) {\(a\)};
    \node at (4.875,1) {\(b\)};
    \node at (5.875,1) {\(c\)};
    \node at (6.125,1) {\(a\)};
    \node at (4.125,1) {\(b\)};
    \node at (5.125,1) {\(c\)};

    \end{tikzpicture}
    \caption{Realising a discrete representation in the stratum \(\mathcal H_8(\,2,\,3,\,3,\,4\,)\). Slit blue segments and then re-glue the resulting edges with the same label.}
    \label{fig:disfilsthree}
\end{figure}

\noindent As already alluded to above, the minimal strata is not covered by the above algorithm. A different construction is needed and it is shown in the figure below. We begin, as before, by considering the parallelogram \(\textnormal{P}\) associated with \(\chi(\,\alpha_1\,)\) and \(\chi(\,\beta_1\,)\) in \(\mathbb{C}\), which provides complex charts on the corresponding torus. We first make a horizontal slit of length one in \(\textnormal{P}\). Additional slits are then drawn along segments that are translates of the first one by \(n = 2, 3, 5, \dots, 2g-3\), see Figure \ref{fig:disfilsmin}. Slits with the same label are identified. Since \(\textnormal{vol}(\,\chi\,)\ge2g-1\), there is always enough room to perform this construction.

\begin{figure}[h!]
    \centering
    \begin{tikzpicture}[scale=1.5, every node/.style={scale=0.675}]
    \definecolor{pallido}{RGB}{221,227,227}

    \pattern [pattern=north east lines, pattern color=pallido]
    (0,0)--(10,0)--(10,2)--(0,2)--(0,0);
    
    \draw[thin, orange        ] (0, 0)--(10,0);
    \draw[thin, red           ] (10,0)--(10,2);
    \draw[thin, orange        ] (10,2)--( 0,2);
    \draw[thin, red           ] ( 0,2)--( 0,0);

    \fill[black] ( 0,0)       circle (1.00pt);
    \fill[black] (10,0)       circle (1.00pt);
    \fill[black] (10,2)       circle (1.00pt);
    \fill[black] ( 0,2)       circle (1.00pt);

    \draw[blue] (1,1)--(2,1);
    \draw[blue] (3,1)--(5,1);
    \draw[blue] (6,1)--(7,1);
    \draw[blue] (8,1)--(9,1);

    \fill[black] (1,1)       circle (1.00pt);
    \fill[black] (2,1)       circle (1.00pt);
    \fill[black] (3,1)       circle (1.00pt);
    \fill[black] (4,1)       circle (1.00pt);
    \fill[black] (5,1)       circle (1.00pt);
    \fill[black] (6,1)       circle (1.00pt);
    \fill[black] (7,1)       circle (1.00pt);
    \fill[black] (8,1)       circle (1.00pt);
    \fill[black] (9,1)       circle (1.00pt);

    \node at (1.5,1.125) {\(a\)};
    \node at (1.5,0.875) {\(b\)};
    \node at (3.5,1.125) {\(b\)};
    \node at (3.5,0.875) {\(c\)};
    \node at (4.5,1.125) {\(c\)};
    \node at (4.5,0.875) {\(d\)};
    \node at (6.5,1.125) {\(d\)};
    \node at (6.5,0.875) {\(e\)};
    \node at (8.5,1.125) {\(e\)};
    \node at (8.5,0.875) {\(a\)};

    \end{tikzpicture}
    \caption{Realising a discrete representation in the stratum \(\mathcal H_5(\,8\,)\). Slit blue segments and then re-glue the resulting edges with the same label.}
    \label{fig:disfilsmin}
\end{figure}

\medskip

\paragraph{\textit{Genus two case}} The constructions described above rely on the existence of a “good” homology basis, which is guaranteed by Kapovich in \cite{KM2} as soon as \(g \ge 3\). In principle, the genus two case remains uncovered. The techniques employed by Le Fils, however, extend \emph{mutatis mutandis} to genus two in the case of discrete representations, in particular, he shows the existence of a good basis in genus two that allows the same construction to be applied. An ad-hoc argument is instead required for non-discrete representations.  To determine a good basis in this case, a direct approach is used. Le Fils shows that given a representation \(\chi\), if there exists a basis in homology \(\{\alpha_i,\beta_i\}\) such that
\begin{equation}
    0\,<\,\Im\Big(\,\,\overline{\chi(\,\alpha_2\,)}\,\chi(\,\beta_2\,)\Big)\,<\,\frac{\sqrt{3}}{2}\Im\Big(\,\,\overline{\chi(\,\alpha_1\,)}\,\chi(\,\beta_1\,)\Big)
\end{equation}
holds, then \(\chi\) is the period character of a translation surface in \(\mathcal H_2(1,1)\) and of a translation surface in \(\mathcal H_2(\,2\,)\). Once a homology basis is fixed and given a non-discrete representation \(\chi_1\), the proof proceeds iteratively to show the existence of a representation \(\chi_2\) such that the fixed basis is a good basis for \(\chi_2\) and there exists a homeomorphism \(h \colon \Sigma \to \Sigma\) such that \( \chi_2 = \chi_1 \circ h_*\). Since \(\chi_2\) can be realised in both strata, the same must hold for \(\chi_1\).


\smallskip

\subsection{Relative version of Haupt's Theorem}\label{ssec:relative version} A representation \(\chi\) encodes the so-called absolute periods, that is, the periods of all closed curves on the surface. However, the representation also captures information about the so-called \textit{relative}\index{period!relative} periods. A relative period is defined as the evaluation of a differential along a saddle connection. To account for these as well, one needs to consider relative homology. Every pair \((X, \omega)\) determines a set \(\Delta\) as the locus of zeros of the differential, and a representation
\begin{equation}
\chi \colon \textnormal{H}_1(\,\Sigma,\,\Delta,\,\mathbb{Z}\,) \longrightarrow \mathbb{C}
\end{equation}
in relative homology. As in \eqref{eq:permap}, one may define a \textit{relative\index{period!relative map} period map} as \begin{equation}\label{eq:relpermap}
\textnormal{relPer}(\,\mu\,)\colon\Omega\mathcal{S}_{g}(\,\mu\,)\longrightarrow \textnormal{H}^1\big(\,\Sigma,\,\Delta,\,\mathbb C\,\big)\cong\text{Hom}\,\Big(\,\textnormal{H}_1(\,\Sigma,\,\Delta,\,\mathbb Z\,),\,\mathbb{C}\,\Big),
\end{equation}
that associates to every homologically marked translation surface its relative period character. There is an obvious surjection \(\textnormal{abs}\colon\textnormal{H}^1\big(\,S_g,\,Z,\,\mathbb C\,\big)\longrightarrow \textnormal{H}^1\big(\,S_g,\,\mathbb C\,\big)\) which may be regarded as a forgetful map. 

\smallskip

\textit{Convention.} It will be useful to adopt the following terminology. Let \(\chi\colon \textnormal{H}_1(\,\Sigma,\,\Delta,\,\mathbb{Z}\,) \longrightarrow \mathbb{C}\) be a relative period representation. Its absolute part, denoted by \(\chi_{a}\), is defined as the restriction of \(\chi\) to the absolute homology \(\textnormal{H}_1(\,\Sigma,\,\Z\,)\).

\smallskip

\noindent The composition map \(\textnormal{abs}\circ\textnormal{relPer}(\,\mu\,)\) thus yields an association whose image is the set of all period representations that arise as the absolute part of some relative period representation \(\chi\). In principle, the following inclusion holds: \(\textnormal{Im}\big(\,\textnormal{abs}\circ\textnormal{relPer}(\,\mu\,)\,\big)\subseteq\textnormal{Im}\big(\,\textnormal{Per}\,\big)\). 

\begin{rmk}[]
    According to Remark \ref{rmk:localcoord}, the moduli space \(\mathcal H_g(\,\mu\,)\) is locally modelled on the relative homology group \(\textnormal{H}^1(\,\Sigma,\,\Delta,\,\C\,)\). More precisely, for every translation surface \((X,\omega)\in\mathcal H_g(\,\mu\,)\) there exists an open set \(\mathcal U_{(X,\omega)}\) and a biholomorphism \(\varphi\colon\mathcal U_{(X,\omega)}\longrightarrow\varphi(\,\mathcal U_{(X,\omega)}\,)\subseteq\textnormal{H}^1(\,\Sigma,\,\Delta,\,\C\,)\). By the analytic continuation property, \(\varphi\) extends to a multivalued map \(\overline{\varphi}\colon\mathcal H_g(\,\mu\,)\longrightarrow\textnormal{H}^1(\,\Sigma,\,\Delta,\,\C\,)\). The \(\textnormal{relPer}(\,\mu\,)\) map in \eqref{eq:relpermap} is the lift of \(\overline{\varphi}\) on the space \(\Omega\mathcal S_g(\,\mu\,)\), the smallest covering of \(\mathcal H_g(\,\mu\,)\) on which the lift of \(\varphi\) is a genuine map. In \cite{FS}, Filip defines the \textit{Developing map}
    \begin{equation}
        \textnormal{Dev}_{\mu}\colon\widetilde{\mathcal H_g}(\,\mu\,)\longrightarrow \textnormal{H}^1(\,\Sigma,\,\Delta,\,\C\,),
    \end{equation}
    as the lift of \(\varphi\) to the the universal cover of the stratum \(\mathcal H_g(\,\mu\,)\).  It is equivariant with respect to a representation
    \begin{equation}
        \rho_{\mu}\colon\pi_1^{\textnormal{orb}}\Big(\,\mathcal H_g(\,\mu\,),\,(X,\omega)\,\Big)\longrightarrow \textnormal{PMod}_{g,k}
    \end{equation}
    where \(\textnormal{PMod}_{g,k}\) denotes the pure mapping class group, that is the group of homotopy classes of diffeomorphisms of \(X\) that preserves \(\Delta\) pointwise. We also observe that this terminology is not casual; in fact, by adopting the language of geometric structures, \(\mathcal H_g(\,\mu\,)\) carries a geometric structure locally modelled on the vector space \(\textnormal{H}^1(\,\Sigma,\,\Delta,\,\C\,)\) for which \(\big(\,\textnormal{Dev}_{\mu},\,\rho_{\mu}\,\big)\) is the developing-holonomy pair.
\end{rmk}

\noindent In \cite[Question 3.1.15]{FS}, Filip asked to determine the image of the Developing map \(\textnormal{Dev}_{\mu}\). This is equivalent to determine the image of the \(\textnormal{relPer}(\,\mu\,)\) as in \eqref{eq:relpermap}. In fact, since the Torelli subgroup \(\mathcal I_{g,k}\)\footnote{This is the subgroup of \(\textnormal{PMod}_{g,k}\) that fixes every class in absolute homology.} acts trivially in homology, it follows that \(\textnormal{Dev}_{\mu}\) and \(\textnormal{relPer}(\,\mu\,)\) have the same image. Recent independent works by Chen--Faraco \cite{CFrel} and Le Fils in \cite{filsrel} have shown that

\begin{thm}[ Chen-Faraco, Le Fils ]\label{thm:chenfarfils}
    \[
    \textnormal{Im}\big(\,\textnormal{abs} \circ \textnormal{relPer}(\,\mu\,)\,\big) = \textnormal{Im}\big(\,\textnormal{Per}(\,\mu\,)\,\big)
    \]
\end{thm}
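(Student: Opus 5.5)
The plan is to prove the equality by exhibiting a commutative square relating the two period maps, and then checking that the map connecting their domains is surjective. Let \(\Omega\mathcal S_g(\,\mu\,)\) denote the space of translation surfaces in \(\strata\) marked in relative homology \(\textnormal{H}_1(\,\Sigma,\,\Delta,\,\mathbb Z\,)\), on which \(\textnormal{relPer}(\,\mu\,)\) is defined as in \eqref{eq:relpermap}. Let \(\textnormal{Per}(\,\mu\,)\) be the restriction of the period map \eqref{eq:permap} to surfaces in \(\strata\) marked in absolute homology. The long exact sequence of the pair \((\Sigma,\Delta)\) gives a short exact sequence
\begin{equation}
0\longrightarrow \textnormal{H}_1(\,\Sigma,\,\mathbb Z\,)\overset{\iota}{\longrightarrow}\textnormal{H}_1(\,\Sigma,\,\Delta,\,\mathbb Z\,)\overset{\partial}{\longrightarrow}\widetilde{\textnormal{H}}_0(\,\Delta,\,\mathbb Z\,)\longrightarrow 0,
\end{equation}
where injectivity of \(\iota\) holds because \(\textnormal{H}_1(\Delta,\mathbb Z)=0\). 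Restricting a relative marking along \(\iota\) therefore defines a forgetful map
\begin{equation}
\mathfrak f\colon\Omega\mathcal S_g(\,\mu\,)\longrightarrow\{\text{homologically marked surfaces in }\strata\}.
\end{equation}

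The first step is to verify the identity \(\textnormal{abs}\circ\textnormal{relPer}(\,\mu\,)=\textnormal{Per}(\,\mu\,)\circ\mathfrak f\). This is immediate from the definitions. If \(\chi\) is the relative period character of \((X,\omega)\), then its absolute part \(\chi_a=\chi\circ\iota\) evaluates \(\omega\) on closed cycles, and these values are exactly the absolute periods recorded by \(\textnormal{Per}\), namely the holonomy, by Lemma~\ref{lem:charholo}. This identity already yields the inclusion \(\subseteq\), which the paper notes just before the statement.

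The second step is the reverse inclusion, which reduces to showing that \(\mathfrak f\) is surjective. Let \((X,\omega)\) be a surface in \(\strata\) with zeros \(\Delta=\{p_o,\dots,p_n\}\), marked by a symplectic basis \(\{\alpha_i,\beta_i\}\) of \(\textnormal{H}_1(X,\mathbb Z)\). I will choose embedded arcs \(\delta_1,\dots,\delta_n\) joining \(p_o\) to \(p_1,\dots,p_n\), exactly as in \S\ref{ssec:percoordinates}. Their classes map under \(\partial\) to the basis \(\{p_j-p_o\}\) of \(\widetilde{\textnormal{H}}_0(\Delta,\mathbb Z)\), so they give a splitting of the sequence above. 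Hence \(\{\alpha_i,\beta_i,\delta_j\}\) is a basis of \(\textnormal{H}_1(X,\Delta,\mathbb Z)\) that extends the given absolute marking. The resulting relatively marked surface is a preimage of \((X,\omega)\) under \(\mathfrak f\).

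Consequently, every character in \(\textnormal{Im}(\,\textnormal{Per}(\,\mu\,)\,)\) equals \(\textnormal{abs}\) of the relative character \(\bigl(\int_{\alpha_i}\omega,\int_{\beta_i}\omega,\int_{\delta_j}\omega\bigr)\), and this gives \(\supseteq\). Both images then coincide with the set described by the theorem of Bainbridge--Johnson--Judge--Park and Le Fils.

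The only point I expect to need care, though it is not a genuine obstacle, is the compatibility of markings. The relative marking must restrict on \(\textnormal{H}_1(\Sigma,\mathbb Z)\) to precisely the given homological marking, rather than to one differing by a Torelli element. This requires fixing the identification \(\textnormal{H}_1(\Sigma,\mathbb Z)\hookrightarrow\textnormal{H}_1(\Sigma,\Delta,\mathbb Z)\) on the reference surface once and for all, and transporting it via the marking homeomorphism. Once this is done, the images agree because the period character is Torelli-invariant.

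The substantive problem is a different one: determining which relative extensions of a given \(\chi_a\) are realisable in \(\strata\). That problem is not needed here.
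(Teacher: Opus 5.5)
Your argument is correct, but it takes a different route from the one the paper indicates.

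The paper gives no self-contained proof. It attributes the identity to Chen--Faraco and Le Fils, who obtain it from a relative refinement of Haupt's theorem, that is, a description of $\textnormal{Im}(\textnormal{relPer}(\mu))$ itself, proved by explicit constructions. When the absolute part is discrete, realisability in a component is governed by two things: the classes $\Delta_1,\dots,\Delta_\ell$ of zeros joined by lattice-valued relative periods, and the bound $\textnormal{vol}(\chi_a)\ge \mathrm{m}(\chi,\mu^\vdash)\,\textnormal{area}(\C/\Lambda)$. Applying $\textnormal{abs}$ then yields the identity. For instance, choose relative periods outside $\Lambda$, so that every class is a single zero and the bound reduces to the $\max_i\{m_i+1\}$ condition of Bainbridge--Johnson--Judge--Park and Le Fils.

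You instead observe that the displayed equality is formal. You have $\textnormal{abs}\circ\textnormal{relPer}(\mu)=\textnormal{Per}(\mu)\circ\mathfrak f$, and the forgetful map $\mathfrak f$ is onto because every absolute marking extends to a relative one. You can see this by adding arcs $\delta_j$, or equivalently by isotoping the marking homeomorphism so that it sends the reference points to the correctly labelled zeros; this leaves the induced map on $\textnormal{H}_1(\Sigma,\Z)$ unchanged.

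Your route is far more elementary, and it shows that the literal identity needs no realisation theorem. The possible extra obstructions for discrete absolute part that the paper mentions restrict which relative characters occur, not which absolute parts occur. What your argument does not give, and as you say does not need, is the characterisation of which relative extensions of a given $\chi_a$ are realisable. That characterisation is the real content of the cited works and the answer to Filip's question.
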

\noindent by providing a relative refinement of Haupt's theorem, thereby giving a complete answer to the aforementioned question posed by Filip. We may notice that, in principle, \(\textnormal{Im}\big(\,\textnormal{abs}\circ\textnormal{relPer}(\,\mu\,)\,\big)\subseteq\textnormal{Im}\big(\,\textnormal{Per}(\,\mu\,)\,\big)\). Although it may not be surprising that every non-discrete absolute period representation arises as the absolute part of some relative period representation, in principle, it is not clear whether other obstructions arise for relative period representations with discrete absolute period representation. To understand better this point, we need to make a brief digression into Hurwitz realisation problem in \S\ref{sssec:Hurwitz} and then explain how it is related to Haupt's theorem in \S\ref{sssec:fromrelreptobranching}.

\subsubsection{Hurwitz realisation problem}\label{sssec:Hurwitz} Let \(f\colon \Sigma\longrightarrow S\) be a degree \(d\) branched covering of surfaces. For every \(p\in S\) there is a partition \(\lambda(\,p\,)=\big[\,m_1+1,m_2+1,\dots,m_r+1\,\big]\) of \(d\) such that, over an appropriate neighbourhood of \(p\), the covering map \(f\) is equivalent to a map
\begin{equation}
    \phi\colon\{\,1,\dots,r\,\}\,\times\,\C\longrightarrow \C, \quad \text{ where }\quad \phi(\,i,z\,)=z^{m_i+1},
\end{equation}
and \(p\) correspond to \(0\in\C\), see \cite{EKS}. If \(\lambda(\,p\,)\) is the trivial partition \(\big[\,1,1,\dots,1\,\big]\) for every \(p\in S\), then \(f\) is a genuine covering map. Otherwise, the set of points of \(S\) for which \(\lambda(\,p\,)\) is not trivial forms the \textit{branched set} \(\mathcal B(\,f\,)\) of \(f\). The collection \(\mathcal D(\,f\,)=\big\{\,\lambda(\,p\,)\,|\,p\in\mathcal B(\,f\,)\,\big\}\) is called \textit{branching datum}, where repetitions are allowed. Since \(f\) is a branched covering map, it is subject to the Riemann--Hurwitz theorem. In our notation
\begin{equation}\label{eq:riemannhurwitzid}
    2g-2\,=\,d\,(\,2h-2\,)\,+\sum_{p\in\mathcal B(\,f\,)}\,\sum_{i\in\lambda(\,p\,)}\,m_i,
\end{equation}
where \(g\) and \(h\) denote the genus of \(\Sigma\) and \(S\) respectively. The branching datum \(\mathcal D(\,f\,)\) can be seen as an \textit{algebraic datum} determined by the branched covering map that satisfies the Riemann--Hurwitz identity. 

\medskip

\begin{quote}\textbf{Hurwitz realisation problem}:
    Given two topological surfaces \(\Sigma\) and \(S\), a positive integer \(d\), a finite set \(\mathcal B=\{\,p_1,\dots,p_n\}\subset S\) and an algebraic datum \(\mathcal D=\big\{\,\lambda(\,p\,)\,|\,p\in\mathcal B\,\big\}\), \textit{i.e.} a collection of partitions of \(d\) that satisfies \eqref{eq:riemannhurwitzid}, 
    determine whether there exists a branched covering map \(f\colon\Sigma\longrightarrow S\) such that \(\mathcal D(\,f\,)=\mathcal D\).
\end{quote}

\medskip

\noindent In \cite{EKS}, Edmonds--Kulkarni--Stong show that every algebraic data is realisable as the branching datum of some branched cover \(f:\Sigma\longrightarrow S\), provided \(h\ge1\). A branched covering map \(f\colon\Sigma\longrightarrow S\) is said to be \textit{primitive} if the map \(f_*\) between fundamental groups is surjective. Two decades later, in their works \cite{BGKZ} and \cite{BGKZ2}, Bogatyi--Gonçalves--Kudryavtseva--Zieschang have strengthened Edmonds--Kulkarni--Stong's result by showing that an algebraic datum \(\mathcal D\) can be realised by a primitive branched covering map provided \(h\ge1\).

\smallskip

\noindent We now reconnect the Hurwitz realisation problem with Haupt's theorem. In \S\ref{ssec:haupt} we have seen that every discrete representation \(\chi\) of degree \(d\ge2\) is uniquely determined (up to homotopy) by a branched covering map \(f\colon\Sigma\longrightarrow T\) with algebraic branching datum \(\mathcal D(\,f\,)\). The leading question here is

\smallskip

\begin{quote}
    \textit{How much information of \(\mathcal D(\,f\,)\) does the representation \(\chi\) encode?}
\end{quote}

\smallskip

\noindent It is a consequence of Haupt's theorem in \S\ref{ssec:haupt} that a discrete representation \(\chi\) encodes very little information. In fact, a representation \(\chi\) encodes \textit{only} the degree of the map inducing it. From the point of view of coverings, a representation in relative homology with discrete absolute period part encodes much more information. It encodes the degree \(d\) of the branched covering and the cardinality of the branched set (see \S\ref{sssec:fromrelreptobranching}). A branching datum is specified once a partition of \(d\) is prescribed for every point of the branching set. The key point is that an algebraic datum \(\mathcal D\) determines a well defined a signature \(\mu\), that is, a partition of \(2g-2\) (see formula \eqref{eq:riemannhurwitzid}; if \(h=1 \), then the local degrees determine a partition of \(2g-2\)). Conversely, from a given signature one can always extract a branching datum, although it need not be unique, see Example \ref{ex:differentbranchingdata}. We discuss this in more detail in the following section.

\smallskip

\subsubsection{From relative representations to branching datum}\label{sssec:fromrelreptobranching} We now show how a discrete relative representation yields a well defined branching datum and then state the relative version of the Haupt theorem.

\smallskip

\noindent Let \(\chi \in \textnormal{H}^1(\,\Sigma,\,\Delta,\,\mathbb{C}\,)\) be a representation with discrete absolute period part \(\chi_a\). Without loss of generality we may assume that \(\textnormal{Im}(\,\chi_a\,)=\Gamma=\ziz\). Decompose \(\Delta = \{z_1, \ldots, z_k\}\) into maximum disjoint subsets, say \(\Delta_1, \ldots, \Delta_{\ell}\), such that for any \(\gamma\) joining a pair of zeros \(z_{i_1}, z_{i_2} \in \Delta_i\) the image \(\chi(\,\gamma\,) \in \Gamma\). Then \(\chi\) arises as the period character of some pair \((X,\omega)\) of type \(\mu = (m_1, \dots, m_k)\) if and only if there exists a primitive branched covering map \(\pi\colon X \longrightarrow \mathbb{C}/\Gamma\) of degree \(d\ge \mathrm{vol}(\,\chi\,)\) such that \(\pi\) has \(\ell\) ramification points, say \(y_1, \ldots, y_\ell \in \mathbb{C}/\Gamma\), and over each \(y_i\) the ramification points are the zeros in \(\Delta_i\) with ramification orders given by the zero orders. 

\smallskip

\noindent Let \(\mu^{\vdash} = (\,\mu_1;\, \dots;\, \mu_\ell\,)\) be a partition of a given signature \(\mu\) into \(\ell\) disjoint subsets, where each \(\mu_i\) records the orders of the zeros in \(\Delta_i\). Let \(|\,\mu_i\,|\) be the sum of the entries of \(\mu_i\) and \(|\,\Delta_i\,|\) be the cardinality of the subset. Define 
\begin{equation}\label{eq:neccondtoreal}
    \mathrm{m}(\,\chi,\, \mu^\vdash\,) = \max_{1\le i \le \ell} \Big\{\, |\,\mu_i\,| + |\,\Delta_i\,| \,\Big\}.
\end{equation}

\noindent As a consequence, by considering the degree of \(\pi\), the above description implies that the following condition is necessary for a period character \(\chi\) to be realisable: 
\begin{eqnarray}
\label{eq:hol-necessary}
 \deg(\,\pi\,) = \mathrm{vol}(\,\chi_a\,) \ge \mathrm{m}(\,\chi,\, \mu^\vdash\,) =\max_{1 \le i \le \ell}\,\,\left\{\,\sum_{z_j \in \Delta_i} (m_j + 1)\,\right\}.
\end{eqnarray}

\smallskip

\begin{thm}
Let \(\chi \in \textnormal{H}^1(\,\Sigma,\,\Delta,\,\mathbb{C}\,)\) and \(\mathcal K \subset \mathcal{H}_g(\,m_1, \dots, m_k\,)\) be a connected component of a stratum of abelian differentials. There exists \((X,\omega) \in \mathcal K\) with period map \(\chi\) if and only if the following conditions hold:
\begin{enumerate}
    \item \(\textnormal{vol}(\,\chi_a\,) > 0\),
    \smallskip
    \item if \(\textnormal{Im}(\,\chi\,)\) is a lattice and \(\mu^\vdash\) is a partition of \(\mu\) into \(\ell + 1\) disjoint subsets, then
    \begin{equation}\label{eq:neccondrela}
        \textnormal{vol}(\,\chi\,) \ge \, \max_{1 \le i \le \ell} \,\,\left\{\,\sum_{z_j \in \Delta_i} (m_j + 1)\,\right\}\,\mathrm{Area}(\mathbb{C}/\Lambda) ,
    \end{equation}
    where \(\Delta_1, \dots, \Delta_{\ell}\) is the partition of \(\Delta\) determined by \(\chi\).
\end{enumerate}
\end{thm}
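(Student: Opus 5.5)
Necessity and sufficiency will be handled separately, and the sufficiency direction will be split according to whether the absolute part \(\chi_a\) is discrete.

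\emph{Necessity.} Condition (1) follows from the Riemann bilinear relations exactly as in \S\ref{sssec:volcond}, since only absolute periods enter. For condition (2), normalise \(\textnormal{Im}(\,\chi_a\,)=\Gamma=\ziz\). The developing map of \((X,\omega)\) descends to a branched cover \(\pi\colon X\to\mathbb C/\Gamma\) of degree \(\textnormal{vol}(\,\chi_a\,)/\textnormal{area}(\mathbb C/\Gamma)\). Two zeros \(z_{i_1},z_{i_2}\) have the same image under \(\pi\) exactly when the relative period of an arc joining them lies in \(\Gamma\). Hence each block \(\Delta_i\) lies in a single fibre \(\pi^{-1}(y_i)\), and the local degree at \(z_j\) is \(m_j+1\). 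Summing local degrees over a fibre gives \(\sum_{z_j\in\Delta_i}(m_j+1)\le\deg\pi\), and this yields \eqref{eq:neccondrela}.

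\emph{Sufficiency, non-discrete \(\chi_a\).} By Bainbridge--Johnson--Judge--Park and Le Fils, there is \((X_0,\omega_0)\in\mathcal K\) with absolute period character \(\chi_a\). I then adjust the relative periods inside the isoperiodic leaf. In period coordinates (Remark \ref{rmk:localcoord}), the relative periods \(\int_{\delta_j}\omega\) with the absolute part fixed are free local coordinates. Moving zeros along twin paths (Definition \ref{def:move}) changes a single relative period by an arbitrary small vector, keeps the absolute periods, and stays in \(\mathcal K\). To reach an arbitrary target, not just a nearby one, I would use that relative periods only matter modulo \(\textnormal{Im}(\chi_a)\): changing the arc \(\delta_j\) by a closed curve shifts the period by an element of \(\textnormal{Im}(\chi_a)\), which is dense. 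So any target value differs from a value attained near \((X_0,\omega_0)\) by an element of \(\textnormal{Im}(\chi_a)\). Choose \(\gamma\) closed with \(\chi_a(\gamma)\) equal to that element, and precompose the marking by a mapping class in \(\textnormal{PMod}_{g,k}\) that acts trivially on absolute homology and sends \(\delta_j\) to \(\delta_j+\gamma\), for instance a point-push of \(z_j\) along \(\gamma\). I expect the main obstacle to be the case where pure point-pushes do not act transitively enough, in particular when several zeros must be adjusted simultaneously. There I would first try to argue via the Calsamiglia--Deroin--Francaviglia ergodicity results that isoperiodic leaves are dense.

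\emph{Sufficiency, discrete \(\chi_a\).} Here the relative character determines a partition \(\Delta_1,\dots,\Delta_\ell\) and hence branching data over \(\ell\) distinct points \(y_i\in\mathbb C/\Gamma\). The points \(y_i\) have prescribed mutual differences modulo \(\Gamma\), and these are distinct by maximality of the blocks. Over \(y_i\) I take the partition consisting of \((m_j+1)_{z_j\in\Delta_i}\), padded with \(1\)'s up to \(d=\deg(\chi_a)\); this is possible precisely by \eqref{eq:neccondrela}. Riemann--Hurwitz with \(h=1\) holds automatically. By Bogatyi--Gonçalves--Kudryavtseva--Zieschang, there is a primitive branched cover realising these data. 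Pulling back \(dz\) gives \((X,\omega)\) in the stratum, with the prescribed absolute character and, after a choice of arcs, the prescribed relative periods. By the Gabai--Kazez-type argument of \S\ref{ssec:refinements}, together with homeomorphisms preserving \(\Delta\), the remaining work is to match the character exactly and to reach the component \(\mathcal K\). For the component, I would adapt the square-tiled constructions of BJJP (spin parity and hyperelliptic cases), placing the cut segments so that the branch values sit at the points \(y_i\). This component control is where I expect the most case work.
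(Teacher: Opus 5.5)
Your necessity argument is the paper's own. In the discrete case, your reduction (BGKZ for the padded branching data over the points $y_i$, then mapping-class-group transitivity on surjective characters $\textnormal{H}_1(\Sigma,\mathbb Z)\to\Gamma$ of fixed degree, then point-pushes) does prove the statement at the level of strata. When $\textnormal{Im}(\chi_a)$ is dense, your non-discrete argument is also complete. The paper does not prove sufficiency itself; it defers to the explicit constructions of Chen--Faraco and Le Fils. Your indirect route leaves two genuine gaps.

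\emph{First gap: non-discrete does not imply dense.} A non-discrete subgroup of $\mathbb C$ spanning it over $\mathbb R$ can have closure $\mathbb R v\oplus\mathbb Z w$. For example, in genus two take $\chi_a(\alpha_1)=1$, $\chi_a(\beta_1)=i$, $\chi_a(\alpha_2)=\sqrt2$, $\chi_a(\beta_2)=0$. This has volume $1$ and image $\mathbb Z+\sqrt2\,\mathbb Z+i\,\mathbb Z$. That is not a lattice, so the theorem only requires $\textnormal{vol}>0$, but its closure is $\mathbb R+i\,\mathbb Z$. In this situation the horizontal foliation is completely periodic. The imaginary part of $\int_{\delta_j}\omega$ modulo $\mathbb Z$, i.e. the height of $z_j$ relative to $z_0$, is unchanged by every re-marking. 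Your set ``small neighbourhood plus $\textnormal{Im}(\chi_a)$'' is therefore only a union of strips of width $2\varepsilon$ around the initial heights. Reaching an arbitrary height needs one of two things:
\begin{itemize}
\item a long-range isoperiodic deformation, for instance moving zeros vertically while dodging collisions, with control showing that the admissible step size does not shrink to zero;
\item a direct construction.
\end{itemize}
Iterating small moves is not enough. The Calsamiglia--Deroin--Francaviglia leaf-closure results do not fill this in either: they concern leaves in $\Omega\mathcal M_g$ rather than in $\mathcal K$, and they do not yield surjectivity of relative periods. On the other hand, the obstacle you anticipated is not one. Pushing $z_j$ along a loop changes only $[\delta_j]$ in $\textnormal{H}_1(\Sigma,\Delta,\mathbb Z)$, so the zeros can be adjusted independently.

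\emph{Second gap: component control in the discrete case.} For non-connected strata, the whole difficulty is postponed to ``adapt BJJP''. The BGKZ cover carries no information about spin parity or hyperelliptic structure. Re-marking never changes $(X,\omega)$, hence never changes its component. BJJP's recipes choose the slit segments to control parity, not the fibre over which the new zeros land. Breaking a zero always puts the new zeros over distinct points, since their relative period is small and non-zero and hence not in $\Gamma$. What is needed is to force the zeros of each block $\Delta_i$ into a common fibre with prescribed orders while landing in the prescribed component. A concrete case is $\mathcal H^{\textnormal{hyp}}_g(g-1,g-1)$ with both zeros over one point, which requires $d\ge 2g$ and a construction of its own. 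This is precisely the new content of the Chen--Faraco and Le Fils constructions. As it stands, in the discrete case your proof covers only connected strata.
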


\noindent One may easily notice that the following chain of inequalities holds
\begin{eqnarray}
    2 \,\le\, \max_{1 \le i \le \ell} (\,m_j + 1\,)\,\le\,\max_{1 \le i \le \ell} \,\,\left\{\,\sum_{z_j \in \Delta_i} (m_j + 1)\,\right\}.
\end{eqnarray}
In the first case the equality holds if \(m_i=1\) and the second equality holds if different zeros project to different ramification values. In fact, the last condition reflects the fact that for a branched covering to exist the total degree must be at least the maximum of the local degrees of the branched covering around zeros projecting to the same ramification value on the torus, see Figure~\ref{fig:degree2}.

\smallskip

\begin{figure}[h!]
    \centering
    \begin{tikzpicture}[scale=1.5, every node/.style={scale=0.875}]
    \definecolor{pallido}{RGB}{221,227,227}

    \draw[black] (-2,0)--(2,0);
    \draw[blue] (-2, 1  )--(-1, 1  ) to[out=0, in=180] (1,1.5)--(2,1.5);
    \draw[blue] (-2, 1.5)--(-1, 1.5) to[out=0, in=180] (1,1)--(2,1);
    \draw[blue] (-2, 2  )--(-1, 2  ) to[out=0, in=180] (1,2.5)--(2,2.5);
    \draw[blue] (-2, 2.5)--(-1, 2.5) to[out=0, in=180] (1,2)--(2,2);
    \draw[dashed] (0,2.25)--(0,0);
    \fill (0,1.25) circle (1pt);
    \fill (0,2.25) circle (1pt);
    \node[red, scale=2] at (0,0) {\(\times\)};
    \node (A) at (1,3) {zeros of order \(1\)};
    \draw[black, ->] (A) to[bend left] (0.125,1.35);
    \draw[black, ->] (A) to[bend right] (0,2.35);
    \node at (0,-0.35) {Branched covering map of degree \(4\)};
    \node at (0,-0.7) {with a two ramification point of order one};

    \begin{scope}[shift={(-5,0)}]
        \draw[black] (-2,0)--(2,0);
        \draw[blue] (-2,1)--(-1.5,1) to[out=0, in=180] (-0.5,2)--(0.5,2) to[out=0, in=180] (1.5,1)--(2,1);
        \draw[blue] (-2,2)--(-1.5,2) to[out=0, in=180] (-0.5,1)--(0.5,1) to[out=0, in=180] (1.5,2)--(2,2);
        \fill (-1,1.5) circle (1pt);
        \fill ( 1,1.5) circle (1pt);
        \node[red, scale=2] at (-1,0) {\(\times\)};
        \node[red, scale=2] at ( 1,0) {\(\times\)};
        \draw[dashed] (-1,1.5)--(-1,0);
        \draw[dashed] ( 1,1.5)--( 1,0);
        \node (B) at (0,2.5) {zeros of order \(1\)};
        \draw[black, ->] (B) to[bend right] (-1,1.75);
        \draw[black, ->] (B) to[bend left ] ( 1,1.75);
        \node at (0,-0.35) {Branched covering map of degree \(2\)};
    \node at (0,-0.7) {with a two ramification point of order one};
    \end{scope}
    \end{tikzpicture}
    \caption{Realisation of a discrete representation \(\chi\) of a genus two surface in the stratum \(\mathcal H_2(1,1)\). The figure shows the branched covering induced by the developing map. If the degree of \(\chi_a\) is two or three, then it does not arise as the period character of a translation surface whose zeroes project to the same point of the target torus. Compare with Figure \ref{fig:degree}.}
    \label{fig:degree2}
\end{figure}

\smallskip

\noindent In both cases, the approaches adopted by Le Fils and Chen--Faraco are constructive and the representations are realised by explicit translation surfaces with prescribed period characters. We shall not summarise the proofs here. In fact, despite the fact that all constructions are news, the gist of the idea and techniques used rely on the earlier works \cite{KM2}, \cite{fils} and \cite{BJJP}.

\smallskip

\begin{figure}[h!]
    \centering
    \begin{tikzpicture}[scale=1.5, every node/.style={scale=0.875}]
    \definecolor{pallido}{RGB}{221,227,227}

    \draw[black] (-3,0)--(3,0);
    \draw[blue] (-3, 1  )--(-2, 1  ) to[out=0, in=180] (-1,2)--(1,2) to[out=0, in=180] (2,1)--(3,1);
    \draw[blue] (-3, 1.5  )--(3, 1.5  );
    \draw[blue] (-3, 2  )--(-2, 2  ) to[out=0, in=180] (-1,1)--(1,1) to[out=0, in=180] (2,2)--(3,2);
    \draw[blue] (-3, 2.5)--(-2, 2.5) to[out=0, in=180] (-1,4)--(3,4);
    \draw[blue] (-3, 3)--(-2, 3) to[out=0, in=180] (-1,3.5)--(3,3.5);
    \draw[blue] (-3, 3.5)--(-2, 3.5) to[out=0, in=180] (-1,3)--(1,3) to[out=0, in=180] (2,2.5)--(3,2.5);
    \draw[blue] (-3, 4)--(-2, 4) to[out=0, in=180] (-1,2.5)--(1,2.5) to[out=0, in=180] (2,3)--(3,3);
    \draw[dashed] (-1.5,4)--(-1.5,0);
    \draw[dashed] ( 1.5,4)--( 1.5,0);
    \fill (-1.5,1.5 ) circle (1pt);
    \fill (-1.5,3.25) circle (1pt);
    \fill ( 1.5,1.5 ) circle (1pt);
    \fill ( 1.5,2.75) circle (1pt);
    \fill ( 1.5,3.5 ) circle (1pt);
    \fill ( 1.5,4   ) circle (1pt);
    \node[red, scale=2] at (-1.5,0) {\(\times\)};
    \node[red, scale=2] at ( 1.5,0) {\(\times\)};
    \node at (0,-0.35) {Branched covering map \(f\) of degree \(7\)};
    \node at (0,-0.7 ) {with a two ramification point};
    \node at (0,-1.05) {\(\mathcal D(\,f\,)=\big\{\,(1,1,2,3),\,(3,4)\,\big\}\)};

    \node[orange] at (-1.75, 1.725) {\(2\)};
    \node[orange] at (-1.75, 3.25 ) {\(3\)};
    \node[orange] at ( 1.75, 1.725) {\(2\)};
    \node[orange] at ( 1.75, 2.75 ) {\(1\)};
    \node[orange] at ( 1.75, 3.375) {\(0\)};
    \node[orange] at ( 1.75, 3.875) {\(0\)};

    \node[scale=1.25] at (0.00,0.25) {\(\C/\Gamma\)};
    \node[scale=1.25] at (0.00,3.75) {\(\Sigma\)};
    
    \begin{scope}[shift={(0,-6)}]
        \draw[black] (-3,0)--(3,0);
        \draw[blue] (-3, 1.5)--(-2, 1.5) to[out=0, in=180] (-1,2)--(1,2) to[out=0, in=180] (2,1)--(3,1);
        \draw[blue] (-3, 2  )--(-2, 2  ) to[out=0, in=180] (-1,1.5)--(3,1.5);
        \draw[blue] (-3, 1)--( 1, 1) to[out=0, in=180] ( 2,2)--(3,2);
        
        \draw[blue] (-3, 3)--(-2, 3) to[out=0, in=180] (-1,3.5)--(1,3.5) to[out=0, in=180] (2,2.5)--(3,2.5);
        \draw[blue] (-3, 3.5)--(-2, 3.5) to[out=0, in=180] (-1,3)--(3,3); 
        \draw[blue] (-3, 4)--(-2, 4) to[out=0, in=180] (-1,2.5)--(1,2.5) to[out=0, in=180] (2,3.5)--(3,3.5);
        
        \draw[blue] (-3, 2.5)--(-2, 2.5) to[out=0, in=180] (-1,4)--(3,4);
        \draw[dashed] (-1.5,4)--(-1.5,0);
        \draw[dashed] ( 1.5,4)--( 1.5,0);
        \fill (-1.5,1.75) circle (1pt);
        \fill (-1.5,3.25) circle (1pt);
        \fill ( 1.5,1.5 ) circle (1pt);
        \fill ( 1.5,3   ) circle (1pt);
        \fill ( 1.5,4   ) circle (1pt);
        \fill (-1.5,1   ) circle (1pt);
        \node[red, scale=2] at (-1.5,0) {\(\times\)};
        \node[red, scale=2] at ( 1.5,0) {\(\times\)};
        \node at (0,-0.35) {Branched covering map \(f\) of degree \(7\)};
        \node at (0,-0.7 ) {with a two ramification point};
        \node at (0,-1.05) {\(\mathcal D(\,f\,)=\big\{\,(1,2,4),\,(1,3,3)\,\big\}\)};

        \node[orange] at (-1.75, 1.225) {\(0\)};
        \node[orange] at (-1.75, 1.725) {\(1\)};
        \node[orange] at (-1.75, 3.25 ) {\(3\)};
        \node[orange] at ( 1.75, 1.625) {\(2\)};
        \node[orange] at ( 1.75, 2.875) {\(2\)};
        \node[orange] at ( 1.75, 3.875) {\(0\)};

        \node[scale=1.25] at (0.00,0.25) {\(\C/\Gamma\)};
        \node[scale=1.25] at (0.00,3.75) {\(\Sigma\)};
    \end{scope}

    \end{tikzpicture}
    \caption{Branched coverings of degree \(7\) associated with different branching data. The orange labels denote the order of zeros once the translation structure on \(\C/\Gamma\) is pulled-back.}
    \label{fig:examplebranchingdata}
\end{figure}

\begin{ex}[Same signature, different branching data \(\mathcal D\)]\label{ex:differentbranchingdata}
    Let \(\mu=(\,1,2,2,3\,)\) be a signature of abelian differentials on a surface of genus \(5\). Let \(\chi\in \textnormal{H}^1(\,\Sigma,\,\Delta,\,\mathbb{C}\,)\) be a representation whose absolute period part has volume \(7\), where \(\Delta\subset\Sigma\) is a finite set of four distinct points. Suppose that \(\chi\) yields a decomposition of \(\Delta\) into two disjoint subsets, say \(\Delta_1\) and \(\Delta_2\), each of which of cardinality two. We ask in how many and in which ways a branching datum \(\mathcal D\) can be prescribed. In other words, we ask in how many and in which ways one can prescribe the local degrees of the covering that we wish to realise. We consider the multi-set \(\{1,2,2,3\}\) and list all its partitions into two subsets of cardinality two (the order of the pairs being irrelevant). There are two possibilities given by \(\mu_1^{\vdash}=\{\,(1,2),(2,3)\,\}\) and \(\mu_2^{\vdash}=\{\,(1,3),(2,2)\,\}\). A direct computation shows that \(\mathrm{m}(\,\chi,\, \mu_1^\vdash\,)=7\) and that \(\mathrm{m}(\,\chi,\, \mu_2^\vdash\,)=6\). In both cases, \(\textnormal{vol}(\,\chi_a\,)\ge\mathrm{m}(\,\chi,\, \mu_i^\vdash\,)\) and hence \(\chi\) can be realised in the stratum \(\mathcal H_5(\,\mu\,)\). By fixing the first partition of \(\mu\), the developing map of the resulting translation surface yields a branched covering \(f_1\colon\Sigma\longrightarrow \C/\Gamma\) with two ramification values, say \(q_1,q_2\), so that \(\Delta_i=f_1^{-1}(\,q_i\,)\) and branching data \(\mathcal D_1=\big\{\,(1,1,2,3),\,(3,4)\big\}\).\footnote{Recall that the order of a zero is one less the local degree, \textit{i.e.} \(d_i=m_i+1\).} In the same fashion, by fixing the second partition of \(\mu\), the developing map of the resulting translation surface yields a branched covering \(f_2\colon\Sigma\longrightarrow \C/\Gamma\) with two ramification values and branching data \(\mathcal D_2=\big\{\,(1,2,4),\,(1,3,3)\big\}\). See Figure \ref{fig:examplebranchingdata}.
\end{ex}

\smallskip

\subsection{Isoperiodic foliations}\label{ssec:isoper} Haupt's theorem, as well as its subsequent refinements, provide only a complete description of the image of the period map. In other words, we know when a representation can be realised as a period character, but, in principle, the resulting structure may not be unique. In this respect, the case of tori is not very interesting, since the fibre over the period map is a single point. Things become truly interesting in genus at least \(g \geq 2\). Translation structures are flexible in the sense that they can be deformed while preserving the absolute periods. The fibre over the holonomy therefore gives a subset of \(\Omega \mathcal{S}_g\) called the \emph{isoperiodic foliation}\index{foliation!isoperiodic}. It is also known as the \emph{absolute period foliation}\index{foliation!absolute period} or the \emph{kernel foliation}\index{foliation!kernel}. Although the terminology is not unique, \emph{isoperiodic foliation} is commonly used. 

\smallskip

\subsubsection{Connectedness of fibres}\label{ssec:conn} A leading question is whether a fibre over a representation is connected. Connectedness means that two structures may be deformed one into another while preserving the absolute periods throughout the deformation. When the absolute periods are preserved, deformations within an isoperiodic leaf may be carried out by varying the relative periods, and this can be achieved through processes such as breaking zeros or moving branch points, as described in~\S\ref{sssec:zerobreak}. 

\smallskip

\noindent Recall that with any period character \(\chi\), one can associate a numerical invariant, called the \textit{degree}, defined as the integer given by the ratio \(\textnormal{vol}(\,\chi\,)/\textnormal{area}(\,\mathbb{C}/\Lambda\,)\) if the image is a lattice, see \eqref{eq:refhaup1}. If the image is not a lattice we set \(\deg(\,\chi\,)=\infty\). A deep result due to Calsamiglia--Deroin--Francaviglia says that the fibre over a representation of degree at least three is connected in the principal stratum \(\mathcal{H}_g(1,\dots,1)\). More precisely,

\begin{thm}[Calsamiglia--Deroin--Francaviglia, \cite{CDF2}]
    For \(g \geq 2\), the fibres of the period map \(\textnormal{Per}\) over points of degree at least three are connected.
\end{thm}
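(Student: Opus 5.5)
The plan has three stages: pass to the principal stratum, analyse the isoperiodic leaves inside it, and then show that distinct leaves over the same character can always be joined. Fix a character \(\chi\) with \(\deg(\,\chi\,)\ge 3\), possibly \(\infty\), and let \(\mathcal F_\chi=\textnormal{Per}^{-1}(\,\chi\,)\subset\Omega\mathcal S_g\).

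\textbf{Reduction to the principal stratum.} Since \(\mathcal H_g(1,\dots,1)\) is open and dense in \(\Omega^*\mathcal M_g\), every fibre point in a lower stratum can be moved into the principal stratum. Breaking its zeros as in \S\ref{sssec:zerobreak} does this along a short path that keeps the absolute periods fixed, and the movement of zeros preserves holonomy. So it suffices to show that \(\mathcal F_\chi\cap\mathcal H_g(1,\dots,1)\) is connected.

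\textbf{Leaves and a model surface.} Inside the principal stratum, the period coordinates of \S\ref{ssec:percoordinates} show that \(\mathcal F_\chi\) is locally the affine subspace of relative periods. It is therefore a complex manifold of dimension \(2g-3\), foliated by the isoperiodic leaves, and the moves of Definition~\ref{def:move} act transitively on each local chart. The task is to connect two arbitrary points \((X_0,\omega_0)\) and \((X_1,\omega_1)\) of the fibre. My first attempt is to degenerate both to a common \emph{model} in the spirit of Kapovich's good bases (\S\ref{ssec:kapapp}). For each \((X_j,\omega_j)\), I would use zero movements to slide saddle connections until the surface decomposes as a connected sum along slits of \(g\) tori \(\mathbb C/\langle\chi(\alpha_i),\chi(\beta_i)\rangle\), each with positive area. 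I expect this to be feasible by analysing the Delaunay or saddle-connection decomposition: while the relative periods vary freely, a cylinder or handle can be isolated by shrinking a saddle connection. Once both surfaces are in this form with respect to possibly different symplectic bases, the fibre components are indexed by the set of realising bases modulo the moves.

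\textbf{Transitivity and the role of the degree.} The next step is to show that the elementary moves relating two good bases, namely slit exchanges between handles (a Dehn-twist-type change of basis), are realised by isoperiodic paths. For non-discrete \(\chi\), Ratner and Moore ergodicity of \(\textnormal{Sp}(2g,\mathbb Z)\) (as in \S\ref{ssec:kapapp}) makes the set of good bases dense enough to connect any two. For a lattice image, I would use the branched-cover description of \S\ref{sssec:latticecond}. Every fibre point in the principal stratum with distinct critical values is a simple branched cover of degree \(d=\deg(\,\chi\,)\) over \(\mathbb C/\Lambda\). Moving a zero corresponds to moving a branch value, so connectedness reduces to connectedness of the Hurwitz space of simple primitive degree-\(d\) covers inducing \(\chi\). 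This space is connected by Gabai--Kazez \cite{GabKaz87}, since braid moves act transitively on monodromy. The hypothesis \(d\ge3\) enters precisely here: for \(d=2\) the hyperelliptic-type covers can split the fibre. The main obstacle I anticipate is the non-discrete case, namely producing explicit isoperiodic paths that realise the generators of the symplectic change of good basis. I would try to handle this by reducing to genus-two subsurfaces and gluing, checking that the moves can always be performed without two zeros colliding.
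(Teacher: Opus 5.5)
Your reduction to the principal stratum is fine. The lattice case, however, contains a real error. Gabai--Kazez \cite{GabKaz87}, which holds for every $d\ge 2$, only shows that the \emph{unmarked} space of primitive simple degree-$d$ covers of $\mathbb C/\Lambda$ is connected, i.e.\ the image of the fibre in $\Omega\mathcal M_g$. The fibre $\textnormal{Per}^{-1}(\chi)$ lives in the homologically marked space $\Omega\mathcal S_g$, and it (orbifold-)covers that image with group $\mathrm{Stab}(\chi)<\mathrm{Sp}(2g,\mathbb Z)$. Concretely, suppose $k\circ p=q\circ h$ with $k$ isotopic to the identity. The isotopy gives an isoperiodic path from $p$ to $q\circ h$, and $q\circ h$ is the surface $q$ with its marking changed by $h_*\in\mathrm{Stab}(\chi)$. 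You would still have to show that every element of $\mathrm{Stab}(\chi)$ is the homological monodromy of a loop of branched covers (the lift of a liftable braid). Nothing in your proposal addresses this, and that surjectivity is the actual content of the theorem for lattice characters. Notice also that your argument never uses $d\ge 3$. Run verbatim for $d=2$, it would prove that degree-two fibres are connected, contradicting the disconnectedness for $g\ge 5$ recorded in \S\ref{ssec:conn}.

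In the non-discrete case, both key claims are asserted rather than proved. The first is that every point of the fibre can be pushed isoperiodically to a slit-sum of $g$ tori of positive area. The second is that any two such models are joined inside the fibre. Ratner and Moore do not give the second: density of the $\mathrm{Sp}(2g,\mathbb Z)$-orbit of $\chi$ is what lets Kapovich find \emph{one} good basis. It says nothing about connecting two good bases of the \emph{same} $\chi$ by isoperiodic paths. These are exactly the inputs Calsamiglia--Deroin--Francaviglia supply, by a different mechanism. They show that isoperiodic degenerations to abelian differentials on stable nodal curves always exist. They then bordify the leaf by such limits, stratified by the number of nodes. Finally, they prove that the bordification is connected and that its connectedness is equivalent to that of the fibre. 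Your Kapovich-type models are essentially interior counterparts of their maximal degenerations. Without the degeneration theorem and the connectedness of the boundary, though, the proposal remains a plan rather than a proof.
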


\noindent The strategy these authors adopt is particularly interesting and an outline of the proof is as follows:
\begin{itemize}
    \item[1.] They analyse the isoperiodic degenerations of abelian differentials towards abelian differentials on stable nodal curves and show that it is always possible to find such degenerations.
    \item[2.] They define a bordification of an isoperiodic leaf by adding nodal differentials arising as limits of degenerations, as in step 1. This bordification is naturally stratified by the number of nodes of the underlying curves.
    \item[3.] They show that bordifications are connected and that the connectedness of the fibre is equivalent to that of its bordification.
\end{itemize}

\noindent As for representations of degree two, the situation is quite different since connectedness also depends on the genus. For \(g = 2, 3\), the fact that the fibres of the period map \(\textnormal{Per}\) over points of primitive degree two are connected was already shown in an earlier work by McMullen; see \cite{McM2}. For \(g \geq 5\), Calsamiglia--Deroin--Francaviglia established the converse in \cite{CDF2}, showing that the fibres over representations of degree two are disconnected. It is currently unclear whether, for \(g = 4\), the representations of degree two have connected fibres, although the authors express a favourable opinion in this direction.


\smallskip

\noindent An even earlier contribution to the field was provided by Schmoll in~\cite{Schmoll}, based on prior work with Eskin and Masur in~\cite{Eskin-Masur-Schmoll}. In~\cite[Theorem 2]{Schmoll}, Schmoll shows that in genus \(g=2\), the fibres, denoted by \(\mathcal{F}_d\), over discrete representations of degree \(d\) are connected. The same result was previously established by Fulton in \cite{Fulton}; however, Schmoll independently recovers this theorem, by employing a different methodological framework. 
These fibres enjoy several desirable properties: they are topological surfaces and naturally come with a covering \(\pi_{\mathcal F}\) over the standard torus \(\mathbb{C}/\Gamma\), where \(\Gamma\) is the standard lattice. \(\mathcal F_d\) carries a natural translation structure determined by \(\omega_d = \pi_{\mathcal F}^* dz\), the pull-back of the standard differential \(dz\) via the covering projection \(\pi_{\mathcal F}\). More specifically, \((\mathcal F_d, \omega_d)\) is a square-tiled surface whose conical singularities correspond to translation surfaces in the minimal stratum \(\mathcal H_2(\,2\,)\) arising as branched coverings of degree \(d\) of the standard torus. Interestingly, the cardinality of the set \(\textnormal{Z}(\,\omega_d\,)\) of zeros of \(\omega_d\) in \(\mathcal F_d\) has been explicitly computed:
\begin{equation}
    \left|\,\textnormal{Z}(\,\omega_d\,)\,\right|\,=\,\frac{3}{8}\,d^2(\,d-2\,)\prod_{\,p|d\,}\left(\,1-\frac{1}{p^2}\,\right),
\end{equation}
 see the Remark after Lemma 4.11 in~\cite{Eskin-Masur-Schmoll} and \cite[Formula (3)]{Schmoll}.

\smallskip

\subsubsection{Ergodicity}\label{sssec:ergo} The preimages of the map \(\textnormal{Per}\) as in \eqref{eq:permap} define a stratification of the space \(\Omega \mathcal{S}_g\) into isoperiodic leaves. This foliation has dimension \(2g - 3\) and is also algebraic: its leaves are solutions to certain systems of algebraic equations with respect to the Deligne--Mumford algebraic structure on the moduli space. The covering projection \(\pi\colon \Omega \mathcal{S}_g \longrightarrow \Omega \mathcal{M}_g\) is \(\textnormal{Sp}(2g, \mathbb{Z})\)-equivariant, and since the leaves are invariant under the action of \(\textnormal{Sp}(2g, \mathbb{Z})\), an induced foliation is well defined on the moduli space of translation structures \(\Omega \mathcal{M}_g\). A central problem is to determine the closures of the leaves.

\begin{thm}[Calsamiglia-Deroin-Francaviglia, \cite{CDF2}]
Let \(g \ge3\) and \((X, \omega) \in \Omega\mathcal{M}_g\) be a translation surface with period character \(\chi\) not of degree two with volume \(\textnormal{vol}(\,\chi\,)\). Let \(\Lambda\) denote the closure of the image of its periods. Then the closure of the isoperiodic leaf passing through \((X, \omega)\), up to the action of \(\mathrm{GL}(2, \mathbb{R})\), is:
\begin{itemize}
  \item[1] If \(\Lambda\) is discrete: the 
  space consisting of genus \(g\) primitive branched coverings over \((\mathbb{C}/\Lambda, dz)\) of volume \(\textnormal{vol}(\,\chi\,)\).
  \item[2] If \(\Lambda = \mathbb{R} + i\mathbb{Z}\): the set of abelian differentials with periods contained in \(\Lambda\), with primitive imaginary part, and with volume \(\textnormal{vol}(\,\chi\,)\).
  \item[3] If \(\Lambda = \mathbb{C}\): the subset of \(\Omega \mathcal{M}_g\) consisting of abelian differentials of volume \(\textnormal{vol}(\,\chi\,)\).
\end{itemize}
\end{thm}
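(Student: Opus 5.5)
The proof splits into the three cases of the statement, according to the closed subgroup $\Lambda=\overline{\textnormal{Im}(\chi)}<\mathbb C$. Up to the action of $\mathrm{GL}(2,\mathbb R)$, a closed subgroup of rank at least two spanning $\mathbb C$ is a lattice, or $\mathbb R+i\mathbb Z$, or $\mathbb C$. Since $\textnormal{vol}(\chi)>0$ by the volume condition of Haupt's Theorem, the image spans $\mathbb C$, so these are the only cases. In every case the inclusion "closure $\subseteq$ candidate set" is the easy direction; the content lies in the reverse inclusion.

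First, the easy inclusions. Along an isoperiodic leaf the period character is constant, so the volume is constant. Periods stay in any closed subgroup containing them, and this condition is closed. In the discrete case, the developing map descends to a branched cover $X\to\mathbb C/\Lambda$, as in the discussion of the lattice condition. Primitivity, i.e. surjectivity of $\chi$ onto $\Lambda$, is preserved because $\chi$ is fixed. Covers of a fixed degree $\textnormal{vol}(\chi)/\textnormal{area}(\mathbb C/\Lambda)$ form a closed set.

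For the reverse inclusion, the plan is as follows.

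\textbf{Discrete case.} By the Calsamiglia--Deroin--Francaviglia connectedness theorem, the fibre is connected when the degree is at least three. All primitive covers of the given degree with character $\chi$ therefore lie in a single leaf. Composing with homeomorphisms realises every character of the same degree, via the Gabai--Kazez argument used in the Bainbridge--Johnson--Judge--Park reduction. Hence the union over the $\mathrm{Sp}(2g,\mathbb Z)$-orbit of $\chi$, which is the projection to $\Omega\mathcal M_g$ of a single leaf, already contains the whole space of such covers.

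\textbf{Non-discrete cases.} I would pass to $\Omega\mathcal S_g$ and use the equivariance of $\textnormal{Per}$. The projection of the leaf through $(X,\omega)$ to $\Omega\mathcal M_g$ is dense in a given open set $U$ if and only if the orbit $\mathrm{Sp}(2g,\mathbb Z)\cdot\chi$ meets $\textnormal{Per}(U)$ inside a fibre. Equivalently, it suffices that the orbit is dense in the appropriate level set and that the fibres are connected.

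For connectedness I again invoke the CDF theorem, since these characters have degree $\infty$. For orbit density I follow Kapovich's approach: Ratner's theorem applied to $\mathrm{Sp}(2g,\mathbb Z)$ acting on $\mathcal X\cong\mathrm{Sp}(2g,\mathbb R)/\mathrm{Sp}(2g-2,\mathbb R)$. The orbit closure is homogeneous, and I identify it from the closed group $\Lambda$. When $\Lambda=\mathbb C$, the orbit is dense in the volume level set. When $\Lambda=\mathbb R+i\mathbb Z$, the orbit is dense in the characters with integral, primitive imaginary part and fixed volume. This uses that $\Im\chi\in\textnormal{H}^1(\Sigma,\mathbb Z)$ is primitive and that $\mathrm{Sp}(2g,\mathbb Z)$ acts transitively on primitive integral classes; the real part then equidistributes in the corresponding stabiliser quotient. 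Combining orbit density with openness of $\textnormal{Per}$ on strata then yields density of the leaf in the candidate set.

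The main obstacle is the Ratner step in the case $\Lambda=\mathbb R+i\mathbb Z$. One must exclude intermediate orbit closures, which is where $g\ge 3$ enters, exactly as in Kapovich's treatment of good bases. One must also check that the real parts, restricted to the kernel of $\Im\chi$, are not trapped in a rational subspace; this is guaranteed by the image being dense in $\mathbb R$.

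A second delicate point is passing from density in $\mathcal X$ to density in $\Omega\mathcal M_g$. This step requires the fibres to be connected over every character in the target set, and it is here that degree two must be excluded.
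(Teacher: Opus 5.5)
Your plan is the right one. The survey only quotes this theorem, and your outline is the transfer-principle argument of Calsamiglia--Deroin--Francaviglia: connected fibres, openness and $\mathrm{Sp}(2g,\mathbb Z)$-equivariance of $\textnormal{Per}$, and Ratner for the orbit closure of $\chi$. The trichotomy, the easy inclusions and the discrete case are fine. Two points are wrong, though.

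\emph{Where degree two enters.} Your last paragraph misplaces the connectivity hypothesis, and as stated it would sink the argument. The target sets in cases 2 and 3 contain discrete characters of degree two, and for $g\ge5$ their fibres are disconnected. For example, take $c=\textnormal{vol}(\chi)$, imaginary part primitive with image $\mathbb Z$, and real part with image $\tfrac{c}{2}\mathbb Z$. You never need those fibres. Lift $(Y,\eta)$ to $\Omega\mathcal S_g$ and take a small neighbourhood $U$; then $\textnormal{Per}(U)$ is an open neighbourhood of $\textnormal{Per}(Y,\eta)$. Orbit density gives some $\gamma\chi\in\textnormal{Per}(U)$. By equivariance, the fibre over $\gamma\chi$ is a mapping-class translate of the fibre over $\chi$. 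So connectivity is needed only over $\chi$ itself, whose degree is infinite in cases 2 and 3. Excluding degree two matters only in case 1.

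\emph{Where $g\ge3$ enters.} The genus bound is not used when $\Lambda=\mathbb R+i\mathbb Z$. There the stabiliser of the primitive class $\Im\chi$ is $\mathrm{Sp}(2g-2,\mathbb Z)$ extended by an integral Heisenberg group. Its orbit of $\Re\chi$ is dense in the affine hyperplane $\{\textnormal{vol}=c\}$ exactly when $\Re\chi$ has dense image on $\ker\Im\chi$, i.e.\ when $\Lambda$ is not a lattice; this works for $g=2$ as well. The bound is needed in case 3. In genus two, characters with real multiplication by a real quadratic order have $\Lambda=\mathbb C$ but a proper orbit closure. That closure is homogeneous under a Hilbert modular $\mathrm{SL}(2,\mathbb R)\times\mathrm{SL}(2,\mathbb R)$ containing the stabiliser, and gives McMullen's eigenform loci, the extra genus-two item.

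The Ratner input you should state is the following. Let $P=\mathrm{span}(\Re\chi,\Im\chi)$ and $g\ge3$. Suppose $L\subsetneq\mathrm{Sp}(2g,\mathbb R)$ is closed, contains the stabiliser $\cong\mathrm{Sp}(2g-2,\mathbb R)$ of $(\Re\chi,\Im\chi)$, and $L\cap\mathrm{Sp}(2g,\mathbb Z)$ is a lattice in $L$. Then $P$ contains a rational line, which is impossible when $\Lambda=\mathbb C$. Here is why:
\begin{itemize}
\item Every proper invariant subspace of the stabiliser lies in $P$ or contains $P^{\perp}$, so a rational one yields a rational subspace of $P$.
\item If the Zariski closure of $L$ has non-trivial unipotent radical, that radical supplies such a rational invariant subspace.
\item In the reductive case, the irreducible constituent containing $P^{\perp}$ has dimension at least $2g-2>2$. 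It is therefore Galois-stable, hence rational, unless $L$ is everything.
\end{itemize}
For $g=2$ that constituent can be Galois-conjugate to $P$, which is exactly the real-multiplication case.
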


\noindent The same result holds even for \(g=2\) with an additional item in the list above, see \cite{CDF2} and \cite{McM2} for more details. As a direct consequence, the isoperiodic foliation is ergodic\index{foliation!ergodicity} where, here, ergodicity\index{ergodic} means that a measurable union of leaves has either zero Lebesgue measure or full Lebesgue measure. In \cite{UH}, the ergodicity of the isoperiodic foliation was proved by Hamenst\"adt with a different methods. 

\begin{thm}[Calsamiglia-Deroin-Francaviglia, Hamenst\"adt]
    The absolute period foliation of the principal stratum is ergodic in every genus \(g\ge2\).
\end{thm}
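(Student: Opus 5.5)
Ergodicity of the absolute period foliation follows from the leaf-closure classification above together with Moore's ergodicity theorem. I would work in the unit-volume locus of the principal stratum, on which the volume is preserved by the foliation. Write $\mathcal P\subset\mathbb P\mathcal H_g(1,\dots,1)$ for this locus and $\nu$ for the Masur--Veech measure restricted to it. The claim to prove is that every $\nu$-measurable union of leaves $A$ has $\nu(A)\in\{0,\nu(\mathcal P)\}$. The strategy is to push the problem down to the space of characters via the period map and exploit the $\textnormal{Sp}(2g,\mathbb Z)$-action, which is ergodic by Moore's theorem as recalled in \S\ref{ssec:kapapp}.

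First I would discard null sets. The set of surfaces whose period character is discrete, of degree two, or has closure of periods equal to $\mathbb R+i\mathbb Z$ up to $\textnormal{GL}(2,\mathbb R)$ is a countable union of proper real-analytic subvarieties in period coordinates, so it has measure zero. For $\nu$-almost every $(X,\omega)$ the period group is therefore dense in $\mathbb C$.

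Next, in period coordinates (Remark \ref{rmk:localcoord}) the measure $\nu$ is locally a product. One factor is Lebesgue measure on the absolute part $\textnormal{H}^1(\Sigma,\mathbb C)$, and the other is Lebesgue measure along the leaves, which are locally the relative directions. A saturated set $A$ is thus locally $\textnormal{Per}^{-1}(B)$ for a set $B$ of characters in $\mathcal X=\{\textnormal{vol}=1\}$. Since $\textnormal{Per}$ is an open submersion onto the realisable set $\mathcal{RX}$, and since fibres over characters of degree $\ge3$ are connected by Calsamiglia--Deroin--Francaviglia, the set $B$ is well defined. It is measurable, and it is invariant under $\textnormal{Sp}(2g,\mathbb Z)$ because we are working on $\Omega\mathcal M_g$ rather than on the marked space. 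Moore's theorem then gives that $B$ is null or conull in $\mathcal X$. Fubini, applied with the local product structure, transfers this dichotomy back to $A$.

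An alternative route to the same dichotomy uses item 3 of the leaf-closure theorem: almost every leaf is dense, so by a Hopf-type argument a saturated set of positive measure has full measure. Either route works for $g\ge3$. For $g=2$ I would invoke McMullen's genus-two classification in place of the leaf-closure theorem and run the same argument.

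The main obstacle is the measurable descent from leaves to characters. One must check that "saturated" in $\Omega\mathcal M_g$ really corresponds to $\textnormal{Sp}(2g,\mathbb Z)$-invariance in $\mathcal X$. This requires that the $\nu$-pushforward under $\textnormal{Per}$ is absolutely continuous with respect to the Haar-type measure on $\mathcal X=\textnormal{Sp}(2g,\mathbb R)/\textnormal{Sp}(2g-2,\mathbb R)$, which holds because $\textnormal{Per}$ is a submersion. It also requires connectivity of generic fibres, so that a union of leaves over a fibre cannot split; I expect this to be the hardest point. Citing the connectedness theorem for degree $\ge3$ (here the degree is $\infty$) resolves it, since fibres are exactly single leaves.
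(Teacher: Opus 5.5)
Your main argument is correct, but it is not the route the paper indicates. The paper presents ergodicity as a direct consequence of the Calsamiglia--Deroin--Francaviglia classification of leaf closures, and cites Hamenst\"adt for an independent proof.

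You instead pass to the homologically marked space. There you use connectedness of the fibres of $\textnormal{Per}$ over characters of degree at least three, which includes the conull set of non-discrete characters. This lets you write the lift of a saturated set as $\textnormal{Per}^{-1}(B)$, with $B\subset\mathcal X$ measurable and $\textnormal{Sp}(2g,\mathbb Z)$-invariant. You then conclude with Moore's theorem and Fubini in period coordinates.

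This is the standard mechanism by which connectedness of fibres yields ergodicity, and it buys something concrete. It needs only connectedness of almost every fibre plus Moore's theorem, with no Ratner-type orbit-closure analysis. The leaf-closure classification needs that extra analysis, and, being a topological statement, it does not by itself give a measure-theoretic dichotomy.

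Your route also makes the separate treatment of $g=2$ unnecessary, for three reasons. The connectedness theorem is stated for every $g\ge 2$. Connectedness passes to the principal stratum, because each fibre meets the principal stratum (breaking zeros preserves absolute periods) and so the lower strata cut it in a proper analytic subset. Finally, the stabiliser $\textnormal{Sp}(2,\mathbb R)$ is non-compact, so Moore's theorem already applies to $\textnormal{Sp}(4,\mathbb R)/\textnormal{Sp}(2,\mathbb R)$.

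Your ``alternative route'' should be discarded. Density of almost every leaf does not imply ergodicity: there are minimal systems preserving Lebesgue measure for which Lebesgue measure is not ergodic, with Furstenberg's skew products on the two-torus as the classical example. A Hopf argument needs contracting and expanding foliations with absolutely continuous holonomy, and the leaf-closure theorem supplies none of this. So neither the CDF classification for $g\ge3$ nor McMullen's genus-two classification can replace the Moore step. The paper's phrase ``direct consequence'' is best read as saying that the leaf-closure theorem and ergodicity both follow from connectedness of fibres combined with homogeneous dynamics on $\mathcal X$: Ratner for the closures, Moore for ergodicity.
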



\smallskip

\subsection{Isoperiodic foliations on strata}\label{ssec:isofolstrata} Every fibre of the period map \eqref{eq:permap} naturally restricts to each stratum, thus defining an isoperiodic foliation on strata. Also in this case, the main questions concern the connectedness of the fibres and whether the isoperiodic foliation is ergodic or not. 

\smallskip

\subsubsection{Fake octagons}\label{sssec:octa} The simplest case, yet less trivial than one might expect, is given by the isoperiodic foliation in the minimal strata. Recall that the minimal stratum consists precisely of those translation structures with a single conical point; since there are no relative periods, its dimension is \(2g\). The isoperiodic foliation in this stratum forms a subspace consisting of isolated points: no infinitesimal deformation of the structure yields a different structure. In other words, applying an infinitesimal movements of points (only one in this case) to a structure in the minimal stratum results in a structure whose developing map differs from the original by a translation, hence the structures are equivalent and no deformation happens. This phenomenon was already shown by McMullen in \cite{McM2}, where he defined the so-called "fake pentagons". A unit-area pentagon appropriately glued to a rotated copy of itself by \(\pi\) produces an octagon with pairwise parallel sides that, when properly glued, determines a genus-two translation surface in the minimal stratum. McMullen, in \cite{McM2}, showed that by suitably deforming the pentagon into a fake pentagon, and gluing it to a rotated-by-\(\pi\)  of itself, one obtains a genus-two translation surface in the minimal stratum with the same periods as the initial one but which is not isomorphic to it. The same procedure actually produces infinitely many such translation surfaces; see \cite[\S 10]{McM}. More recently, Dobrilla–Francaviglia in \cite{DF} provided an alternative proof of the existence of these structures, which they called "fake octagons". Their proof is more down-to-earth and concrete, as it uses an explicit point move that, when iterated, produces pairwise non-isomorphic structures.

\subsubsection{Ergodicity} Chaika and Weiss have recently provided in \cite{CW} a new proof of the ergodicity of the isoperiodic foliation on any connected component of any stratum. Their approach is built upon recent advances in the ongoing research programme led by Eskin, Brown, Filip, Rodriguez--Hertz, and others, which aims to generalise the celebrated "Magic Wand Theorem'' of Eskin–Mirzakhani, see \cite{EM}.

\subsubsection{Connectedness of the fibres in strata}\label{sssec:constrataiso} The restriction of the isoperiodic foliation to a generic stratum is generally not connected. A first example of this phenomenon was observed in \S\ref{sssec:octa} in the case of minimal strata. In \cite{KW}, Winsor provided a number of remarkable results that led to a better understanding of the fibres of the period map on a specific stratum. In some cases, the fibre turns out to be connected, while in others, it is highly disconnected. Among the several results shown by Winsor, one that contrasts with those in \cite{CDF2} is the following:

\begin{thm}[Winsor]
Fix \( g \geq 3 \), and let \( \mathcal K \) be a component of a stratum \( \strata \) with \( |\,\mu\,| > 1 \), such that \( \mathcal K \) is a spin component or a hyperelliptic component. For any representation \( \chi \in \textnormal{H}^1(\Sigma,\, \mathbb{C}) \) with positive volume such that \( \textnormal{Im}(\,\chi\,) \cong \mathbb{Z}^{2g} \), the fibre \( \textnormal{Per}_{\mu}^{-1}(\,\chi\,) \) is disconnected.
\end{thm}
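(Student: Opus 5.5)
The plan is to find an invariant that is locally constant on \(\textnormal{Per}_{\mu}^{-1}(\,\chi\,)\cap\mathcal K\), and then to build two structures in that fibre on which the invariant takes different values. Since \(\textnormal{Im}(\,\chi\,)\cong\mathbb Z^{2g}\), the period group \(\Lambda=\textnormal{Im}(\,\chi\,)\) is a free abelian group of maximal rank and the absolute periods are \(\mathbb Q\)-linearly independent. The natural invariant is the \emph{relative period class modulo absolute periods}. Fix a base zero \(p_o\) and arcs \(\delta_i\) from \(p_o\) to the other zeros \(p_i\) (there is at least one, because \(|\,\mu\,|>1\)). Inside the fibre the absolute periods are frozen, so the vector \(\big(\int_{\delta_i}\omega \bmod \Lambda\big)_i\in(\mathbb C/\Lambda)^{k}\) is well defined independently of the choice of arcs. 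Along a path in the fibre it varies continuously. Because \(\Lambda\) is dense, this by itself gives nothing. So I would refine the invariant: I would use the labelled data of which zeros are joined by saddle connections with period in \(\Lambda\), in the spirit of the partition \(\Delta_1,\dots,\Delta_\ell\) of \S\ref{sssec:fromrelreptobranching}. More robustly, I would use the action of monodromy of the fibre on the marking, and show that it preserves the spin structure or hyperelliptic involution.

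Concretely, the key steps are as follows.
\begin{itemize}
\item[1.] Lift to the space \(\Omega\mathcal S_g(\,\mu\,)\) of homologically marked surfaces. There the fibre of \(\textnormal{Per}_\mu\) is a union of leaves of the isoperiodic foliation restricted to \(\mathcal K\). Each leaf is locally parametrised by relative periods (Remark \ref{rmk:localcoord}), via movements of zeros (Definition \ref{def:move}).
\item[2.] Attach to a marked surface in \(\mathcal K\) the quadratic form \(\Omega_\xi\) on \(\textnormal{H}_1(\Sigma,\mathbb Z_2)\) given by \eqref{spindef} when \(\mathcal K\) is a spin component. When \(\mathcal K\) is hyperelliptic, attach instead the induced action of the involution \(\tau\), which is \(-\textnormal{id}\) on \(\textnormal{H}_1\), together with its set of fixed points. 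This datum is discrete, hence constant along the leaf. Its Arf invariant is constant on all of \(\mathcal K\), but the form \(\Omega_\xi\) itself, relative to the marking, is only constant on connected pieces of the marked fibre.
\item[3.] Produce a symplectic automorphism \(A\in\spz\) that changes \(\Omega_\xi\) but preserves its parity, and a structure \((X,\omega)\) for which \(\chi\circ A\) is again realisable in \(\mathcal K\) with the same period character after re-marking. Here the hypothesis \(\textnormal{Im}(\,\chi\,)\cong\mathbb Z^{2g}\) is used: it guarantees that no nontrivial element of \(\spz\) fixes \(\chi\). The fibre over \(\chi\) in \(\mathcal K\) is nonempty by the Bainbridge--Johnson--Judge--Park theorem, since \(\chi\) is non-discrete. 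I would then realise \(\chi\) twice, by two Le Fils-type constructions (Figure \ref{fig:indfilseven}) built from two different good bases. The two constructions are to be related by an \(A\) that moves \(\Omega_\xi\). Each one gives a marked structure with period \(\chi\), and they carry different quadratic forms.
\item[4.] Conclude as follows. The two points lie in the same \(\textnormal{Per}_\mu\)-fibre, but in different values of the locally constant map from step 2. Their images in \(\mathcal K\) therefore lie in distinct components of the fibre, with the unmarked fibre read through the homological marking of the period.
\end{itemize}

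I expect step 3 to be the main obstacle. It requires controlling, inside an explicit construction, both the period character and the induced quadratic form, via the indices of curves in \eqref{eq:spinparity}. The first thing I would try is a handle-gluing surgery in which one handle is glued along a slit through an extra Dehn twist. This keeps \(\chi\) fixed, because slit lengths are unchanged, and it changes the index of one \(\beta_i\) by one, while the index of another curve is compensated so that the Arf invariant is unchanged. For the hyperelliptic case I would instead compare which Weierstrass points carry the zeros, adapting the argument used for \(\mathcal H_2(\,2\,)\) in \S\ref{sssec:fundgroup}.
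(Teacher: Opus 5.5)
The paper gives no proof of this theorem; it only quotes it from Winsor \cite{KW}. Your proposal therefore has to stand on its own, and it has a genuine gap at exactly the step you flag.

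Step~2 is the right starting point in the spin case. On the marked space, the form $\Omega_\xi$, read on $\textnormal{H}_1(\Sigma,\mathbb Z_2)$ through the marking, is locally constant. So the preimage $\widetilde{\mathcal K}$ of $\mathcal K$ splits into open and closed pieces $\widetilde{\mathcal K}_q$, and the fibre is disconnected as soon as $\chi$ is the period of points in two different pieces. That existence statement is the whole content of the theorem, and step~3 does not provide it.

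\textbf{Step 3 does not produce a second point over $\chi$.}
\begin{itemize}
\item As written it contradicts itself. Injectivity of $\chi$, which is what $\textnormal{Im}(\chi)\cong\mathbb Z^{2g}$ gives, means $\chi\circ A\neq\chi$ for every $A\neq\textnormal{id}$. So re-marking can \emph{never} produce a second point over $\chi$; the hypothesis rules your mechanism out rather than enabling it.
\item The ``extra Dehn twist'' regluing does not help. Regluing a handle with a shift changes the period of every curve crossing the slit by that shift. A full twist is just a re-marking, which turns $\chi$ into $\chi\circ T$.
\item The fallback with two Le Fils-type constructions from good bases $B$ and $B'=AB$ is coherent in one respect. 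If both follow the same pattern, both have marked period $\chi$ and $q_{B'}=q_B\circ A^{-1}$. Every other ingredient is missing:
\begin{itemize}
\item Le Fils' Proposition~2.5 gives one good basis, not a second one $B'=AB$ with $A\bmod 2\notin\textnormal{Stab}(q_B)$.
\item His constructions, as recalled in the paper, target a stratum, not a prescribed component. You would still have to compute indices of curves to control parity and to exclude the hyperelliptic component.
\end{itemize}
\end{itemize}

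\textbf{The hyperelliptic case is mishandled.} Because $|\mu|>1$, the only hyperelliptic component in play is $\mathcal H_g^{\textnormal{hyp}}(g-1,g-1)$. Its two zeros are exchanged by $\tau$ and are not Weierstrass points (Remark~\ref{rmk:actionoftheinvolution}). Consequently:
\begin{itemize}
\item ``which Weierstrass points carry the zeros'' is meaningless there;
\item $\tau_*=-\textnormal{id}$ on $\textnormal{H}_1$ carries no information relative to a marking;
\item for $g$ even there is no spin structure at all.
\end{itemize}
A usable discrete datum is the mod-$2$ Weierstrass configuration read through the marking, that is, the classes of lifts of arcs joining branch points of $X\to X/\tau$. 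Its stabiliser in $\textnormal{Sp}(2g,\mathbb Z_2)$ is the image of $\mathfrak S_{2g+2}$. This is a proper subgroup for $g\ge3$, but all of $\textnormal{Sp}(4,\mathbb Z_2)\cong\mathfrak S_6$ for $g=2$, which is one place where $g\ge3$ matters.

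\textbf{What is missing in both cases} is a global argument putting $\chi$ in the period image of at least two pieces. Let $q$ stand for the relevant discrete datum. A natural framing is as follows.
\begin{itemize}
\item $U_q=\textnormal{Per}(\widetilde{\mathcal K}_q)$ is open, since periods give a submersion on strata, and it is non-empty.
\item Re-marking gives $U_{q\circ A}=\{\psi\circ A:\psi\in U_q\}$. Hence $U_q$ is invariant under the finite-index subgroup $\textnormal{Stab}(q)<\spz$.
\item It follows that $\chi\in U_q$ if and only if the orbit $\textnormal{Stab}(q)\cdot\chi$ meets $U_q$.
\end{itemize}
Establishing this for two distinct $q$ calls for an orbit-closure or ergodicity statement for the lattice $\textnormal{Stab}(q)$ acting on characters, of the Kapovich--Ratner type recalled in \S\ref{ssec:kapapp}. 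Your Le Fils-type route would need the same input to produce the second good basis. Nothing in the proposal supplies it.
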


\subsection{Further developments} Following the extensive study of holomorphic differentials on Riemann surfaces, the investigation of \emph{meromorphic differentials}\index{differential!meromorphic} has also attracted considerable attention in recent years. Given a meromorphic differential \(\omega\) on a compact Riemann surface \(X \in \mathcal{M}_g\), one obtains a \emph{period representation} \(\chi\colon \textnormal{H}_1(X \setminus P(\omega),\,\mathbb{Z}) \longrightarrow \mathbb{C},
\) where \(P(\omega)\) denotes the set of poles of \(\omega\). It is not difficult to show that every such representation arises as the \emph{period character} of a meromorphic differential on a prescribed Riemann surface; see \cite{CFG}. In contrast to the holomorphic setting, \emph{there are no obstructions} to realizing a given representation in this broader context. More subtle, however, is the problem of realizing a given representation \emph{within a specific stratum or connected component} of the moduli space. These questions have been addressed in detail in \cite{CFG} and \cite{CF}. An alternative formulation, where all singularities are located at punctures, is presented in \cite{fargup}. Even in the meromorphic case, one can define an \emph{absolute period map}, and the aforementioned works provide a complete description of its image when restricted to a stratum or to one of its connected components. On the other hand, the structure of the \emph{isoperiodic fibration} remains largely unexplored: only a few special cases have been studied so far; see \cite{CD} and \cite{FTZ}.


\bibliographystyle{alpha}
\bibliography{surveytranssurf.bib}

\end{document}